\documentclass[a4paper, reqno, 12pt]{amsart}

\usepackage[usenames,dvipsnames]{color}
\usepackage{amsthm,amsfonts,amssymb,amsmath,amsxtra}
\usepackage[all]{xy}
\SelectTips{cm}{}
\usepackage{xr-hyper}
\usepackage[colorlinks=
   citecolor=Black,
   linkcolor=Red,
   urlcolor=Blue]{hyperref}
\usepackage{verbatim}

\usepackage[margin=1.25in]{geometry}
\usepackage{mathrsfs}

\RequirePackage{xspace}
\RequirePackage{etoolbox}
\RequirePackage{varwidth}
\RequirePackage{enumitem}
\RequirePackage{tensor}
\RequirePackage{mathtools}
\RequirePackage{longtable}
\RequirePackage{multirow}

\usepackage[url=false,doi=false,isbn=false,sorting=nyt,giveninits=true,maxbibnames=4]{biblatex}
\def\le{\leqslant}
\def\a{\alpha}

\def\L{\Lambda}

\def\o{\omega}
\def\p{\pi}

\def\s{\sigma}

\def\l{\lambda}

\def\<{\langle}
\def\>{\rangle}

\newcommand{{\BG}}{\ensuremath{\mathbb {G}}\xspace}

\newcommand{{\BK}}{\ensuremath{\mathbb {K}}\xspace}

\newtheorem{theorem}{Theorem}
\newtheorem{proposition}[theorem]{Proposition}
\newtheorem{lemma}[theorem]{Lemma}

\newtheorem{corollary}[theorem]{Corollary}

\theoremstyle{definition}

\newtheorem{remark}[theorem]{Remark}

\numberwithin{equation}{section}
\numberwithin{theorem}{section}

\setitemize[0]{leftmargin=*,itemsep=\the\smallskipamount}
\setenumerate[0]{leftmargin=*,itemsep=\the\smallskipamount}

\renewcommand{\to}{%
   \ifbool{@display}{\longrightarrow}{\rightarrow}%
   }
\let\shortmapsto\mapsto
\renewcommand{\mapsto}{%
   \ifbool{@display}{\longmapsto}{\shortmapsto}%
   }
\newlength{\olen}
\newlength{\ulen}
\newlength{\xlen}
\newcommand{\xra}[2][]{%
   \ifbool{@display}%
      {\settowidth{\olen}{$\overset{#2}{\longrightarrow}$}%
       \settowidth{\ulen}{$\underset{#1}{\longrightarrow}$}%
       \settowidth{\xlen}{$\xrightarrow[#1]{#2}$}%
       \ifdimgreater{\olen}{\xlen}%
          {\underset{#1}{\overset{#2}{\longrightarrow}}}%
          {\ifdimgreater{\ulen}{\xlen}%
             {\underset{#1}{\overset{#2}{\longrightarrow}}}
             {\xrightarrow[#1]{#2}}}}%
      {\xrightarrow[#1]{#2}}
   }
\makeatother
\newcommand{\xyra}[2][]{%
   \settowidth{\xlen}{$\xrightarrow[#1]{#2}$}%
   \ifbool{@display}%
      {\settowidth{\olen}{$\overset{#2}{\longrightarrow}$}%
       \settowidth{\ulen}{$\underset{#1}{\longrightarrow}$}%
       \ifdimgreater{\olen}{\xlen}%
          {\mathrel{\xymatrix@M=.12ex@C=3.2ex{\ar[r]^-{#2}_-{#1} &}}}%
          {\ifdimgreater{\ulen}{\xlen}%
             {\mathrel{\xymatrix@M=.12ex@C=3.2ex{\ar[r]^-{#2}_-{#1} &}}}
             {\mathrel{\xymatrix@M=.12ex@C=\the\xlen{\ar[r]^-{#2}_-{#1} &}}}}}%
      {\mathrel{\xymatrix@M=.12ex@C=\the\xlen{\ar[r]^-{#2}_-{#1} &}}}%
   }
\makeatletter
\newcommand{\xla}[2][]{%
   \ifbool{@display}%
      {\settowidth{\olen}{$\overset{#2}{\longleftarrow}$}%
       \settowidth{\ulen}{$\underset{#1}{\longleftarrow}$}%
       \settowidth{\xlen}{$\xleftarrow[#1]{#2}$}%
       \ifdimgreater{\olen}{\xlen}%
          {\underset{#1}{\overset{#2}{\longleftarrow}}}%
          {\ifdimgreater{\ulen}{\xlen}%
             {\underset{#1}{\overset{#2}{\longleftarrow}}}
             {\xleftarrow[#1]{#2}}}}%
      {\xleftarrow[#1]{#2}}
   }
\newcommand{\isoarrow}{%
   \ifbool{@display}{\overset{\sim}{\longrightarrow}}{\xrightarrow\sim}%
   }

\def\blambda{\boldsymbol{\lambda}}

\begin{document}

\title[]{Thickening realization and positivity properties of canonical bases}

\author[Jiepeng Fang]{Jiepeng Fang}
\address{Department of Mathematics and New Cornerstone Science Laboratory, The University of Hong Kong, Pokfulam, Hong Kong, Hong Kong SAR, China}
\email{fangjp@hku.hk}

\author[Xuhua He]{Xuhua He}
\address{Department of Mathematics and New Cornerstone Science Laboratory, The University of Hong Kong, Pokfulam, Hong Kong, Hong Kong SAR, China}
\email{xuhuahe@hku.hk}

\thanks{}

\keywords{Quantum group, canonical basis, positivity property, tensor product}
\subjclass[2020]{17B37, 20G42}



\begin{abstract}
Let $\mathbf{U}$ be a quantum group associated with a symmetric Cartan datum, let $\dot{\mathbf{U}}$ be its modified form, and let $\dot{\mathbf{B}}$ be the canonical basis of $\dot{\mathbf{U}}$. Lusztig conjectured that the structure constants of the multiplication, comultiplication, and bilinear form in $\dot{\mathbf{U}}$ with respect to $\dot{\mathbf{B}}$ belong to $\mathbb{N}[v,v^{-1}]$.

We introduce the \emph{thickening realization}, which relates $\dot{\mathbf{B}}$ to the canonical basis of the negative part of a larger quantum group $\tilde{\mathbf{U}}^-$. More precisely, it identifies the relevant structure constants in $\dot{\mathbf{U}}$ with the structure constants in $\tilde{\mathbf{U}}^-$. As consequences, we prove, for arbitrary symmetric Cartan datum, $\dot{\mathbf{B}}$ has the positivity properties for the comultiplication and bilinear form, as well as the positivity for the multiplication whenever one factor is spherical parabolic. In particular, Lusztig's conjecture holds for simply-laced finite type. We also prove the canonical bases of a broad class of tensor products of integrable modules have the positivity properties for the transition matrices and actions by $\dot{\mathbf{B}}$.
\end{abstract}

\maketitle


\section*{Introduction}

\subsection{Lusztig's positivity conjecture}

The theory of canonical bases, first established by Lusztig \cite{Lusztig-1990, Lusztig-1991} and further studied by Kashiwara \cite{Kashiwara-1991}, reveals profound and elegant structures on quantum groups and their integrable modules. 

A cornerstone of this theory is the \textbf{positivity property}, established by Lusztig \cite{Lusztig-1993}. Let $\mathbf{U}$ be the quantum group associated with a symmetric Cartan datum. For the canonical bases of the negative part $\mathbf{U}^-$ and positive part $\mathbf{U}^+$, Lusztig proved that they have the positivity properties with respect to the multiplication, comultiplication and bilinear form \cite[Theorem 14.4.13]{Lusztig-1993}. For the canonical bases of the simple
integrable highest and lowest weight modules, Lusztig proved that they have the positivity properties with respect to the actions by the Chevalley generators  \cite[Theorem 22.1.7]{Lusztig-1993}.

To extend canonical-basis theory to the full quantum group, Lusztig
introduced the modified quantum group $\dot{\mathbf{U}}$, in which
the Cartan part is replaced by a system of mutually orthogonal
idempotents. The category of unital
$\dot{\mathbf{U}}$-modules is equivalent to the category of weight
$\mathbf{U}$-modules, and $\dot{\mathbf{U}}$ admits a canonical basis
$\dot{\mathbf{B}}$ \cite[Part IV]{Lusztig-1993}. In
\cite[Conjecture 25.4.2]{Lusztig-1993}, Lusztig conjectured that
$\dot{\mathbf{B}}$ has the following positivity properties:
\begin{itemize}
\item \textbf{(Multiplication)}
$\dot{\mathbf{B}}\cdot\dot{\mathbf{B}}
\subset \mathbb{N}[v,v^{-1}][\dot{\mathbf{B}}]$;
\item \textbf{(Comultiplication)}
$\dot{\Delta}(\dot{\mathbf{B}})
\subset \mathbb{N}[v,v^{-1}]
[\dot{\mathbf{B}}\hat{\otimes}\dot{\mathbf{B}}]$;
\end{itemize}
A closely related positivity question concerns the bilinear form on $\dot{\mathbf{U}}$:
\[
(\dot{\mathbf{B}},\dot{\mathbf{B}})_{\dot{\mathbf{U}}}
\subset\mathbb{N}[[v^{-1}]].
\]
These questions ask whether the rigid positivity structures of $\mathbf{U}^-$ and 
$\mathbf{U}^+$ persist after they are assembled in $\dot{\mathbf{U}}$.

\subsection{Previous work}
Geometric and categorical realizations provide the principal source of
positivity results for canonical bases. 

Lusztig realized the negative part $\mathbf{U}^-$ and its canonical basis using perverse sheaves on varieties of quiver representations \cite{Lusztig-1990,Lusztig-1991}. Khovanov-Lauda \cite{Khovanov-Lauda-2009} and Rouquier \cite{Rouquier-2008,Rouquier-2012} subsequently categorified $\mathbf{U}^-$ using quiver Hecke algebras, and Varagnolo--Vasserot \cite{Varagnolo-Vasserot-2011} identified the canonical basis elements with corresponding indecomposable projective modules. Related constructions for simple integrable highest weight modules were given in \cite{Kang-Kashiwara-2012,Zheng-2014}.

For the modified quantum group, Beilinson--Lusztig--MacPherson \cite{Beilinson-Lusztig-MacPherson-1990} constructed a geometric realization in finite type $A$ using partial flag varieties. This framework was extended to affine type $A$ by Ginzburg--Vasserot \cite{Ginzburg-Vasserot-1993} and Lusztig \cite{Lusztig-1999,Lusztig-2000}. Schiffmann-Vasserot \cite{Schiffmann-Vasserot-2000} and McGerty \cite{McGerty-2012} identified the canonical basis elements with corresponding geometric objects. These constructions established the multiplication positivity \cite{Beilinson-Lusztig-MacPherson-1990,Ginzburg-Vasserot-1993,Lusztig-1999,Lusztig-2000,Schiffmann-Vasserot-2000,McGerty-2012,Fu-Shoji-2018}, comultiplication positivity \cite{Fan-Li-2021,Fu-2021}, and bilinear-form positivity \cite{McGerty-2012} in type $A$. In finite type, Webster's $2$-categorical construction gave the multiplication positivity \cite{Webster-2015}; alternative geometric approaches and partial extensions appeared in \cite{Fang-Lan-2023,Fang-Lan-2025}.

These methods depend either on type $A$ geometry or on features specific to
finite type. Before the present work, the multiplication positivity was open
outside finite type and affine type $A$, while the comultiplication and
bilinear-form positivity were not known beyond the finite and affine type
$A$ settings. The main purpose of this paper is to provide a uniform
algebraic mechanism valid for arbitrary symmetric Cartan data.

\subsection{Main results}
We introduce an algebraic construction, called the \emph{thickening
realization}, that relates the tensor products of certain modules of $\mathbf{U}$ to the negative part of a larger quantum group. Let ${^{\omega}\Lambda_{\lambda_1}}\otimes \Lambda_{\lambda_2}$ be the tensor product of a simple integrable lowest weight module and a simple integrable highest weight module. For each Demazure submodule ${^{\omega}V_w(\lambda_1)}\subset
{^{\omega}\Lambda_{\lambda_1}}$, we realize
$${^{\omega}V_w(\lambda_1)}\otimes\Lambda_{\lambda_2}$$
as a subquotient of a Verma module of an enlarged quantum group
$\tilde{\mathbf{U}}$, compatibly with canonical bases. Since
${^{\omega}\Lambda_{\lambda_1}}$ is the direct limit of its Demazure
submodules, these realizations control the canonical basis of the full
tensor product ${^{\omega}\Lambda_{\lambda_1}}\otimes \Lambda_{\lambda_2}$. 
Through Lusztig's stable-limit construction of
$\dot{\mathbf{B}}$, these realizations yield identities relating
the relevant structure constants in $\dot{\mathbf{U}}$ to those
in the negative parts of enlarged quantum groups.

Here a canonical basis element is called \emph{spherical parabolic}
if it belongs to a parabolic subalgebra in which the positive
generators are restricted to a finite-type subsystem and the negative
generators are unrestricted, or vice versa.

The structural results may be summarized as follows.

\begin{theorem}[Theorem \ref{thm:structure-multiplication}, \ref{thm:str-const-comult}]
For a quantum group associated with a symmetric Cartan datum, the following
statements hold.
\begin{enumerate}
\item \textbf{(Spherical multiplication)} If
$\dot{b},\dot{b}'\in\dot{\mathbf{B}}$ and $\dot{b}$ belongs to a
spherical parabolic subalgebra of $\dot{\mathbf{U}}$, then the structure constants for $\dot{b}\dot{b}'$ are given by the multiplication structure constants for the canonical basis of the negative part of a further enlargement
$\tilde{\tilde{\mathbf{U}}}$.
\item \textbf{(Comultiplication)} The structure constants for $\dot{\Delta}(\dot{b}),\dot{b}\in\dot{\mathbf{B}}$ are given, up to explicit powers of $v$, by the comultiplication structure constants for the canonical basis of $\tilde{\mathbf{U}}^-$.
\end{enumerate}
\end{theorem}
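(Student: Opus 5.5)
The plan is to deduce both parts from the thickening realization combined with Lusztig's stable-limit description of $\dot{\mathbf{B}}$. Recall the relevant description. For $\lambda=\lambda_2-\lambda_1$, the element $\dot b=b_1^{\diamond}\,1_\lambda\, b_2^-\in\dot{\mathbf B}$ is characterized as follows. Take $\lambda_1,\lambda_2$ sufficiently dominant with $\lambda_2-\lambda_1=\lambda$. Then $\dot b$ acts on $\xi_{-\lambda_1}\otimes\eta_{\lambda_2}\in{}^{\omega}\Lambda_{\lambda_1}\otimes\Lambda_{\lambda_2}$ by sending it to the canonical basis element $(b_1\diamondsuit b_2)_{\lambda_1,\lambda_2}$ of the tensor product. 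Moreover, the surjection $\dot{\mathbf U}1_\lambda\to{}^{\omega}\Lambda_{\lambda_1}\otimes\Lambda_{\lambda_2}$ maps $\dot{\mathbf B}1_\lambda$ onto the canonical basis together with zero, and it is injective in any fixed finite range of weights once $\lambda_1,\lambda_2\gg0$. As a result, every structure constant of $\dot{\mathbf U}$ can be read off from a structure constant of some tensor product ${}^{\omega}\Lambda_{\lambda_1}\otimes\Lambda_{\lambda_2}$ with large $\lambda_i$. The first step is to pass to a Demazure submodule ${}^{\omega}V_w(\lambda_1)$ large enough to contain all the relevant basis vectors; this is possible because ${}^{\omega}\Lambda_{\lambda_1}$ is the direct limit of its Demazure submodules. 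We then invoke the thickening realization, which identifies ${}^{\omega}V_w(\lambda_1)\otimes\Lambda_{\lambda_2}$, with its canonical basis, with a subquotient of a Verma module of $\tilde{\mathbf U}$, that is, with a subquotient of $\tilde{\mathbf U}^-$ with canonical basis $\tilde{\mathbf B}$. Under this identification, $(b_1\diamondsuit b_2)$ corresponds to an element $\tilde b\in\tilde{\mathbf B}$.

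For (2), the comultiplication, we use the following. If $\Lambda_{\lambda_2}=\Lambda_{\mu}\otimes\Lambda_{\mu'}$-type data are split, the coproduct $\dot\Delta(\dot b)$ is detected by the action of $\dot b$ on a fourfold tensor product ${}^{\omega}\Lambda_{\lambda_1}\otimes\Lambda_{\lambda_2}\otimes{}^{\omega}\Lambda_{\lambda_1'}\otimes\Lambda_{\lambda_2'}$. By Lusztig's compatibility of canonical bases with tensor products, this corresponds to $\dot\Delta(\dot b)$ evaluated on two stable-limit vectors. I would thicken each tensor factor pair separately, so that the total realization lives inside $\tilde{\mathbf U}^-$ for an enlarged Cartan datum containing two disjoint thickenings. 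The next step is to check that the $\mathbf U$-coproduct, transported through the realization, agrees with the restriction of Lusztig's twisted coproduct $r$ on $\tilde{\mathbf U}^-$ to the bidegree components separating the two thickened halves. The resulting twist by $v^{\text{pairing}}$ gives the explicit powers of $v$ in the statement.

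For (1), the spherical multiplication, suppose $\dot b$ lies in a parabolic subalgebra in which the positive generators are restricted to a finite-type subsystem $J$. Then left multiplication by $\dot b$ only needs the $E_j$ with $j\in J$ together with all $F_i$. The $E_j$-action on the lowest-weight side is exactly what is realized by $F$'s of the thickened quiver, since a finite-type Demazure module for $W_J$ has longest element $w_{0,J}$. I would therefore thicken a second time, producing $\tilde{\tilde{\mathbf U}}$, so that both $\dot b$ and the tensor product containing $\dot b'$ are realized inside $\tilde{\tilde{\mathbf U}}^-$. In this realization, $\dot b\dot b'$ becomes a product of canonical basis elements, by the compatibility of the realization with multiplication by negative generators (the realization is a $\tilde{\mathbf U}^-$-module map). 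The finite-type hypothesis on $J$ guarantees that $\dot b$ itself is realized as a single canonical basis element, via the top Demazure module. The case in which the negative generators are restricted follows by applying the involution $\omega$.

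The main obstacle is establishing that the realization respects canonical bases on the nose, so that the map $\dot{\mathbf B}\ni\dot b\mapsto\tilde b\in\tilde{\mathbf B}\cup\{0\}$ is well-defined. This requires checking the bar-invariance and the lattice/crystal conditions of both bases under the subquotient identification. I would attempt this by matching Lusztig's characterization of $b_1\diamondsuit b_2$ with the characterization of canonical basis elements in the Verma module of $\tilde{\mathbf U}$. The other delicate point is ensuring stability in $\lambda_1,\lambda_2,w$, so that the constants do not depend on these choices.
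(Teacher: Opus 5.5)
Your part (2) broadly follows the paper's route. Part (1) has a genuine gap: the mechanism you describe does not produce the formula, and the two steps that actually do are missing.

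\textbf{Part (1).} First, you misplace the role of sphericity.
\begin{itemize}
\item Every $\dot b\in\dot{\mathbf B}$, in any type, is already realized as a single element $\mathrm{th}_{\lambda_1,\lambda_2,w}(\dot b)\in\tilde{\mathbf B}$ once $\lambda_1,\lambda_2,w$ are large.
\item What sphericity actually buys is a choice $w=w_J*w'$ with $J\subset J(w)$. Then $E_j\xi_{-w\lambda_1}=0$ and ${}^{\omega}V_w(\lambda_1)$ is $E_j$-stable for $j\in J$. Hence $a_{-w\lambda_1}$, and so $\underline{\varphi_{-w\lambda_1,\lambda_2}}$, intertwines the action of $\dot{\mathbf U}_J$ (Lemma \ref{lem:Demazure-UJ}); without this choice the realization is only $\mathbf U(\mathfrak b^-)$-linear.
\item The $E_j$ act on $M_{-w\lambda_1\odot\lambda_2}$ as the $E_j$ of $\tilde{\mathbf U}$, not through $F$'s of the thickened datum.
\end{itemize}
Even with this choice, you have only turned $\dot b\dot b'$ into the action of $\dot b$ on the Verma-module vector $\mathrm{th}(\dot b')^-v_{-w\lambda_1\odot\lambda_2}$. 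For $\dot b$ involving $E_j$'s, this action is not a product in $\tilde{\mathbf f}$. Your justification ``the realization is a $\tilde{\mathbf U}^-$-module map'' covers only the case $\dot b=1\diamondsuit_\zeta b$. A symmetric realization of both factors cannot work either: $\mathrm{th}(\dot b)\,\mathrm{th}(\dot b')$ has $I'$-degree $2|\theta_{\lambda_2}|$, while every $\mathrm{th}(\dot b'')$ has $I'$-degree $|\theta_{\lambda_2}|$.

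The paper closes this with two steps you do not have.
\begin{enumerate}
\item[(i)] The pure-negative case, transported by $\sigma$ (using $\sigma(\dot{\mathbf B})=\dot{\mathbf B}$), computes right multiplication $\dot x\,(1\diamondsuit b)$ for arbitrary $\dot x\in\dot{\mathbf B}$. This is then applied inside $\dot{\tilde{\mathbf U}}$ to the product $\dot{\tilde b}\,(1\diamondsuit_{-w\lambda_1\odot\lambda_2}\mathrm{th}(\dot b'))$. Acting on $v_{-w\lambda_1\odot\lambda_2}$, this product gives $\dot b\,\mathrm{th}(\dot b')^-v_{-w\lambda_1\odot\lambda_2}$. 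That is where the second thickening enters. It also explains the asymmetric, order-reversed form $\sigma(\mathrm{th}(\dot b'))\cdot\widetilde{\mathrm{th}}(\sigma(\dot{\tilde b}))$ and the target $\widetilde{\mathrm{th}}(\sigma(1\diamondsuit\mathrm{th}(\dot b'')))$.
\item[(ii)] The resulting expansion in $\dot{\tilde{\mathbf B}}$ contains terms $\tilde b''_1\diamondsuit\tilde b''_2$ with $1\neq\tilde b''_1\in\mathbf B_J$. You must show these contribute nothing, i.e.\ $\underline{\varphi}\bigl((\tilde b''_1\diamondsuit\tilde b''_2)v_{-w\lambda_1\odot\lambda_2}\bigr)=0$ (Lemma \ref{canonical basis of U dot acts on Verma}). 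The paper proves this by factoring through ${}^{\omega}\Lambda_{\tilde\lambda_1}\otimes M_{\tilde\lambda_2}$, with auxiliary weights satisfying $\langle j_{\tilde Y},\tilde\lambda_1\rangle=0$ whenever $s_jw<w$. Without it, the coefficients do not match.
\end{enumerate}
Your reduction of the ${}^{\omega}\dot{\mathbf U}_J$ case via $\omega$ also needs $\omega(\dot{\mathbf B})=\dot{\mathbf B}$ exactly. Lusztig gives this only up to sign; the paper proves it separately (Proposition \ref{prop:omega}).

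\textbf{Part (2).} Here you follow the paper: detect $\dot\Delta(\dot b)$ on $\xi_{-\lambda_1'}\otimes\eta_{\lambda_2'}\otimes\xi_{-\lambda_1''}\otimes\eta_{\lambda_2''}$ and compare with $r$ on $\tilde{\mathbf B}$. What you leave implicit is the bridge. You need a $\mathbf U$-map $g$ from ${}^{\omega}\Lambda_{\lambda_1}\otimes\Lambda_{\lambda_2}$ to the fourfold product with these properties:
\begin{itemize}
\item it sends $\xi_{-\lambda_1}\otimes\eta_{\lambda_2}$ to that vector;
\item it has an explicit formula on $y^-\xi_{-w\lambda_1}\otimes\eta_{\lambda_2}$;
\item it carries Demazure parts into Demazure parts, so that one $w$ serves all three thickening maps.
\end{itemize}
The paper builds $g$ from the maps $\chi$ and an inverse universal $\mathcal R$-matrix over an $X$-regularized root datum.

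Also, only one thickening is used: $r$ splits $\theta_{\lambda_2}$ into $\theta_{\lambda_2'}\otimes\theta_{\lambda_2''}$ inside $\tilde{\mathbf f}\otimes\tilde{\mathbf f}$. If by ``two disjoint thickenings'' you mean separate copies $I',I''$, then $\mathrm{th}(\dot b)$ has no $I''$-part to split off, and the comparison with $r$ fails.
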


Together with the compatibility of canonical bases and bilinear
forms under thickening, these identities allow us to apply Lusztig's
positivity theorem for the negative part \cite[Theorem 14.4.13]{Lusztig-1993} and obtain the following
results.

\begin{theorem}[Theorems \ref{thm:trans-pos}, \ref{positivity of bilinear form}, \ref{thm:comult-positivity}, \ref{thm:mult}, Proposition \ref{prop:omega}]
For a quantum group associated with a symmetric Cartan datum, the following
positivity properties hold.
\begin{enumerate}
\item \textbf{(Comultiplication)} For any
$\dot{b}\in\dot{\mathbf{B}}$,
\[
\dot{\Delta}(\dot{b})
\in \mathbb{N}[v,v^{-1}]
[\dot{\mathbf{B}}\hat{\otimes}\dot{\mathbf{B}}].
\]
\item \textbf{(Bilinear form)} For any
$\dot{b},\dot{b}'\in\dot{\mathbf{B}}$,
\[
(\dot{b},\dot{b}')_{\dot{\mathbf{U}}}
\in\delta_{\dot{b},\dot{b}'}+v^{-1}\mathbb{N}[[v^{-1}]].
\]
\item \textbf{(Spherical multiplication)} If one of
$\dot{b},\dot{b}'\in\dot{\mathbf{B}}$ is spherical parabolic, then
\[
\dot{b}\dot{b}'
\in\mathbb{N}[v,v^{-1}][\dot{\mathbf{B}}].
\]
\item \textbf{(Transition)} For dominant weights
$\lambda_1,\lambda_2$,
\[
\mathbf{B}({^{\omega}\Lambda_{\lambda_1}}
\otimes\Lambda_{\lambda_2})
\subset
\mathbb{N}[v^{-1}]
[\mathbf{B}({^{\omega}\Lambda_{\lambda_1}})
\otimes\mathbf{B}(\Lambda_{\lambda_2})].
\]
\item \textbf{($\omega$-invariance)} The involution $\omega$ preserves
the canonical basis: $\omega(\dot{\mathbf{B}})=\dot{\mathbf{B}}$.
\end{enumerate}
\end{theorem}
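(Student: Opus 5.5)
The proof is a deduction from the structural identities of the thickening realization together with Lusztig's positivity for $\tilde{\mathbf{U}}^-$ \cite[Theorem 14.4.13]{Lusztig-1993}. The order is: first the transition positivity (4), then the bilinear form (2), then comultiplication (1), then spherical multiplication (3), and finally $\omega$-invariance (5).

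For (4), I fix a Demazure submodule ${^{\omega}V_w(\lambda_1)}$ and use the realization of ${^{\omega}V_w(\lambda_1)}\otimes\Lambda_{\lambda_2}$ as a subquotient of a Verma module of $\tilde{\mathbf{U}}$, compatibly with canonical bases. Under this identification, a pure tensor $b_1\otimes b_2$ of canonical basis elements should correspond to a product $\tilde b'\tilde b''$ in $\tilde{\mathbf{U}}^-$. The thickened vertex part accounts for the Demazure factor, and the original part for $\Lambda_{\lambda_2}$. A canonical basis element of the tensor product corresponds to a canonical basis element $\tilde b$ of $\tilde{\mathbf{U}}^-$. Its expansion in pure tensors is then read off from the expansion of $\tilde b$ in the (positive, by Lusztig) PBW-type or monomial-type basis adapted to the thickening. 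Concretely, I would take coefficients obtained from the comultiplication $r(\tilde b)$ projected to the appropriate bidegree, and these lie in $\mathbb{N}[v,v^{-1}]$. They lie in $\mathbb{N}[v^{-1}]$ because the coefficients are also constrained to lie in $v^{-1}\mathbb{Z}[v^{-1}]$ off the diagonal by the defining property of the canonical basis of the tensor product. Passing to the direct limit over $w$ gives (4) for ${^{\omega}\Lambda_{\lambda_1}}$.

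For (2), I would use Lusztig's description of $(\dot b,\dot b')$ as a limit of the forms on ${^{\omega}\Lambda_{\lambda_1}}\otimes\Lambda_{\lambda_2}$ as $\lambda_1,\lambda_2\to\infty$. By the compatibility of bilinear forms under thickening, each such form value equals $(\tilde b,\tilde b')$ in $\tilde{\mathbf{U}}^-$, which lies in $\delta+v^{-1}\mathbb{N}[[v^{-1}]]$ by positivity of Lusztig's form on $\mathbf{f}$. Taking the limit coefficientwise preserves this.

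For (1), Theorem \ref{thm:str-const-comult} identifies the structure constants of $\dot\Delta(\dot b)$, up to explicit powers of $v$, with comultiplication constants of the canonical basis of $\tilde{\mathbf{U}}^-$. These are in $\mathbb{N}[v,v^{-1}]$, and a power of $v$ preserves this. For (3), Theorem \ref{thm:structure-multiplication} expresses the constants of $\dot b\dot b'$, with $\dot b$ spherical parabolic, as multiplication constants in $\tilde{\tilde{\mathbf{U}}}^-$, so positivity again follows from Lusztig. The case where $\dot b'$ is spherical parabolic reduces to the first case via the antiautomorphism $\sigma$ or the involution $\omega$, once we know these preserve $\dot{\mathbf{B}}$ and spherical parabolicity. $\sigma$ preserving $\dot{\mathbf{B}}$ is due to Lusztig.

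For (5), I would characterize $\dot{\mathbf{B}}$ up to sign as the $\bar{\ }$-invariant elements $\dot b$ of the $\mathcal{A}$-form with $(\dot b,\dot b)\in 1+v^{-1}\mathbb{Z}[[v^{-1}]]$ (Lusztig's almost-orthonormality). $\omega$ commutes with the bar involution and preserves the form and the integral form, so $\omega(\dot b)=\pm\dot b''$. To fix the sign, I would compare a positive coefficient, for example the leading term of $\dot\Delta$ or of the action on a suitable extremal vector in ${^{\omega}\Lambda_{\lambda_1}}\otimes\Lambda_{\lambda_2}$. Using (4), the relevant coefficient is in $\mathbb{N}[v^{-1}]$ and nonzero, which forces the sign $+$.

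The main obstacle I expect is (4), specifically verifying that pure tensors correspond to elements whose transition to $\tilde{\mathbf{B}}$ is governed by a positive structure (rather than merely integral), and obtaining the $\mathbb{N}[v^{-1}]$ (not just $\mathbb{N}[v,v^{-1}]$) conclusion. I would first try to identify $b_1\otimes b_2$ with a product $\tilde b_1\tilde b_2$ of canonical basis elements in a direct-sum decomposition of the thickened weight lattice. Then the coefficient is a multiplication constant, which is positive.
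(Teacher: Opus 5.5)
Your proofs of (1) and of the $\dot{\mathbf U}_J$ case of (3), and your comultiplication reading of (4), follow the paper. The argument for (2), however, has a genuine gap: nothing identifies $(\dot b(\xi_{-\lambda_1}\otimes\eta_{\lambda_2}),\dot b'(\xi_{-\lambda_1}\otimes\eta_{\lambda_2}))$ with $(\tilde b,\tilde b')_{\tilde{\mathbf f}}$. The only compatibility of forms under thickening is Lemma~\ref{varphi and inner product}(3). It lives on $M_\zeta\otimes\Lambda_\lambda$, uses the $\mathbf f$-form on the Verma factor, and reads $(\varphi_{\zeta,\lambda}(z^-v_{\zeta\odot\lambda}),\varphi_{\zeta,\lambda}({z'}^-v_{\zeta\odot\lambda}))_\otimes=(z,z')_{\tilde{\mathbf f}}\prod_{i\in I}\prod_{k=1}^{\langle i_Y,\lambda\rangle}(1-v^{-2k})$. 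This breaks your argument in two places.
\begin{itemize}
\item The correction factor has negative coefficients, so positivity of $(\tilde b,\tilde b')_{\tilde{\mathbf f}}$ does not survive multiplication by it.
\item Lusztig's limit formula \cite[Proposition 26.2.3]{Lusztig-1993} uses the form of \cite[Proposition 19.1.2]{Lusztig-1993} on ${^\omega\Lambda_{\lambda_1}}$. That form does not come from the $\mathbf f$-form on $M_{-w\lambda_1}$ via $a_{-w\lambda_1}$, since $a_{-w\lambda_1}$ kills vectors of nonzero norm, e.g.\ $F_i^{(n)}v_{-w\lambda_1}$ for $n$ large.
\end{itemize}
The missing input is positivity of the forms on the canonical bases of the simple modules: $(\mathbf B(\Lambda_\lambda),\mathbf B(\Lambda_\lambda))_{\Lambda_\lambda}\subset\mathbb N[v^{-1}]$, and likewise for ${^\omega\Lambda_\lambda}$; the paper takes this from \cite[Theorem 4.16]{Zheng-2014}. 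Apply (4) to both vectors. Then every finite-level value lies in $\mathbb N[v^{-1}]$, so the limit lies in $\mathbb N[[v^{-1}]]$. The shape $\delta_{\dot b,\dot b'}+v^{-1}(\cdots)$ comes from Lusztig's almost-orthonormality of $\dot{\mathbf B}$ \cite[Theorem 26.3.1]{Lusztig-1993}, not from $\mathbf f$.

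Smaller points. In (4), keep the comultiplication formula of Theorem~\ref{thm:structure-trans}(2) and drop the fallback of matching $b_1\otimes b_2$ with a product $\tilde b_1\tilde b_2$. Pure tensors are not images of products: $\varphi$ sends $x\theta_\lambda y$ to a sum over $r(x)$. Moreover, expanding products in $\tilde{\mathbf B}$ would compute the inverse transition matrix, which is not positive; for $\mathfrak{sl}_2$, $E_i\xi_{-1}\otimes F_i\eta_1=E_i\xi_{-1}\diamondsuit F_i\eta_1-v^{-1}\,\xi_{-1}\diamondsuit\eta_1$. Theorem~\ref{thm:structure-multiplication} only covers $\dot b\in\dot{\mathbf U}_J$, so the ${^\omega\dot{\mathbf U}_J}$ case of (3) also needs (5).

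For (5) your route differs from the paper's, and the sign-fixing step is only gestured at. Comparing actions on tensor products requires the swap isomorphism $\Omega_{-\lambda_1,\lambda_2}:{^\omega\Lambda_{\lambda_1}}\otimes\Lambda_{\lambda_2}\to{^\omega({^\omega\Lambda_{\lambda_2}}\otimes\Lambda_{\lambda_1})}$ built from $(\omega\otimes\omega)\Delta=\tau\Delta\omega$. Once you check that $\Omega$ intertwines the $\Psi$-involutions, you get $\omega(b_1\diamondsuit_\zeta b_2)=b_2\diamondsuit_{-\zeta}b_1$ directly (Lemma~\ref{isomorphism Omega}). This is the paper's route, and it needs neither Lusztig's $\pm$ result nor positivity.

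Your $\dot\Delta$ variant can also be completed, but from (1) rather than (4). Look at the component of $\dot\Delta(b_1\diamondsuit_\zeta b_2)$ whose factors have weight shifts $|b_1|$ and $-|b_2|$. Only the leading term $b_1^+b_2^-1_\zeta$ contributes there, so that component equals $v^n(b_1^+1_{\zeta'})\otimes(b_2^-1_{\zeta''})$. Since $\omega$ sends both factors into $\dot{\mathbf B}$ without sign, (1) forces the sign $+$.
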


The comultiplication and bilinear form assertions prove the corresponding parts of  Lusztig's conjecture for arbitrary symmetric Cartan datum. The multiplication assertion proves the other part whenever one factor is spherical parabolic. In particular, in finite type, every canonical basis element is spherical parabolic, and hence all parts of Lusztig's conjecture hold. Multiplication positivity in finite type was previously obtained by Webster \cite{Webster-2015}; the present argument gives a uniform algebraic proof and simultaneously establishes the
comultiplication and bilinear-form statements. Multiplication positivity for arbitrary elements of $\dot{\mathbf{B}}$ remains open in general symmetric Kac--Moody type.

We record several consequences for later use.

\begin{corollary}\label{cor:applications}
For a quantum group associated with a symmetric Cartan datum, the following
positivity properties hold.
\begin{enumerate}
\item For finite simply-laced type, Lusztig's positivity conjecture holds. The canonical basis $\dot{B}$ has the positivity properties with respect to the multiplication, comultiplication and bilinear form.
\item For every dominant weight $\lambda$ and
$\dot{b}\in\dot{\mathbf{B}}$,
\[
\dot{b}(\mathbf{B}(\Lambda_\lambda))
\subset\mathbb{N}[v,v^{-1}][\mathbf{B}(\Lambda_\lambda)].
\]
\item Let $\lambda_1,\ldots,\lambda_{m+n}$ be dominant, and let $\dot{b}\in\dot{\mathbf{B}}$ be spherical parabolic. Then the action of
$\dot{b}$ on the tensor product
\[
{^{\omega}\Lambda_{\lambda_1}}\otimes\cdots\otimes
{^{\omega}\Lambda_{\lambda_m}}\otimes
\Lambda_{\lambda_{m+1}}\otimes\cdots\otimes
\Lambda_{\lambda_{m+n}}
\]
has coefficients in $\mathbb{N}[v,v^{-1}]$ with respect to the canonical
basis. The same conclusion holds for arbitrary
$\dot{b}\in\dot{\mathbf{B}}$ if $m=0$ or $n=0$.
\end{enumerate}
\end{corollary}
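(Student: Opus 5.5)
Each part of Corollary \ref{cor:applications} follows from the main positivity theorem above, combined with standard facts from \cite[Part IV]{Lusztig-1993} about how $\dot{\mathbf{B}}$ acts on based modules. I treat the three parts in order.

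\textbf{Part (1).} In finite type the Cartan datum is its own finite-type subsystem, so every parabolic subalgebra, and in particular $\dot{\mathbf{U}}$ itself, is spherical. Hence every $\dot b\in\dot{\mathbf{B}}$ is spherical parabolic. The spherical multiplication statement then gives $\dot b\dot b'\in\mathbb{N}[v,v^{-1}][\dot{\mathbf{B}}]$ for all $\dot b,\dot b'$. Comultiplication and bilinear-form positivity are general statements that hold for any symmetric Cartan datum, so they apply here directly. Since a symmetric finite-type datum is exactly a simply-laced one, this proves part (1).

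\textbf{Part (2).} I will use Lusztig's realization of $\Lambda_\lambda$ as a quotient of $\dot{\mathbf{U}}1_\lambda$. By \cite[\S 25.2]{Lusztig-1993}, the map $\dot{\mathbf{U}}1_\lambda\to\Lambda_\lambda$, $u\mapsto u\eta_\lambda$, sends each canonical basis element either to $0$ or to an element of $\mathbf{B}(\Lambda_\lambda)$. Moreover, every element of $\mathbf{B}(\Lambda_\lambda)$ arises as $b'\eta_\lambda$ for some $b'\in\dot{\mathbf{B}}1_\lambda$. Given $b'\eta_\lambda\in\mathbf{B}(\Lambda_\lambda)$, we have $\dot b(b'\eta_\lambda)=(\dot b b')\eta_\lambda$, so it suffices to know that $\dot b b'$ has nonnegative coefficients in $\dot{\mathbf{B}}$.

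For general $\dot b$, however, we only have multiplication positivity when one factor is spherical parabolic. To get around this, I choose $b'$ spherical. Every element of $\mathbf{B}(\Lambda_\lambda)$ has the form $b^-\eta_\lambda$ with $b^-\in\mathbf{B}$ of $\mathbf{U}^-$, and it is obtained from the element $b^-1_\lambda$ (equal to $b^-\diamondsuit 1$), which lies in $\dot{\mathbf{U}}^-1_\lambda$. This element belongs to the parabolic subalgebra whose positive generators are restricted to the empty subsystem, and the empty subsystem is of finite type. So $b'=b^-1_\lambda$ is spherical parabolic. The spherical multiplication positivity then gives $\dot b b'\in\mathbb{N}[v,v^{-1}][\dot{\mathbf{B}}]$, and applying $\eta_\lambda$ yields (2).

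\textbf{Part (3).} I would argue by iterating the comultiplication. Lusztig's theory identifies the canonical basis of a tensor product of based modules; see \cite[\S 27.3]{Lusztig-1993}, or Bao--Wang for mixed lowest/highest tensor products. Using it, I would realize the tensor product in (3) as a quotient of $\dot{\mathbf{U}}1_{\zeta}$ or of iterated products, compatible with bases. The action of $\dot b$ on the tensor product is given by $\dot\Delta^{(m+n-1)}(\dot b)$. By comultiplication positivity, this expands with nonnegative coefficients in tensor products of elements of $\dot{\mathbf{B}}$. Each tensor factor $\dot b_i$ then acts on ${^\omega\Lambda}$ or on $\Lambda$ positively: on $\Lambda$ this is part (2), and on ${^\omega\Lambda}$ it follows by transporting part (2) through the $\omega$-invariance $\omega(\dot{\mathbf{B}})=\dot{\mathbf{B}}$. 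This already gives positivity with respect to the tensor product of canonical bases.

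\textbf{Main obstacle.} The difficulty is passing from positivity on the pure tensor basis to positivity on the canonical basis of the tensor product, since the transition matrix between the two is unitriangular but not obviously positive in the inverse direction. To handle this, I would use the description of $\mathbf{B}(M\otimes\Lambda)$ as the image of $\dot{\mathbf{B}}$-elements acting on a tensor of extremal vectors. For $m=0$, one realizes $\Lambda_{\lambda_1}\otimes\cdots\otimes\Lambda_{\lambda_n}$ as a based submodule of $\Lambda_{\lambda}$-type quotients of $\dot{\mathbf{U}}$. Then $\dot b$ acts on a canonical basis element $b'\cdot(\text{extremal})$ as $(\dot b b')\cdot(\text{extremal})$, and positivity reduces to that of the product $\dot b b'$. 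For $m=0$ or $n=0$, $b'$ can be chosen spherical as in (2), which gives the statement for arbitrary $\dot b$. For mixed $m,n>0$, the element $b'$ is general, so I need $\dot b$ itself to be spherical in order to invoke spherical multiplication positivity. This matches the hypothesis in (3), and it is the step I would check most carefully.
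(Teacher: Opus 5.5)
Parts (1) and (2) are correct and follow the paper's argument. In finite type $\dot{\mathbf{U}}=\dot{\mathbf{U}}_I$ with $I$ spherical. For (2) the paper also writes $b^-\eta_\lambda=(1\diamondsuit_\lambda b)\eta_\lambda$ with $1\diamondsuit_\lambda b=b^-1_\lambda\in\dot{\mathbf{U}}_\emptyset$ and applies Theorem~\ref{thm:mult}. One notational slip: $b^-1_\lambda$ is $1\diamondsuit_\lambda b$, not $b\diamondsuit 1$, which is $b^+1_\lambda$.

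Part (3) has a genuine gap, exactly at the step you flag.

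\textbf{Why your repair fails.} It needs every element of $\mathbf{B}(\Lambda(\blambda))$ to have the form $\dot b'v_0$, with $\dot b'\in\dot{\mathbf{B}}$ and $v_0$ one fixed extremal tensor of weight $\zeta$. That is, it needs a based surjection $\dot{\mathbf{U}}1_\zeta\to\Lambda(\blambda)$.
\begin{itemize}
\item Inside $\dot{\mathbf{U}}$ such a surjection exists only for $\Lambda_\lambda$ and for ${}^{\omega}\Lambda_{\lambda_1}\otimes\Lambda_{\lambda_2}$ (Theorem~\ref{25.2.1}). In those two cases your argument is the paper's.
\item In general $\Lambda(\blambda)$ is not a quotient of any $\dot{\mathbf{U}}1_\zeta$. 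A semisimple quotient of $\dot{\mathbf{U}}1_\zeta$ contains $\Lambda_\mu$ with multiplicity at most $\dim(\Lambda_\mu)_\zeta$, since $\mathrm{Hom}(\dot{\mathbf{U}}1_\zeta,\Lambda_\mu)\cong(\Lambda_\mu)_\zeta$. But for $\mathfrak{sl}_2$ one has $\Lambda_1\otimes\Lambda_1\otimes\Lambda_1\cong\Lambda_3\oplus\Lambda_1^{\oplus 2}$, while $\dim(\Lambda_1)_\zeta\le 1$. The mixed module ${}^{\omega}\Lambda_1\otimes\Lambda_1\otimes\Lambda_1$ is isomorphic to this one. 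So the plan fails already in finite type, where every $\dot b$ is spherical.
\item For $m=0$, the spherical elements $1\diamondsuit_\zeta b$ applied to $\eta_{\lambda_1}\otimes\cdots\otimes\eta_{\lambda_n}$ only produce the top component $\mathbf{U}^-(\eta_{\lambda_1}\otimes\cdots\otimes\eta_{\lambda_n})\cong\Lambda_{\lambda_1+\cdots+\lambda_n}$.
\item The comultiplication route, as you note, only gives positivity in the pure tensor basis and does not transfer to the canonical basis.
\end{itemize}

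\textbf{The missing idea.} Enlarge the quantum group so that a single extremal vector does exist.
\begin{itemize}
\item Proposition~\ref{thickening realization of tensor product of simple based modules} identifies $\Lambda_{\lambda_1}\otimes\cdots\otimes\Lambda_{\lambda_n}$ with a based $\mathbf{U}$-submodule of the simple $\tilde{\mathbf{U}}^{n-1}$-module $\Lambda_{\lambda_1\odot\cdots\odot\lambda_n}$. It is built inductively from Lemma~\ref{morphism underlinepsi of case 2}, i.e.\ from the fact that the thickening maps are based. The lowest weight factors are handled the same way.
\item After padding with trivial modules so that $m=n$, $\Lambda(\blambda)$ becomes a based submodule of ${}^{\omega}\Lambda_{\lambda_1\odot\cdots\odot\lambda_n}\otimes\Lambda_{\lambda_{n+1}\odot\cdots\odot\lambda_{2n}}$ over $\tilde{\mathbf{U}}^{n-1}$.
\item For spherical $\dot b$, run your two-factor argument (Theorem~\ref{25.2.1} plus Theorem~\ref{thm:mult}) in $\dot{\tilde{\mathbf{U}}}^{n-1}$. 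This uses $\dot{\mathbf{B}}\subset\dot{\tilde{\mathbf{B}}}^{n-1}$ and the fact that a spherical parabolic element of $\dot{\mathbf{U}}$ stays spherical parabolic there.
\item For $m=0$, apply part (2) in $\dot{\tilde{\mathbf{U}}}^{n-1}$ to $\Lambda_{\lambda_1\odot\cdots\odot\lambda_n}$. Its canonical basis elements are $\tilde b^-\eta_{\lambda_1\odot\cdots\odot\lambda_n}$, so the spherical factor is $1\diamondsuit\tilde b$ in the big algebra and $\dot b$ can be arbitrary.
\item Restricting to the based submodule gives (3).
\end{itemize}
No construction inside $\dot{\mathbf{U}}$ alone can supply this extremal vector.
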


Corollary \ref{cor:applications}(2) extends Lusztig's positivity theorem
\cite[Theorem 22.1.7]{Lusztig-1993} from the Chevalley generators to
arbitrary elements of $\dot{\mathbf{B}}$. Corollary
\ref{cor:applications}(3) includes, in particular, positivity for the
action of the Chevalley generators on canonical bases of tensor products. At $v=1$, these algebraic positivity results enter the study of total positivity on double flag varieties in the joint work of Xie and the second-named author \cite{HX}.

\subsection{The thickening realization}
The thickening construction is motivated by total positivity. Lusztig related the
totally nonnegative part of a flag variety to nonnegative linear combinations
of canonical basis elements \cite{Lusztig-1994}, and extended this framework
to Kac--Moody groups in \cite{Lusztig-2019}; see also
\cite{Bao-He-2021}. In \cite{Bao-He-2022}, Bao and the second-named author
studied twisted products of flag varieties by embedding them into a flag
variety for a larger Kac--Moody group. The thickening realization developed
here is an algebraic counterpart of that construction.

The enlarged quantum group $\tilde{\mathbf{U}}$ is associated with a framed
Cartan datum related to Nakajima's framed quiver construction
\cite{Nakajima-1998,Nakajima-2001,Li-2014,Fang-Lan-2025}. For a Demazure
submodule ${^\omega V_w(\lambda_1)}$, we construct the diagram
\begin{equation}\label{*}
\tag{$*$}\xymatrix@C=1cm@R=1cm{
\mathbf{B}((\mathbf{f}\theta_{\lambda_2}\mathbf{f})_{-w\lambda_1\odot\lambda_2})
\ar@<.5ex>[r] \ar@{^{(}->}[d]
&\mathbf{B}(M_{-w\lambda_1}\otimes \Lambda_{\lambda_2})
\ar@<.5ex>[l] \ar@{^{(}->}[d] \ar@{->>}[r]
&\mathbf{B}({^{\omega}V_w(\lambda_1)}\otimes
\Lambda_{\lambda_2})\sqcup\{0\} \ar@{^{(}->}[d]\\
(\mathbf{f}\theta_{\lambda_2}\mathbf{f})_{-w\lambda_1\odot\lambda_2}
\ar@<.5ex>[r]^-{\varphi_{-w\lambda_1,\lambda_2}}
&M_{-w\lambda_1}\otimes \Lambda_{\lambda_2}
\ar@<.5ex>[l]^-{\psi_{-w\lambda_1,\lambda_2}}
\ar@{->>}[r]^-{a_{-w\lambda_1}\otimes \mathrm{Id}}
&{^{\omega}V_w(\lambda_1)}\otimes \Lambda_{\lambda_2}.}
\end{equation}


The canonical basis of the lower-left space is obtained from a subset of the canonical basis of $\tilde{\mathbf{U}}^-$ acting on the highest weight vector of the ambient Verma module. The maps $\varphi_{-w\lambda_1,\lambda_2}$ and $\psi_{-w\lambda_1,\lambda_2}$ are inverse $\mathbf{U}$-module isomorphisms, while $a_{-w\lambda_1}\otimes\mathrm{Id}$ is a $\mathbf{U}^-$-linear quotient map. The top row records that these maps preserve the indicated canonical bases, with some basis elements mapping to zero under the quotient. Every canonical basis vector of ${^\omega\Lambda_{\lambda_1}}\otimes\Lambda_{\lambda_2}$ occurs at a Demazure stage. These realizations therefore apply to the full tensor product, and Lusztig's stable-limit construction relates them to $\dot{\mathbf{B}}$.

To obtain the structure-constant identities, we establish compatibility
of the thickening realization with multiplication and comultiplication.
For multiplication by a spherical parabolic element, we choose a
Demazure submodule for which the projection in \eqref{*} intertwines
the relevant parabolic action. A further thickening then identifies
the multiplication coefficients with those in the negative part of
an enlarged quantum group. For comultiplication, we compare the maps
between tensor products under thickening, obtaining the corresponding
coefficient identities up to explicit powers of $v$.


\subsection{Organization of the paper}
\begin{itemize}
\item Section \ref{sec:thick-module} constructs the thickening realization for the tensor products.
\item Section \ref{Thickening realization of canonical basis} proves the
compatibility of thickening realization and canonical bases.
\item Section \ref{Structure constant of multiplication} compares
spherical-parabolic multiplication in $\dot{\mathbf{U}}$ with
multiplication in the negative part of a further enlarged quantum group.
\item Section \ref{sec:comult} compares the comultiplication of
$\dot{\mathbf{U}}$ with that of $\tilde{\mathbf{U}}^-$.
\item Section \ref{Positivity of canonical basis of modified quantum group}
deduces the positivity and $\omega$-invariance results for
$\dot{\mathbf{B}}$.
\item Section \ref{Positivity of canonical basis of tensor product}
establishes the corresponding positivity statements for arbitrary tensor
products of simple lowest and highest weight modules.
\end{itemize}

{\bf Acknowledgements} JF is partially supported by the New Cornerstone Science Foundation through the New Cornerstone Investigator Program awarded to XH and the National Natural Science Foundation of China (Grant No. 12471030). XH is partially supported by the Hong Kong RGC grant 14300023 and by the New Cornerstone Science Foundation through the New Cornerstone Investigator Program. We thank Huanchen Bao, Yiqiang Li, George Lusztig and Weiqiang Wang for useful comments. JF would like to thank Jie Xiao and Yixin Lan for their long-term discussion and collaboration.

\section{Preliminary}\label{Preliminary}

\subsection{Cartan datum and root datum}\label{Cartan datum and root datum}

A symmetric Cartan datum $(I,\cdot)$ consists of a finite set $I$ and a symmetric bilinear form $\mathbb{Z}[I]\times \mathbb{Z}[I]\rightarrow \mathbb{Z}$ denoted by $(\nu,\nu')\mapsto \nu\cdot \nu'$ such that $i\cdot i=2$ for any $i\in I$, and $i\cdot j\leqslant 0$ for any $i\not=j$ in $I$.

A root datum $(Y,X,\langle\,,\,\rangle,\ldots)$ of type $(I,\cdot)$ consists of two finitely generated free abelian groups $Y,X$, a perfect bilinear pairing $\langle\,,\,\rangle:Y\times X\rightarrow \mathbb{Z}$ and two imbeddings $I\hookrightarrow Y,i\mapsto i_Y;\, I\hookrightarrow X,i\mapsto i_X$ such that $\langle i_Y,j_X\rangle=i\cdot j$ for any $i,j\in I$. The imbeddings $I\hookrightarrow Y$ and $I\hookrightarrow X$ induce group homomorphisms $\mathbb{Z}[I]\rightarrow Y,\nu\mapsto \nu_Y$ and $\mathbb{Z}[I]\rightarrow X,\nu\mapsto \nu_X$ respectively. 

The root datum is called $Y$-regular if the image of the imbedding $I\hookrightarrow Y$ is linearly independent in $Y$, and is called $X$-regular if the image of the imbedding $I\hookrightarrow X$ is linearly independent in $X$.

The simply connected root datum  of type $(I,\cdot)$ is $(\mathbb{Z}[I],\mathrm{Hom}_{\mathbb{Z}}(\mathbb{Z}[I],\mathbb{Z}),\langle\,,\,\rangle,\ldots)$, where $\langle\,,\,\rangle$ is the natural evaluation, $I\hookrightarrow \mathbb{Z}[I]$ and $I\hookrightarrow \mathrm{Hom}_{\mathbb{Z}}(\mathbb{Z}[I],\mathbb{Z})$ are defined by $i\mapsto i$ and $i\mapsto (j\mapsto i\cdot j)$ respectively. It is $Y$-regular.

Throughout this paper, we fix a symmetric Cartan datum $(I,\cdot)$, and denote by $(Y,X,\langle\,,\,\rangle,\ldots)$ the simply connected root datum of type $(I,\cdot)$.

\subsection{The algebra $\mathbf{f}$}\label{The algebra f}

We follow \cite[Chapter 1]{Lusztig-1993}.

Let $\mathbb{Q}(v)$ be the field of rational functions in $v$ with coefficients in $\mathbb{Q}$. We denote
$$[n]=\frac{v^n-v^{-n}}{v-v^{-1}},\ [m]!=\prod^m_{k=1}[k]\in \mathbb{Q}(v)\ \textrm{for any}\ n\in\mathbb{Z},m\in \mathbb{N}.$$

For the Cartan datum $(I,\cdot)$, let $\mathbf{f}$ be the $\mathbb{Q}(v)$-algebra defined in \cite[\S 1.2.5]{Lusztig-1993}. Equivalently, it is the $\mathbb{Q}(v)$-algebra with unit generated by $\theta_i$ for any $i\in I$, subject to the quantum Serre relations
$$\sum_{n=0}^{1-i\cdot j}(-1)^n\theta_i^{(n)}\theta_j\theta_i^{(1-i\cdot j-n)}=0\ \textrm{for any $i\not=j$ in $I$},$$
where $\theta_i^{(n)}=\theta^n_i/[n]!$ for any $i\in I$ and $n\in \mathbb{N}$.

The algebra $\mathbf{f}=\bigoplus_{\nu\in \mathbb{N}[I]}\mathbf{f}_\nu$ is $\mathbb{N}[I]$-graded such that $\theta_i\in \mathbf{f}_i$ for any $i\in I$. For any homogeneous $x\in \mathbf{f}_\nu$, we denote $|x|=\nu$. The tensor product $\mathbf{f}\otimes \mathbf{f}$ over $\mathbb{Q}(v)$ is an algebra with the twisted multiplication 
$$(x_1\otimes x_2)(y_1\otimes y_2)=v^{|x_2|\cdot |y_1|}x_1y_1\otimes x_2y_2\ \textrm{for any homogeneous}\ x_1,x_2,y_1,y_2\in \mathbf{f}.$$

The comultiplication of $\mathbf{f}$ is the algebra homomorphism $r:\mathbf{f}\rightarrow \mathbf{f}\otimes \mathbf{f}$ defined by $r(\theta_i)=\theta_i\otimes 1+1\otimes \theta_i$ for any $i\in I$.

\subsection{Quantum group}\label{Quantized enveloping algebra}

We refer to \cite[Chapter 3]{Lusztig-1993} for details.

For the root datum $(Y,X,\langle\,,\,\rangle,\ldots)$ of type $(I,\cdot)$, the quantum group $\mathbf{U}$ is the $\mathbb{Q}(v)$-algebra with unit generated by $E_i,F_i$ and $K_\mu$ for any $i\in I$ and $\mu\in Y$, subject to the relations in \cite[\S 3.1.1]{Lusztig-1993}.

Let $\mathbf{U}^0$ be the subalgebra of $\mathbf{U}$ generated by $K_{\mu}$ for any $\mu\in Y$, and let $\mathbf{U}^+,\mathbf{U}^-$ be the subalgebras of $\mathbf{U}$ generated by $E_i,F_i$ for any $i\in I$ respectively. Then there is an algebra automorphism $\omega:\mathbf{U}\rightarrow \mathbf{U}$ defined by $\omega(E_i)=F_i, \omega(F_i)=E_i$ and $\omega(K_{\mu})=K_{-\mu}$ for any $i\in I$ and $\mu\in Y$.
There are algebra isomorphisms $\mathbf{f}\rightarrow \mathbf{U}^+, x\mapsto x^+$ and $\mathbf{f}\rightarrow \mathbf{U}^-, x\mapsto x^-$
such that $E_i=\theta_i^+$ and $F_i=\theta_i^-$ for any $i\in I$. We have $\omega(x^+)=x^-$ and $\omega(x^-)=x^+$ for any $x\in \mathbf{f}$.

The algebra $\mathbf{U}=\bigoplus_{\nu\in \mathbb{Z}[I]}\mathbf{U}(\nu)$ is $\mathbb{Z}[I]$-graded such that $E_i\in \mathbf{U}(i),F_i\in \mathbf{U}(-i)$ and $K_\mu\in \mathbf{U}(0)$ for any $i\in I$ and $\mu\in Y$.

The tensor product $\mathbf{U}\otimes \mathbf{U}$ over $\mathbb{Q}(v)$ is an algebra with the multiplication
$$(u_1\otimes u_2)(u'_1\otimes u'_2)=u_1u'_1\otimes u_2u'_2\ \textrm{for any}\ u_1,u_2,u'_1,u'_2\in \mathbf{U}.$$

The comultiplication of $\mathbf{U}$ is the algebra homomorphism $\Delta:\mathbf{U}\rightarrow \mathbf{U}\otimes \mathbf{U}$ defined by $\Delta(E_i)=E_i\otimes 1+K_{i_Y}\otimes E_i, \Delta(F_i)=1\otimes F_i+F_i\otimes K_{-i_Y}$ and $\Delta(K_\mu)=K_\mu\otimes K_\mu$ for any $i\in I$ and $\mu\in Y$.

\subsection{Weight module}\label{U-module}

We refer to \cite[\S 3.4]{Lusztig-1993} for details.

A weight $\mathbf{U}$-module is a left $\mathbf{U}$-module over $\mathbb{Q}(v)$ whose underlying vector space has a direct sum decomposition into weight spaces. Throughout this paper, all $\mathbf{U}$-modules are weight modules, and all $\mathbf{U}$-module homomorphisms respect the weight space decompositions.

For any $\mathbf{U}$-module $M$, we define a new $\mathbf{U}$-module ${^{\omega}M}$. Its underlying vector space is $M$. For any $m \in M$, we denote by $^{\omega} m\in {^{\omega}M}$ the corresponding element. The module structure is defined by $u ({}^{\omega} m)={}^{\omega}(\omega(u) m)$ for any $u\in \mathbf{U}$ and $m\in M$. 

For any $\zeta\in X$, we denote by $M_{\zeta}$ the Verma module of $\mathbf{U}$ with the highest weight $\zeta$. Its underlying vector space is $\mathbf{f}$, and the module structure is defined by $E_i1=0, F_ix=\theta_ix, K_{\mu}x=v^{\langle\mu,\zeta-|x|_X\rangle}x$ for any $i\in I,\mu\in Y$ and homogeneous $x\in \mathbf{f}$. We denote by $v_{\zeta}\in M_{\zeta}$ the highest weight vector $1\in \mathbf{f}$. 

Let $X^+=\{\l \in X; \langle i_Y,\lambda\rangle \in \mathbb{N}\ \textrm{for any}\ i\in I\}$ be the set of dominant weights. For any $\lambda\in X^+$, the simple integrable $\mathbf{U}$-module with the highest weight $\lambda$ is  
$$\Lambda_\lambda=M_{\lambda}/\sum_{i\in I}\mathbf{U}^-F_i^{\langle i_Y,\lambda\rangle+1}v_{\lambda}.$$
We denote by $\eta_{\lambda}\in \Lambda_{\lambda}$ the highest weight vector which is the image of $v_{\lambda}\in M_{\lambda}$ under the natural projection $M_{\lambda}\rightarrow \Lambda_{\lambda}$.

The simple integrable $\mathbf{U}$-module with the lowest weight $-\lambda$ is ${^{\omega}\Lambda_{\lambda}}$. We denote by $\xi_{-\lambda}\in {^{\omega}\Lambda_{\lambda}}$ the lowest weight vector ${^{\omega}\eta_{\lambda}}$.

\subsection{Modified quantum group}
We refer to \cite[Chapter 23]{Lusztig-1993} for details.

For the root datum $(Y,X,\langle\,,\,\rangle,\ldots)$ of type $(I,\cdot)$, the modified quantum group is $\dot{\mathbf{U}}=\bigoplus_{\gamma,\zeta\in X}{_{\gamma}\mathbf{U}_{\zeta}}$, where
$${_{\gamma}\mathbf{U}_{\zeta}}=\mathbf{U}/(\sum_{\mu\in Y}(K_{\mu}-v^{\langle \mu,\gamma\rangle})\mathbf{U}+\sum_{\mu\in Y}\mathbf{U}(K_{\mu}-v^{\langle \mu,\zeta\rangle})).$$
Let ${_{\gamma}\pi_{\zeta}}:\mathbf{U}\rightarrow {_{\gamma}\mathbf{U}_{\zeta}}$ be the natural projection for any $\gamma,\zeta\in X$, and let $1_{\zeta}={_{\zeta}\pi_{\zeta}}(1)$ for any $\zeta\in X$. Then $\dot{\mathbf{U}}$ is a free $(\mathbf{U}^+\otimes (\mathbf{U}^-)^{\mathrm{opp}})$-module with a basis $\{1_{\zeta};\zeta\in X\}$. Comparing with the triangular decomposition $\mathbf{U}=\mathbf{U}^+\otimes \mathbf{U}^0\otimes \mathbf{U}^-$ of the quantum group, $\dot{\mathbf{U}}$ is a modified form of $\mathbf{U}$ in which $\mathbf{U}^0$ is replaced by $\bigoplus_{\zeta\in X}\mathbb{Q}(v)1_{\zeta}$.

The multiplication of $\mathbf{U}$ induces a well-defined multiplication on $\dot{\mathbf{U}}$ such that 
$${_{\gamma}\pi_{\zeta}}(u)\cdot {_{\gamma'}\pi_{\zeta'}}(u')=\delta_{\zeta,\gamma'}\cdot {_{\gamma}\pi_{\zeta'}}(uu')$$ 
for any $\gamma,\gamma',\zeta,\zeta'\in X,\nu,\nu'\in \mathbb{Z}[I]$ and $u,u'\in \mathbf{U}$ such that $\gamma-\zeta=\nu_X,\gamma'-\zeta'=\nu'_X$ and $u\in \mathbf{U}(\nu),u'\in \mathbf{U}(\nu')$.

The comultiplication $\Delta:\mathbf{U}\rightarrow \mathbf{U}\otimes \mathbf{U}$ of $\mathbf{U}$ induces a well-defined linear map ${_{\gamma',\gamma''}\dot{\Delta}_{\zeta',\zeta''}}:{_{\gamma'+\gamma''}\mathbf{U}_{\zeta'+\zeta''}}\rightarrow {_{\gamma'}\mathbf{U}_{\zeta'}}\otimes {_{\gamma''}\mathbf{U}_{\zeta''}}$ for any $\gamma',\gamma'',\zeta',\zeta''\in X$ such that 
$${_{\gamma',\gamma''}\dot{\Delta}_{\zeta',\zeta''}}({_{\gamma'+\gamma''}\pi_{\zeta'+\zeta''}}(x))=({_{\gamma'}\pi_{\zeta'}}\otimes {_{\gamma''}\pi_{\zeta''}})(\Delta(x))\ \textrm{for any}\ x\in \mathbf{U}.$$
The comultiplication of $\dot{\mathbf{U}}$ is the collection $\{{_{\gamma',\gamma''}\dot{\Delta}_{\zeta',\zeta''}};\gamma',\gamma'',\zeta',\zeta''\in X\}$.

A {\it unital} $\dot{\mathbf{U}}$-module $M$ is a left $\dot{\mathbf{U}}$-module over $\mathbb{Q}(v)$ such that for any $m\in M$, we have $1_{\zeta}m=0$ for all but finitely many $\zeta\in X$, and $\sum_{\zeta\in X}1_{\zeta}m=m$. Then the category of unital $\dot{\mathbf{U}}$-modules is equivalent to the category of weight $\mathbf{U}$-modules (see \cite[\S 23.1.4]{Lusztig-1993}). For this reason, $\dot{\mathbf{U}}$ is an algebra more appropriate than $\mathbf{U}$ for the study of weight modules.

\subsection{Canonical basis}\label{Canonical basis}

We refer to \cite[Chapter 14]{Lusztig-1993} for details.

Let $\mathcal{A}=\mathbb{Z}[v,v^{-1}]\subset \mathbb{Q}(v)$ be the ring of Laurent polynomials with coefficients in $\mathbb{Z}$. The integral form ${_{\mathcal{A}}\mathbf{f}}$ of $\mathbf{f}$ is the $\mathcal{A}$-subalgebra generated by $\theta_i^{(n)}$ for any $i\in I$ and $n\in \mathbb{N}$.

The bar-involution on $\mathbf{f}$ is the $\mathbb{Q}$-algebra involution $\mathbf{f}\rightarrow \mathbf{f},x\mapsto \overline{x}$ defined by $\overline{\theta_i}=\theta_i, \overline{v^n}=v^{-n}$ for any $i\in I$ and $n\in \mathbb{Z}$. 

By \cite[Proposition 1.2.3]{Lusztig-1993}, there is a unique non-degenerate symmetric bilinear form $(\,,\,)_{\mathbf{f}}:\mathbf{f}\times \mathbf{f}\rightarrow \mathbb{Q}(v)$ such that $(1,1)_{\mathbf{f}}=1,(\theta_i,\theta_j)_{\mathbf{f}}=\delta_{i,j}(1-v^{-2})^{-1}$ for any $i,j\in I$, and $(x_1x_2,x)_{\mathbf{f}}=(x_1\otimes x_2,r(x))_{\mathbf{f}\otimes \mathbf{f}}$ for any $x_1,x_2,x\in \mathbf{f}$, where the bilinear form on $\mathbf{f}\otimes \mathbf{f}$ is given by $(x_1\otimes x_2,x'_1\otimes x'_2)_{\mathbf{f}\otimes \mathbf{f}}=(x_1,x'_1)_{\mathbf{f}}(x_2,x'_2)_{\mathbf{f}}$ for any $x_1,x_2,x'_1,x'_2\in \mathbf{f}$.

The canonical basis $\mathbf{B}$ of $\mathbf{f}$ is defined in \cite[Definition 14.4.6]{Lusztig-1993}. It is a $\mathcal{A}$-basis of ${_{\mathcal{A}}\mathbf{f}}$ such that  $\overline{b}=b$ for any $b\in \mathbf{B}$, and $(b,b')_{\mathbf{f}}\in \delta_{b,b'}+v^{-1}\mathbb{Z}[[v^{-1}]]$ for any $b,b'\in \mathbf{B}$. The canonical basis $\mathbf{B}$ has the following positivity properties with respect to the multiplication, comultiplication and bilinear form.

\begin{theorem}[{\cite[Theorem 14.4.13]{Lusztig-1993}}]\label{14.4.13}
Let $b,b'\in \mathbf{B}$. Then we have
$$bb'\in \mathbb{N}[v,v^{-1}][\mathbf{B}],\ r(b)\in \mathbb{N}[v,v^{-1}][\mathbf{B}\otimes \mathbf{B}],\ (b,b')_{\mathbf{f}}\in \delta_{b,b'}+v^{-1}\mathbb{N}[[v^{-1}]].$$
\end{theorem}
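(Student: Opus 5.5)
This is Lusztig's result, and I would prove it through his geometric realization of $\mathbf{f}$; no purely algebraic route to positivity is known. Fix an orientation $\Omega$ of the graph underlying $(I,\cdot)$; this is possible because the Cartan datum is symmetric. For $\nu\in\mathbb{N}[I]$, let $E_\nu$ be the variety of representations of $(I,\Omega)$ of dimension vector $\nu$ over $\overline{\mathbb{F}}_q$ or $\mathbb{C}$, acted on by $G_\nu=\prod_i GL_{\nu_i}$. Let $\mathcal{P}_\nu$ be the set of simple $G_\nu$-equivariant perverse sheaves occurring, up to shift, as direct summands of the Lusztig sheaves $L_{\mathbf{i},\mathbf{a}}=(\pi_{\mathbf{i},\mathbf{a}})_!\mathbf{1}[d]$, the pushforwards from the varieties of flags of type $(\mathbf{i},\mathbf{a})$. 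Let $\mathcal{Q}_\nu$ be the additive category generated by their shifts. The first step is to recall Lusztig's construction: $\bigoplus_\nu K_0(\mathcal{Q}_\nu)$, with $v$ acting as the shift $[1]$, carries an induction functor and a restriction functor. I would show that these make it a twisted bialgebra isomorphic to ${_{\mathcal{A}}\mathbf{f}}$, with $\theta_i^{(n)}$ corresponding to the constant sheaf on the point $E_{ni}$. Under this isomorphism $\mathbf{B}$ corresponds to $\{[P]: P\in\mathcal{P}_\nu\}$, and the bar involution corresponds to Verdier duality, which fixes each simple perverse sheaf.

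The multiplication statement then comes from the decomposition theorem. Induction of $A\boxtimes B$ is a proper pushforward $p_{3!}$ of a pullback along the smooth map $p_1$, followed by a descent along the principal bundle $p_2$. Applied to $P\boxtimes P'$ with $P,P'$ semisimple and pure of weight $0$, it returns a semisimple complex. Therefore $\mathrm{Ind}(P\boxtimes P')\cong\bigoplus_{P''}\bigoplus_n P''[n]^{\oplus m_{n}}$ with multiplicities $m_n\in\mathbb{N}$, and $bb'$ has coefficients in $\mathbb{N}[v,v^{-1}]$. One must check that every $P''$ that occurs lies in $\mathcal{P}$; this holds because $\mathrm{Ind}(L_{\mathbf{i},\mathbf{a}}\boxtimes L_{\mathbf{j},\mathbf{c}})=L_{\mathbf{ij},\mathbf{ac}}$.

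For comultiplication I would use restriction, $\mathrm{Res}=\kappa_!\iota^*$ with appropriate shifts, where $\iota:F\hookrightarrow E_\nu$ is the locus preserving a fixed $I$-graded subspace and $\kappa:F\to E_{\nu'}\times E_{\nu''}$ is a vector bundle. This functor is not proper, so semisimplicity is not automatic. This is the main obstacle. I would handle it in one of two ways. The first is Braden's hyperbolic localization: $\mathrm{Res}$ is identified with a hyperbolic restriction for a $\mathbb{G}_m$-action, and hyperbolic restriction preserves purity, hence semisimplicity. The second is Lusztig's original route: compute $\mathrm{Res}(L_{\mathbf{i},\mathbf{a}})$ by a Mackey-type stratification of flag varieties into affine-bundle pieces, show that it lies in $\mathcal{Q}$, and then argue over $\mathbb{F}_q$. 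There, $\mathrm{Res}(P)$ is a direct summand of a pure complex, and by Gabber's theorem a pure complex is semisimple over $\overline{\mathbb{F}}_q$. Either way $r(b)\in\mathbb{N}[v,v^{-1}][\mathbf{B}\otimes\mathbf{B}]$, the twist in the multiplication of $\mathbf{f}\otimes\mathbf{f}$ being absorbed into the normalisation of the shifts.

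For the bilinear form I would prove Lusztig's formula
\[
(b_P,b_{P'})_{\mathbf{f}}=\sum_{j}\dim H^j_{G_\nu}\bigl(E_\nu, D(P)\otimes P'\bigr)\,v^{-j}.
\]
One checks that the right side satisfies the defining properties: normalisation $(1,1)=1$, $(\theta_i,\theta_i)=(1-v^{-2})^{-1}$ via $H^*(BGL_1)$, and adjunction with $r$ via the projection formula. With this formula, $(b,b')\in\mathbb{N}[[v^{-1}]]$ is immediate, since the coefficients are dimensions. For the refinement $\delta_{b,b'}+v^{-1}\mathbb{N}[[v^{-1}]]$ I would use purity of equivariant cohomology together with the support and cosupport conditions of perverse sheaves. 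These force equivariant cohomology to vanish in negative degrees, and in degree $0$ it equals $\mathrm{Hom}(P,P')$, which is $\delta_{P,P'}$ for simple objects.
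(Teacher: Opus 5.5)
The paper does not prove this statement. It quotes it from Lusztig, and your outline is essentially Lusztig's geometric proof. The multiplication part is fine. The comultiplication part is fine too, with one caveat: Braden's theorem by itself only gives semisimplicity of $\mathrm{Res}(P)$. That the summands have the form $P_1\boxtimes P_2$ with $P_1,P_2$ in $\mathcal{P}$ still has to come from Lusztig's computation of $\mathrm{Res}(L_{\mathbf{i},\mathbf{a}})$, which appears only in your second route.

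The real error is in the bilinear form: the hypercohomology of the tensor product $D(P)\otimes P'$ is the wrong object.

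\begin{itemize}
\item \emph{Counterexample.} Take $\nu=i+j$ with $i\cdot j=-1$, so $E_\nu=\mathbb{A}^1$ and $G_\nu=\mathbb{G}_m^2$. Let $P=P'=\mathbf{1}_{E_\nu}[1]\in\mathcal{P}_\nu$; this is one of $\theta_i\theta_j,\theta_j\theta_i$, and both satisfy $(b,b)_{\mathbf{f}}=(1-v^{-2})^{-2}$. But $D(P)\otimes P=\mathbf{1}_{E_\nu}[2]$, so $H^{-2}_{G_\nu}\neq 0$ and your right-hand side equals $v^{2}(1-v^{-2})^{-2}$.
\item \emph{Consequence.} Your claims that this cohomology vanishes in negative degrees and equals $\mathrm{Hom}(P,P')$ in degree $0$ are false for this object. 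They hold for $\mathrm{Ext}^j_{G_\nu}(P,P')=H^j_{G_\nu}(E_\nu,R\mathcal{H}om(P,P'))$. Since $R\mathcal{H}om(P,P')=D(P\otimes D(P'))$, it differs from $D(P)\otimes P'$ by the shift $[2\dim]$ already on an open smooth stratum.
\end{itemize}

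To repair the argument, follow Lusztig:
\begin{itemize}
\item Define the form through the duals of $G_\nu$-equivariant compactly supported cohomology of $A\otimes B$, suitably normalized.
\item The adjunction $\{\mathrm{Ind}(A\boxtimes B),C\}=\{A\boxtimes B,\mathrm{Res}(C)\}$ is then honestly the projection formula plus base change, because $\mathrm{Res}=\kappa_!\iota^*$ uses $!$-pushforward and $*$-pullback.
\item Verdier duality gives $H^{-j}_c(P\otimes P')^{*}\cong\mathrm{Ext}^j(P,D(P'))=\mathrm{Ext}^j(P,P')$ for $P'\in\mathcal{P}$.
\item The perverse $t$-structure and simplicity then give $\delta_{b,b'}+v^{-1}\mathbb{N}[[v^{-1}]]$. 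No purity is needed for this step.
\end{itemize}

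If you instead define the form directly by $\mathrm{Ext}$, the right adjoint of $\mathrm{Ind}$ is $\kappa_*\iota^!$ rather than $\kappa_!\iota^*$. Comparing the two then needs a further argument, such as Braden's theorem.
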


Under the isomorphisms $\mathbf{f}\cong \mathbf{U}^-$ and $\mathbf{f}\cong \mathbf{U}^+$, the canonical basis $\mathbf{B}$ of $\mathbf{f}$ gives the canonical bases $\{b^-;b\in \mathbf{B}\}$ of $\mathbf{U}^-$ and $\{b^+;b\in \mathbf{B}\}$ of $\mathbf{U}^+$.

Let $\lambda\in X^+$. The integral form of $\Lambda_{\lambda}$ is ${_{\mathcal{A}}\Lambda_{\lambda}}=\{x^-\eta_{\lambda};x\in {_{\mathcal{A}}\mathbf{f}}\}$. 

The bar-involution on $\mathbf{U}$ is the $\mathbb{Q}$-algebra involution $\mathbf{U}\rightarrow \mathbf{U},u\mapsto \overline{u}$ defined by $\overline{E_i}=E_i, \overline{F_i}=F_i, \overline{K_{\mu}}=K_{-\mu}$ and $\overline{v^n}=v^{-n}$ for any $i\in I, \mu\in Y$ and $n\in \mathbb{Z}$. The bar-involution on $\Lambda_{\lambda}$ is the $\mathbb{Q}$-linear involution $\Lambda_{\lambda}\rightarrow \Lambda_{\lambda},m\mapsto \overline{m}$ defined by $\overline{u\eta_{\lambda}}=\overline{u}\eta_{\lambda}$ for any $u\in \mathbf{U}$. 

By \cite[Proposition 19.1.2]{Lusztig-1993}, there is a
unique symmetric bilinear form $(\,,\,)_{\Lambda_{\lambda}}:\Lambda_{\lambda}\times \Lambda_{\lambda}\rightarrow \mathbb{Q}(v)$ such that $(\eta_{\lambda},\eta_{\lambda})_{\Lambda_{\lambda}}=1$, and $(um,m')_{\Lambda_{\lambda}}\!\!=\!(m,\rho(u)m')_{\Lambda_{\lambda}}\!$ for any $u\in \mathbf{U}$ and $m,m'\in\Lambda_{\lambda}$, where $\rho:\mathbf{U}\rightarrow \mathbf{U}^{\mathrm{opp}}$ is the algebra isomorphism defined by $\rho(E_i)=vK_{i_Y}F_i,\rho(F_i)=vK_{-i_Y}E_i$ and $\rho(K_{\mu})=K_{\mu}$ for any $i\in I$ and $\mu\in Y$.

The canonical basis $\mathbf{B}(\Lambda_{\lambda})$ of $\Lambda_{\lambda}$ is defined in \cite[Definition 14.4.12]{Lusztig-1993}. By \cite[Theorem 14.4.11]{Lusztig-1993}, $\mathbf{B}(\Lambda_{\lambda})=\{b^-\eta_{\lambda}; b\in \mathbf{B}\}\setminus \{0\}$. It is a $\mathcal{A}$-basis of ${_{\mathcal{A}}\Lambda_{\lambda}}$ such that $\overline{b}=b$ for any $b\in \mathbf{B}(\Lambda_{\lambda})$, and $(b,b')_{\Lambda_{\lambda}}\in \delta_{b,b'}+v^{-1}\mathbb{Z}[v^{-1}]$ for any $b,b'\in \mathbf{B}(\Lambda_{\lambda})$. The canonical basis $\mathbf{B}(\Lambda_{\lambda})$ has the following positivity properties with respect to the actions by Chevalley generators.

\begin{theorem}[{\cite[Theorem 22.1.7]{Lusztig-1993}}]\label{22.1.7}
Let $\lambda\in X^+$ and $i\in I$. Then we have
$$F_i(\mathbf{B}(\Lambda_{\lambda})), E_i(\mathbf{B}(\Lambda_{\lambda}))\subset \mathbb{N}[v,v^{-1}][\mathbf{B}(\Lambda_{\lambda})].$$
\end{theorem}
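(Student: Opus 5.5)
For $F_i$ the statement follows directly from Theorem \ref{14.4.13}; for $E_i$ the plan is to move the problem into the negative part of an enlarged quantum group, where positivity of the comultiplication is available.

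\emph{The operators $F_i$.} Let $b^-\eta_\lambda\in\mathbf{B}(\Lambda_\lambda)$ with $b\in\mathbf{B}$. Then $F_i b^-\eta_\lambda=(\theta_i b)^-\eta_\lambda$. By the multiplication positivity in Theorem \ref{14.4.13} we may write $\theta_i b=\sum_{b'}c_{b'}b'$ with $c_{b'}\in\mathbb{N}[v,v^{-1}]$. By \cite[Theorem 14.4.11]{Lusztig-1993}, each $b'^-\eta_\lambda$ is either an element of $\mathbf{B}(\Lambda_\lambda)$ or $0$, and distinct nonzero ones are distinct basis elements. Hence $F_i b^-\eta_\lambda\in\mathbb{N}[v,v^{-1}][\mathbf{B}(\Lambda_\lambda)]$.

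\emph{The operators $E_i$: why the naive approach fails.} The commutation formula gives
$$E_i x^-\eta_\lambda=\frac{v^{a}\,({}_ir(x))^- - v^{-a'}\,(r_i(x))^-}{v-v^{-1}}\,\eta_\lambda$$
for homogeneous $x$, with explicit exponents $a,a'$. Both derivations ${}_ir$ and $r_i$ are components of $r$, so they are positive on $\mathbf{B}$ by Theorem \ref{14.4.13}. The difficulty is the subtraction and the division by $v-v^{-1}$, which destroy manifest positivity.

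\emph{The operators $E_i$: framing construction.} I would enlarge the Cartan datum by a framing vertex $\infty$ with $\infty\cdot i=-\langle i_Y,\lambda\rangle$ and $\infty\cdot\infty=2$, giving an algebra $\tilde{\mathbf{f}}$ with canonical basis $\tilde{\mathbf{B}}$. Consider the subspace $\mathbf{f}\theta_\infty\subset\tilde{\mathbf{f}}$ of $\infty$-degree one, with $\theta_\infty$ at the right end. The first step is to check that $x\theta_\infty\mapsto x^-\eta_\lambda$ induces an isomorphism from a suitable quotient of $\mathbf{f}\theta_\infty$ onto $\Lambda_\lambda$. The kernel should be spanned by the elements $\theta_i^{(\langle i_Y,\lambda\rangle+1)}\theta_\infty$ generated by the Serre relations at $\infty$; this is the prototype of the thickening map $\varphi$ in \eqref{*}. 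The second step is to show that this isomorphism carries the canonical basis elements of $\tilde{\mathbf{B}}$ lying in $\mathbf{f}\theta_\infty$ onto $\mathbf{B}(\Lambda_\lambda)\sqcup\{0\}$. I would prove this by comparing bar-invariance and the almost-orthonormality of $(\,,\,)_{\tilde{\mathbf{f}}}$ with that of $(\,,\,)_{\Lambda_\lambda}$, using \cite[Proposition 19.1.2]{Lusztig-1993}.

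\emph{The operators $E_i$: conclusion.} Under this identification, I expect $E_i$ to correspond, up to an explicit power of $v$, to the single derivation ${}_ir$ restricted to elements with $\theta_\infty$ at the right end. The other derivation $r_i$ should act on the rightmost factor $\theta_\infty$ and give $0$, because $\theta_\infty$ has no $i$-component. The two terms in the commutation formula then collapse: the $(v-v^{-1})$-denominator is absorbed and only one positive term survives. Positivity of ${}_ir$ on $\tilde{\mathbf{B}}$, which is part of the comultiplication positivity of Theorem \ref{14.4.13} applied to $\tilde{\mathbf{f}}$, then gives $E_i(\mathbf{B}(\Lambda_\lambda))\subset\mathbb{N}[v,v^{-1}][\mathbf{B}(\Lambda_\lambda)]$.

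\emph{Main obstacle.} The hardest part is to justify the vanishing of the $r_i$-term and the matching of the exponents, so that $E_i$ becomes exactly a positive multiple of ${}_ir$ on this quotient. The second, less delicate point is that the quotient map sends $\tilde{\mathbf{B}}$ into $\mathbf{B}(\Lambda_\lambda)\sqcup\{0\}$. If the collapse fails, I would instead compute the action on an $i$-string, using that $\Lambda_\lambda$ restricted to the $\mathbf{U}_i$-subalgebra is a positive sum of $\mathfrak{sl}_2$-modules with compatible canonical bases.
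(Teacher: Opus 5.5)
Your treatment of $F_i$ is correct. The $E_i$ half has a genuine gap: the claimed collapse to the single derivation ${}_ir$ is false.

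First, $r_i$ is a twisted derivation, so
$r_i(x\theta_\infty)=v^{i\cdot\infty}r_i(x)\theta_\infty+x\,r_i(\theta_\infty)=v^{-\langle i_Y,\lambda\rangle}r_i(x)\theta_\infty$.
Only the contribution of the factor $\theta_\infty$ vanishes, not the whole term.

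More fundamentally, the framing carries no new information here. Since $r_\infty(x\theta_\infty)=x$ for $x\in\mathbf{f}$, right multiplication by $\theta_\infty$ is injective on $\mathbf{f}$. In particular $\theta_i^{(\langle i_Y,\lambda\rangle+1)}\theta_\infty\neq0$, so the Serre relations at $\infty$ do not produce your kernel inside $\mathbf{f}\theta_\infty$. Moreover $\tilde{\mathbf{B}}\cap\mathbf{f}\theta_\infty=\{b\theta_\infty;b\in\mathbf{B}\}$. Your model is therefore just $\mathbf{f}/\sum_i\mathbf{f}\theta_i^{(\langle i_Y,\lambda\rangle+1)}$ relabelled, and $E_i$ obeys the same two-term formula there as before.

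In fact ${}_ir$ does not even descend to $\Lambda_\lambda$. Put $N=\langle i_Y,\lambda\rangle$. The element $\theta_i^{(N+1)}\theta_\infty$ maps to $0$, but ${}_ir(\theta_i^{(N+1)}\theta_\infty)=v^{N}\theta_i^{(N)}\theta_\infty$ maps to $v^{N}F_i^{(N)}\eta_\lambda\neq0$. Similarly $E_iF_i\eta_\lambda=[N]\eta_\lambda$, while ${}_ir(\theta_i\theta_\infty)=\theta_\infty$. So no power of $v$ can make $E_i$ agree with ${}_ir$ once $N\geq2$.

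Your fallback via $i$-strings does not close the gap either. $\mathbf{B}(\Lambda_\lambda)$ is not a union of canonical bases of $\mathbf{U}_i$-irreducible direct summands; the $i$-string structure is only visible modulo $v^{-1}$. The signs of the correction terms are exactly what has to be controlled.

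The paper does not reprove this theorem; it cites Lusztig. The missing idea is to turn $E_i$ into a left multiplication rather than a derivation. One route uses only results recalled in the paper:

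\begin{enumerate}
\item Since $F_i({}^{\omega}m)={}^{\omega}(E_im)$ and $\mathbf{B}({}^{\omega}\Lambda_\lambda)=\{{}^{\omega}m;m\in\mathbf{B}(\Lambda_\lambda)\}$, it suffices to show that $F_i$ acts positively on $\mathbf{B}({}^{\omega}\Lambda_\lambda)=\bigcup_{w}\mathbf{B}({}^{\omega}V_w(\lambda))$.
\item By Proposition \ref{action map is a based map}, $a_{-w\lambda}$ is a $\mathbf{U}(\mathfrak{b}^-)$-linear surjection with $a_{-w\lambda}(\mathbf{B}(M_{-w\lambda}))\subset\mathbf{B}({}^{\omega}V_w(\lambda))\sqcup\{0\}$. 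Hence every element of $\mathbf{B}({}^{\omega}V_w(\lambda))$ has the form $a_{-w\lambda}(b^-v_{-w\lambda})$ with $b\in\mathbf{B}$.
\item Then $F_i\,a_{-w\lambda}(b^-v_{-w\lambda})=a_{-w\lambda}((\theta_ib)^-v_{-w\lambda})$, which has coefficients in $\mathbb{N}[v,v^{-1}]$ by the multiplication positivity in Theorem \ref{14.4.13}.
\end{enumerate}

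The substantial input is Kashiwara's compatibility of canonical bases with extremal weight vectors. In finite type this says that $\mathbf{B}(\Lambda_\lambda)$ is also obtained from the lowest weight vector.

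The paper's later recovery of this statement, Corollary \ref{cor:applications}(2), works in the same spirit. Via the thickening map and $\sigma$ (Theorem \ref{thm:structure-multiplication}), it turns the action of $E_i$ into multiplication structure constants in the negative part of an enlarged quantum group.
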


Similarly, the simple lowest weight module ${^{\omega}\Lambda_{\lambda}}$ has the integral form, bar-involution, symmetric bilinear form and canonical basis 
$\mathbf{B}({^{\omega}\Lambda_{\lambda}})=\{b^+\xi_{-\lambda}; b\in \mathbf{B}\}\setminus \{0\}$ which satisfy similar properties. 

\subsection{Based module}\label{Based module}

A based $\mathbf{U}$-module $(M,B)$ is an integrable $\mathbf{U}$-module $M$ with a basis $B$ satisfying the conditions (a)-(d) in \cite[\S 27.1.2]{Lusztig-1993} (also see \cite[\S 2.1]{Bao-Wang-2016}). One condition is that the $\mathbb{Q}$-linear involution $M\rightarrow M,m\mapsto \overline{m}$ defined by $\overline{b}=b,\overline{v^n}=v^{-n}$ for any $b\in B$ and $n\in \mathbb{Z}$, is compatible with the bar-involution and the action of $\mathbf{U}$, that is, 
\begin{equation}\label{27.1.2c}
\overline{um}=\bar{u}\bar{m}\ \textrm{for any}\ u\in \mathbf{U}, m\in M.   
\end{equation}

A homomorphism of based $\mathbf{U}$-modules from $(M,B)$ to $(M',B')$ is a $\mathbf{U}$-module homomorphism $f:M\rightarrow M'$ such that $f(B)\subset B'\sqcup\{0\}$ and $B\cap \mathrm{ker}\,f$ is a basis of $\mathrm{ker}\,f$.

For any $\lambda\in X^+$, $(\Lambda_{\lambda},\mathbf{B}(\Lambda_{\lambda}))$ and $({^{\omega}\Lambda_{\lambda}},\mathbf{B}({^{\omega}\Lambda_{\lambda}}))$ are based $\mathbf{U}$-modules (see \cite[\S  27.1.4]{Lusztig-1993}).

\subsection{Canonical basis of tensor product}\label{Canonical basis of tensor product}

We refer to \cite[Chapter 24]{Lusztig-1993} for details.

For any $\mathbf{U}$-modules $M_1$ and $M_2$, the tensor product $M_1\otimes M_2$ over $\mathbb{Q}(v)$ is naturally a $(\mathbf{U}\otimes \mathbf{U})$-module. We regard it as a $\mathbf{U}$-module via the comultiplication $\Delta:\mathbf{U}\rightarrow \mathbf{U}\otimes \mathbf{U}$.

For subsets $B_1\subset M_1,B_2\subset M_2$, we denote $B_1 \otimes B_2:=\{b_1 \otimes b_2; b_1 \in B_1, b_2 \in B_2\}$. If the elements $b_1\diamondsuit b_2\in M_1\otimes M_2$ have been defined for any $b_1\in B_1$ and $b_2\in B_2$, then we simply denote $B_1\diamondsuit B_2:=\{b_1\diamondsuit b_2;b_1\in B_1,b_2\in B_2\}$.
 
Let $\lambda_1,\lambda_2\in X^+$. The tensor product ${^{\omega}\Lambda_{\lambda_1}}\otimes \Lambda_{\lambda_2}$ has a basis $\mathbf{B}({^{\omega}\Lambda_{\lambda_1}})\otimes \mathbf{B}(\Lambda_{\lambda_2})$. The bar-involution on ${^{\omega}\Lambda_{\lambda_1}}\otimes \Lambda_{\lambda_2}$ is defined by
$\overline{m_1\otimes m_2}=\overline{m_1}\otimes \overline{m_2}$ for any $m_1\in {^{\omega}\Lambda_{\lambda_1}}$ and $m_2\in \Lambda_{\lambda_2}$. In general $\Delta(\overline{u})\not=\overline{\Delta(u)}$ for $u\in \mathbf{U}$, and so this bar-involution do not satisfy \eqref{27.1.2c}. Hence $({^{\omega}\Lambda_{\lambda_1}}\otimes \Lambda_{\lambda_2},\mathbf{B}({^{\omega}\Lambda_{\lambda_1}})\otimes \mathbf{B}(\Lambda_{\lambda_2}))$ is not a based $\mathbf{U}$-module.

Lusztig defined a new $\mathbb{Q}$-linear involution on ${^{\omega}\Lambda_{\lambda_1}}\otimes \Lambda_{\lambda_2}$ satisfying \eqref{27.1.2c}. The quasi-$\mathcal{R}$-matrix $\Theta$ defined in \cite[\S 4.1]{Lusztig-1993} induces a linear transformation on the tensor product ${^{\omega}\Lambda_{\lambda_1}}\otimes \Lambda_{\lambda_2}$ (see \cite[\S 24.1.1]{Lusztig-1993}). More precisely,
$$\Theta(m_1\otimes m_2)=\sum_{\nu\in \mathbb{N}[I]}(-v)^{\mathrm{tr}\,\nu}\sum_{b\in B_{\nu}}b^-m_1\otimes b^{*+}m_2\ \textrm{for any}\ m_1\in {^{\omega}\Lambda_{\lambda_1}}, m_2\in \Lambda_{\lambda_2}.$$
where $\mathrm{tr}\,\nu=\sum_{i\in I}\nu_i$, $B_{\nu}$ is a basis of $\mathbf{f}_{\nu}$, and $\{b^*; b\in B_{\nu}\}$ is the dual basis to $B_{\nu}$ under $(\,,\,)_{\mathbf{f}}$ for any $\nu=\sum_{i\in I}\nu_ii\in \mathbb{N}[I]$. Note that there are always only finitely many non-zero terms in the summation. By \cite[Lemma 24.1.2]{Lusztig-1993}, we have $\Delta(\overline{u})\Theta(m_1\otimes m_2)=\Theta\overline{\Delta(u)}(m_1\otimes m_2)$ for any $u\in \mathbf{U}$, $m_1\in {^{\omega}\Lambda_{\lambda_1}}$ and $m_2\in \Lambda_{\lambda_2}$. The quasi-$\mathcal{R}$-matrix is an intertwiner between the compositions of the comultiplication and bar-involution taken in different orders, which is similar to the universal $\mathcal{R}$-matrix as an intertwiner between the comultiplication and its transpose. The $\Psi$-involution on the tensor product ${^{\omega}\Lambda_{\lambda_1}}\otimes \Lambda_{\lambda_2}$ is the $\mathbb{Q}$-linear transformation on it defined by
\begin{equation}\label{Phi-involution}
\Psi(m_1\otimes m_2)=\Theta(\overline{m_1}\otimes \overline{m_2})\ \textrm{for any}\ m_1\in {^{\omega}\Lambda_{\lambda_1}}, m_2\in \Lambda_{\lambda_2}.
\end{equation}
Then $\Psi(u(m_1\otimes m_2))=\overline{u}\Psi(m_1\otimes m_2)$ for any $u\in \mathbf{U}$ and $m_1\in {^{\omega}\Lambda_{\lambda_1}},m_2\in \Lambda_{\lambda_2}$, that is, the $\Psi$-involution satisfies \eqref{27.1.2c}.

The canonical basis of the tensor product ${^{\omega}\Lambda_{\lambda_1}}\otimes \Lambda_{\lambda_2}$ was established by Lusztig in \cite[Theorem 24.3.3]{Lusztig-1993}.

\begin{theorem}\label{24.3.3} 
Let $\l_1, \l_2 \in X^+$. Then for any $b_1\in \mathbf{B}({^{\omega}\Lambda_{\lambda_1}})$ and $b_2\in \mathbf{B}(\Lambda_{\lambda_2})$, there exists a unique $b_1 \diamondsuit b_2 \in \mathbb{Z}[v^{-1}][\mathbf{B}({^{\omega}\Lambda_{\lambda_1}})\otimes \mathbf{B}(\Lambda_{\lambda_2})]$ such that
\begin{align*}
\Psi(b_1 \diamondsuit b_2)=b_1 \diamondsuit b_2 \textrm{ and } b_1 \diamondsuit b_2-b_1 \otimes b_2 \in v^{-1}\mathbb{Z}[v^{-1}][\mathbf{B}({^{\omega}\Lambda_{\lambda_1}})\otimes \mathbf{B}(\Lambda_{\lambda_2})].
\end{align*}
Moreover, the set $\mathbf{B}({^{\omega}\Lambda_{\lambda_1}}\otimes \Lambda_{\lambda_2}):=\mathbf{B}({^{\omega}\Lambda_{\lambda_1}}) \diamondsuit \mathbf{B}(\Lambda_{\lambda_2})$ is a basis of ${^{\omega}\Lambda_{\lambda_1}}\otimes \Lambda_{\lambda_2}$. The transition matrix from $\mathbf{B}(^{\omega}\Lambda_{\lambda_1})\otimes  \mathbf{B}(\Lambda_{\lambda_2})$ to $\mathbf{B}({^{\omega}\Lambda_{\lambda_1}}\otimes \Lambda_{\lambda_2})$ is unitriangular with off-diagonal entries in $v^{-1}\mathbb{Z}[v^{-1}]$.
\end{theorem}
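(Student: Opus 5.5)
The plan is the standard Kazhdan--Lusztig style argument, following \cite[Chapter 24]{Lusztig-1993}. Write $\mathcal{L}$ for the $\mathbb{Z}[v^{-1}]$-span of $\mathbf{B}({^{\omega}\Lambda_{\lambda_1}})\otimes \mathbf{B}(\Lambda_{\lambda_2})$. The argument has three steps: show that $\Psi$ is an involution; show that $\Psi$ acts unitriangularly on this basis with respect to a suitable partial order; and then solve for the $\Psi$-invariant elements by the usual recursion. Uniqueness will come out of the same argument.

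\textbf{Step 1: $\Psi$ is an involution.} We need $\Psi^2=1$, which amounts to $\Theta\overline{\Theta}=1$ as operators on the tensor product. This identity is \cite[Theorem 4.1.2(b)]{Lusztig-1993}. The operator $\Theta$ acts on ${^{\omega}\Lambda_{\lambda_1}}\otimes \Lambda_{\lambda_2}$ through finite sums. On ${^{\omega}\Lambda_{\lambda_1}}$ the elements $b^-$ lower weight, and the weights there are bounded below by $-\lambda_1$. On $\Lambda_{\lambda_2}$ the elements $b^{*+}$ raise weight, and those weights are bounded above by $\lambda_2$. So only finitely many $\nu$ contribute, and the identity in completed $\mathbf{U}\otimes\mathbf{U}$ specializes to an honest operator identity.

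\textbf{Step 2: integrality and triangularity of $\Psi$.} Grade the basis by weight. Order the pairs $b_1\otimes b_2$ of a fixed total weight by declaring $b_1'\otimes b_2'<b_1\otimes b_2$ when $|b_2'|$ is strictly higher than $|b_2|$, equivalently when $|b_1'|$ is strictly lower than $|b_1|$. The $\nu=0$ term of $\Theta$ is the identity, and $\overline{b_1}=b_1$, $\overline{b_2}=b_2$. Hence $\Psi(b_1\otimes b_2)$ equals $b_1\otimes b_2$ plus terms $b^-b_1\otimes b^{*+}b_2$ with $\nu\neq 0$, all of which are strictly smaller in the order. We also need the coefficients to lie in $\mathcal{A}$. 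For this, use that $\Theta_\nu$ lies in ${_{\mathcal{A}}}(\mathbf{U}^-\otimes\mathbf{U}^+)$ \cite[Corollary 24.1.6]{Lusztig-1993}, together with the fact that ${_{\mathcal{A}}\Lambda}$ is stable under ${_{\mathcal{A}}\mathbf{U}}$. I expect this integrality to be the main technical point. I would invoke Lusztig's result rather than reprove it; reproving it would require showing that the dual basis elements, paired with divided powers, give $\mathcal{A}$-integral operators.

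\textbf{Step 3: existence and uniqueness.} For each weight, the set of $b_1'\otimes b_2'$ below a fixed $b_1\otimes b_2$ is finite, since both weights are bounded. Write $\Psi(b\otimes b')=\sum \rho_{\cdot,\cdot}\,(\cdot)$ with $\rho$ unitriangular, $\rho\bar\rho=1$, and entries in $\mathcal{A}$. The standard lemma \cite[Lemma 24.2.1]{Lusztig-1993} then gives unique $\pi_{\cdot,\cdot}\in v^{-1}\mathbb{Z}[v^{-1}]$ off the diagonal, with $\pi=1$ on the diagonal, such that $\sum\pi\,(\cdot)$ is $\Psi$-invariant. The construction is by induction down the finite interval: at each step one solves $\pi-\bar\pi=$ (known element of $\mathcal{A}$ that is anti-invariant), and splits that element into its $v^{-1}\mathbb{Z}[v^{-1}]$ part and the corresponding conjugate part.

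For uniqueness, suppose two candidates exist and take their difference. It is $\Psi$-invariant and lies in $v^{-1}\mathcal{L}$. Look at a maximal term in the order: its coefficient $c$ must satisfy $\bar c=c$ with $c\in v^{-1}\mathbb{Z}[v^{-1}]$, so $c=0$, and therefore the difference vanishes.

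Finally, the transition matrix is unitriangular with off-diagonal entries in $v^{-1}\mathbb{Z}[v^{-1}]$ by construction. The basis property follows because a unitriangular matrix with finite lower intervals is invertible on each weight space.
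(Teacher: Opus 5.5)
Your outline is Lusztig's own argument from Chapter 24 of \cite{Lusztig-1993}, which is exactly what the paper relies on: it quotes the theorem there without reproof. The inputs you name are the right ones: $\Theta\overline{\Theta}=1$, integrality of $\Theta_\nu$, Lemma 24.2.1, and the maximal-term uniqueness argument. However, one step fails as written in the present setting, because you grade, order and bound everything by weights in $X$. Here $(Y,X,\ldots)$ is the simply connected root datum of an \emph{arbitrary} symmetric Cartan datum. It is not $X$-regular when the matrix $(i\cdot j)$ is singular; in affine type, for the null root $\delta\in\mathbb{N}[I]$, one has $\delta_X=0$. This has four consequences.
\begin{enumerate}
\item The map $\nu\mapsto\nu_X$ is not injective on $\mathbb{N}[I]$, and the $X$-weight spaces of $\Lambda_{\lambda_2}$ and ${^{\omega}\Lambda_{\lambda_1}}$ can be infinite-dimensional.
\item ``Strictly higher weight'' is not a strict partial order.
\item Summands of $\Theta_\nu$ with $\nu\neq 0$ but $\nu_X=0$ produce terms with the same pair of $X$-weights as $b_1\otimes b_2$. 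For example, $\Theta_\delta$ can contribute multiples of $\xi_{-\lambda_1}\otimes\eta_{\lambda_2}$ to $\Psi(x^+\xi_{-\lambda_1}\otimes y^-\eta_{\lambda_2})$ when $|x|=|y|=\delta$. So $\Psi$ is not unitriangular for your order, the sets below a given element need not be finite, and Lemma 24.2.1 cannot be applied.
\item Your finiteness argument in Step 1 (``weights bounded below by $-\lambda_1$'') has the same defect.
\end{enumerate}

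The repair is to use the $\mathbb{N}[I]$-grading instead of $X$-weights. Write $b_1=x^+\xi_{-\lambda_1}$ and $b_2=y^-\eta_{\lambda_2}$ with $x,y\in\mathbf{B}$. Both modules are quotients of $\mathbf{U}$ by left ideals generated by homogeneous elements, hence $\mathbb{Z}[I]$-graded. The summand $b^-\otimes b^{*+}$ of $\Theta_\nu$ sends the bidegree $(|x|,|y|)$ to $(|x|-\nu,|y|-\nu)$. Declare ${x'}^{+}\xi_{-\lambda_1}\otimes {y'}^{-}\eta_{\lambda_2}<x^+\xi_{-\lambda_1}\otimes y^-\eta_{\lambda_2}$ if and only if $|x|-|x'|=|y|-|y'|\in\mathbb{N}[I]\setminus\{0\}$. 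Then:
\begin{enumerate}
\item this is a strict partial order;
\item $\Psi$ is unitriangular for it, with entries in $\mathcal{A}$;
\item each lower set is finite, since only finitely many $\mu\in\mathbb{N}[I]$ satisfy $|y|-\mu\in\mathbb{N}[I]$ and each $\mathbf{f}_\mu$ is finite-dimensional;
\item the same degree count shows that only finitely many $\Theta_\nu$ act nontrivially on a given vector.
\end{enumerate}
With this change the rest of your argument goes through verbatim. In the $X$-regular case, e.g.\ finite type, your weight-based version is already fine.
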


The canonical basis of the tensor product ${^{\omega}\Lambda_{\lambda_1}}\otimes \Lambda_{\lambda_2}$ is $\mathbf{B}({^{\omega}\Lambda_{\lambda_1}}\otimes \Lambda_{\lambda_2})$. Moreover, $({^{\omega}\Lambda_{\lambda_1}}\otimes \Lambda_{\lambda_2},\mathbf{B}({^{\omega}\Lambda_{\lambda_1}}\otimes \Lambda_{\lambda_2}))$ is a based $\mathbf{U}$-module.

\subsection{Canonical basis of modified quantum group}

We refer to \cite[Chapter 25]{Lusztig-1993} for details.

The integral form ${_{\mathcal{A}}\dot{\mathbf{U}}}$ of $\dot{\mathbf{U}}$ is the $\mathcal{A}$-subalgebra generated by $E_i^{(n)}1_{\zeta},F_i^{(n)}1_{\zeta}$ for any $i\in I,n\in \mathbb{N}$ and $\zeta\in X$.

The canonical bases $\mathbf{B}({^{\omega}\Lambda_{\lambda_1+\lambda}}\otimes \Lambda_{\lambda_2+\lambda})$ of ${^{\omega}\Lambda_{\lambda_1+\lambda}}\otimes \Lambda_{\lambda_2+\lambda}$ for any $\lambda_1,\lambda_2,\lambda\in X^+$ have very nice stability property when $\langle i_Y,\lambda\rangle\rightarrow \infty$ for any $i\in I$ in the sense of \cite[\S  25.1]{Lusztig-1993}. The canonical basis of the modified quantum group $\dot{\mathbf{U}}$ was established by Lusztig in \cite[Theorem 25.2.1]{Lusztig-1993} by gluing these canonical bases of tensor products.

\begin{theorem}\label{25.2.1}
Let $\zeta\in X$ and $b_1,b_2\in \mathbf{B}$. Then there exists a unique $b_1\diamondsuit_{\zeta}b_2\in {_{\mathcal{A}}\dot{\mathbf{U}}1_{\zeta}}$ such that
$$(b_1\diamondsuit_{\zeta}b_2)(\xi_{-\lambda_1}\otimes \eta_{\lambda_2})=b_1^+\xi_{-\lambda_1}\diamondsuit b_2^-\eta_{\lambda_2}\in \mathbf{B}({^{\omega}\Lambda_{\lambda_1}}\otimes \Lambda_{\lambda_2})$$
for any $\lambda_1,\lambda_2\in X^+$ such that $\zeta=\lambda_2-\lambda_1$ and $b_1^+\xi_{-\lambda_1}\neq0, b_2^-\eta_{\lambda_2}\neq0$.
Moreover, the set $\dot{\mathbf{B}}:=\{b_1\diamondsuit_{\zeta}b_2;\zeta\in X,b_1,b_2\in \mathbf{B}\}$ is a basis of $\dot{\mathbf{U}}$, and we have $(b_1\diamondsuit_{\zeta}b_2)(\xi_{-\lambda_1}\otimes \eta_{\lambda_2})=0$
for any $\lambda_1,\lambda_2\in X^+$ such that $\zeta\neq \lambda_2-\lambda_1$, $b_1^+\xi_{-\lambda_1}=0$, or $b_2^-\eta_{\lambda_2}=0$.
\end{theorem}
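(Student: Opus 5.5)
This is Lusztig's Theorem 25.2.1, cited in the paper as background, so the natural route is Lusztig's stability argument from \cite[Chapter 25]{Lusztig-1993}. Fix $\zeta\in X$ and $b_1,b_2\in\mathbf{B}$. For $\lambda_1,\lambda_2\in X^+$ with $\lambda_2-\lambda_1=\zeta$, the map
\[
\dot{\mathbf{U}}1_\zeta\to {^{\omega}\Lambda_{\lambda_1}}\otimes\Lambda_{\lambda_2},\qquad u\mapsto u(\xi_{-\lambda_1}\otimes\eta_{\lambda_2})
\]
has the following properties.
\begin{enumerate}
\item It is surjective, since the tensor product is generated by $\xi_{-\lambda_1}\otimes\eta_{\lambda_2}$.
\item It carries ${_{\mathcal{A}}\dot{\mathbf{U}}}1_\zeta$ onto the integral form.
\item It is injective in any fixed weight piece once $\langle i_Y,\lambda_1\rangle,\langle i_Y,\lambda_2\rangle$ are large for all $i$.
\end{enumerate}
Injectivity follows from the triangular decomposition $\dot{\mathbf{U}}1_\zeta\cong\mathbf{U}^+\otimes\mathbf{U}^-$: the elements $x^+y^-\xi_{-\lambda_1}\otimes\eta_{\lambda_2}$, taken modulo lower terms, are the tensors $x^+\xi_{-\lambda_1}\otimes y^-\eta_{\lambda_2}$, and these are independent once $\lambda_1,\lambda_2$ are large.

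\textbf{Key step: stability.} Compare $\lambda_1,\lambda_2$ with $\lambda_1+\lambda,\lambda_2+\lambda$. Lusztig's map $\chi$, induced by $\eta_{\lambda_2+\lambda}\mapsto\eta_{\lambda_2}\otimes\eta_\lambda$ and $\xi_{-\lambda_1-\lambda}\mapsto\xi_{-\lambda}\otimes\xi_{-\lambda_1}$, together with the contraction ${^{\omega}\Lambda_\lambda}\otimes\Lambda_\lambda\to\mathbb{Q}(v)$ sending $\xi_{-\lambda}\otimes\eta_\lambda$ to $1$, yields a $\mathbf{U}$-module map
\[
t\colon {^{\omega}\Lambda_{\lambda_1+\lambda}}\otimes\Lambda_{\lambda_2+\lambda}\to{^{\omega}\Lambda_{\lambda_1}}\otimes\Lambda_{\lambda_2},\qquad \xi\otimes\eta\mapsto\xi\otimes\eta.
\]
Here $\xi$ and $\eta$ abbreviate the lowest and highest weight vectors of the respective modules. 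I would show that $t$ commutes with $\Psi$ (using $\Psi$-compatibility of $\Theta$ on iterated tensor products and bar-invariance of the extremal vectors) and preserves the lattice $\mathbb{Z}[v^{-1}][\mathbf{B}\otimes\mathbf{B}]$. I would also show that $t(b_1^+\xi\otimes b_2^-\eta)$ equals $b_1^+\xi\otimes b_2^-\eta$ modulo $v^{-1}\mathbb{Z}[v^{-1}]$ combinations, or lands in $v^{-1}$ terms when one of the factors dies. By the uniqueness in Theorem \ref{24.3.3}, $t$ then sends $b_1^+\xi\diamondsuit b_2^-\eta$ to the corresponding canonical basis element or to $0$. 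This is \cite[Prop.~25.1.10]{Lusztig-1993}, and I expect it to be the main obstacle: the leading-term control of $t$ on $\mathbf{B}\otimes\mathbf{B}$ needs the compatibility of $\mathbf{B}(\Lambda)$ with $\Lambda_{\lambda+\lambda'}\to\Lambda_\lambda\otimes\Lambda_{\lambda'}$ modulo $v^{-1}$, which is where the crystal/lattice estimates of Chapters 18--19 of \cite{Lusztig-1993} enter.

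\textbf{Constructing and characterizing the basis.} Given stability, choose $\lambda_1,\lambda_2$ so large that the evaluation map above is injective on the relevant finite-dimensional piece. Define $b_1\diamondsuit_\zeta b_2$ as the unique preimage of $b_1^+\xi\diamondsuit b_2^-\eta$; it lies in ${_{\mathcal{A}}\dot{\mathbf{U}}}$ by property (2). Stability shows the definition is independent of the choice, since any two choices can be compared through a common larger pair. It also shows that the defining property holds for all $\lambda_1,\lambda_2$: for small weights, apply $t$ from a large pair. The vanishing statements follow the same way, since $t$ kills the element exactly when a factor vanishes, and a weight mismatch forces $1_\zeta$ to act by zero. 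Uniqueness follows from injectivity for large weights.

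\textbf{Basis property.} Write $u\in\dot{\mathbf{U}}1_\zeta$ of fixed degree and evaluate at large $\lambda_1,\lambda_2$. By Theorem \ref{24.3.3}, the elements $b_1^+\xi\diamondsuit b_2^-\eta$ form a basis there, and injectivity transports this to linear independence and spanning in $\dot{\mathbf{U}}1_\zeta$. Summing over $\zeta$ shows that $\dot{\mathbf{B}}$ is a basis of $\dot{\mathbf{U}}$.
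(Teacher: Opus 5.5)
The paper gives no proof of its own here; it quotes \cite[Theorem 25.2.1]{Lusztig-1993}. Your outline is Lusztig's argument (stability under the maps $t$, then gluing via injectivity of evaluation for large weights), so the approach is right, but three points need repair.

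\textbf{The construction of $t$.} With your splittings $\xi_{-\lambda_1-\lambda}\mapsto\xi_{-\lambda}\otimes\xi_{-\lambda_1}$ and $\eta_{\lambda_2+\lambda}\mapsto\eta_{\lambda_2}\otimes\eta_{\lambda}$, the factors you want to contract sit at the two ends of ${^{\omega}\Lambda_{\lambda}}\otimes{^{\omega}\Lambda_{\lambda_1}}\otimes\Lambda_{\lambda_2}\otimes\Lambda_{\lambda}$. Since $\Delta$ is not cocommutative, contracting non-adjacent factors is not $\mathbf{U}$-linear. There are two fixes.
\begin{enumerate}
\item Reverse both splittings, $\xi_{-\lambda_1-\lambda}\mapsto\xi_{-\lambda_1}\otimes\xi_{-\lambda}$ and $\eta_{\lambda_2+\lambda}\mapsto\eta_{\lambda}\otimes\eta_{\lambda_2}$, so that ${^{\omega}\Lambda_\lambda}\otimes\Lambda_\lambda$ is adjacent.
\item Or obtain $t$ directly. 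The kernel of $u\mapsto u(\xi_{-\lambda_1}\otimes\eta_{\lambda_2})$ on $\dot{\mathbf{U}}1_\zeta$ is the left ideal generated by $E_i^{(n)}1_\zeta$ with $n>\langle i_Y,\lambda_1\rangle$ and $F_i^{(n)}1_\zeta$ with $n>\langle i_Y,\lambda_2\rangle$. This ideal only grows as the weights shrink, so the surjection $t$ exists.
\end{enumerate}

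\textbf{Commuting with $\Psi$.} Once $t$ is $\mathbf{U}$-linear, $t\Psi=\Psi t$ is immediate and needs no iterated-$\Theta$ computation. Both modules are generated by the $\Psi$-fixed vector $\xi\otimes\eta$, and $\Psi(um)=\bar{u}\Psi(m)$ holds in each.

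\textbf{Where evaluation is injective.} Evaluation is injective only on the finite-dimensional pieces $\bigoplus_{\nu\le\alpha,\,\nu'\le\beta}(\mathbf{f}_\nu)^+(\mathbf{f}_{\nu'})^-1_\zeta$, for weights large relative to $\alpha$ and $\beta$. It is not injective on a weight space $1_{\zeta'}\dot{\mathbf{U}}1_\zeta$, which is infinite-dimensional. Your construction, integrality, and independence-of-choice steps therefore need the preimage of $b_1^+\xi\diamondsuit b_2^-\eta$ to lie in the piece with $\alpha=|b_1|$ and $\beta=|b_2|$, for every large pair. This follows from two triangularity facts:
\begin{itemize}
\item $x^+y^-(\xi\otimes\eta)$ equals $x^+\xi\otimes y^-\eta$ plus terms of lower degree on both sides, with coefficients in $\mathcal{A}$;
\item $\Theta$ lowers both degrees by the same $\nu$.
\end{itemize}
With these added, the rest of your outline goes through as written.
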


The canonical basis of the modified quantum group $\dot{\mathbf{U}}$ is $\dot{\mathbf{B}}$. 

By \cite[Proposition 25.2.6]{Lusztig-1993}, we have $b^-1_{\zeta}=1\diamondsuit_{\zeta}b,b^+1_{\zeta}=b\diamondsuit_{\zeta}1\in \dot{\mathbf{B}}$ for any $\zeta\in X$ and $b\in \mathbf{B}$. Hence $\dot{\mathbf{B}}$ is a generalization of $\mathbf{B}$.

\subsection{Canonical basis of Demazure module}

Let $\mathbf{U}(\mathfrak{b}^-)$ be the subalgebra of $\mathbf{U}$ generated by $F_i$ and $K_{\mu}$ for any $i\in I$ and $\mu\in Y$. Similarly, let $\mathbf{U}(\mathfrak{b}^+)$ be the subalgebra of $\mathbf{U}$ generated by $E_i$ and $K_{\mu}$ for any $i\in I$ and $\mu\in Y$.

For the Cartan datum $(I,\cdot)$, the Weyl group $W$ is generated by $s_i$ for any $i\in I$, subject to the relations in \cite[\S  2.1.1]{Lusztig-1993}. It acts on $Y$ and $X$ (see \cite[\S  2.2.6]{Lusztig-1993}).

Let $\lambda\in X^+$ and $w\in W$. By \cite[Proposition 5.2.7]{Lusztig-1993}, the $w\lambda$-weight space of $\Lambda_{\lambda}$ is one-dimensional. Let $\eta_{w\lambda}\in \mathbf{B}(\Lambda_{\lambda})$ be the unique canonical basis element of $\Lambda_{\lambda}$ of weight $w\lambda$. The {\it Demazure module} $V_{w}(\lambda)$ is the $\mathbf{U}(\mathfrak{b}^+)$-submodule of $\Lambda_{\lambda}$ generated by $\eta_{w\lambda}$. 

By \cite[Proposition 3.2.3]{Kashiwara-1993}, the set $\mathbf{B}(V_{w}(\lambda)):=\mathbf{B}(\L_\l)\cap V_w(\l)$ is a basis of $V_{w}(\lambda)$. We call it the {\it canonical basis} of $V_w(\l)$. 

By \cite[Corollary 3.2.2]{Kashiwara-1993}, we have $V_{w}(\lambda)\subset V_{w'}(\lambda)$ for any $w,w'\in W$ with $w\leqslant w'$ under the Bruhat order. Let $\ast$ be the Demazure product on $W$ (see \cite[\S 1.2]{He-Nie-2024}). Then $w, w' \le w \ast w'$ for any $w, w'\in W$. Thus $(W, \le)$ is a directed set, and $\{V_w(\lambda);w\in W\}$ forms a direct system. We have $\varinjlim_{w\in W} V_{w}(\lambda)=\Lambda_{\lambda}$, and so $\mathbf{B}(\Lambda_{\lambda})=\bigcup_{w\in W}\mathbf{B}(V_{w}(\lambda))$. This enables us to approximate the canonical basis of $\Lambda_{\lambda}$ by the canonical bases of its Demazure submodules.

Similarly, let $\xi_{-w\lambda}={^{\omega}\eta_{w\lambda}}\in \mathbf{B}({^{\omega}\Lambda_{\lambda}})$ be the unique canonical basis element of ${^{\omega}\Lambda_{\lambda}}$ of weight $-w\lambda$. The {\it Demazure module} ${^{\omega}V_{w}(\lambda)}$ is the $\mathbf{U}(\mathfrak{b}^-)$-submodule of ${^{\omega}\Lambda_{\lambda}}$ generated by $\xi_{-w\lambda}$. It has the {\it canonical basis} $\mathbf{B}({^{\omega}V_{w}(\lambda)}):=\mathbf{B}({}^\o \L_\l)\cap {}^\o V_w(\l)$. We have $\varinjlim_{w\in W}{^{\omega}V_{w}(\lambda)}={^{\omega}\Lambda_{\lambda}}$ and $\mathbf{B}({^{\omega}\Lambda_{\lambda}})=\bigcup_{w\in W}\mathbf{B}({^{\omega}V_{w}(\lambda)})$.

Recall that $M_{-w\lambda}$ is the Verma module of $\mathbf{U}$ with the highest weight $-w\lambda$. As a $\mathbf{U}^-$-module, $M_{\zeta}$ is free and is generated by $v_{\zeta}$. There is a surjective $\mathbf{U}(\mathfrak{b}^-)$-module homomorphism $$a_{-w\lambda}: M_{-w \l} \to {^{\omega}V_{w}(\lambda)},x^-v_{-w\lambda}\mapsto x^-\xi_{-w\lambda}\ \textrm{for any}\ x\in \mathbf{f}.$$ 
We call it the {\it projection map}.

The underlying vector space of $M_{-w\lambda}$ is $\mathbf{f}$, thus the canonical basis $\mathbf{B}$ of $\mathbf{f}$ gives a canonical basis $\mathbf{B}(M_{-w\lambda}):=\{b^-v_{-w\lambda_1};b\in \mathbf{B}\}$ of $M_{-w\lambda}$. The following proposition follows from \cite[Lemma 8.2.1]{Kashiwara-1994}. It enables us to realize the canonical basis of ${^{\omega}V_{w}(\lambda)}$ by the canonical basis of $\mathbf{U}^-$.

\begin{proposition}\label{action map is a based map}
Let $\l \in X^+$ and $w \in W$. Then we have 
$$a_{-w\lambda}(\mathbf{B}(M_{-w\lambda}))\subset \mathbf{B}({^{\omega}V_{w}(\lambda)})\sqcup \{0\},$$ 
and the set $\mathbf{B}(M_{-w\lambda})\cap \mathrm{ker}\,a_{-w\lambda}$ is a basis of $\mathrm{ker}\,a_{-w\lambda}$.
\end{proposition}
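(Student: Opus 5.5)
We want to show that $a_{-w\lambda}$ sends $\mathbf{B}(M_{-w\lambda})=\{b^-v_{-w\lambda};b\in\mathbf{B}\}$ into $\mathbf{B}({^{\omega}V_w(\lambda)})\sqcup\{0\}$, and that the basis elements it kills span its kernel. The plan is to transport Kashiwara's result on Demazure modules, \cite[Lemma 8.2.1]{Kashiwara-1994}, through the involution $\omega$.

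\textbf{Translation to the highest weight side.} Write $\xi_{-w\lambda}={^{\omega}\eta_{w\lambda}}$. Then
$$x^-\xi_{-w\lambda}={^{\omega}(\omega(x^-)\eta_{w\lambda})}={^{\omega}(x^+\eta_{w\lambda})}.$$
Hence $a_{-w\lambda}$ is identified, via $m\mapsto{^{\omega}m}$, with the map
$$\mathbf{f}\to V_w(\lambda),\qquad x\mapsto x^+\eta_{w\lambda}.$$
By definition, $\mathbf{B}({^{\omega}\Lambda_\lambda})=\{{^{\omega}b};b\in\mathbf{B}(\Lambda_\lambda)\}$, so the canonical bases match under this identification. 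Since $V_w(\lambda)=\mathbf{U}^+\eta_{w\lambda}$, the identified map is surjective. It therefore suffices to prove the statement for $x\mapsto x^+\eta_{w\lambda}$ with $x\in\mathbf{B}$.

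\textbf{Invoking Kashiwara.} The lemma in \cite[\S 8.2]{Kashiwara-1994} states that for $b\in\mathbf{B}$ the vector $b^+u_{w\lambda}$ is either zero or a global basis element of $V(\lambda)$. Here $u_{w\lambda}$ is the extremal vector, which coincides with $\eta_{w\lambda}$ because the $w\lambda$-weight space of $\Lambda_\lambda$ is one-dimensional and both vectors are canonical basis elements. This uses the identification of Kashiwara's lower global bases with Lusztig's canonical bases, valid in the symmetric case. It also gives that $\{b;b^+u_{w\lambda}\neq0\}$ maps bijectively onto $\mathbf{B}(V_w(\lambda))$, where injectivity comes from the crystal embedding $B(\lambda)\otimes T_{w\lambda}\hookrightarrow B(\infty)\otimes\ldots$ at $v=\infty$. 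If that statement is not phrased exactly in this form, I would derive it as follows. Use the isomorphism $V_w(\lambda)\cong\mathbf{U}^+\otimes_{\mathbf{U}^+_{\ge0}}(\text{annihilator})$, and recall that the kernel of $x\mapsto x^+\eta_{w\lambda}$ is the left ideal $\sum\mathbf{f}\,\theta_i^{(n)}$ generated by the elements killing the extremal vector. Then apply the fact that such left ideals are spanned by canonical basis elements \cite[Ch.~14, 16]{Lusztig-1993}.

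\textbf{Kernel statement.} Let $\mathbf{B}'=\{b\in\mathbf{B};b^+\eta_{w\lambda}\neq0\}$. By the previous step, the images of $\mathbf{B}'$ are distinct canonical basis elements, hence linearly independent. Since the map is surjective and $\mathbf{B}$ spans $\mathbf{f}$, these images span $V_w(\lambda)$. Now take $y=\sum_b c_b b$ in the kernel. Its image is $\sum_{b\in\mathbf{B}'}c_b\, b^+\eta_{w\lambda}=0$, so $c_b=0$ for all $b\in\mathbf{B}'$. Thus $y$ lies in the span of $\mathbf{B}\setminus\mathbf{B}'=\mathbf{B}\cap\ker$.

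\textbf{Main obstacle.} The key step is injectivity on $\mathbf{B}'$, namely that distinct $b$ cannot give the same nonzero image. I would get this from crystal theory: $b^+\eta_{w\lambda}\equiv$ (crystal element) modulo $v^{-1}\mathcal{L}$. Combined with the injectivity of the Demazure crystal map $b\mapsto \tilde{e}$-string applied to $u_{w\lambda}$, restricted to nonzero images, this shows the images of distinct elements of $\mathbf{B}'$ are distinct.
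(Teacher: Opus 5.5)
Your main line is correct and is essentially the paper's argument. The paper proves this proposition simply by citing \cite[Lemma 8.2.1]{Kashiwara-1994}. Your $\omega$-twist to $x\mapsto x^+\eta_{w\lambda}$ and your linear-independence deduction of the kernel statement from injectivity on $\mathbf{B}'$ just make that citation explicit.

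You should not, however, fall back on the alternative derivations you sketch.

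\textbf{The kernel is not of the form you claim.} The kernel of $x\mapsto x^+\eta_{w\lambda}$ is in general not a left ideal of the form $\sum_i\mathbf{f}\theta_i^{(n_i)}$. Take $I=\{i,j\}$ with $i\cdot j=-1$, $\langle i_Y,\lambda\rangle=\langle j_Y,\lambda\rangle=1$ and $w=s_i$, so that $\eta_{w\lambda}=F_i\eta_\lambda$. The divided powers of Chevalley generators killing this vector generate the left ideal $\mathbf{f}\theta_i^{(2)}+\mathbf{f}\theta_j$. Its component of degree $i+j$ is only $\mathbb{Q}(v)\theta_i\theta_j$. Yet $E_jE_iF_i\eta_\lambda=E_j\eta_\lambda=0$, so $\theta_j\theta_i$ lies in the kernel but not in that ideal. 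Lusztig's results on ideals $\sum_i\mathbf{f}\theta_i^{(n_i)}$ therefore do not apply.

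\textbf{The crystal sketch does not prove injectivity.} It presupposes that $b^+\eta_{w\lambda}$ reduces modulo $v^{-1}$ to a string of $\tilde{e}_i$'s applied to $u_{w\lambda}$. Nothing you cite gives this when $\eta_{w\lambda}$ is not a lowest weight vector.

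So injectivity on $\mathbf{B}'$, which is equivalent to the kernel statement, really has to be taken from Kashiwara's lemma itself, exactly as the paper does.
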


\begin{proposition}\label{canonical basis of demazure otimes highest weight}
Let $\lambda_1,\lambda_2\in X^+,w\in W$. Then the subset $\mathbf{B}({^{\omega}V_{w}(\lambda_1)}\otimes \Lambda_{\lambda_2}):=\mathbf{B}({^{\omega}V_w(\lambda_1)}) \diamondsuit \mathbf{B}(\Lambda_{\lambda_2})$ of $\mathbf{B}({^{\omega}\Lambda_{\lambda_1}}\otimes \Lambda_{\lambda_2})$ is a basis of ${^{\omega}V_{w}(\lambda_1)}\otimes \Lambda_{\lambda_2}$. Moreover, we have $\mathbf{B}({^{\omega}V_{w}(\lambda_1)}\otimes \Lambda_{\lambda_2})=\mathbf{B}({^{\omega}\Lambda_{\lambda_1}}\otimes \Lambda_{\lambda_2})\cap ({^{\omega}V_{w}(\lambda_1)}\otimes \Lambda_{\lambda_2})$.
\end{proposition}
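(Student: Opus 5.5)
The plan is to show that ${^{\omega}V_w(\lambda_1)}\otimes\Lambda_{\lambda_2}$ is spanned by the canonical basis elements it contains. Since it then also contains a full basis's worth of them, both assertions follow.

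\textbf{Stability under $\Psi$.} I first check that $N:={^{\omega}V_w(\lambda_1)}\otimes\Lambda_{\lambda_2}$ is stable under $\Psi$. For $m_1\in {^{\omega}V_w(\lambda_1)}$ we have $\overline{m_1}\in {^{\omega}V_w(\lambda_1)}$, because the Demazure module has a basis $\mathbf{B}({^{\omega}V_w(\lambda_1)})$ of bar-invariant elements. In the formula for $\Theta$, the first tensor factor is acted on only by $b^-$, with $b\in\mathbf{U}^-$. Since ${^{\omega}V_w(\lambda_1)}$ is a $\mathbf{U}(\mathfrak{b}^-)$-submodule, $\Theta(\overline{m_1}\otimes\overline{m_2})\in N$. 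Thus $\Psi(N)\subset N$.

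\textbf{Containment in $N$.} Next I consider $b_1\diamondsuit b_2$ with $b_1\in\mathbf{B}({^{\omega}V_w(\lambda_1)})$ and $b_2\in\mathbf{B}(\Lambda_{\lambda_2})$, and run Lusztig's construction of Theorem \ref{24.3.3} inside $N$. The set $\mathbf{B}({^{\omega}V_w(\lambda_1)})\otimes\mathbf{B}(\Lambda_{\lambda_2})$ is a basis of $N$, and $\Psi$ preserves $N$. Expanding $\Psi(b_1\otimes b_2)$ in this basis gives a unitriangular matrix with respect to the partial order used by Lusztig (the weight of the first factor), restricted to pairs with first factor in the Demazure basis. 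The standard triangular recursion then produces an element of
\[
b_1\otimes b_2+v^{-1}\mathbb{Z}[v^{-1}][\mathbf{B}({^{\omega}V_w(\lambda_1)})\otimes\mathbf{B}(\Lambda_{\lambda_2})]\subset N
\]
which is $\Psi$-invariant. By the uniqueness in Theorem \ref{24.3.3}, this element is $b_1\diamondsuit b_2$, so $b_1\diamondsuit b_2\in N$.

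\textbf{Conclusion.} The resulting transition matrix from $\mathbf{B}({^{\omega}V_w(\lambda_1)})\otimes\mathbf{B}(\Lambda_{\lambda_2})$ to $\mathbf{B}({^{\omega}V_w(\lambda_1)})\diamondsuit\mathbf{B}(\Lambda_{\lambda_2})$ is unitriangular. Hence the latter set is a basis of $N$, and it is a subset of $\mathbf{B}({^{\omega}\Lambda_{\lambda_1}}\otimes\Lambda_{\lambda_2})$.

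For the intersection statement, one inclusion is now immediate. Conversely, suppose $b_1\diamondsuit b_2\in N$ with $b_1\notin {^{\omega}V_w(\lambda_1)}$. Then $b_1\diamondsuit b_2$ is a linear combination of the basis $\mathbf{B}({^{\omega}V_w(\lambda_1)})\diamondsuit\mathbf{B}(\Lambda_{\lambda_2})$ of $N$, which contradicts the linear independence of $\mathbf{B}({^{\omega}\Lambda_{\lambda_1}}\otimes\Lambda_{\lambda_2})$. Equivalently, expanding in the tensor basis, the coefficient of $b_1\otimes b_2$ is $1$, but $\mathbf{B}({^{\omega}V_w(\lambda_1)})\otimes\mathbf{B}(\Lambda_{\lambda_2})$ is a basis of $N$, so that coefficient would have to vanish.

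\textbf{Main obstacle.} The delicate point is the triangularity of $\Psi$ on $N$ in the second step. Lusztig's existence argument is carried out in finite-dimensional weight spaces of the whole tensor product. I must check that the recursion only involves $\Psi(b'_1\otimes b'_2)$ for $b'_1$ in the Demazure basis. This holds because $\Psi(N)\subset N$ and the Demazure basis is a subset of the global basis. The recursion is therefore just the restriction of Lusztig's to the $\Psi$-stable, basis-spanned subspace $N$, and finiteness in each weight space is inherited.
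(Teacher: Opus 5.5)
Your proposal is correct and follows essentially the same route as the paper. You show that $\Psi$ (equivalently $\Theta$ on the bar-invariant pure tensors $b_1\otimes b_2$ with $b_1$ in the Demazure basis) preserves ${^{\omega}V_w(\lambda_1)}\otimes\Lambda_{\lambda_2}$, run Lusztig's triangular construction from Theorem \ref{24.3.3} inside this subspace to get $b_1\diamondsuit b_2$ there with a unitriangular transition matrix, and then read off the basis and intersection statements; your explicit check that the recursion restricts to the Demazure index set simply spells out what the paper compresses into ``by the proof of Theorem \ref{24.3.3}''.
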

\begin{proof}
Note that the quasi-$\mathcal{R}$-matrix $\Theta$ and the $\Psi$-involution on the tensor product ${^{\omega}\Lambda_{\lambda_1}}\otimes \Lambda_{\lambda_2}$ preserve the subspace ${^{\omega}V_{w}(\lambda_1)}\otimes \Lambda_{\lambda_2}$. Indeed, for any $b_1\in \mathbf{B}({^{\omega}V_{w}(\lambda_1)})$ and $b_2\in \mathbf{B}(\Lambda_{\lambda_2})$, by \eqref{Phi-involution}, we have 
\begin{align*}
\Psi(b_1\otimes b_2)=\Theta(b_1\otimes b_2)=\sum_{\nu\in \mathbb{N}[I]}(-v)^{\mathrm{tr}\,\nu}\sum_{b\in B_{\nu}}b^-b_1\otimes {b^*}^+ b_2\in {^{\omega}V_{w}(\lambda_1)}\otimes \Lambda_{\lambda_2}.
\end{align*}
By the proof of Theorem \ref{24.3.3}, $b_1\diamondsuit b_2\in {^{\omega}V_{w}(\lambda_1)}\otimes \Lambda_{\lambda_2}$, and the transition matrix from the basis $\mathbf{B}({^{\omega}V_{w}(\lambda_1)})\otimes \mathbf{B}(\Lambda_{\lambda_2})$ to the set $\mathbf{B}({^{\omega}V_{w}(\lambda_1)}\otimes \Lambda_{\lambda_2})$ is unitriangular. So $\mathbf{B}({^{\omega}V_{w}(\lambda_1)}\otimes \Lambda_{\lambda_2})$ is also a basis of ${^{\omega}V_w(\lambda_1)}\otimes \Lambda_{\lambda_2}$, and the remaining statement follows from definition directly.
\end{proof}

We call $\mathbf{B}({^{\omega}V_{w}(\lambda_1)}\otimes \Lambda_{\lambda_2})$ the {\it canonical basis} of the tensor product ${^{\omega}V_{w}(\lambda_1)}\otimes \Lambda_{\lambda_2}$. 

\section{Thickening realization of tensor product}\label{sec:thick-module}

\subsection{Thickening product}\label{The thickening product}

For the symmetric Cartan datum $(I,\cdot)$, we define its {\it thickening Cartan datum} to be the symmetric Cartan datum $(\tilde{I},\cdot)$, where $\tilde{I}=I\sqcup I'$ is obtained from $I$ by adding a copy $I'=\{i';i\in I\}$, and the symmetric bilinear form $\mathbb{Z}[\tilde{I}]\times \mathbb{Z}[\tilde{I}]\rightarrow \mathbb{Z}$ is extended from that in $(I,\cdot)$ such that 
$$i\cdot j'=-\delta_{i,j},\ i'\cdot j'=2\delta_{i,j}\ \textrm{for any}\ i,j\in I.$$

For the simply connected root datum $(Y,X,\langle\,,\,\rangle,\ldots)$ of type $(I,\cdot)$, we define its {\it thickening root datum} to be the simply connected root datum $(\tilde{Y},\tilde{X},\langle\,,\,\rangle,\ldots)$ of type $(\tilde{I},\cdot)$. We have $\tilde{Y}=Y\oplus \mathbb{Z}[I'],\tilde{X}=X\oplus \mathrm{Hom}_{\mathbb{Z}}(\mathbb{Z}[I'],\mathbb{Z})$, the perfect bilinear pairing $\langle\,,\,\rangle:\tilde{Y}\times \tilde{X}\rightarrow \mathbb{Z}$ and the imbeddings $\tilde{I}\hookrightarrow \tilde{Y},\tilde{i}\mapsto \tilde{i}_{\tilde{Y}};\, \tilde{I}\hookrightarrow \tilde{X},\tilde{i}\mapsto \tilde{i}_{\tilde{X}}$ are extended from those in $(Y,X,\langle\,,\,\rangle,\ldots)$. It is $\tilde{Y}$-regular.

For any $\zeta\in X$ and $\lambda\in X^+$, we define $\zeta\odot\lambda\in \tilde{X}$ by
\begin{equation}\label{definition of odot}
\langle i_{\tilde{Y}},\zeta\odot\lambda\rangle=\langle i_Y,\zeta\rangle,\ \langle i'_{\tilde{Y}},\zeta\odot\lambda\rangle=\langle i_Y,\lambda\rangle\ \textrm{for any}\ i\in I.
\end{equation}
We call $\zeta\odot\lambda$ the {\it thickening product} of $\zeta$ and $\lambda$. This product ``thickens'' the weight $\zeta$ by incorporating the data of $\lambda$ into the new $I'$-components.

\subsection{Realization of tensor product}\label{ftheta_lambdaf}

In this subsection, we establish the thickening realization for the tensor product. In particular, we have the following maps 
$$\xymatrix@C=1.8cm{(\mathbf{f}\theta_{\lambda_2}\mathbf{f})_{-w\lambda_1\odot\lambda_2} \ar@<.5ex>[r]^{\varphi_{-w\lambda_1,\lambda_2}} &M_{-w\lambda_1}\otimes \Lambda_{\lambda_2} \ar@<.5ex>[l]^{\psi_{-w\lambda_1,\lambda_2}} \ar[r]^-{a_{-w\lambda_1}\otimes \mathrm{Id}} &{^{\omega}V_w(\lambda_1)}\otimes \Lambda_{\lambda_2}}$$
in the diagram \eqref{*}.

Let $\tilde{\mathbf{f}}$ and $\tilde{\mathbf{U}}$ be the algebra and the quantum group defined in \S \ref{The algebra f} and \S \ref{Quantized enveloping algebra} associated with the thickening data $(\tilde{I},\cdot)$ and $(\tilde{Y},\tilde{X},\langle\,,\,\rangle,\ldots)$ respectively. Then the natural imbedding $I\hookrightarrow \tilde{I}$ induces subalgebra imbeddings $\mathbf{f}\hookrightarrow\tilde{\mathbf{f}}$ and $\mathbf{U}\hookrightarrow\tilde{\mathbf{U}}$.

Let $\lambda\in X^+$. Since $i'\cdot j'=0$ and $\theta_{i'}\theta_{j'}=\theta_{j'}\theta_{i'}$ for any $i\not=j$ in $I$, the element 
$$\theta_{\lambda}=\prod_{i\in I}\theta_{i'}^{(\langle i_Y,\lambda\rangle)}\in \tilde{\mathbf{f}}$$
is independent of the order in the product. We define $\mathbf{f}\theta_{\lambda}\mathbf{f}$ to be the subspace of $\tilde{\mathbf{f}}$ spanned by $x\theta_{\lambda}y$ for any $x,y\in \mathbf{f}$. 

Let $\zeta\in X$ and $\lambda\in X^+$. Then we have the thickening product $\zeta\odot\lambda\in \tilde{X}$. Let $M_{\zeta\odot\lambda}$ be the Verma module of $\tilde{\mathbf{U}}$ with the highest weight $\zeta\odot\lambda$. As a $\tilde{\mathbf{U}}^-$-module, $M_{\zeta\odot\lambda}$ is free and is generated by the highest weight vector $v_{\zeta\odot\lambda}$. We define $$(\mathbf{f}\theta_{\lambda}\mathbf{f})_{\zeta\odot\lambda}:=\{z^-v_{\zeta\odot\lambda};z\in \mathbf{f}\theta_{\lambda}\mathbf{f}\}.$$
Then there is a natural identification $\mathbf{f}\theta_{\lambda}\mathbf{f}\cong (\mathbf{f}\theta_{\lambda}\mathbf{f})_{\zeta\odot\lambda},z\mapsto z^-v_{\zeta\odot\lambda}$. We regard $M_{\zeta\odot \lambda}$ as a $\mathbf{U}$-module via the imbedding $\mathbf{U}\hookrightarrow\tilde{\mathbf{U}}$. Similar to \cite[Lemma 3.1]{Fang-Lan-2025}, $(\mathbf{f}\theta_{\lambda}\mathbf{f})_{\zeta\odot\lambda}$ is a $\mathbf{U}$-submodule of $M_{\zeta\odot\lambda}$. 

Analogously to \cite[\S  3.2, (18)]{Li-2014}, we first construct a $\mathbf{U}$-module homomorphism $\psi_{\zeta,\lambda}:M_{\zeta}\otimes \Lambda_{\lambda}\rightarrow (\mathbf{f}\theta_{\lambda}\mathbf{f})_{\zeta\odot\lambda}$. Let $\mathbb{Q}(v)_{\lambda}$ be the one-dimensional $\mathbf{U}(\mathfrak{b^+})$-module with the basis $\{\rho_{\lambda}\}$ and the module structure defined by $E_i\rho_{\lambda}=0$ and $K_{\mu}\rho_{\lambda}=v^{\langle \mu,\lambda\rangle}\rho_{\lambda}$ for any $i\in I$ and $\mu\in Y$. Then $M_{\lambda}=\mathbf{U}\otimes_{\mathbf{U}(\mathfrak{b^+})}\mathbb{Q}(v)_{\lambda}$. It is easy to check that $M_{\zeta}\otimes \mathbb{Q}(v)_{\lambda}\rightarrow (\mathbf{f}\theta_{\lambda}\mathbf{f})_{\zeta\odot\lambda}$ defined by $x^-v_{\zeta}\otimes \rho_{\lambda}\mapsto \rho_{\lambda}\theta_{\lambda}^-x^-v_{\zeta\odot\lambda}$ for any $x\in \mathbf{f}$, is a $\mathbf{U}(\mathfrak{b^+})$-module homomorphism. By the tensor identity and Frobenius reciprocity 
\begin{align*}
M_{\zeta}\otimes M_{\lambda}&\cong \mathbf{U}\otimes_{\mathbf{U}(\mathfrak{b}^+)}(M_{\zeta}\otimes \mathbb{Q}(v)_{\lambda}),\\
\mathrm{Hom}_{\mathbf{U}}(M_{\zeta}\otimes M_{\lambda},(\mathbf{f}\theta_{\lambda}\mathbf{f})_{\zeta\odot\lambda})&\cong \mathrm{Hom}_{\mathbf{U}(\mathfrak{b^+})}(M_{\zeta}\otimes \mathbb{Q}(v)_{\lambda},(\mathbf{f}\theta_{\lambda}\mathbf{f})_{\zeta\odot\lambda}),
\end{align*}
we obtain a $\mathbf{U}$-module homomorphism $\psi_0:M_{\zeta}\otimes M_{\lambda}\rightarrow (\mathbf{f}\theta_{\lambda}\mathbf{f})_{\zeta\odot\lambda}$ such that 
$$\psi_0(x^-v_{\zeta}\otimes v_{\lambda})=\theta_{\lambda}^-x^-v_{\zeta\odot\lambda}\ \textrm{for any}\ x\in \mathbf{f}.$$
For any $i\in I$, by induction on $n\in \mathbb{N}$, we have 
$$\psi_0(x^-v_{\zeta}\otimes F_i^{(n)}v_{\lambda})=\sum_{k=0}^{n} (-1)^k v^{-k(\langle i_Y,\lambda\rangle+1-n)}F_i^{(n-k)}(\prod_{i\in I}F_{i'}^{(\langle i_Y,\lambda\rangle)})F_i^{(k)}x^-v_{\zeta\odot\lambda}$$
for any $x\in \mathbf{f}$. Since $j'\cdot i=0$ and $F_{j'}F_i=F_iF_{j'}$ for any $j\neq i$ in $I$, taking $n=\langle i_Y,\lambda\rangle+1$, the right-hand side becomes 
\begin{align*}
\sum_{k=0}^{\langle i_Y,\lambda\rangle+1} (-1)^k F_i^{(\langle i_Y,\lambda\rangle+1-k)}F_{i'}^{(\langle i_Y,\lambda\rangle)}F_i^{(k)}(\prod_{j\in I;j\not=i}F_{j'}^{(\langle j_Y
,\lambda\rangle)})x^-v_{\zeta\odot\lambda}=0 
\end{align*}
by the higher order quantum Serre relations in \cite[Proposition 7.1.5]{Lusztig-1993}. Moreover, by induction on $\mathrm{tr}\,|y|\in \mathbb{N}$, we have 
$$\psi_0(x^-v_{\zeta}\otimes y^-F_i^{(\langle i_Y,\lambda\rangle+1)}v_{\lambda})=0\ \textrm{for any homogeneous}\ x,y\in \mathbf{f}.$$
So $\psi_0$ induces a $\mathbf{U}$-module homomorphism $\psi_{\zeta,\lambda}:M_{\zeta}\otimes \Lambda_{\lambda}\rightarrow (\mathbf{f}\theta_{\lambda}\mathbf{f})_{\zeta\odot\lambda}$ such that 
\begin{equation}\label{im psi}
\psi_{\zeta,\lambda}(x^-v_{\zeta}\otimes \eta_{\lambda})=\theta_{\lambda}^-x^-v_{\zeta\odot\lambda}\ \textrm{for any}\ x\in \mathbf{f}. 
\end{equation}

Next we introduce the map $\varphi_{\zeta,\lambda}:(\mathbf{f}\theta_{\lambda}\mathbf{f})_{\zeta\odot\lambda}\rightarrow M_{\zeta}\otimes \Lambda_{\lambda}$.

\begin{lemma}\label{linear map varphi}
Let $\zeta\in X$ and $\lambda\in X^+$. Then there exists a $\mathbf{U}^-$-module homomorphism $\varphi_{\zeta,\lambda}:(\mathbf{f}\theta_{\lambda}\mathbf{f})_{\zeta\odot\lambda}\rightarrow M_{\zeta}\otimes \Lambda_{\lambda}$ such that
\begin{equation}\label{im varphi}
\varphi_{\zeta,\lambda}(x^-\theta_{\lambda}^-y^-v_{\zeta\odot\lambda})=\sum v^{|x_2|\cdot |\theta_{\lambda}|}x_2^-y^-v_{\zeta}\otimes x_1^-\eta_{\lambda}\ \textrm{for any}\ x,y\in \mathbf{f},
\end{equation}
where we write $r(x)=\sum x_1\otimes x_2$ and $x_1,x_2\in \mathbf{f}$ are homogeneous.
\end{lemma}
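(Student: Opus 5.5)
The plan is to build $\varphi_{\zeta,\lambda}$ as a composite of maps that are well defined for structural reasons. The alternative, defining it directly on the expressions $x\theta_\lambda y$, runs into trouble because such expressions are far from unique: for instance $\theta_{i'}$ commutes with $\theta_j$ for $j\neq i$, and the quantum Serre relations in $\tilde{\mathbf{f}}$ mix $\theta_i$ and $\theta_{i'}$. Well-definedness is the real issue, and I would sidestep it as follows. The ingredients are the comultiplication $\tilde r$ of $\tilde{\mathbf{f}}$, which is an algebra homomorphism, together with two linear "evaluation" maps, one into $M_\zeta$ and one into $\Lambda_\lambda$.

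\emph{Step 1: the component of $\tilde r$.} Since $\tilde r$ is a (twisted) algebra homomorphism, for $z=x\theta_\lambda y$ we have
\[
\tilde r(z)=r(x)\,\tilde r(\theta_\lambda)\,r(y).
\]
Moreover, $\tilde r(\theta_{i'}^{(n)})=\sum_{a+b=n} v^{ab}\theta_{i'}^{(a)}\otimes\theta_{i'}^{(b)}$. Let $p$ denote the projection of $\tilde{\mathbf{f}}\otimes\tilde{\mathbf{f}}$ onto the summand in which the first factor has $I'$-degree exactly $|\theta_\lambda|$ and the second factor has $I'$-degree $0$. This is a well-defined linear map, because it is just a projection onto graded pieces. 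In $p\tilde r(z)$ only the term $\theta_\lambda\otimes 1$ of $\tilde r(\theta_\lambda)$ survives, so $p\tilde r(z)$ is a twisted sum of terms $x_1\theta_\lambda y_1\otimes x_2y_2$.

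\emph{Step 2: the evaluation map into $\Lambda_\lambda$.} Let $L(0\odot\lambda)$ be the simple integrable $\tilde{\mathbf{U}}$-module of highest weight $0\odot\lambda$, which is dominant, with highest weight vector $v$. Define $L:\tilde{\mathbf{f}}\to L(0\odot\lambda)$ by $L(u)=u^-v$; this is obviously well defined. Since $\langle i_{\tilde Y},0\odot\lambda\rangle=0$, we have $F_iv=0$ for all $i\in I$. Hence $L(x_1\theta_\lambda y_1)=0$ whenever $y_1\in\mathbf{f}$ has positive degree, and $L(x_1\theta_\lambda)=x_1^-\theta_\lambda^- v$. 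Next consider $\theta_\lambda^-v$. It has $I$-weight $\lambda$, using $i\cdot j'=-\delta_{ij}$. It is killed by every $E_i$ with $i\in I$, because $E_i$ commutes with the $F_{j'}$ and kills $v$. And it generates an integrable $\mathbf{U}$-module, since $L(0\odot\lambda)$ is integrable. Therefore $\mathbf{U}\theta_\lambda^-v\cong\Lambda_\lambda$ via $\theta_\lambda^-v\mapsto\eta_\lambda$, and $L$ restricted to the relevant graded piece yields $x_1\theta_\lambda y_1\mapsto\delta_{y_1\in\mathbb{Q}(v)}\,x_1^-\eta_\lambda$. On the second factor, which lies in $\mathbf{f}$, use $u\mapsto u^-v_\zeta\in M_\zeta$.

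\emph{Step 3: assemble and check $\mathbf{U}^-$-linearity.} Define $\varphi_{\zeta,\lambda}$ as the composite
\[
z^-v_{\zeta\odot\lambda}\;\longmapsto\;(\text{flip})\circ\bigl(L\otimes(\,\cdot\,)^-v_\zeta\bigr)\circ p\circ\tilde r(z),
\]
with the twist convention adjusted so that the coefficient in front of $x_2^-y^-v_\zeta\otimes x_1^-\eta_\lambda$ is exactly $v^{|x_2|\cdot|\theta_\lambda|}$. By Steps 1 and 2, only the terms with $y_1=1$, $y_2=y$ survive, and this gives \eqref{im varphi}. The $\mathbf{U}^-$-linearity follows from $r(\theta_i)=\theta_i\otimes1+1\otimes\theta_i$ applied to $\theta_i z$. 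After the flip, the two terms become $1\otimes F_i$ and $F_i\otimes(\text{power of }v)$. This should be compared with $\Delta(F_i)=1\otimes F_i+F_i\otimes K_{-i_Y}$ acting on $M_\zeta\otimes\Lambda_\lambda$.

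\emph{Main obstacle.} I expect the hardest part to be the bookkeeping of $v$-powers. One must check that the twisted multiplication on $\tilde{\mathbf{f}}\otimes\tilde{\mathbf{f}}$, the weight $\lambda-|x_1|$ of $x_1^-\eta_\lambda$, and the factor $v^{|x_2|\cdot|\theta_\lambda|}$ combine to reproduce exactly the factor $K_{-i_Y}$ acting on the $\Lambda_\lambda$-component. The identity that makes this work is $-i\cdot(|\theta_\lambda|+|x_1|)=-\langle i_Y,\lambda-|x_1|_X\rangle$, which holds because $i\cdot|\theta_\lambda|=-\langle i_Y,\lambda\rangle$. If the twist comes out on the wrong side, the fix is to use the opposite convention for $\tilde r$, or to precompose with the flip before applying $p$.
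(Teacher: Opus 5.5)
Your construction is correct and is essentially the paper's. Both use comultiplication, project onto the graded piece carrying all of $\theta_\lambda$ in one tensor factor, evaluate in $M_\zeta\otimes\Lambda_{0\odot\lambda}$, and identify $\mathbf{U}\theta_\lambda^-\eta_{0\odot\lambda}$ with $\Lambda_\lambda$. The paper uses $\Delta$ of $\tilde{\mathbf{U}}$, so $\mathbf{U}^-$-linearity is automatic and the formula comes from Lusztig's \S 3.1.5, and it cites Fang--Lan for the identification; you use $r$ and a direct highest-weight argument, for which you should also note that $\theta_\lambda^-\eta_{0\odot\lambda}\neq 0$. Your hedging about the twist is unnecessary: with the plain flip, the twisted product already gives exactly $v^{|x_2|\cdot|\theta_\lambda|}$. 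In the linearity check, the factor $v^{i\cdot|a|}$ coming from $(1\otimes\theta_i)(a\otimes b)$ is precisely the eigenvalue of $K_{-i_Y}$ on $a^-\eta_{0\odot\lambda}$, because $\langle i_{\tilde{Y}},0\odot\lambda\rangle=0$.
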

\begin{proof}
We identify $(\mathbf{f}\theta_{\lambda}\mathbf{f})_{\zeta\odot\lambda}\cong \mathbf{f}\theta_{\lambda}\mathbf{f}$ via $z^-v_{\zeta\odot\lambda}\mapsto z$ for any $z\in \mathbf{f}\theta_{\lambda}\mathbf{f}$, and define 
\begin{align*}
\varphi_0:(\mathbf{f}\theta_{\lambda}\mathbf{f})_{\zeta\odot\lambda}\cong \mathbf{f}\theta_{\lambda}\mathbf{f}\hookrightarrow \tilde{\mathbf{f}}\cong \tilde{\mathbf{U}}^-\xrightarrow{\Delta} \tilde{\mathbf{U}}\otimes \tilde{\mathbf{U}}\xrightarrow{\mathrm{Id}\otimes p} \tilde{\mathbf{U}}\otimes \bigoplus_{\nu\in \mathbb{N}[I]}\tilde{\mathbf{U}}(-\nu-|\theta_{\lambda}|),
\end{align*}
where $\Delta$ is the comultiplication of $\tilde{\mathbf{U}}$, $p$ is natural projection from $\tilde{\mathbf{U}}$ to the direct sum of its certain homogeneous components.

The vector space $(\mathbf{f}\theta_{\lambda}\mathbf{f})_{\zeta\odot\lambda}$ is spanned by $x^-\theta_{\lambda}^-y^-v_{\zeta\odot\lambda}$ for any $x,y\in \mathbf{f}$. We write $r(x)=\sum x_1\otimes x_2$ and $r(y)=\sum y_1\otimes y_2$ with homogeneous $x_1,x_2,y_1,y_2\in \mathbf{f}$. By \cite[\S 3.1.5 \& Lemma 1.2.11]{Lusztig-1993}, we have 
$$\varphi_0(x^-\theta_{\lambda}^-y^-v_{\zeta\odot\lambda})=\sum v^{-|x_1|\cdot |x_2|-|y_1|\cdot |y_2|}x_2^-y_2^-\otimes K_{-|x_2|_Y}x_1^-\theta_{\lambda}^-K_{-|y_2|_Y}y_1^-\in \mathbf{U}\otimes \tilde{\mathbf{U}},$$
and so $\mathrm{im}\,\varphi_0\subset \mathbf{U}\otimes \tilde{\mathbf{U}}$. By \eqref{definition of odot}, $0\odot\lambda\in \tilde{X}^+$ is dominant. Let $M_{0\odot\lambda}$ and $\Lambda_{0\odot\lambda}$ be the Verma module and the simple highest weight module of $\tilde{\mathbf{U}}$ respectively. Then we define $\kappa:\mathbf{U}\otimes \tilde{\mathbf{U}}\rightarrow M_{\zeta}\otimes \Lambda_{0\odot\lambda},u\otimes \tilde{u}\mapsto uv_{\zeta}\otimes \tilde{u}\eta_{0\odot\lambda}$. By definition, $\kappa\varphi_0:(\mathbf{f}\theta_{\lambda}\mathbf{f})_{\zeta\odot\lambda}\rightarrow M_{\zeta}\otimes \Lambda_{0\odot\lambda}$ is a $\mathbf{U}^-$-module homomorphism. Since $\langle i,0\odot\lambda\rangle=0$ for any $i\in I$ and $y_1^-\eta_{0\odot\lambda}=0$ whenever $|y_1|\not=0$, we have
\begin{align*}
\kappa\varphi_0(x^-\theta_{\lambda}^-y^-v_{\zeta\odot\lambda})=&\sum v^{-|x_1|\cdot |x_2|}x_2^-y^-v_{\zeta}\otimes K_{-|x_2|_Y}x_1^-\theta_{\lambda}^-K_{-|y|_Y}\eta_{0\odot\lambda}\\
=&\sum v^{-|x_1|\cdot |x_2|}x_2^-y^-v_{\zeta}\otimes v^{|x_2|\cdot (|x_1|+|\theta_{\lambda}|)}x_1^-\theta_{\lambda}^-K_{-|x_2|_Y}K_{-|y|_Y}\eta_{0\odot\lambda}\\
=&\sum v^{|x_2|\cdot |\theta_{\lambda}|}x_2^-y^-v_{\zeta}\otimes x_1^-\theta_{\lambda}^-\eta_{0\odot\lambda}\in M_{\zeta}\otimes \pi_{0\odot\lambda}((\mathbf{f}\theta_{\lambda}\mathbf{f})_{0\odot\lambda}),
\end{align*}
and so $\mathrm{im}\,(\kappa\varphi_0)\subset M_{\zeta}\otimes \pi_{0\odot\lambda}((\mathbf{f}\theta_{\lambda}\mathbf{f})_{0\odot\lambda})$, where $\pi_{0\odot\lambda}:M_{0\odot\lambda}\rightarrow \Lambda_{0\odot\lambda}$ is the natural projection. By \cite[Proposition 3.4]{Fang-Lan-2025}, there is a $\mathbf{U}$-module isomorphism $$\phi_{\lambda}:\Lambda_{\lambda}\rightarrow\pi_{0\odot\lambda}((\mathbf{f}\theta_{\lambda}\mathbf{f})_{0\odot\lambda}),x^-\eta_{\lambda}\mapsto x^-\theta_{\lambda}^-\eta_{0\odot\lambda}\ \textrm{for any}\ x\in \mathbf{f}.$$ 
Then $\varphi_{\zeta,\lambda}=(\mathrm{Id}\otimes \phi_{\lambda}^{-1})\kappa\varphi_0:(\mathbf{f}\theta_{\lambda}\mathbf{f})_{\zeta\odot\lambda}\rightarrow M_{\zeta}\otimes \Lambda_{\lambda}$ is the desired $\mathbf{U}^-$-module homomorphism.
\end{proof}

The following lemma establishes the thickening realization for the tensor product of a Verma module and a simple highest weight module.

\begin{lemma}\label{isomorphism varphi}
Let $\zeta\in X$ and $\lambda\in X^+$. Then $\varphi_{\zeta, \l}$ and $\psi_{\zeta,\l}$ are inverse to each other, and both of them are $\mathbf{U}$-module isomorphisms.
\end{lemma}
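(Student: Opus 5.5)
Our plan is to show $\varphi_{\zeta,\lambda}\circ\psi_{\zeta,\lambda}=\mathrm{Id}$ and that $\psi_{\zeta,\lambda}$ is surjective. Then $\psi_{\zeta,\lambda}$ is injective and surjective, hence bijective, and $\varphi_{\zeta,\lambda}=\psi_{\zeta,\lambda}^{-1}$. Since $\psi_{\zeta,\lambda}$ was constructed as a $\mathbf{U}$-module homomorphism, its inverse $\varphi_{\zeta,\lambda}$ is automatically $\mathbf{U}$-linear as well. So the fact that Lemma \ref{linear map varphi} only gives $\mathbf{U}^-$-linearity of $\varphi_{\zeta,\lambda}$ causes no trouble.

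\emph{Surjectivity of $\psi_{\zeta,\lambda}$.} The space $(\mathbf{f}\theta_{\lambda}\mathbf{f})_{\zeta\odot\lambda}$ is spanned by the vectors $x^-\theta_\lambda^-y^-v_{\zeta\odot\lambda}$. By \eqref{im psi},
\[
x^-\theta_\lambda^-y^-v_{\zeta\odot\lambda}=x^-\psi_{\zeta,\lambda}(y^-v_\zeta\otimes\eta_\lambda)=\psi_{\zeta,\lambda}\bigl(x^-(y^-v_\zeta\otimes\eta_\lambda)\bigr),
\]
using that $\psi_{\zeta,\lambda}$ is $\mathbf{U}$-linear. This proves surjectivity.

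\emph{The identity $\varphi\psi=\mathrm{Id}$.} Both $\varphi_{\zeta,\lambda}$ and $\psi_{\zeta,\lambda}$ are $\mathbf{U}^-$-linear, so $\varphi_{\zeta,\lambda}\psi_{\zeta,\lambda}$ is $\mathbf{U}^-$-linear. It therefore suffices to check it on a set of $\mathbf{U}^-$-generators of $M_\zeta\otimes\Lambda_\lambda$.

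We claim $M_\zeta\otimes\Lambda_\lambda$ is generated as a $\mathbf{U}^-$-module by the vectors $y^-v_\zeta\otimes\eta_\lambda$, $y\in\mathbf{f}$. To see this, consider $x^-(y^-v_\zeta\otimes\eta_\lambda)$. Using $\Delta(x^-)=\sum(\text{unit factor})\,x_2^-K\otimes x_1^-$, this equals $x^-y^-v_\zeta\otimes\eta_\lambda$ (the $x_1=1$ term) plus terms of the form $(\cdots)\otimes x_1^-\eta_\lambda$ with $|x_1|$ strictly smaller. The same computation appears inside \eqref{im varphi}. By induction on the height of the weight of the second factor, every $m\otimes x^-\eta_\lambda$ lies in the $\mathbf{U}^-$-span of these generators. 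Here we use that $M_\zeta\cong\mathbf{f}$ is free over $\mathbf{U}^-$, so the first-factor twists can be absorbed.

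On the generators, \eqref{im varphi} with $x=1$ gives
\[
\varphi_{\zeta,\lambda}\psi_{\zeta,\lambda}(y^-v_\zeta\otimes\eta_\lambda)=\varphi_{\zeta,\lambda}(\theta_\lambda^-y^-v_{\zeta\odot\lambda})=y^-v_\zeta\otimes\eta_\lambda,
\]
since $r(1)=1\otimes1$.

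\emph{Main obstacle.} The step needing care is the generation claim: we must make the triangularity precise, including the $K$-factors and powers of $v$ coming from $\Delta(F_i)=1\otimes F_i+F_i\otimes K_{-i_Y}$. Our approach is a direct triangularity argument, filtering $M_\zeta\otimes\Lambda_\lambda$ by the weight of the $\Lambda_\lambda$-component; the leading term of $x^-(y^-v_\zeta\otimes\eta_\lambda)$ is a nonzero multiple of $y^-v_\zeta\otimes x^-\eta_\lambda$.

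If that turns out to be messy, we would instead use the well-known isomorphism $M_\zeta\otimes\Lambda_\lambda\cong\mathbf{U}\otimes_{\mathbf{U}(\mathfrak{b}^+)}(\ldots)$, or argue by graded dimension. Both $M_\zeta\otimes\Lambda_\lambda$ and $\mathbf{f}\theta_\lambda\mathbf{f}$ have weight spaces of matching finite dimension, since $\mathbf{f}\theta_\lambda\mathbf{f}\cong\mathbf{f}\otimes\Lambda_\lambda$ by Fang--Lan \cite[Proposition 3.4]{Fang-Lan-2025}. In that case surjectivity of $\psi_{\zeta,\lambda}$ alone gives bijectivity weight space by weight space, and $\varphi\psi=\mathrm{Id}$ on generators, together with $\mathbf{U}$-linearity of $\psi^{-1}$, identifies $\varphi_{\zeta,\lambda}$ with $\psi_{\zeta,\lambda}^{-1}$.
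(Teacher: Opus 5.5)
Your argument is correct and is essentially the paper's proof. The paper likewise checks $\varphi_{\zeta,\lambda}\psi_{\zeta,\lambda}=\mathrm{Id}$ on the $\mathbf{U}^-$-generators $x^-v_\zeta\otimes\eta_\lambda$, proving generation by induction on $\mathrm{tr}\,|y|$ via $F_i(x^-v_\zeta\otimes y'^-\eta_\lambda)=x^-v_\zeta\otimes(\theta_iy')^-\eta_\lambda+v^{\langle -i_Y,\lambda-|y'|_X\rangle}(\theta_ix)^-v_\zeta\otimes y'^-\eta_\lambda$. Your surjectivity of $\psi_{\zeta,\lambda}$ rests on the same observation the paper uses to get $\psi_{\zeta,\lambda}\varphi_{\zeta,\lambda}=\mathrm{Id}$ from the generators $\theta_\lambda^-y^-v_{\zeta\odot\lambda}$.

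One expository slip: the triangular leading term is the $x_2=1$ term $y^-v_\zeta\otimes x^-\eta_\lambda$, as you say later, not the $x_1=1$ term. Working with a single $F_i$ as above makes that step immediate, so the graded-dimension fallback is not needed.
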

\begin{proof}
We simply denote $\varphi=\varphi_{\zeta,\lambda}$ and $\psi=\psi_{\zeta,\lambda}$. 

The vector space $(\mathbf{f}\theta_{\lambda}\mathbf{f})_{\zeta\odot\lambda}$ is spanned by $x^-\theta_{\lambda}^-y^-v_{\zeta\odot\lambda}$ for any homogeneous $x,y\in \mathbf{f}$. We prove  $\psi\varphi(x^-\theta_{\lambda}^-y^-v_{\zeta\odot\lambda})=x^-\theta_{\lambda}^-y^-v_{\zeta\odot\lambda}$ by induction on $\mathrm{tr}\,|x|\in \mathbb{N}$. If $\mathrm{tr}\,|x|=0$, then $x\in \mathbb{Q}(v)$. By \eqref{im varphi} and \eqref{im psi}, we have
$$\psi\varphi(\theta_{\lambda}^-y^-v_{\zeta\odot\lambda})=\psi(y^-v_{\zeta}\otimes \eta_{\lambda})=\theta_{\lambda}^-y^-v_{\zeta\odot\lambda}.$$ 
If $\mathrm{tr}\,|x|>0$, then it is enough to prove the statement for any $x=\theta_ix'$ with $i\in I$ and homogeneous $x'\in \mathbf{f}$. Since both $\varphi$ and $\psi$ commute with the actions of $F_i$, and $\mathrm{tr}\,|x'|<\mathrm{tr}\,|x|$, by the inductive hypothesis, we have 
\begin{align*}
\psi\varphi(x^-\theta_{\lambda}^-y^-v_{\zeta\odot\lambda})=&\psi\varphi(F_i{x'}^-\theta_{\lambda}^-y^-v_{\zeta\odot\lambda})=F_i\psi\varphi({x'}^-\theta_{\lambda}^-y^-v_{\zeta\odot\lambda})\\
=&F_i{x'}^-\theta_{\lambda}^-y^-v_{\zeta\odot\lambda}=x^-\theta_{\lambda}^-y^-v_{\zeta\odot\lambda}.
\end{align*}
Hence $\psi\varphi=\mathrm{Id}:(\mathbf{f}\theta_{\lambda}\mathbf{f})_{\zeta\odot\lambda}\rightarrow (\mathbf{f}\theta_{\lambda}\mathbf{f})_{\zeta\odot\lambda}$.

The vector space $M_{\zeta}\otimes \Lambda_{\lambda}$ is spanned by $x^-v_{\zeta}\otimes y^-\eta_{\lambda}$ for any homogeneous $x,y\in \mathbf{f}$. We prove  $\varphi\psi(x^-v_{\zeta}\otimes y^-\eta_{\lambda})=x^-v_{\zeta}\otimes y^-\eta_{\lambda}$ by induction on $\mathrm{tr}\,|y|\in \mathbb{N}$. If $\mathrm{tr}\,|y|=0$, then $y\in \mathbb{Q}(v)$. By \eqref{im psi} and \eqref{im varphi}, we have 
$$\varphi\psi(x^-v_{\zeta}\otimes \eta_{\lambda})=\varphi(\theta_{\lambda}^-x^-v_{\zeta\odot\lambda})=x^-v_{\zeta}\otimes \eta_{\lambda}.$$ 
If $\mathrm{tr}\,|y|>0$, then it is enough to prove the statement for any $y=\theta_iy'$ with $i\in I$ and homogeneous $y'\in \mathbf{f}$. Notice that 
\begin{align*}
F_i(x^-v_{\zeta}\otimes y'^-\eta_{\lambda})=&x^-v_{\zeta}\otimes F_iy'^-\eta_{\lambda}+F_ix^-v_{\zeta}\otimes K_{-i_Y}y'^-\eta_{\lambda}\\
=&x^-v_{\zeta}\otimes y^-\eta_{\lambda}+(\theta_ix)^-v_{\zeta}\otimes v^{\langle -i_Y,\lambda-|y'|_X\rangle}y'^-\eta_{\lambda}
\end{align*}
Since both $\varphi$ and $\psi$ commute with the actions of $F_i$, and $\mathrm{tr}\,|y'|<\mathrm{tr}\,|y|$, by the inductive hypothesis, we have 
\begin{align*}
&\varphi\psi(x^-v_{\zeta}\otimes y^-\eta_{\lambda})=\varphi\psi(F_i(x^-v_{\zeta}\otimes y'^-\eta_{\lambda}))-\varphi\psi((\theta_ix)^-v_{\zeta}\otimes v^{\langle -i_Y,\lambda-|y'|_X\rangle}y'^-\eta_{\lambda})\\
=&F_i(x^-v_{\zeta}\otimes y'^-\eta_{\lambda})-(\theta_ix)^-v_{\zeta}\otimes v^{\langle -i_Y,\lambda-|y'|_X\rangle}y'^-\eta_{\lambda}=x^-v_{\zeta}\otimes y^-\eta_{\lambda}.
\end{align*}
Hence $\varphi\psi=\mathrm{Id}:M_{\zeta}\otimes \Lambda_{\lambda}\rightarrow M_{\zeta}\otimes \Lambda_{\lambda}$.

Thus, $\varphi$ and $\psi$ are inverse to each other. Since $\psi$ is a $\mathbf{U}$-module homomorphism, both $\varphi$ and $\psi$ are $\mathbf{U}$-module isomorphisms.
\end{proof}

Let $\lambda_1,\lambda_2\in X^+$ and $w\in W$. The projection map $a_{-w\lambda_1}:M_{-w\lambda_1}\rightarrow {^{\omega}V_w(\lambda_1)}$ defined by $x^-v_{-w\lambda_1}\mapsto x^-\xi_{-w\lambda_1}$ for any $x\in \mathbf{f}$, is a surjective $\mathbf{U}(\mathfrak{b}^-)$-module homomorphism. We denote the surjective $\mathbf{U}(\mathfrak{b}^-)$-module homomorphism
$$\underline{\varphi_{-w\lambda_1,\lambda_2}}=(a_{-w\lambda_1}\otimes \mathrm{Id})\varphi_{-w\lambda_1,\lambda_2}:(\mathbf{f}\theta_{\lambda_2}\mathbf{f})_{-w\lambda_1\odot\lambda_2}\rightarrow {^{\omega}V_w(\lambda_1)}\otimes \Lambda_{\lambda_2}.$$ 
This is the thickening realization for the tensor product of a Demazure module and a simple highest weight module.

\section{Thickening realization of canonical basis of tensor product}\label{Thickening realization of canonical basis}

\subsection{Canonical basis of tensor product of Verma module and simple module}\label{Canonical basis of Verma otimes highest weight}

Let $\zeta\in X$ and $\lambda\in X^+$. The underlying vector space of the Verma module $M_{\zeta}$ is $\mathbf{f}$, thus the canonical basis $\mathbf{B}$ of $\mathbf{f}$ gives a canonical basis $\mathbf{B}(M_\zeta):=\{b^-v_{\zeta};b\in \mathbf{B}\}$ of $M_{\zeta}$. Note that $M_{\zeta}$ is not integrable, and $(M_{\zeta},\mathbf{B}(M_\zeta))$ does not satisfy the definition of the based module in \cite[\S  27.1.2]{Lusztig-1993} or \cite[\S  2.1]{Bao-Wang-2016}. Nevertheless, a canonical basis of the tensor product $M_{\zeta}\otimes \Lambda_{\lambda}$ still exists.

We use Lusztig's quasi-$\mathcal{R}$-matrix $\Theta$ and the same formula \eqref{Phi-involution} to define a $\Psi$-involution on $M_{\zeta}\otimes \Lambda_{\lambda}$, that is,
\begin{equation}\label{Theta and Psi}
\Psi(m_1\otimes m_2)=\sum_{\nu\in \mathbb{N}[I]}(-v)^{\mathrm{tr}\,\nu}\sum_{b\in B_{\nu}}b^-\overline{m_1}\otimes b^{*+}\overline{m_2}\ \textrm{for any} \ m_1\in M_{\zeta},m_2\in \Lambda_{\lambda}.
\end{equation}
Note that there are still only finitely many non-zero terms in the summation. 

The following lemma is essentially the same as \cite[Theorem 27.3.2]{Lusztig-1993} and \cite[Theorem 2.7]{Bao-Wang-2016}. Indeed, by the same proof as \cite[Proposition 2.4 \& Remark 2.5]{Bao-Wang-2016}, both $\Theta$ and $\Psi$ preserve the $\mathcal{A}$-submodule of $M_{\zeta}\otimes \Lambda_{\lambda}$ generated by $\mathbf{B}(M_\zeta)\otimes \mathbf{B}(\Lambda_{\lambda})$. Then the statement follows from the same proof as \cite[Theorem 27.3.2]{Lusztig-1993}.

\begin{lemma}\label{canonical basis of Verma otime highest weight}
Let $\zeta\in X$ and $\lambda\in X^+$. Then for any $b_1\in \mathbf{B}(M_\zeta)$ and $b_2\in \mathbf{B}(\Lambda_{\lambda})$, there exists a unique $b_1\diamondsuit b_2\in \mathbb{Z}[v^{-1}][\mathbf{B}(M_\zeta)\otimes \mathbf{B}(\Lambda_{\lambda})]$ such that
$$\Psi(b_1\diamondsuit b_2)=b_1\diamondsuit b_2\ \text{and}\ b_1\diamondsuit b_2-b_1\otimes b_2 \in v^{-1}\mathbb{Z}[v^{-1}][\mathbf{B}(M_\zeta)\otimes \mathbf{B}(\Lambda_{\lambda})].$$
Moreover, the set $\mathbf{B}(M_\zeta\otimes \Lambda_{\lambda}):=\mathbf{B}(M_\zeta)\diamondsuit \mathbf{B}(\Lambda_{\lambda})$ is a basis of $M_{\zeta}\otimes \Lambda_{\lambda}$. The transition matrix from $\mathbf{B}(M_\zeta)\otimes \mathbf{B}(\Lambda_{\lambda})$ to $\mathbf{B}(M_\zeta\otimes \Lambda_{\lambda})$ is unitriangular with off-diagonal entries in $v^{-1}\mathbb{Z}[v^{-1}]$.
\end{lemma}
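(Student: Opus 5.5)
The plan is to run Lusztig's standard construction of canonical bases in based modules, \cite[Theorem 27.3.2]{Lusztig-1993}, in the setting of $M_\zeta\otimes\Lambda_\lambda$. The only non-formal inputs are that $\Psi$ is an involution and that it preserves the $\mathcal{A}$-lattice $\mathcal{L}$ spanned by $\mathbf{B}(M_\zeta)\otimes\mathbf{B}(\Lambda_\lambda)$ with a triangular shape.

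First I record that each weight space of $M_\zeta\otimes\Lambda_\lambda$ is finite-dimensional. Indeed, weights of $M_\zeta$ are $\zeta-\nu_X$ and weights of $\Lambda_\lambda$ are bounded above by $\lambda$, and $Y$-regularity makes the $\mathbb{N}[I]$-grading well defined. Next I put a partial order on $\mathbf{B}(M_\zeta)\otimes\mathbf{B}(\Lambda_\lambda)$: say $b_1'\otimes b_2'\prec b_1\otimes b_2$ if $|b_2'|<|b_2|$ in $\mathbb{N}[I]$, the total weight being fixed. Each term of \eqref{Theta and Psi} with $\nu\neq0$ lowers the $\Lambda_\lambda$-depth by $\nu$, since $b^{*+}$ raises. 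Hence each weight space has finitely many elements below a given one, and the formula is a finite sum.

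Integrality comes next, following \cite[Proposition 2.4 \& Remark 2.5]{Bao-Wang-2016}. The quasi-$\mathcal{R}$-matrix satisfies $\Theta=\sum_\nu\Theta_\nu$ with $\Theta_\nu\in{_{\mathcal{A}}\mathbf{U}^-}\otimes{_{\mathcal{A}}\mathbf{U}^+}$ \cite[Theorem 4.1.2, Corollary 24.1.6]{Lusztig-1993}. Moreover ${_{\mathcal{A}}\mathbf{U}^-}$ preserves the $\mathcal{A}$-span of $\mathbf{B}(M_\zeta)={_{\mathcal{A}}\mathbf{f}}$, and ${_{\mathcal{A}}\mathbf{U}^+}$ preserves ${_{\mathcal{A}}\Lambda_\lambda}$. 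The factors are bar-invariant, and the bars on $M_\zeta$ and $\Lambda_\lambda$ fix $\mathbf{B}(M_\zeta)$ and $\mathbf{B}(\Lambda_\lambda)$ respectively. Therefore
$$\Psi(b_1\otimes b_2)\in b_1\otimes b_2+\sum_{b'\prec b_1\otimes b_2}\mathcal{A}\,b'.$$

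For the involution property $\Psi^2=\mathrm{Id}$, I would use $\Theta\overline{\Theta}=1$ \cite[Corollary 4.1.3]{Lusztig-1993}. The point is that the identity holds in a completion of $\mathbf{U}^-\otimes\mathbf{U}^+$, and its action on $M_\zeta\otimes\Lambda_\lambda$ is well defined termwise because $\mathbf{U}^+$ acts locally nilpotently on $\Lambda_\lambda$. Integrability of $M_\zeta$ is never used. This is the step I would check most carefully, but only the $\mathbf{U}^+$-side nilpotence enters, so it should go through.

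With these ingredients the standard lemma \cite[Lemma 24.2.1]{Lusztig-1993} applies. It concerns a bar-type involution on a free $\mathcal{A}$-module whose matrix is unitriangular with respect to a partial order having finite lower intervals. It yields, for each $b_1\otimes b_2$, a unique $\Psi$-invariant element of the form $b_1\otimes b_2+\sum_{b'\prec b_1\otimes b_2}v^{-1}\mathbb{Z}[v^{-1}]\,b'$. This gives both existence and uniqueness of $b_1\diamondsuit b_2$. Concretely, existence goes by induction along $\prec$, solving $\overline{p}-p=$ known coefficient in $v^{-1}\mathbb{Z}[v^{-1}]$. Uniqueness holds because a $\Psi$-invariant element of $v^{-1}\mathbb{Z}[v^{-1}][\ldots]$ vanishes, as seen by looking at a maximal term.

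Finally, the transition matrix from $\mathbf{B}(M_\zeta)\otimes\mathbf{B}(\Lambda_\lambda)$ to $\mathbf{B}(M_\zeta\otimes\Lambda_\lambda)$ is unitriangular with off-diagonal entries in $v^{-1}\mathbb{Z}[v^{-1}]$ by construction. Since weight spaces are finite-dimensional, such a matrix is invertible weight space by weight space, so $\mathbf{B}(M_\zeta\otimes\Lambda_\lambda)$ is a basis.
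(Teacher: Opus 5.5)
Your proposal is correct and follows the paper's route. The paper cites \cite[Proposition 2.4 \& Remark 2.5]{Bao-Wang-2016} for stability of the $\mathcal{A}$-lattice spanned by $\mathbf{B}(M_\zeta)\otimes\mathbf{B}(\Lambda_\lambda)$ under $\Theta$ and $\Psi$, then runs the proof of \cite[Theorem 27.3.2]{Lusztig-1993}, which is the triangularity, involutivity (via $\Theta\overline{\Theta}=1$, needing only local nilpotence of $\mathbf{U}^+$ on $\Lambda_\lambda$) and \cite[Lemma 24.2.1]{Lusztig-1993} argument you spell out.

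One small correction: the simply connected datum need not be $X$-regular (e.g.\ in affine type $\nu\mapsto\nu_X$ has nonzero kernel), so $X$-weight spaces of $M_\zeta$ can be infinite-dimensional. Your finiteness claims and the order $\prec$ should therefore be stated on pieces of fixed total $\mathbb{N}[I]$-degree, which is intrinsic from $\mathbf{f}$ and preserved by $\Theta$ and the bar maps, rather than on $X$-weight spaces; $Y$-regularity plays no role here.
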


We call $\mathbf{B}(M_\zeta \otimes \Lambda_{\lambda})$ the {\it canonical basis} of the tensor product $M_{\zeta}\otimes \Lambda_{\lambda}$.

\begin{lemma}\label{a otimes Id is a based map}
Let $\lambda_1,\lambda_2\in X^+,w\in W$, and let $a_{-w\lambda_1}: M_{-w\l_1} \to {^{\omega}V_w(\lambda_1)}$ be the projection map. Then we have  
$$(a_{-w\lambda_1}\otimes \mathrm{Id})(\mathbf{B}(M_{-w\lambda_1}\otimes \Lambda_{\lambda_2})) \subset \mathbf{B}({^{\omega}V_{w}(\lambda_1)}\otimes \Lambda_{\lambda_2})\sqcup \{0\},$$ 
and the set $\mathbf{B}(M_{-w\lambda_1}\otimes \Lambda_{\lambda_2})\cap \mathrm{ker}\,(a_{-w\lambda_1}\otimes \mathrm{Id})$ is a basis of $\mathrm{ker}\,(a_{-w\lambda_1}\otimes \mathrm{Id})$.
\end{lemma}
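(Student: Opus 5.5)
The approach is the standard one for showing a map between modules equipped with $\Psi$-involutions is based: check that $a_{-w\lambda_1}\otimes\mathrm{Id}$ intertwines the two $\Psi$-involutions and the two integral lattices, then use the uniqueness characterization of Lemma \ref{canonical basis of Verma otime highest weight} and Theorem \ref{24.3.3}.

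\textbf{Step 1: $\Psi$ commutes with the projection.} I will show that
\[
(a_{-w\lambda_1}\otimes\mathrm{Id})\circ\Psi=\Psi\circ(a_{-w\lambda_1}\otimes\mathrm{Id}),
\]
where the right-hand $\Psi$ is the restriction to ${^{\omega}V_w(\lambda_1)}\otimes\Lambda_{\lambda_2}$ of the involution on ${^{\omega}\Lambda_{\lambda_1}}\otimes\Lambda_{\lambda_2}$. By \eqref{Theta and Psi}, $\Psi$ only involves three ingredients: the bar maps on each factor, the action of $b^-$ on the first factor, and the action of $b^{*+}$ on the second factor. Since $a_{-w\lambda_1}$ is $\mathbf{U}(\mathfrak{b}^-)$-linear, it commutes with $b^-$. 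It commutes with bar because of two facts:
\begin{itemize}
\item on $M_{-w\lambda_1}$ the bar is $\overline{x^-v_{-w\lambda_1}}=\overline{x}^-v_{-w\lambda_1}$;
\item $\xi_{-w\lambda_1}$ is a bar-invariant canonical basis element, so $\overline{x^-\xi_{-w\lambda_1}}=\overline{x}^-\xi_{-w\lambda_1}$, using that the bar on ${^{\omega}\Lambda_{\lambda_1}}$ is compatible with the $\mathbf{U}$-action.
\end{itemize}
The second factor is untouched. This gives the identity on the spanning set $b^-v\otimes b_2$, hence everywhere.

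\textbf{Step 2: images of canonical basis elements.} Take $b_1=b^-v_{-w\lambda_1}$. By Proposition \ref{action map is a based map}, either $a(b_1)\in\mathbf{B}({^{\omega}V_w(\lambda_1)})$ or $a(b_1)=0$. So $a\otimes\mathrm{Id}$ sends the $\mathbb{Z}[v^{-1}]$-span of $\mathbf{B}(M)\otimes\mathbf{B}(\Lambda_{\lambda_2})$ into the $\mathbb{Z}[v^{-1}]$-span of $\mathbf{B}({^{\omega}V_w(\lambda_1)})\otimes\mathbf{B}(\Lambda_{\lambda_2})$, and likewise with $v^{-1}\mathbb{Z}[v^{-1}]$. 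Therefore $y=(a\otimes\mathrm{Id})(b_1\diamondsuit b_2)$ is $\Psi$-invariant and satisfies
\[
y-a(b_1)\otimes b_2\in v^{-1}\mathbb{Z}[v^{-1}][\mathbf{B}({^{\omega}V_w(\lambda_1)})\otimes\mathbf{B}(\Lambda_{\lambda_2})].
\]
\begin{itemize}
\item If $a(b_1)\neq0$, uniqueness in Theorem \ref{24.3.3} (restricted via Proposition \ref{canonical basis of demazure otimes highest weight}) gives $y=a(b_1)\diamondsuit b_2$.
\item If $a(b_1)=0$, then $y$ is $\Psi$-invariant and lies in $v^{-1}\mathbb{Z}[v^{-1}]$-span. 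Expanding $y$ in the canonical basis $\mathbf{B}({^{\omega}V_w(\lambda_1)}\otimes\Lambda_{\lambda_2})$, the coefficients are bar-invariant, since that basis is $\Psi$-invariant. Using unitriangularity, the coefficients also lie in $v^{-1}\mathbb{Z}[v^{-1}]$. A bar-invariant element of $v^{-1}\mathbb{Z}[v^{-1}]$ is $0$, so $y=0$.
\end{itemize}

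\textbf{Step 3: the kernel.} Let $S\subset\mathbf{B}(M\otimes\Lambda_{\lambda_2})$ be the set of elements with nonzero image. Distinct elements $b_1\diamondsuit b_2$ of $S$ map to distinct canonical basis elements, because Proposition \ref{action map is a based map} makes $a$ injective on $\mathbf{B}(M)\setminus\ker a$. Hence the images of $S$ are linearly independent, and the elements of $\mathbf{B}(M\otimes\Lambda_{\lambda_2})$ killed by the map span a subspace of $\ker$ whose dimension matches, weight space by weight space. To make this dimension count legitimate, I compare with the basis $\mathbf{B}(M)\otimes\mathbf{B}(\Lambda_{\lambda_2})$ of the same weight space. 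In each weight, the number of elements $b_1\otimes b_2$ with $a(b_1)=0$ equals $\dim\ker$, again by Proposition \ref{action map is a based map}, and this equals the number of $b_1\diamondsuit b_2$ with $a(b_1)=0$. Weight spaces of $M\otimes\Lambda_{\lambda_2}$ are finite dimensional because $\Lambda_{\lambda_2}$ is integrable with finitely many weights above each fixed one. This gives the basis statement for the kernel.

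I expect the main obstacle to be Step 1: checking carefully that the bar involutions on $M_{-w\lambda_1}$ and on ${^{\omega}V_w(\lambda_1)}$ are intertwined by $a$, namely that $\xi_{-w\lambda_1}$ is bar-invariant and that the $\mathbf{U}^-$-action commutes with bar on ${^{\omega}\Lambda_{\lambda_1}}$.
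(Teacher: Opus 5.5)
Your proof is the paper's argument. You show that $a_{-w\lambda_1}\otimes\mathrm{Id}$ intertwines the two $\Psi$-involutions; the paper says this holds ``by definition'', and you verify it from the bar-invariance of $\xi_{-w\lambda_1}$ and $\mathbf{U}^-$-linearity. Then you use uniqueness in Theorem \ref{24.3.3} when $a_{-w\lambda_1}(b_1)\neq 0$, and the vanishing of $\Psi$-invariant elements of the $v^{-1}\mathbb{Z}[v^{-1}]$-lattice when $a_{-w\lambda_1}(b_1)=0$. Steps 1 and 2 are fine.

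The one thing to fix is the justification in Step 3. The $X$-weight spaces of $M_{-w\lambda_1}\otimes\Lambda_{\lambda_2}$ need not be finite dimensional, because the simply connected root datum is not $X$-regular in general. For example, in affine type $\delta_X=0$, so $\mathbf{f}_\nu,\mathbf{f}_{\nu+\delta},\mathbf{f}_{\nu+2\delta},\dots$ all lie in a single weight space of $M_{-w\lambda_1}$. Comparing cardinalities of bases inside such a space proves nothing.

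There are two ways to repair it:
\begin{itemize}
\item Run your count in the $\mathbb{N}[I]$-grading by $|x_1|+|x_2|$ on $x_1^-v_{-w\lambda_1}\otimes x_2^-\eta_{\lambda_2}$. This grading is preserved by $\Psi$, and its graded pieces are finite dimensional.
\item Better, drop counting altogether. Write $z\in\ker(a_{-w\lambda_1}\otimes\mathrm{Id})$ as $\sum_c k_c c$ with $c\in\mathbf{B}(M_{-w\lambda_1}\otimes\Lambda_{\lambda_2})$, and apply the map. You already showed that the elements of $S$ map to pairwise distinct elements of the basis $\mathbf{B}({^{\omega}V_w(\lambda_1)}\otimes\Lambda_{\lambda_2})$, so $k_c=0$ for $c\in S$. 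Hence $z$ lies in the span of $\mathbf{B}(M_{-w\lambda_1}\otimes\Lambda_{\lambda_2})\cap\ker(a_{-w\lambda_1}\otimes\mathrm{Id})$.
\end{itemize}
The second argument is exactly what the paper's one-line deduction of the kernel statement from the image formula amounts to.
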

\begin{proof}
By Proposition \ref{action map is a based map}, we have $a_{-w\lambda_1}(\mathbf{B}(M_{-w\lambda_1}))\subset \mathbf{B}({^{\omega}V_w(\lambda_1)})\sqcup\{0\}$. Let $b_1 \in \mathbf{B}(M_{-w\lambda_1})$ and $b_2 \in \mathbf{B}(\L_{\l_2})$. We prove that
\begin{equation}\label{image a otimes Id}
(a_{-w\lambda_1}\otimes \mathrm{Id})(b_1\diamondsuit b_2)=\delta_{a_{-w\lambda_1}(b_1),0}\cdot (a_{-w\lambda_1}(b_1) \diamondsuit b_2).    
\end{equation}
Let $\Psi$ and $\Psi'$ be the $\Psi$-involutions on ${^{\omega}\Lambda_{\lambda_1}}\otimes \Lambda_{\lambda_2}$ and $M_{-w\lambda_1}\otimes \Lambda_{\lambda_2}$ respectively. By definition, we have $\Psi(a_{-w\lambda_1}\otimes \mathrm{Id})=(a_{-w\lambda_1}\otimes m\mathrm{Id})\Psi':M_{-w \l_1} \otimes \L_{\l_2}\rightarrow {^{\omega}\Lambda_{\lambda_1}}\otimes \Lambda_{\lambda_2}$. By Lemma \ref{canonical basis of Verma otime highest weight}, we have $\Psi'(b_1\diamondsuit b_2)=b_1\diamondsuit b_2\in \mathbb{Z}[v^{-1}][\mathbf{B}(M_{-w\lambda_1})\otimes \mathbf{B}(\Lambda_{\lambda_2})]$ and $b_1\diamondsuit b_2-b_1\otimes b_2\in v^{-1}\mathbb{Z}[v^{-1}][\mathbf{B}(M_{-w\lambda_1})\otimes \mathbf{B}(\Lambda_{\lambda_2})]$. Applying $a_{-w\lambda_1}\otimes \mathrm{Id}$, by Proposition \ref{action map is a based map}, we obtain
\begin{align*}
&\Psi(a_{-w\lambda_1}\otimes \mathrm{Id})(b_1\diamondsuit b_2)
=(a_{-w\lambda_1}\otimes \mathrm{Id})(b_1\diamondsuit b_2)\in \mathbb{Z}[v^{-1}][\mathbf{B}({^{\omega}\Lambda_{\lambda_1}})\otimes \mathbf{B}(\Lambda_{\lambda_2})],\\
&(a_{-w\lambda_1}\otimes \mathrm{Id})(b_1\diamondsuit b_2)-a_{-w\lambda_1}(b_1)\otimes b_2\in v^{-1}\mathbb{Z}[v^{-1}][\mathbf{B}({^{\omega}\Lambda_{\lambda_1}})\otimes \mathbf{B}(\Lambda_{\lambda_2})].
\end{align*}
If $a_{-w\lambda_1}(b_1) \neq 0$, then $a_{-w\lambda_1}(b_1) \in \mathbf{B}({^{\omega}\Lambda_{\lambda_1}})$, and $(a_{-w\lambda_1}\otimes \mathrm{Id})(b_1\diamondsuit b_2)$ satisfies the characterizing conditions in Theorem \ref{24.3.3}. Hence $$(a_{-w\lambda_1}\otimes \mathrm{Id})(b_1\diamondsuit b_2)=a_{-w\lambda_1}(b_1)\diamondsuit b_2.$$ 
Moreover, by Proposition \ref{canonical basis of demazure otimes highest weight}, $a_{-w\lambda_1}(b_1)\diamondsuit b_2\in \mathbf{B}({^{\omega}V_{w}(\lambda_1)}\otimes \Lambda_{\lambda_2})$. If $a_{-w\lambda_1}(b_1)=0$, then $(a_{-w\lambda_1}\otimes \mathrm{Id})(b_1\diamondsuit b_2)\in v^{-1}\mathbb{Z}[v^{-1}][\mathbf{B}({^{\omega}\Lambda_{\lambda_1}})\otimes \mathbf{B}(\Lambda_{\lambda_2})]$ is $\Psi$-invariant. By Theorem \ref{24.3.3}, $\mathbf{B}({^{\omega}\Lambda_{\lambda_1}})\otimes \mathbf{B}(\Lambda_{\lambda_2})\subset \mathbb{Z}[v^{-1}][\mathbf{B}({^{\omega}\Lambda_{\lambda_1}}\otimes\Lambda_{\lambda_2})]$, and $0$ is the unique $\Psi$-invariant element in $v^{-1}\mathbb{Z}[v^{-1}][\mathbf{B}({^{\omega}\Lambda_{\lambda_1}}\otimes\Lambda_{\lambda_2})]$. Hence $$(a_{-w\lambda_1}\otimes \mathrm{Id})(b_1\diamondsuit b_2)=0.$$
The remaining statement about $\mathrm{ker}\,(a_{-w\lambda_1}\otimes \mathrm{Id})$ follows from \eqref{image a otimes Id} and the fact that $\mathbf{B}(M_{-w\lambda_1})\cap \mathrm{ker}\,a_{-w\lambda_1}$ is a basis of $\mathrm{ker}\,a_{-w\lambda_1}$ given by Proposition \ref{action map is a based map}. 
\end{proof}

\subsection{Canonical basis of $(\mathbf{f}\theta_{\lambda}\mathbf{f})_{\zeta\odot\lambda}$}

Let $\tilde{\mathbf{B}}$ be the canonical basis of $\tilde{\mathbf{f}}$.

Let $\lambda\in X^+$, and let $\mathbf{f}\theta_{\lambda}\mathbf{f}$ be the subspace of $\tilde{\mathbf{f}}$ defined in \S \ref{ftheta_lambdaf}. By \cite[Proposition 4.3]{Fang-Lan-2025}, the set $\mathbf{B}(\mathbf{f}\theta_{\lambda}\mathbf{f}):=\tilde{\mathbf{B}}\cap (\mathbf{f}\theta_{\lambda}\mathbf{f})$ is a basis of $\mathbf{f}\theta_{\lambda}\mathbf{f}$. We call it the {\it canonical basis} of $\mathbf{f}\theta_{\lambda}\mathbf{f}$. 

Let $\mathbf{B}(\lambda)=\bigcap_{i\in I}(\mathbf{B}\setminus \mathbf{f}\theta_i^{\langle i_Y,\lambda\rangle+1})$. By \cite[Theorem 14.4.11]{Lusztig-1993}, we have
\begin{equation}\label{14.4.11}
\mathbf{B}(\lambda)=\{b\in \mathbf{B};b^-\eta_{\lambda}\neq0\}. 
\end{equation}
By \cite[\S 5.1, (14)]{Fang-Lan-2025}, the right multiplication by $\theta_{\lambda}$ gives a bijection
\begin{equation}\label{Fang-Lan-2025-5.3*}
\mathbf{B}(\lambda)\rightarrow \bigcap_{i\in I}(\tilde{\mathbf{B}}\setminus \tilde{\mathbf{f}}\theta_i)\cap \mathbf{B}(\mathbf{f}\theta_{\lambda}\mathbf{f}), b\mapsto b\theta_{\lambda}.
\end{equation}

Let $\zeta\in X$ and $\lambda\in X^+$. Then 
$\mathbf{B}((\mathbf{f}\theta_{\lambda}\mathbf{f})_{\zeta\odot\lambda}):=\{b^-v_{\zeta\odot\lambda};b\in \mathbf{B}(\mathbf{f}\theta_{\lambda}\mathbf{f})\}$
is a basis of $(\mathbf{f}\theta_{\lambda}\mathbf{f})_{\zeta\odot\lambda}$. We call it the {\it canonical basis} of $(\mathbf{f}\theta_{\lambda}\mathbf{f})_{\zeta\odot\lambda}$. 
\subsection{Involution and bilinear form}\label{sec:technical}

Let $\zeta\in X$ and $\lambda\in X^+$. In this subsection, we study the compatibility of the thickening realization $\varphi_{\zeta,\lambda}:(\mathbf{f}\theta_{\lambda}\mathbf{f})_{\zeta\odot\lambda}\rightarrow M_{\zeta}\otimes \Lambda_{\lambda}$ established in Lemma \ref{isomorphism varphi} with the involutions and symmetric bilinear forms on $(\mathbf{f}\theta_{\lambda}\mathbf{f})_{\zeta\odot\lambda}$ and $M_{\zeta}\otimes \Lambda_{\lambda}$. 

We first study the involutions.

On the one hand, the underlying vector space of the Verma module $M_{\zeta}$ is $\mathbf{f}$, thus the bar-involution on $\mathbf{f}$ gives a bar-involution on $M_{\zeta}$, that is, $\overline{x^-v_{\zeta}}=(\overline{x})^-v_{\zeta}$ for any $x\in \mathbf{f}$. We identify $M_{\zeta}$ with $\mathbf{U}/(\sum_{i\in I}\mathbf{U}E_i+\sum_{\mu\in Y}\mathbf{U}(K_{\mu}-v^{\langle \mu,\zeta\rangle}))$. Under this identification, the bar-involution on $M_{\zeta}$ is induced by the bar-involution on $\mathbf{U}$, and so $\overline{um}=\bar{u}\bar{m}$ for any $u\in \mathbf{U}$ and $m\in M_{\zeta}$. Similarly, the bar-involution on $M_{\zeta\odot\lambda}$ satisfies the same property. It restricts to a bar-involution on $(\mathbf{f}\theta_{\lambda}\mathbf{f})_{\zeta\odot\lambda}$. On the other hand, $M_{\zeta}\otimes \Lambda_{\lambda}$ has a $\Psi$-involution defined by \eqref{Theta and Psi}.

\begin{lemma}\label{varphi and involution}
Let $\zeta\in X$ and $\lambda\in X^+$. Then we have 
$$\Psi\varphi_{\zeta,\lambda}(m)=\varphi_{\zeta,\lambda}(\overline{m})\ \textrm{for any}\ m\in (\mathbf{f}\theta_{\lambda}\mathbf{f})_{\zeta\odot\lambda}.$$
\end{lemma}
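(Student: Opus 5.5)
We want $\Psi\varphi=\varphi\overline{\phantom{m}}$, i.e.\ that $\Psi'':=\varphi^{-1}\Psi\varphi$ agrees with the bar-involution on $(\mathbf{f}\theta_{\lambda}\mathbf{f})_{\zeta\odot\lambda}$. We write $\varphi=\varphi_{\zeta,\lambda}$ and $\psi=\psi_{\zeta,\lambda}=\varphi^{-1}$ (Lemma \ref{isomorphism varphi}). Both $\Psi''$ and the bar-involution are $\mathbb{Q}$-linear maps that are $v\mapsto v^{-1}$ semilinear and intertwine $u$ with $\bar u$ for all $u\in\mathbf{U}$. For the bar map on $(\mathbf{f}\theta_\lambda\mathbf{f})_{\zeta\odot\lambda}$ this is stated just before the lemma. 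For $\Psi$ on $M_\zeta\otimes\Lambda_\lambda$, the identity $\Psi(um)=\bar u\Psi(m)$ is obtained exactly as in \S\ref{Canonical basis of tensor product}: it uses only \cite[Lemma 24.1.2]{Lusztig-1993} together with $\overline{um}=\bar u\bar m$ on each factor, and both hold for $M_\zeta$. Since $\varphi$ is a $\mathbf{U}$-module isomorphism, $\Psi''$ inherits the same property.

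By this common compatibility it suffices to compare the two maps on a set of generators of $(\mathbf{f}\theta_{\lambda}\mathbf{f})_{\zeta\odot\lambda}$ as a $\mathbf{U}^-$-module. By \eqref{im psi} and the definition of $\psi$, the module $M_\zeta\otimes\Lambda_\lambda$ is generated as a $\mathbf{U}^-$-module by the vectors $y^-v_\zeta\otimes\eta_\lambda$ (this is the induction in the proof of Lemma \ref{isomorphism varphi}). Hence $(\mathbf{f}\theta_{\lambda}\mathbf{f})_{\zeta\odot\lambda}$ is generated over $\mathbf{U}^-$ by the vectors $\theta_\lambda^-y^-v_{\zeta\odot\lambda}$. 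Both maps intertwine the $F_i$ action, since $\bar F_i=F_i$, so it is enough to check $\Psi\varphi(\theta_\lambda^-y^-v_{\zeta\odot\lambda})=\varphi(\overline{\theta_\lambda^-y^-v_{\zeta\odot\lambda}})$ for $y\in\mathbf{f}$.

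For this check, note that $\overline{\theta_\lambda}=\theta_\lambda$. Hence the right-hand side equals $\varphi(\theta_\lambda^-\bar y^-v_{\zeta\odot\lambda})=\bar y^-v_\zeta\otimes\eta_\lambda$ by \eqref{im varphi}, where we take $x=1$. The left-hand side is $\Psi(y^-v_\zeta\otimes\eta_\lambda)=\sum_\nu(-v)^{\mathrm{tr}\,\nu}\sum_b b^-\bar y^-v_\zeta\otimes b^{*+}\eta_\lambda$ by \eqref{Theta and Psi}. Since $E_i\eta_\lambda=0$, all terms with $\nu\neq0$ vanish, and we obtain the same vector $\bar y^-v_\zeta\otimes\eta_\lambda$.

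The main point requiring care is the first step: verifying that $\Psi$ on $M_\zeta\otimes\Lambda_\lambda$ really satisfies $\Psi(um)=\bar u\Psi(m)$ for the non-integrable factor $M_\zeta$. The relevant sums are finite because $\Lambda_\lambda$ is integrable with weights bounded above, so $b^{*+}m_2=0$ for large $\nu$. Lusztig's intertwining identity $\Delta(\bar u)\Theta=\Theta\overline{\Delta(u)}$ then applies verbatim, so I expect this step to go through. If needed, I would instead check the intertwining directly for the generators $E_i,F_i,K_\mu$ using \cite[Theorem 4.1.2]{Lusztig-1993}. Having the intertwining for all of $\mathbf{U}$ is convenient, but only the $F_i$-equivariance is actually used in the reduction above.
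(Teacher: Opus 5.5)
Your proof is correct and is essentially the paper's argument. The paper inducts on $\mathrm{tr}\,|x|$ over the spanning vectors $x^-\theta_\lambda^-y^-v_{\zeta\odot\lambda}$, which is exactly your reduction to the $\mathbf{U}^-$-generators $\theta_\lambda^-y^-v_{\zeta\odot\lambda}$ using $F_i$-equivariance of $\varphi$ and of $\Psi$ (the latter via \cite[Lemma 24.1.2]{Lusztig-1993}). The base-case computation is also the same: only the $\nu=0$ term of $\Theta$ survives on $\bar y^-v_\zeta\otimes\eta_\lambda$.
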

\begin{proof}
We simply denote $\varphi_{\zeta,\lambda}=\varphi$. By definition, both $\Psi\varphi$ and $\varphi(\bar{\cdot})$ are $\mathbb{Q}$-linear exchanging $v$ and $v^{-1}$. The vector space $(\mathbf{f}\theta_{\lambda}\mathbf{f})_{\zeta\odot\lambda}$ is spanned by $x^-\theta_{\lambda}^-y^-v_{\zeta\odot\lambda}$ for any homogeneous $x,y\in \mathbf{f}$. We prove $\Psi\varphi(x^-\theta_{\lambda}^-y^-v_{\zeta\odot\lambda})=\varphi(\overline{x^-\theta_{\lambda}^-y^-v_{\zeta\odot\lambda}})$ by induction on $\mathrm{tr}\,|x|\in \mathbb{N}$. If $\mathrm{tr}\,|x|=0$, then $x\in \mathbb{Q}(v)$. By \eqref{im varphi} and \eqref{Theta and Psi}, we have 
\begin{align*}
\Psi\varphi(\theta_{\lambda}^-y^-v_{\zeta\odot\lambda})=&\Psi(y^-v_{\zeta}\otimes \eta_{\lambda})=\Theta(\overline{y^-v_{\zeta}}\otimes \overline{\eta_{\lambda}})\\
=&(\overline{y})^-v_{\zeta}\otimes \eta_{\lambda}=\varphi(\theta_{\lambda}^-(\overline{y})^-v_{\zeta\odot\lambda})=\varphi(\overline{\theta_{\lambda}^-y^-v_{\zeta\odot\lambda}})   . 
\end{align*}
If $\mathrm{tr}\,|x|>0$, then it suffices to prove the statement for $x=\theta_ix'$ with $i\in I$ and homogeneous $x'\in \mathbf{f}$. By \cite[Lemma 24.1.2]{Lusztig-1993}, $\Psi(um)=\overline{u}\Psi(m)$ for any $u\in \mathbf{U}$ and $m\in M_{\zeta}\otimes \Lambda_{\lambda}$. Since $\mathrm{tr}\,|x'|<\mathrm{tr}\,|x|$, by Lemma \ref{isomorphism varphi} and the inductive hypothesis,
\begin{align*}
\Psi\varphi(x^-\theta_{\lambda}^-y^-v_{\zeta\odot\lambda})=&\Psi\varphi(F_i{x'}^-\theta_{\lambda}^-y^-v_{\zeta\odot\lambda})=\Psi(F_i\varphi({x'}^-\theta_{\lambda}^-y^-v_{\zeta\odot\lambda}))\\
=&\overline{F_i}\Psi\varphi({x'}^-\theta_{\lambda}^-y^-v_{\zeta\odot\lambda})=F_i\varphi(\overline{{x'}^-\theta_{\lambda}^-y^-v_{\zeta\odot\lambda}})\\
=&\varphi(F_i\overline{{x'}^-\theta_{\lambda}^-y^-v_{\zeta\odot\lambda}})=\varphi(\overline{F_i{x'}^-\theta_{\lambda}^-y^-v_{\zeta\odot\lambda}})=\varphi(\overline{x^-\theta_{\lambda}^-y^-v_{\zeta\odot\lambda}}),
\end{align*}
as desired.
\end{proof}

Next we study the bilinear forms.

On the one hand, by \cite[Proposition 1.2.3]{Lusztig-1993}, there is a non-degenerate symmetric bilinear form $(\,,\,)_{\mathbf{f}}:\mathbf{f}\times \mathbf{f}\rightarrow \mathbb{Q}(v)$. Similarly, there is a bilinear form $(\,,\,)_{\tilde{\mathbf{f}}}:\tilde{\mathbf{f}}\times \tilde{\mathbf{f}}\rightarrow \mathbb{Q}(v)$. On the other hand, by \cite[Proposition 19.1.2]{Lusztig-1993}, there is a symmetric bilinear form $(\,,\,)_{\Lambda_{\lambda}}:\Lambda_{\lambda}\times \Lambda_{\lambda}\rightarrow \mathbb{Q}(v)$. We define a symmetric bilinear form 
$(\,,\,)_{\otimes}:(M_{\zeta}\otimes \Lambda_{\lambda})\times (M_{\zeta}\otimes \Lambda_{\lambda})\rightarrow \mathbb{Q}(v)$
by
\begin{equation}\label{definition the bilinear form on tensor product}
(x_1^-v_{\zeta}\otimes m_2,{x'_1}^-v_{\zeta}\otimes m'_2)_{\otimes}=(x_1,x'_1)_{\mathbf{f}}(m_2,m'_2)_{\Lambda_{\lambda}}\ \textrm{for}\ x_1,x'_1\in \mathbf{f},m_2,m'_2\in \Lambda_{\lambda}.
\end{equation}

For any $i\in I$, let ${_{i}r}:\mathbf{f}\rightarrow \mathbf{f}$ and  ${_{i}r}:\tilde{\mathbf{f}}\rightarrow \tilde{\mathbf{f}}$ be the linear maps defined in \cite[\S 1.2.13]{Lusztig-1993}. For any homogeneous $x,y\in \mathbf{f}$, by definition, we have 
\begin{equation}\label{irxthetalambday}
{_{i}r}(x\theta_{\lambda}y)={_{i}r}(x)\theta_{\lambda}y+v^{i\cdot(|x|+|\theta_{\lambda}|)}x\theta_{\lambda}\,{_{i}r}(y)\in \mathbf{f}\theta_{\lambda}\mathbf{f}.
\end{equation}
Hence ${_{i}r}:\tilde{\mathbf{f}}\rightarrow \tilde{\mathbf{f}}$ can restrict to ${_{i}r}:\mathbf{f}\theta_{\lambda}\mathbf{f}\rightarrow \mathbf{f}\theta_{\lambda}\mathbf{f}$. We identify $ (\mathbf{f}\theta_{\lambda}\mathbf{f})_{\zeta\odot\lambda}\cong \mathbf{f}\theta_{\lambda}\mathbf{f}$ via $z^-v_{\zeta\odot\lambda}\mapsto z$ for any $z\in \mathbf{f}\theta_{\lambda}\mathbf{f}$, then we obtain a linear map 
$${_{i}r}:(\mathbf{f}\theta_{\lambda}\mathbf{f})_{\zeta\odot\lambda}\rightarrow (\mathbf{f}\theta_{\lambda}\mathbf{f})_{\zeta\odot\lambda},z^-v_{\zeta\odot\lambda}\mapsto ({_ir(z)})^-v_{\zeta\odot\lambda}\ \textrm{for any}\ z\in \mathbf{f}\theta_{\lambda}\mathbf{f}.$$ 
Let $\epsilon_i:M_{\zeta}\otimes \Lambda_{\lambda}\rightarrow M_{\zeta}\otimes \Lambda_{\lambda}$ be the linear map defined in \cite[\S 18.1.3]{Lusztig-1993}, that is,
\begin{equation}\label{definition of epsiloni}
\epsilon_i(x_1^-v_{\zeta}\otimes m_2)=({_{i}r}(x_1))^-v_{\zeta}\otimes K_{-i_Y}m_2+(v-v^{-1})x_1^-v_{\zeta}\otimes K_{-i_Y}E_i m_2
\end{equation}
for any $x_1\in \mathbf{f}$ and $m_2\in \Lambda_{\lambda}$.

\begin{lemma}\label{varphi and inner product}
Let $\zeta\in X,\lambda\in X^+,i\in I$ and $x_1,x'_1\in \mathbf{f},m_2,m'_2\in \Lambda_{\lambda},z,z'\in \mathbf{f}\theta_{\lambda}\mathbf{f}$. Then we have
\begin{enumerate}
\item $\epsilon_i\varphi_{\zeta,\lambda}=\varphi_{\zeta,\lambda}\,{_{i}r}:(\mathbf{f}\theta_{\lambda}\mathbf{f})_{\zeta\odot\lambda}\rightarrow M_{\zeta}\otimes \Lambda_{\lambda}$;
\item $(F_i(x_1^-v_{\zeta}\otimes m_2),{x'_1}^-v_{\zeta}\otimes m'_2)_{\otimes}=(1-v^{-2})^{-1}(x_1^-v_{\zeta}\otimes m_2,\epsilon_i({x'_1}^-v_{\zeta}\otimes m'_2))_{\otimes}$;
\item $(\varphi_{\zeta,\lambda}(z^-v_{\zeta\odot\lambda}),\varphi_{\zeta,\lambda}({z'}^-v_{\zeta\odot\lambda}))_{\otimes}=(z,z')_{\tilde{\mathbf{f}}}\cdot \prod_{i\in I}\prod_{k=1}^{\langle i_Y,\lambda\rangle}(1-v^{-2k})$.
\end{enumerate}
\end{lemma}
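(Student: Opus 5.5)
Part (1) is proved by induction on the generators, exactly as Lemmas \ref{isomorphism varphi} and \ref{varphi and involution} were. We will use the commutation rule for ${_ir}$ in $\tilde{\mathbf{f}}$, namely ${_ir}(\theta_jz)=\delta_{ij}z+v^{i\cdot j}\theta_j\,{_ir}(z)$. On the other side we need the analogous rule for $\epsilon_i$ on $M_\zeta\otimes\Lambda_\lambda$: $\epsilon_iF_j=\delta_{ij}\mathrm{Id}+v^{i\cdot j}F_j\epsilon_i$. This would be checked directly from \eqref{definition of epsiloni}, $\Delta(F_j)=1\otimes F_j+F_j\otimes K_{-j_Y}$ and $E_iF_j-F_jE_i=\delta_{ij}(K_i-K_{-i})/(v-v^{-1})$; it is the content of \cite[\S 18.1]{Lusztig-1993}. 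Since $\varphi_{\zeta,\lambda}$ commutes with $F_j$ by Lemma \ref{isomorphism varphi}, both $\epsilon_i\varphi$ and $\varphi\,{_ir}$ satisfy the same recursion with respect to left multiplication by $F_j$, $j\in I$. Since $(\mathbf{f}\theta_\lambda\mathbf{f})_{\zeta\odot\lambda}$ is spanned by $x^-\theta_\lambda^-y^-v_{\zeta\odot\lambda}$, induction on $\mathrm{tr}\,|x|$ reduces (1) to the base case $x=1$. There \eqref{irxthetalambday} gives ${_ir}(\theta_\lambda y)=v^{i\cdot|\theta_\lambda|}\theta_\lambda\,{_ir}(y)$, since ${_ir}(\theta_\lambda)=0$ for $i\in I$. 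So the right side is $v^{i\cdot|\theta_\lambda|}\,{_ir}(y)^-v_\zeta\otimes\eta_\lambda$. The left side is $\epsilon_i(y^-v_\zeta\otimes\eta_\lambda)={_ir}(y)^-v_\zeta\otimes K_{-i_Y}\eta_\lambda$, because $E_i\eta_\lambda=0$. The factor $v^{-\langle i_Y,\lambda\rangle}$ matches $v^{i\cdot|\theta_\lambda|}=v^{-\langle i_Y,\lambda\rangle}$, since $i\cdot j'=-\delta_{ij}$.

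Part (2) is a direct computation. We expand $F_i(x_1^-v_\zeta\otimes m_2)=x_1^-v_\zeta\otimes F_im_2+(\theta_ix_1)^-v_\zeta\otimes K_{-i_Y}m_2$ and pair it with ${x_1'}^-v_\zeta\otimes m_2'$. For the second term we use $(\theta_ix_1,x_1')_{\mathbf{f}}=(1-v^{-2})^{-1}(x_1,{_ir}(x_1'))_{\mathbf{f}}$ \cite[1.2.13]{Lusztig-1993} and the self-adjointness of $K_{-i_Y}$ for $(\,,\,)_{\Lambda_\lambda}$. For the first term we use $(F_im_2,m_2')=(m_2,vK_{-i_Y}E_im_2')$, and then $v=(1-v^{-2})^{-1}(v-v^{-1})$. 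Adding the two pieces gives exactly $(1-v^{-2})^{-1}(x_1^-v_\zeta\otimes m_2,\epsilon_i(\cdots))_\otimes$ by \eqref{definition of epsiloni}.

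For part (3), write $c_\lambda=\prod_i\prod_{k=1}^{\langle i_Y,\lambda\rangle}(1-v^{-2k})$, and note that $(\theta_\lambda,\theta_\lambda)_{\tilde{\mathbf{f}}}=c_\lambda^{-1}$ by \cite[1.4.4]{Lusztig-1993} and orthogonality of the commuting $\theta_{i'}$. We induct on $\mathrm{tr}\,|x|$ with $z=x\theta_\lambda y$, using (1), (2) and the corresponding identity $(\theta_iu,z')_{\tilde{\mathbf{f}}}=(1-v^{-2})^{-1}(u,{_ir}z')_{\tilde{\mathbf{f}}}$ in $\tilde{\mathbf{f}}$. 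This reduces the claim to $z=\theta_\lambda y$ with $z'$ arbitrary, and by symmetry of both forms we may also reduce $z'$ to $\theta_\lambda y'$. The remaining base case is the main obstacle. We must show $(\theta_\lambda y,\theta_\lambda y')_{\tilde{\mathbf{f}}}=c_\lambda^{-1}(y,y')_{\mathbf{f}}$, because $\varphi$ sends these elements to $y^-v_\zeta\otimes\eta_\lambda$ and $y'^-v_\zeta\otimes\eta_\lambda$, whose $(\,,\,)_\otimes$-pairing is $(y,y')_{\mathbf{f}}$. For this we compute $r(\theta_\lambda y')$ and pair with $\theta_\lambda\otimes y$. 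Only the component of $r(\theta_\lambda)r(y')$ in degree $(|\theta_\lambda|,|y|)$ survives. Since $y'$ has no $I'$-part, that component is $\theta_\lambda\otimes y'$ up to the twist $v^{|y'_{(1)}|\cdot|\theta_\lambda|}$, where $y'_{(1)}$ must have degree $0$; hence the twist is $1$. Therefore $(\theta_\lambda y,\theta_\lambda y')=(\theta_\lambda,\theta_\lambda)(y,y')$, as required. Care is needed with the ordering conventions of $r$ and the twisted product.
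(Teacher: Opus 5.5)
Your proposal is correct and follows the paper's proof. Part (1) is proved by induction on $\mathrm{tr}\,|x|$ using $\epsilon_iF_j=\delta_{i,j}+v^{i\cdot j}F_j\epsilon_i$ together with the matching rule for ${_{i}r}$. Part (2) is the same direct computation, and part (3) is an induction on $\mathrm{tr}\,|x|$ driven by (1) and (2).

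The only difference is the base case of (3). The paper pairs $z=\theta_\lambda y$ against an arbitrary $z'=x'\theta_\lambda y'$ directly, using \eqref{im varphi} and the fact that only the term $x'_1=1$ of $r(x')$ survives, both in $(\eta_\lambda,{x'_1}^-\eta_\lambda)_{\Lambda_\lambda}$ and in $r(x'\theta_\lambda y')$. You instead use symmetry of both forms to run the induction a second time in $z'$. That is legitimate because ${_{i}r}(\theta_\lambda y)=v^{-\langle i_Y,\lambda\rangle}\theta_\lambda\,{_{i}r}(y)$ keeps the fixed argument inside $\theta_\lambda\mathbf{f}$, so the reduction closes. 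You should state this step explicitly.
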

\begin{proof}
We simply denote $\varphi_{\zeta,\lambda}=\varphi$.

(1) The vector space $(\mathbf{f}\theta_{\lambda}\mathbf{f})_{\zeta\odot\lambda}$ is spanned by $x^-\theta_{\lambda}^-y^-v_{\zeta\odot\lambda}$ for any homogeneous $x,y\in \mathbf{f}$. We prove $\epsilon_i\varphi(x^-\theta_{\lambda}^-y^-v_{\zeta\odot\lambda})=\varphi\,{_{i}r}(x^-\theta_{\lambda}^-y^-v_{\zeta\odot\lambda})$ by induction on $\mathrm{tr}\,|x|\in \mathbb{N}$. If $\mathrm{tr}\,|x|=0$, then $x\in \mathbb{Q}(v)$. By \eqref{im varphi}, \eqref{definition of epsiloni},\eqref{irxthetalambday}, we have 
\begin{align*}
&\epsilon_i\varphi(\theta_{\lambda}^-y^-v_{\zeta\odot\lambda})=\epsilon_i(y^-v_{\zeta}\otimes \eta_{\lambda})=({_{i}r}(y))^-v_{\zeta}\otimes K_{-i_Y}\eta_{\lambda}\\
=&v^{-\langle i_Y,\lambda\rangle}({_{i}r}(y))^-v_{\zeta}\otimes \eta_{\lambda}=\varphi(v^{-\langle i_Y,\lambda\rangle}\theta_{\lambda}^-({_{i}r}(y))^-v_{\zeta\odot\lambda})=\varphi\,{_{i}r}(\theta_{\lambda}^-y^-v_{\zeta\odot\lambda}).
\end{align*}
If $\mathrm{tr}\,|x|>0$, then it suffices to prove the statement for $x=\theta_jx'$ with $j\in I$ and homogeneous $x'\in \mathbf{f}$. Since $\mathrm{tr}\,|x'|<\mathrm{tr}\,|x|$, by Lemma \ref{isomorphism varphi}, \cite[\S 18.1.3 \& \S 15.1.2(b) \& Lemma 15.1.4]{Lusztig-1993} and the inductive hypothesis, we have 
\begin{align*}
&\epsilon_i\varphi(x^-\theta_{\lambda}^-y^-v_{\zeta\odot\lambda})=\epsilon_i\varphi(F_j{x'}^-\theta_{\lambda}^-y^-v_{\zeta\odot\lambda})=\epsilon_i(F_j\varphi({x'}^-\theta_{\lambda}^-y^-v_{\zeta\odot\lambda}))\\
=&v^{i\cdot j}F_j\epsilon_i\varphi({x'}^-\theta_{\lambda}^-y^-v_{\zeta\odot\lambda})+\delta_{i,j}\varphi({x'}^-\theta_{\lambda}^-y^-v_{\zeta\odot\lambda})\\
=&v^{i\cdot j}F_j\varphi\,{_{i}r}({x'}^-\theta_{\lambda}^-y^-v_{\zeta\odot\lambda})+\delta_{i,j}\varphi({x'}^-\theta_{\lambda}^-y^-v_{\zeta\odot\lambda})\\
=&v^{i\cdot j}\varphi(F_j\,{_{i}r}({x'}^-\theta_{\lambda}^-y^-v_{\zeta\odot\lambda}))+\delta_{i,j}\varphi({x'}^-\theta_{\lambda}^-y^-v_{\zeta\odot\lambda})\\
=&\varphi(v^{i\cdot j}\theta_j\,{_{i}r}({x'}^-\theta_{\lambda}^-y^-v_{\zeta\odot\lambda})+\delta_{i,j}{x'}^-\theta_{\lambda}^-y^-v_{\zeta\odot\lambda})=\varphi\,{_{i}r}({x'}^-\theta_{\lambda}^-y^-v_{\zeta\odot\lambda}),
\end{align*}
as desired.

(2) By definition and \cite[Proposition 19.1.2 \& \S 1.2.13]{Lusztig-1993}, we have
\begin{align*}
&(F_i(x_1^-v_{\zeta}\otimes m_2),{x'_1}^-v_{\zeta}\otimes m'_2)_{\otimes}\\
=&(x_1^-v_{\zeta}\otimes F_im_2+F_ix_1^-v_{\zeta}\otimes K_{-i_Y}m_2,{x'_1}^-v_{\zeta}\otimes m'_2)_{\otimes}\\
=&(x_1,x'_1)_{\mathbf{f}}(F_im_2,m'_2)_{\Lambda_{\lambda}}+(\theta_ix_1,x'_1)_{\mathbf{f}}(K_{-i_Y}m_2,m'_2)_{\Lambda_{\lambda}}\\
=&(x_1,x'_1)_{\mathbf{f}}(m_2,vK_{-i_Y}E_im'_2)_{\Lambda_{\lambda}}+(1-v^{-2})^{-1}(x_1,{_{i}}r(x'_1))_{\mathbf{f}}(m_2,K_{-i_Y}m'_2)_{\Lambda_{\lambda}}\\
=&(1-v^{-2})^{-1}(x_1^-v_{\zeta}\otimes m_2,(v-v^{-1}){x'_1}^-v_{\zeta}\otimes K_{-i_Y}E_im'_2+({_{i}}r(x'_1))^-v_{\zeta}\otimes K_{-i_Y}m'_2)_{\otimes}\\
=&(1-v^{-2})^{-1}(x_1^-v_{\zeta}\otimes m_2,\epsilon_i({x'_1}^-v_{\zeta}\otimes m'_2))_{\otimes}.
\end{align*}

(3) The vector space $\mathbf{f}\theta_{\lambda}\mathbf{f}$ is spanned by $x\theta_{\lambda}y$ for any homogeneous $x,y\in \mathbf{f}$. We prove the statement for any $z=x\theta_{\lambda}y$ and $z'=x'\theta_{\lambda}y'$ by induction on $\mathrm{tr}\,|x|\in \mathbb{N}$. If $\mathrm{tr}\,|x|=0$, then $x\in \mathbb{Q}(v)$. On the one hand, we write $r(x')=\sum x'_1\otimes x'_2$ with homogeneous $x'_1,x'_2\in \mathbf{f}$, then by \eqref{im varphi} and \cite[Proposition 19.1.2]{Lusztig-1993}, we have
\begin{align*}
&(\varphi(\theta_{\lambda}^-y^-v_{\zeta\odot\lambda}),\varphi({x'}^-\theta_{\lambda}^-{y'}^-v_{\zeta\odot\lambda}))_{\otimes}=(y^-v_{\zeta}\otimes \eta_{\lambda},\sum v^{|x'_2|\cdot |\theta_{\lambda}|}{x'_2}^-{y'}^-v_{\zeta}\otimes {x'_1}^-\eta_{\lambda})_{\otimes}\\
=&\sum v^{|x'_2|\cdot |\theta_{\lambda}|}(y,x'_2y')_{\mathbf{f}}(\eta_{\lambda},{x'_1}^-\eta_{\lambda})_{\Lambda_{\lambda}}=v^{|x'|\cdot |\theta_{\lambda}|}(y,x'y')_{\mathbf{f}}.
\end{align*}
On the other hand, by \cite[Proposition 1.2.3]{Lusztig-1993} and the fact that $(\tilde{\mathbf{f}}_{\nu},\tilde{\mathbf{f}}_{\nu'})_{\tilde{\mathbf{f}}}=0$ for any $\nu\not=\nu'$ in $\mathbb{N}[\tilde{I}]$ (see the proof of \cite[Proposition 1.2.3]{Lusztig-1993}), we have 
\begin{align*}
(\theta_{\lambda}y,x'\theta_{\lambda}y')_{\tilde{\mathbf{f}}}=&(\theta_{\lambda}\otimes y,r(x'\theta_{\lambda}y'))_{\tilde{\mathbf{f}}\otimes\tilde{\mathbf{f}}}=(\theta_{\lambda}\otimes y,v^{|x'|\cdot|\theta_{\lambda}|}\theta_{\lambda}\otimes x'y')_{\tilde{\mathbf{f}}\otimes\tilde{\mathbf{f}}}\\
=&v^{|x'|\cdot|\theta_{\lambda}|}(\theta_{\lambda},\theta_{\lambda})_{\tilde{\mathbf{f}}}(y,x'y')_{\tilde{\mathbf{f}}}.
\end{align*}
It is clear that $(y,x'y')_{\mathbf{f}}=(y,x'y')_{\tilde{\mathbf{f}}}$. By \cite[Proposition 1.2.3 \& Lemma 1.4.4]{Lusztig-1993}, we have 
$(\theta_{\lambda},\theta_{\lambda})_{\tilde{\mathbf{f}}}=\prod_{i\in I}\prod_{k=1}^{\langle i_Y,\lambda\rangle}(1-v^{-2k})^{-1}$
and so $$(\varphi(\theta_{\lambda}^-y^-v_{\zeta\odot\lambda}),\varphi({x'}^-\theta_{\lambda}^-{y'}^-v_{\zeta\odot\lambda}))_{\otimes}=(\theta_{\lambda}y,x'\theta_{\lambda}y')_{\tilde{\mathbf{f}}}\cdot \prod_{i\in I}\prod_{k=1}^{\langle i_Y,\lambda\rangle}(1-v^{-2k}).$$
If $\mathrm{tr}\,|x|>0$, then it suffices to prove the statement for any $x=\theta_ix''$ with $i\in I$ homogeneous $x''\in \mathbf{f}$. Since $\mathrm{tr}\,|x''|<\mathrm{tr}\,|x|$, by Lemma \ref{isomorphism varphi}, (2), (1), the inductive hypothesis and \cite[\S  1.2.13]{Lusztig-1993}, we have 
\begin{align*}
&(\varphi(x^-\theta_{\lambda}^-y^-v_{\zeta\odot\lambda}),\varphi({x'}^-\theta_{\lambda}^-{y'}^-v_{\zeta\odot\lambda}))_{\otimes}=(\varphi(F_i{x''}^-\theta_{\lambda}^-y^-v_{\zeta\odot\lambda}),\varphi({x'}^-\theta_{\lambda}^-{y'}^-v_{\zeta\odot\lambda}))_{\otimes}\\
=&(F_i\varphi({x''}^-\theta_{\lambda}^-y^-v_{\zeta\odot\lambda}),\varphi({x'}^-\theta_{\lambda}^-{y'}^-v_{\zeta\odot\lambda}))_{\otimes}\\
=&(1-v^{-2})^{-1}(\varphi({x''}^-\theta_{\lambda}^-y^-v_{\zeta\odot\lambda}),\epsilon_i\varphi({x'}^-\theta_{\lambda}^-{y'}^-v_{\zeta\odot\lambda}))_{\otimes}\\
=&(1-v^{-2})^{-1}(\varphi({x''}^-\theta_{\lambda}^-y^-v_{\zeta\odot\lambda}),\varphi\,{_{i}r}({x'}^-\theta_{\lambda}^-{y'}^-v_{\zeta\odot\lambda}))_{\otimes}\\
=&(1-v^{-2})^{-1}(x''\theta_{\lambda}y,\,{_{i}r}(x'\theta_{\lambda}y'))_{\tilde{\mathbf{f}}}\cdot \prod_{i\in I}\prod_{k=1}^{\langle i_Y,\lambda\rangle}(1-v^{-2k})\\
=&(\theta_ix''\theta_{\lambda}y,x'\theta_{\lambda}y')_{\tilde{\mathbf{f}}}\cdot \prod_{i\in I}\prod_{k=1}^{\langle i_Y,\lambda\rangle}(1-v^{-2k})=(x\theta_{\lambda}y,x'\theta_{\lambda}y')_{\tilde{\mathbf{f}}}\cdot \prod_{i\in I}\prod_{k=1}^{\langle i_Y,\lambda\rangle}(1-v^{-2k}),
\end{align*}
as desired.
\end{proof}

\subsection{Realization of canonical basis of tensor product}

In this subsection, we study the compatibility of the thickening realizations of tensor products established in \S \ref{ftheta_lambdaf} with the canonical bases.

\begin{theorem}\label{positivity of transition matrix}
Let $\zeta\in X$ and $\lambda\in X^+$. Then 
\begin{enumerate}
\item there are bijections
$$\xymatrix@C=1.5cm{\mathbf{B}((\mathbf{f}\theta_{\lambda}\mathbf{f})_{\zeta \odot\lambda}) \ar@<.5ex>[r]^{\varphi_{\zeta,\lambda}} &\mathbf{B}(M_{\zeta}\otimes \Lambda_{\lambda}); \ar@<.5ex>[l]^{\psi_{\zeta,\lambda}}}$$
\item let $\tilde{b}\in \mathbf{B}(\mathbf{f}\theta_{\lambda}\mathbf{f})$, write $r(\tilde{b})=\sum_{\tilde{b}_1,\tilde{b}_2\in \tilde{\mathbf{B}}}c_{\tilde{b}_1,\tilde{b}_2}^{\tilde{b}}\tilde{b}_1\otimes\tilde{b}_2$, then
$$\varphi_{\zeta,\lambda}(\tilde{b}^-v_{\zeta\odot\lambda})=\sum_{b_1\in \mathbf{B},b_2^-\eta_{\lambda}\in \mathbf{B}(\Lambda_{\lambda})}c_{b_2\theta_{\lambda_2},b_1}^{\tilde{b}}b_1^-v_{\zeta}\otimes b_2^-\eta_{\lambda}.$$
\end{enumerate}
\end{theorem}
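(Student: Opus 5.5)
The plan is to prove (1) by checking that $\varphi_{\zeta,\lambda}$ sends each element of $\mathbf{B}((\mathbf{f}\theta_{\lambda}\mathbf{f})_{\zeta\odot\lambda})$ to an element satisfying the two characterizing conditions of Lemma \ref{canonical basis of Verma otime highest weight}. Then (2) will follow by a direct computation of $\varphi_{\zeta,\lambda}$ on canonical basis elements, and in fact we establish the expansion (2) first and use it for (1).

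\textbf{Step 1 (formula (2)).} Take $\tilde b\in\mathbf{B}(\mathbf{f}\theta_\lambda\mathbf{f})$. Using Lemma \ref{linear map varphi}, unwind the definition $\varphi=(\mathrm{Id}\otimes\phi_\lambda^{-1})\kappa\varphi_0$ applied directly to $\tilde b^-v_{\zeta\odot\lambda}$. By \cite[\S 3.1.5]{Lusztig-1993}, $\Delta(\tilde b^-)$ is expressed through $r(\tilde b)=\sum c^{\tilde b}_{\tilde b_1,\tilde b_2}\tilde b_1\otimes\tilde b_2$, with Cartan factors. The projection $p$ keeps only terms whose second factor has $I'$-degree $|\theta_\lambda|$. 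The map $\kappa$ then kills all terms whose first factor lies outside $\mathbf{f}$, i.e.\ it forces $\tilde b_2=b_1\in\mathbf{B}$ (all $I'$-degree goes to the left factor). It also kills every left factor $\tilde b_1$ that, acting on $\eta_{0\odot\lambda}$, ends in some $\theta_i$ with $i\in I$, since $\langle i_{\tilde Y},0\odot\lambda\rangle=0$. By the bijection \eqref{Fang-Lan-2025-5.3*}, the surviving $\tilde b_1$ are exactly $b_2\theta_\lambda$ with $b_2\in\mathbf{B}(\lambda)$, and $\phi_\lambda^{-1}$ sends $(b_2\theta_\lambda)^-\eta_{0\odot\lambda}$ to $b_2^-\eta_\lambda$. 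The remaining work is to track the $K$-factors, following the computation in the proof of Lemma \ref{linear map varphi}, and to check that the powers of $v$ cancel exactly. The first factor has weight $|b_1|$ and the $K_{-|b_1|_Y}$ passes through the second factor with exponent $|b_1|\cdot(|b_2|+|\theta_\lambda|)$, which combines with $-|\tilde b_1|\cdot|\tilde b_2|$ to give the same net $v$-power as in \eqref{im varphi}. Reconciling this power with the stated formula, which has no extra power of $v$, is the delicate bookkeeping point. If a stray $v^{|b_1|\cdot|\theta_\lambda|}$ survives, we would absorb it by checking the normalization of $r$ versus $\Delta$ on $\tilde{\mathbf{f}}$.

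\textbf{Step 2 (bijection).} By Theorem \ref{14.4.13} applied to $\tilde{\mathbf{f}}$ (or just integrality of $r$ on $\tilde{\mathbf{B}}$), the coefficients satisfy $c^{\tilde b}_{\tilde b_1,\tilde b_2}\in\mathcal{A}$. The condition that $\varphi(\tilde b^-v)$ lies in $b_1\otimes b_2+v^{-1}\mathbb{Z}[v^{-1}][\cdots]$ for a unique pair requires more than integrality. We use Lemma \ref{varphi and inner product}(3): the form $(\,,\,)_\otimes$ on images equals the $\tilde{\mathbf{f}}$-form up to the factor $\prod(1-v^{-2k})\in1+v^{-1}\mathbb{Z}[[v^{-1}]]$. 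Hence $(\varphi(\tilde b),\varphi(\tilde b))_\otimes\in1+v^{-1}\mathbb{Z}[[v^{-1}]]$. Since $\mathbf{B}(M_\zeta)\otimes\mathbf{B}(\Lambda_\lambda)$ is almost orthonormal for $(\,,\,)_\otimes$ and the expansion has $\mathcal{A}$-coefficients, the standard argument \cite[Lemma 14.2.2]{Lusztig-1993} shows that $\varphi(\tilde b)=\pm b'+v^{-1}(\ldots)$ for some $b'\in\mathbf{B}(M_\zeta\otimes\Lambda_\lambda)$. Here $\Psi$-invariance comes from Lemma \ref{varphi and involution} together with $\overline{\tilde b}=\tilde b$, and we express $\varphi(\tilde b)$ in the basis $\mathbf{B}(M_\zeta\otimes\Lambda_\lambda)$, whose form is also almost orthonormal by unitriangularity. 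The sign is $+$ because the leading coefficient is some $c^{\tilde b}_{b_2\theta_\lambda,b_1}$, which is in $\mathbb{N}[v,v^{-1}]$ by Theorem \ref{14.4.13}. Thus $\varphi$ maps $\mathbf{B}((\mathbf{f}\theta_\lambda\mathbf{f})_{\zeta\odot\lambda})$ injectively into $\mathbf{B}(M_\zeta\otimes\Lambda_\lambda)$; injectivity follows because $\varphi$ is an isomorphism by Lemma \ref{isomorphism varphi}.

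\textbf{Step 3 (surjectivity).} Both sets are bases of weight spaces that correspond under the isomorphism $\varphi$. The weight spaces are finite dimensional, so an injection of bases of equal-dimensional spaces is a bijection, and $\psi=\varphi^{-1}$ gives the inverse bijection.

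The main obstacle is Step 2: the almost-orthonormality argument has to produce an $\mathcal{A}$-integral, $\Psi$-invariant element of the lattice, and we need the form $(\,,\,)_\otimes$ to be almost orthonormal on $\mathbf{B}(M_\zeta\otimes\Lambda_\lambda)$ with values in $\mathbb{Z}[[v^{-1}]]$.
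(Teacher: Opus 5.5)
Your overall route is the paper's. You expand $\varphi_{\zeta,\lambda}(\tilde b^-v_{\zeta\odot\lambda})$ through $r(\tilde b)$ and the bijection \eqref{Fang-Lan-2025-5.3*}. You get $(\varphi(\tilde b),\varphi(\tilde b))_{\otimes}\in 1+v^{-1}\mathbb{Z}[[v^{-1}]]$ from Lemma \ref{varphi and inner product}(3) and apply \cite[Lemma 14.2.2]{Lusztig-1993}. You fix the sign by positivity of $r$ on $\tilde{\mathbf{B}}$, and you get $\Psi$-invariance from Lemma \ref{varphi and involution}. Running 14.2.2 in the basis $\mathbf{B}(M_\zeta\otimes\Lambda_\lambda)$ rather than in the pure tensors is a harmless variant. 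Integrality and almost orthonormality both transfer by unitriangularity. You should then say explicitly that the coefficients are bar-invariant elements of $\mathbb{Z}[v^{-1}]$, hence integers, so the $v^{-1}$-tail vanishes. Two steps, however, do not go through as written.

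In Step 1 there is no unresolved bookkeeping, and your fallback would not be legitimate anyway. The normalizations of $r$ and $\Delta$ are fixed, so a surviving stray power would make formula (2) false rather than be absorbable. In fact none survives. Take $\tilde b_2=b_1\in\mathbf{B}$ and $\tilde b_1=b_2\theta_\lambda$. Then \cite[\S 3.1.5]{Lusztig-1993} gives the prefactor $v^{-|\tilde b_1|\cdot|\tilde b_2|}$. Meanwhile $K_{-|b_1|_{\tilde Y}}$ acts on the weight vector $\tilde b_1^-\eta_{0\odot\lambda}$, of weight $0\odot\lambda-|\tilde b_1|_{\tilde X}$, by $v^{|b_1|\cdot|\tilde b_1|}$, because $\langle i_{\tilde Y},0\odot\lambda\rangle=0$ for $i\in I$. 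These are exactly the two exponents you wrote, and they cancel, since $|\tilde b_1|=|b_2|+|\theta_\lambda|$ and $|\tilde b_2|=|b_1|$. The factor $v^{|x_2|\cdot|\theta_\lambda|}$ in \eqref{im varphi} is not a $K$-factor at all. It comes from the twisted multiplication in $r(x\theta_\lambda y)=r(x)r(\theta_\lambda)r(y)$ and is already built into the constants $c^{\tilde b}_{\tilde b_1,\tilde b_2}$.

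In Step 3, the claim that the weight spaces are finite-dimensional is false in general. The simply connected datum is only $Y$-regular. For a degenerate Cartan matrix (e.g.\ affine type, where $\delta_X=0$), the $\zeta$-weight space of $M_\zeta$ contains $\mathbf{f}_{k\delta}$ for every $k$. The counting is also unnecessary. Since $\varphi_{\zeta,\lambda}$ is a linear isomorphism, the image of $\mathbf{B}((\mathbf{f}\theta_\lambda\mathbf{f})_{\zeta\odot\lambda})$ is a basis of $M_\zeta\otimes\Lambda_\lambda$ contained in the basis $\mathbf{B}(M_\zeta\otimes\Lambda_\lambda)$, hence equal to it; this is how the paper concludes. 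Alternatively, you can count in the $\mathbb{N}[I]$-grading by $|x|+|y|$ on $x^-v_\zeta\otimes y^-\eta_\lambda$, which $\varphi_{\zeta,\lambda}$ preserves and whose pieces are finite-dimensional.
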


\begin{remark}\label{structure constant remark}
A similar result to part (1) that $\psi_{\zeta,\lambda}$ is a bijection was established by Li in \cite[Theorem 7.17]{Li-2014} using perverse sheaves and crystal theory. Our approach is different and leads to the comparison of the entries of the transition matrix with the structure constants of the comultiplication in part (2). 
\end{remark}

\begin{proof}
We simply denote $\varphi_{\zeta,\lambda}=\varphi$. For any $\tilde{b}\in \mathbf{B}(\mathbf{f}\theta_{\lambda}\mathbf{f})\subset \tilde{\mathbf{B}}$, we write $r(\tilde{b})=\sum_{\tilde{b}_1,\tilde{b}_2\in \tilde{\mathbf{B}}}c_{\tilde{b}_1,\tilde{b}_2}^{\tilde{b}} \tilde{b}_1\otimes \tilde{b}_2$. Then $c_{\tilde{b}_1,\tilde{b}_2}^{\tilde{b}}\in \mathbb{N}[v,v^{-1}]$ by the positivity property of $\tilde{\mathbf{B}}$ with respect to the comultiplication in Theorem \ref{14.4.13}. By the definition of $\varphi$ and \cite[\S 3.1.5 \& Lemma 1.2.11]{Lusztig-1993}, we have 
\begin{align*}
\varphi(\tilde{b}^-v_{\zeta\odot\lambda})=&\sum v^{-|\tilde{b}_1|\cdot|b_2|}c_{\tilde{b}_1,b_2}^{\tilde{b}}b_2^-v_{\zeta}\otimes \phi_{\lambda}^{-1}(K_{-|b_2|_Y}\tilde{b}_1^-\eta_{0\odot\lambda})\\
=&\sum c_{\tilde{b}_1,b_2}^{\tilde{b}}b_2^-v_{\zeta}\otimes \phi_{\lambda}^{-1}(\tilde{b}_1^-\eta_{0\odot\lambda}),    
\end{align*}
where the summation is taken over $\tilde{b}_1\in \mathbf{B}(\mathbf{f}\theta_{\lambda}\mathbf{f})$ and $b_2\in \mathbf{B}$ such that $\tilde{b}_1^-\eta_{0\odot\lambda}\not=0$. Since $\tilde{b}_1^-\eta_{0\odot\lambda}\not=0$, by \eqref{14.4.11}, we have $\tilde{b}_1\in \tilde{\mathbf{B}}(0\odot\lambda)\subset \bigcap_{i\in I}(\tilde{\mathbf{B}}\setminus \tilde{\mathbf{f}}\theta_i)$. Then by \eqref{Fang-Lan-2025-5.3*}, there exists a unique $b_1\in\mathbf{B}(\lambda)$ such that $\tilde{b}_1=b_1\theta_{\lambda}$. Hence
\begin{align}\label{varphib integral}
\varphi(\tilde{b}^-v_{\zeta\odot\lambda})=\sum c_{b_1\theta_{\lambda},b_2}^{\tilde{b}}b_2^-v_{\zeta}\otimes b_1^-\eta_{\lambda}\in \mathbb{N}[v,v^{-1}][\mathbf{B}(M_{\zeta})\otimes \mathbf{B}(\Lambda_{\lambda})].   
\end{align}

Let $\mathbf{A}=\mathbb{Q}[[v^{-1}]]\cap \mathbb{Q}(v)$ and $L(M_{\zeta}\otimes \Lambda_{\lambda})$ be the $\mathbf{A}$-submodule of $M_{\zeta}\otimes \Lambda_{\lambda}$ generated by $\mathbf{B}(M_{\zeta})\otimes \mathbf{B}(\Lambda_{\lambda})$. By \cite[Theorem 14.2.3 \& Proposition 19.3.3]{Lusztig-1993}, the basis $\mathbf{B}(M_{\zeta})\otimes \mathbf{B}(\Lambda_{\lambda})$ of $M_{\zeta}\otimes \Lambda_{\lambda}$ is almost orthonormal (in the sense of \cite[\S  14.2.1]{Lusztig-1993}) with respect to the bilinear form $(\,,\,)_{\otimes}$ defined in \eqref{definition the bilinear form on tensor product}. By \cite[Theorem 14.2.3]{Lusztig-1993} and Lemma \ref{varphi and inner product}, we have $(\varphi(\tilde{b}^-v_{\zeta\odot\lambda}),\varphi(\tilde{b}^-v_{\zeta\odot\lambda}))_{\otimes}=(\tilde{b},\tilde{b})_{\tilde{\mathbf{f}}}\cdot \prod_{i\in I}\prod_{k=1}^{\langle i_Y,\lambda\rangle}(1-v^{-2k})\in 1+v^{-1}\mathbf{A}$. By \cite[Lemma 14.2.2]{Lusztig-1993}, there exists ${b'_1}^-v_{\zeta}\otimes {b'_2}^-\eta_{\lambda}\in \mathbf{B}(M_{\zeta})\otimes \mathbf{B}(\Lambda_{\lambda})$ such that 
$$\varphi(\tilde{b}^-v_{\zeta\odot\lambda})=\pm {b'_1}^-v_{\zeta}\otimes {b'_2}^-\eta_{\lambda}\ \mathrm{mod}\,v^{-1}L(M_{\zeta}\otimes \Lambda_{\lambda}).$$
By \eqref{varphib integral} and $\mathbb{N}[v,v^{-1}]\cap\mathbf{A}=\mathbb{N}[v^{-1}]$, we have 
\begin{align*}
\varphi(\tilde{b}^-v_{\zeta\odot\lambda})={b'_1}^-v_{\zeta}\otimes {b'_2}^-\eta_{\lambda}\ \mathrm{mod}\,v^{-1}\mathbb{N}[v^{-1}][\mathbf{B}(M_{\zeta})\otimes \mathbf{B}(\Lambda_{\lambda})].
\end{align*}
By \cite[Theorem 14.2.3]{Lusztig-1993} and Lemma \ref{varphi and involution}, we have $$\overline{\tilde{b}^-v_{\zeta\odot\lambda}}=(\overline{\tilde{b}})^-v_{\zeta\odot\lambda}=\tilde{b}^-v_{\zeta\odot\lambda},\ \Psi\varphi(\tilde{b}^-v_{\zeta\odot\lambda})=\varphi(\overline{\tilde{b}^-v_{\zeta\odot\lambda}})=\varphi(\tilde{b}^-v_{\zeta\odot\lambda}).$$ Hence $\varphi(\tilde{b}^-v_{\zeta\odot\lambda})$ satisfies the characterizing conditions in Lemma \ref{canonical basis of Verma otime highest weight}, and so $\varphi(\tilde{b}^-v_{\zeta\odot\lambda})={b'_1}^-v_{\zeta}\diamondsuit {b'_2}^-\eta_{\lambda}\in \mathbf{B}(M_{\zeta}\otimes \Lambda_{\lambda})$. Hence $\varphi(\mathbf{B}((\mathbf{f}\theta_{\lambda}\mathbf{f})_{\zeta\odot\lambda}))\subset \mathbf{B}(M_{\zeta}\otimes \Lambda_{\lambda})$. Then part (1) follows from Lemma \ref{isomorphism varphi}, and part (2) follows from \eqref{varphib integral}.
\end{proof}

Combining with Lemma \ref{a otimes Id is a based map}, we obtain

\begin{theorem}\label{thm:structure-trans}
Let $\lambda_1,\lambda_2\in X^+$ and $w\in W$. Then
\begin{enumerate}
\item there is a bijection
$$\mathbf{B}((\mathbf{f}\theta_{\lambda_2}\mathbf{f})_{-w\lambda_1\odot\lambda_2})\setminus\mathrm{ker}\,\underline{\varphi_{-w\lambda_1,\lambda_2}}\xrightarrow{\underline{\varphi_{-w\lambda_1,\lambda_2}}}\mathbf{B}({^{\omega}V_w(\lambda_1)}\otimes \Lambda_{\lambda_2});$$
\item let $\tilde{b}\in \mathbf{B}(\mathbf{f}\theta_{\lambda_2}\mathbf{f})$, write $r(\tilde{b})=\sum_{\tilde{b}_1,\tilde{b}_2\in \tilde{\mathbf{B}}}c_{\tilde{b}_1,\tilde{b}_2}^{\tilde{b}}\tilde{b}_1\otimes\tilde{b}_2$, then 
$$\underline{\varphi_{-w\lambda_1,\lambda_2}}(\tilde{b}^-v_{-w\lambda_1\odot\lambda_2})=\sum c_{b_2\theta_{\lambda_2},b_1}^{\tilde{b}}b_1^-\xi_{-w\lambda_1}\otimes b_2^-\eta_{\lambda_2},$$
where the summation is taken over $b_1^-\xi_{-w\lambda_1}\in \mathbf{B}({^{\omega}V_w(\lambda_1)})$ and $b_2^-\eta_{\lambda_2}\in \mathbf{B}(\Lambda_{\lambda_2})$.
\end{enumerate}
\end{theorem}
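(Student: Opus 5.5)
The plan is to obtain both parts by composing Theorem \ref{positivity of transition matrix} (applied with $\zeta=-w\lambda_1$, $\lambda=\lambda_2$) with Lemma \ref{a otimes Id is a based map}. By definition $\underline{\varphi_{-w\lambda_1,\lambda_2}}=(a_{-w\lambda_1}\otimes\mathrm{Id})\varphi_{-w\lambda_1,\lambda_2}$, so the argument is essentially bookkeeping.

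For part (1), Theorem \ref{positivity of transition matrix}(1) says that $\varphi_{-w\lambda_1,\lambda_2}$ maps $\mathbf{B}((\mathbf{f}\theta_{\lambda_2}\mathbf{f})_{-w\lambda_1\odot\lambda_2})$ bijectively onto $\mathbf{B}(M_{-w\lambda_1}\otimes\Lambda_{\lambda_2})$. It then suffices to show that $a_{-w\lambda_1}\otimes\mathrm{Id}$ restricts to a bijection from the canonical basis elements not in its kernel onto $\mathbf{B}({^{\omega}V_w(\lambda_1)}\otimes\Lambda_{\lambda_2})$. Since $\varphi$ is a linear isomorphism, an element $\tilde b^-v$ lies in $\ker\underline{\varphi}$ exactly when $\varphi(\tilde b^-v)\in\ker(a\otimes\mathrm{Id})$, so the two sets of kernel elements correspond.

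By formula \eqref{image a otimes Id} in the proof of Lemma \ref{a otimes Id is a based map}, we have $(a\otimes\mathrm{Id})(b_1\diamondsuit b_2)=a(b_1)\diamondsuit b_2$ whenever $a(b_1)\neq0$, and this is $0$ otherwise. The paper's $\delta$ notation there should be read as $1-\delta$. By Proposition \ref{action map is a based map}, $a_{-w\lambda_1}$ restricts to a bijection from $\mathbf{B}(M_{-w\lambda_1})\setminus\ker a_{-w\lambda_1}$ onto $\mathbf{B}({^{\omega}V_w(\lambda_1)})$. It is surjective because $a$ is surjective and the image of the basis spans, and it is injective because the nonkernel basis elements together with a kernel basis form a basis, so their images are linearly independent. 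Hence $b_1\diamondsuit b_2\mapsto a(b_1)\diamondsuit b_2$ is a bijection onto $\mathbf{B}({^{\omega}V_w(\lambda_1)})\diamondsuit\mathbf{B}(\Lambda_{\lambda_2})$, which is $\mathbf{B}({^{\omega}V_w(\lambda_1)}\otimes\Lambda_{\lambda_2})$ by Proposition \ref{canonical basis of demazure otimes highest weight}.

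For part (2), apply $a_{-w\lambda_1}\otimes\mathrm{Id}$ to the expansion in Theorem \ref{positivity of transition matrix}(2):
\[
\varphi(\tilde b^-v)=\sum c^{\tilde b}_{b_2\theta_{\lambda_2},b_1}\,b_1^-v_{-w\lambda_1}\otimes b_2^-\eta_{\lambda_2}.
\]
Each term goes to $b_1^-\xi_{-w\lambda_1}\otimes b_2^-\eta_{\lambda_2}$. By Proposition \ref{action map is a based map}, $b_1^-\xi_{-w\lambda_1}$ is either $0$ or an element of $\mathbf{B}({^{\omega}V_w(\lambda_1)})$, and distinct $b_1$ with nonzero image give distinct images by the injectivity above. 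So, after dropping the zero terms, the sum is indexed exactly as stated, with no coefficients merging.

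The only point needing care is this injectivity on nonkernel basis elements, which excludes collisions both in the bijection of part (1) and in the coefficient formula of part (2). It follows from the based-map property in Proposition \ref{action map is a based map}, as explained above.
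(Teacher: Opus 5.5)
Your proposal is correct and matches the paper's argument: the paper obtains this theorem in one line by composing Theorem~\ref{positivity of transition matrix} (with $\zeta=-w\lambda_1$, $\lambda=\lambda_2$) with Lemma~\ref{a otimes Id is a based map}, and you have made explicit the bookkeeping it leaves implicit, namely that $a_{-w\lambda_1}$ is injective on $\mathbf{B}(M_{-w\lambda_1})\setminus\ker a_{-w\lambda_1}$, which follows from Proposition~\ref{action map is a based map}. Your reading of the factor in \eqref{image a otimes Id} as $1-\delta_{a_{-w\lambda_1}(b_1),0}$ is also the intended one, as the proof of that lemma shows.
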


\subsection{Thickening map}\label{Thickening map}

Let $\lambda_1,\lambda_2\in X^+$ and $w\in W$. We define the {\it thickening map}
$$\mathrm{th}_{\lambda_1,\lambda_2,w}:\dot{\mathbf{B}}\rightarrow \tilde{\mathbf{B}}\sqcup\{0\},\dot{b}\mapsto \begin{cases}\tilde{b}\ &\textrm{if}\ \dot{b}(\xi_{-\lambda_1}\otimes \eta_{\lambda_2})\in \mathbf{B}({^{\omega}V_w(\lambda_1)\otimes \Lambda_{\lambda_2}});\\0 &\textrm{otherwise},\end{cases}$$
where in the first case, $\tilde{b}$ is the unique element in $\mathbf{B}(\mathbf{f}\theta_{\lambda_2}\mathbf{f})$ such that 
\begin{equation}\label{definition of thickening map}
\dot{b}(\xi_{-\lambda_1}\otimes \eta_{\lambda_2})=\underline{\varphi_{-w\lambda_1,\lambda_2}}(\tilde{b}^-v_{-w\lambda_1\odot\lambda_2}).   
\end{equation}
The existence and uniqueness of such $\tilde{b}$ follow from Theorem \ref{thm:structure-trans}.

For any fixed $\dot{b}\in\dot{\mathbf{B}}$, by Theorem \ref{25.2.1}, we have $\dot{b}(\xi_{-\lambda_1}\otimes \eta_{\lambda_2})\in \mathbf{B}({^{\omega}\Lambda_{\lambda_1}}\otimes \Lambda_{\lambda_2})$ for sufficiently regular $\lambda_1,\lambda_2\in X^+$, that is, $\langle i_Y,\lambda_1\rangle,\langle i_Y,\lambda_2\rangle\in \mathbb{N}$ are sufficiently large for any $i\in I$. Then by $\mathbf{B}({^{\omega}\Lambda_{\lambda}})=\bigcup_{w\in W}\mathbf{B}({^{\omega}V_{w}(\lambda)})$ and Proposition \ref{canonical basis of demazure otimes highest weight}, $\dot{b}(\xi_{-\lambda_1}\otimes \eta_{\lambda_2})\in \mathbf{B}({}^{\omega}V_w(\lambda_1)\otimes\Lambda_{\lambda_2})$ for sufficiently large $w\in W$. In this way, every canonical basis element in $\dot{\mathbf{B}}$ can be realized as a canonical basis element in $\tilde{\mathbf{B}}$ via the thickening map after choosing appropriate parameters $\lambda_1,\lambda_2,w$.

\subsection{Example}\label{Example of thickening map}
We present the example for $\mathfrak{sl}_2$.

Let $(I,\cdot)$ be the Cartan datum of type $A_1$ and $(Y,X,\langle,\rangle,...)$ be the simply connected root datum of type $(I,\cdot)$, that is, $I=\{i\}, i\cdot i=2$ and $Y=X=\mathbb{Z},i_Y=1,i_X=2$. Then the canonical basis of $\mathbf{f}$ is $\mathbf{B}=\{\theta_i^{(k)}; k\in \mathbb{N}\}$.

For $m,n\in \mathbb{N}$, the canonical bases of ${^{\omega}\Lambda_m}$ and $\Lambda_{n}$ are 
$$\mathbf{B}({^{\omega}\Lambda_m})=\{E_i^{(k)}\xi_{-m}; 0\leqslant k\leqslant m\}\ \textrm{and}\ \mathbf{B}(\Lambda_n)=\{F_i^{(l)}\eta_n; 0\leqslant l\leqslant n\}$$
respectively. The canonical basis of ${^{\omega}\Lambda_m}\otimes \Lambda_n$ is $$\mathbf{B}({^{\omega}\Lambda_m}\otimes \Lambda_n)=\{E_i^{(k)}\xi_{-m}\diamondsuit F_i^{(l)}\eta_n; 0\leqslant k\leqslant m,0\leqslant l\leqslant n\},$$ where $E_i^{(k)}\xi_{-m}\diamondsuit F_i^{(l)}\eta_n$ is given by
\begin{align*}
\begin{cases}
\displaystyle\sum_{s=0}^{\mathrm{min}(k,l)}v^{s(k-m-s)}\begin{bmatrix}
n-l+s\\s
\end{bmatrix} E_i^{(k-s)}\xi_{-m}\otimes F_i^{(l-s)}\eta_n &\textrm{if}\  n-m\leqslant l-k;\\
\displaystyle \sum_{s=0}^{\mathrm{min}(k,l)}v^{s(l-n-s)}\begin{bmatrix}
m-k+s\\s
\end{bmatrix} E_i^{(k-s)}\xi_{-m}\otimes F_i^{(l-s)}\eta_n &\textrm{if}\ n-m\geqslant l-k
\end{cases}
\end{align*}
with the identification of two expressions for $n-m=l-k$ (see \cite[\S 6]{Lusztig-1992}), where $\begin{bmatrix}
q\\p
\end{bmatrix}=[q]!([p]!\,[q-p]!)^{-1}$ for any $p\leqslant q$ in $\mathbb{N}$. 

The canonical basis of $\dot{\mathbf{U}}$ is $\dot{\mathbf{B}}=\{\theta_i^{(k)}\diamondsuit_t\,\theta_i^{(l)};t\in \mathbb{Z},k,l\in \mathbb{N}\}$, where 
$$\theta_i^{(k)}\diamondsuit_t\,\theta_i^{(l)}=\begin{cases}
E_i^{(k)}F_i^{(l)}1_t\ &\textrm{if}\ t\leqslant l-k;\\
F_i^{(l)}E_i^{(k)}1_t\ &\textrm{if}\ t\geqslant l-k
\end{cases}$$
with the identification $E_i^{(k)}F_i^{(l)}1_t=F_i^{(l)}E_i^{(k)}1_t$ for $t=l-k$ (see \cite[\S 25.3.1]{Lusztig-1993}).

The thickening Cartan datum $(\tilde{I},\cdot)$ is of type $A_2$. The canonical basis of $\tilde{\mathbf{f}}$ is
$$\tilde{\mathbf{B}}=\{\theta_i^{(p)}\theta_{i'}^{(q)}\theta_i^{(r)}; p,q,r\in \mathbb{N},q\geqslant p+r\}\cup\{\theta_{i'}^{(r)}\theta_i^{(q)}\theta_{i'}^{(p)}; p,q,r\in \mathbb{N},q\geqslant p+r\}$$
with the identification $\theta_i^{(p)}\theta_{i'}^{(q)}\theta_i^{(r)}=\theta_{i'}^{(r)}\theta_i^{(q)}\theta_{i'}^{(p)}$ for $q=p+r$ (see \cite[\S 14.5.4]{Lusztig-1993}). Let $w=-1$. Then we have $^{\omega}V_{-1}(m)={^{\omega}\Lambda_{m}}$ and the thickening realization $\underline{\varphi_{m,n}}:(\mathbf{f}\theta_n\mathbf{f})_{m\odot n}\rightarrow {^{\omega}\Lambda_m}\otimes \Lambda_n$ such that 
$$E_i^{(k)}\xi_{-m}\diamondsuit F_i^{(l)}\eta_n=\begin{cases}
\underline{\varphi_{m,n}}(F_{i'}^{(n-l)}F_i^{(m-k+l)}F_{i'}^{(l)}v_{m\odot n})\ &\textrm{if}\ n-m\leqslant l-k;\\
\underline{\varphi_{m,n}}(F_i^{(l)}F_{i'}^{(n)}F_i^{(m-k)}v_{m\odot n})\ &\textrm{if}\ n-m\geqslant l-k
\end{cases}$$
for any $0\leqslant k\leqslant m$ and $0\leqslant l\leqslant n$ (see \cite[Proposition 4.9]{Fang-Lan-2025}).

The thickening map $\mathrm{th}_{m,n,-1}:\dot{\mathbf{B}}\rightarrow \tilde{\mathbf{B}}\sqcup\{0\}$ is given by $\mathrm{th}_{m,n,-1}(\theta_i^{(k)}\diamondsuit_t\,\theta_i^{(l)})=0$ unless $t=n-m,0\leqslant k\leqslant m,0\leqslant l\leqslant n$, and 
$$\mathrm{th}_{m,n,-1}(\theta_i^{(k)}\diamondsuit_{n-m}\,\theta_i^{(l)})=\begin{cases}
\theta_{i'}^{(n-l)}\theta_i^{(m-k+l)}\theta_{i'}^{(l)}\ &\textrm{if}\ n-m\leqslant l-k;\\
\theta_i^{(l)}\theta_{i'}^{(n)}\theta_i^{(m-k)}\ &\textrm{if}\ n-m\geqslant l-k.
\end{cases}$$

\section{Structure constant of multiplication}\label{Structure constant of multiplication}

By using the thickening realizations, we compare the structure constants of the multiplication in the modified quantum group and the multiplication in the negative part of a larger quantum group with respect to their canonical bases.

\subsection{Spherical parabolic subalgebra}
Let $J \subset I$ be a subset, and let $\mathbf{U}_J^+,\mathbf{U}_J^-$ be the subalgebras of $\mathbf{U}$ generated by $E_j,F_j$ for any $j\in J$ respectively. 

By \cite[\S 23.2.1]{Lusztig-1993}, $\dot{\mathbf{U}}$ is a free $(\mathbf{U}^+\otimes (\mathbf{U}^-)^{\mathrm{opp}})$-module with a basis $\{1_{\zeta};\zeta\in X\}$. Let $\dot{\mathbf{U}}_J$ be the $(\mathbf{U}_J^+\otimes (\mathbf{U}^-)^{\mathrm{opp}})$-submodule generated by $\{1_{\zeta};\zeta\in X\}$, and ${^{\omega}\dot{\mathbf{U}}_J}$ be the $(\mathbf{U}^+\otimes (\mathbf{U}_J^-)^{\mathrm{opp}})$-submodule generated by $\{1_{\zeta};\zeta\in X\}$. By \cite[\S 23.1.3]{Lusztig-1993}, both $\dot{\mathbf{U}}_J$ and ${^{\omega}\dot{\mathbf{U}}_J}$ are subalgebras of $\dot{\mathbf{U}}$. We call them the {\it parabolic subalgebras} of $\dot{\mathbf{U}}$ associated with $J$.  

A subset $J\subset I$ is called spherical if the subgroup $W_J$ of $W$ generated by $s_j$ for any $j\in J$ is finite. For any $w\in W$, the set $J(w):=\{j\in I; s_j w<w\}$ is spherical. 

A {\it spherical parabolic subalgebra} of $\dot{\mathbf{U}}$ is either $\dot{\mathbf{U}}_J$ or ${}^{\omega}\dot{\mathbf{U}}_J$ associated with a spherical subset $J \subset I$.

\begin{lemma}\label{lem:Demazure-UJ}
Let $\lambda \in X^+,w \in W$ and $J\subset J(w)$. Then the action of $\dot{\mathbf{U}}_J$ on $^{\omega}\Lambda_{\lambda}$ preserves ${^{\omega}V_{w}(\lambda)}$, and the projection map $a_{-w\lambda}: M_{-w\lambda} \to {^{\omega}V_{w}(\lambda)}$ commutes with the action of $\dot{\mathbf{U}}_J$.
\end{lemma}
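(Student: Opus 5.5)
The plan is to first prove that $a_{-w\lambda}$ intertwines the $\dot{\mathbf{U}}_J$-actions, and then deduce stability of ${^{\omega}V_w(\lambda)}$ formally from surjectivity of $a_{-w\lambda}$. Since $\dot{\mathbf{U}}_J$ is spanned by products of $E_j^{(n)}$ ($j\in J$), $F_i^{(n)}$ ($i\in I$) and idempotents $1_\zeta$, it suffices to check commutation with each of these. The map $a_{-w\lambda}$ preserves weights, because $v_{-w\lambda}$ and $\xi_{-w\lambda}$ both have weight $-w\lambda$ and $a_{-w\lambda}$ is $\mathbf{U}^-$-linear. Hence it commutes with all $1_\zeta$ (equivalently with all $K_\mu$) and with all $F_i$. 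The only real content is commutation with $E_j$ for $j\in J$.

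\emph{Key step: $E_j\xi_{-w\lambda}=0$ for $j\in J(w)$.} Since $s_jw<w$, the root $w^{-1}(j)$ is negative, so
\[
\langle j_Y,w\lambda\rangle=\langle w^{-1}(j)_Y,\lambda\rangle\le 0,
\qquad\text{i.e.}\qquad \langle j_Y,-w\lambda\rangle\ge 0.
\]
Now $\xi_{-w\lambda}$ is an extremal weight vector of the integrable module ${^{\omega}\Lambda_\lambda}$, so its $j$-string is the full string through the extremal weight $-w\lambda$. That string runs from $-w\lambda$ down to $s_j(-w\lambda)$. Therefore $-w\lambda+j$ is not a weight of ${^{\omega}\Lambda_\lambda}$: its $W$-conjugate $s_j(-w\lambda+j)=-s_jw\lambda-j$ would otherwise lie beyond the extremal weight $-s_jw\lambda$ in the $j$-direction. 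Hence $E_j\xi_{-w\lambda}=0$. I expect this to be the main point requiring care. If a cleaner route is wanted, apply $\omega$ and use the standard fact that $\eta_{w\lambda}$ with $\langle j_Y,w\lambda\rangle\le0$ is killed by $F_j$, via Lusztig's braid group action $\eta_{w\lambda}=T''\cdots\eta_\lambda$ up to sign.

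\emph{Commutation with $E_j$.} Given $E_j\xi_{-w\lambda}=0=E_jv_{-w\lambda}$, I will prove $a(E_jx^-v_{-w\lambda})=E_jx^-\xi_{-w\lambda}$ by induction on $\mathrm{tr}\,|x|$, or directly from the commutation formula \cite[Proposition 3.1.7]{Lusztig-1993}:
\[
E_jx^--x^-E_j=\frac{({r_j}(x))^-K_{j_Y}-K_{-j_Y}({_jr}(x))^-}{v-v^{-1}} .
\]
Applying this formula to $v_{-w\lambda}$ and to $\xi_{-w\lambda}$, both of which have weight $-w\lambda$ and are killed by $E_j$, produces the same expression $c\,y^-(\cdot)$ with identical scalars $c$ determined by the weight. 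These two expressions correspond under $a_{-w\lambda}$ since $a$ is $\mathbf{U}^-$-linear. Divided powers $E_j^{(n)}$ follow immediately.

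\emph{Stability.} Finally, since $a_{-w\lambda}$ is surjective onto ${^{\omega}V_w(\lambda)}$ and commutes with $\dot{\mathbf{U}}_J$, we get for $u\in\dot{\mathbf{U}}_J$ that
\[
u\,{^{\omega}V_w(\lambda)}=u\,a_{-w\lambda}(M_{-w\lambda})=a_{-w\lambda}(uM_{-w\lambda})\subset{^{\omega}V_w(\lambda)}.
\]
This gives the first assertion without invoking Kashiwara's Demazure-module results.
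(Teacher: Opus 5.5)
Your handling of the commutation with $E_j$ is the paper's argument: apply the commutator formula for $E_jx^--x^-E_j$ to $v_{-w\lambda}$ and to $\xi_{-w\lambda}$. Both have weight $-w\lambda$ and both are killed by $E_j$. Your displayed formula has $r_j$ and ${_jr}$ interchanged; the correct form is $E_jx^--x^-E_j=(v-v^{-1})^{-1}\bigl(K_j({_jr}(x))^--(r_j(x))^-K_{-j}\bigr)$, as in the paper. This slip is harmless, since only the dependence on $x$ and the common weight matters.

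You depart from the paper on the first assertion. The paper quotes $E_j({^{\omega}V_w(\lambda)})\subset{^{\omega}V_w(\lambda)}$ for $s_jw<w$ as a known Demazure-module fact. You instead deduce it from the intertwining identity and the surjectivity of $a_{-w\lambda}$. This is a real economy: the only external input becomes $E_j\xi_{-w\lambda}=0$, which the paper also uses without proof.

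However, the weight-string argument you give first for $E_j\xi_{-w\lambda}=0$ does not hold up.

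\textbf{Reflecting by $s_j$ is circular.} The set of weights is $s_j$-stable, so ``$-s_jw\lambda-j_X$ is not a weight'' is equivalent to ``$-w\lambda+j_X$ is not a weight.'' The useful reflection is by $w^{-1}$, which sends $-w\lambda+j_X$ to $-\lambda+w^{-1}(j_X)$ with $w^{-1}(j)$ a negative root.

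\textbf{Even that needs $X$-regularity.} The corrected argument only works when $\mathbb{Z}[I]\to X$ is injective. The paper's simply connected datum fails this whenever the Cartan matrix is singular, for example in affine type. Take type $A_1^{(1)}$ with $I=\{k,l\}$, $k\cdot l=-2$, $\langle k_Y,\lambda\rangle=1$, $\langle l_Y,\lambda\rangle=0$ and $w=s_k$. Then $l_X=-k_X$, and $-w\lambda+k_X=-\lambda+2k_X=-s_ks_ls_k\lambda$ is an honest weight of ${^{\omega}\Lambda_\lambda}$. Nevertheless $E_k\xi_{-s_k\lambda}=E_k^2\xi_{-\lambda}=0$.

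\textbf{Use your fallback instead,} in the form the paper itself uses in Lemma \ref{chi extreme weight vector}.
\begin{enumerate}
\item By \cite[Proposition 28.1.4]{Lusztig-1993}, $\eta_{w\lambda}=F_j^{(n)}\eta_{s_jw\lambda}$ with $n=\langle j_Y,s_jw\lambda\rangle\geq 0$.
\item Since $s_j(s_jw)>s_jw$, we have $E_j\eta_{s_jw\lambda}=0$ (see the proof of \cite[Lemma 39.1.2]{Lusztig-1993}).
\item Integrability then gives $F_j\eta_{w\lambda}=[n+1]F_j^{(n+1)}\eta_{s_jw\lambda}=0$.
\item Applying $\omega$ yields $E_j\xi_{-w\lambda}=0$.
\end{enumerate}
This route involves only $\langle j_Y,\cdot\rangle$, so it is insensitive to the failure of $X$-regularity.
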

\begin{proof}
For any $j\in J$, we have $s_jw<w$, and so $E_j({^{\omega}V_{w}(\lambda)})\subset {^{\omega}V_{w}(\lambda)}$. The projection map is already a $\mathbf{U}(\mathfrak{b}^-)$-module homomorphism. It remains to prove that it commutes with the action of $E_j$ for any $j\in J$. Let $r_j:\mathbf{f}\rightarrow \mathbf{f}$ and ${_jr}:\mathbf{f}\rightarrow \mathbf{f}$ be the linear maps defined in \cite[\S 1.2.13]{Lusztig-1993}. For any homogeneous $x\in \mathbf{f}$, by \cite[Proposition 3.1.6]{Lusztig-1993}, we have 
$$E_jx^--x^-E_j=\frac{K_j({_jr(x)})^--(r_j(x))^-K_{-j}}{v-v^{-1}}\in \mathbf{U}.$$
Since $E_jv_{-w\lambda}=0$ in $M_{-w\lambda}$ and $E_j\xi_{-w\lambda}=0$ in ${^{\omega}V_w(\lambda)}$, both $a_{-w\lambda}(E_jx^-v_{-w\lambda})$ and $E_ja_{-w\lambda}(x^-v_{-w\lambda})$ are equal to
$(v-v^{-1})^{-1}(K_j({_jr(x)})^-\xi_{-w\lambda}-(r_j(x))^-K_{-j}\xi_{-w\lambda})$.
\end{proof}

\subsection{A vanishing lemma}
Let $J\subset I$, and let $\mathbf{f}_J$ be the subalgebra of $\mathbf{f}$ generated by $\theta_j$ for any $j\in J$. We regard $\mathbf{f}_J$ as the algebra defined in \S \ref{The algebra f} associated with the sub-Cartan datum $(J,\cdot)$ of $(I,\cdot)$, and let $\mathbf{B}_J$ be the canonical basis of $\mathbf{f}_J$. Then $\mathbf{B}_J\subset \mathbf{B}$. Similar to \cite[\S 23.2.1]{Lusztig-1993}, $\dot{\mathbf{U}}_J$ has a basis $\{b_1^+1_{\zeta}b_2^-;\zeta\in X,b_1\in \mathbf{B}_J,b_2\in \mathbf{B}\}$. Then by the proof of Theorem \ref{25.2.1}, $\{b_1\diamondsuit_{\zeta}b_2\in \dot{\mathbf{B}};\zeta\in X,b_1\in \mathbf{B}_J,b_2\in \mathbf{B}\}$ is a basis of $\dot{\mathbf{U}}_J$. Moreover, it is equal to $\dot{\mathbf{B}}\cap\dot{\mathbf{U}}_J$. 

Let $\dot{\tilde{\mathbf{U}}}$ be the modified quantum group associated with the thickening root datum $(\tilde{Y},\tilde{X},\langle\,,\,\rangle,\ldots)$, and $\dot{\tilde{\mathbf{B}}}$ be its canonical basis. Then there are natural imbeddings $\dot{\mathbf{U}}\hookrightarrow \dot{\tilde{\mathbf{U}}}$ and $\dot{\mathbf{B}}\subset \dot{\tilde{\mathbf{B}}}$.

\begin{lemma}\label{canonical basis of U dot acts on Verma}
Let $\lambda_1,\lambda_2\in X^+,w\in W$ and $J\subset J(w)$. Then for any $b_1\in \mathbf{B}_J$ such that $b_1 \neq 1$ and $b_2\in \mathbf{B}(\mathbf{f}\theta_{\lambda_2}\mathbf{f})$, we have 
$$\underline{\varphi_{-w\lambda_1,\lambda_2}}\bigl((b_1\diamondsuit_{-w\lambda_1\odot\lambda_2}b_2)v_{-w\lambda_1\odot\lambda_2}\bigr)=0.$$
\end{lemma}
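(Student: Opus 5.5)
The plan is to reduce the statement to the vanishing clause of Theorem \ref{25.2.1}, applied inside the thickened modified quantum group $\dot{\tilde{\mathbf{U}}}$. Write $\zeta=-w\lambda_1\odot\lambda_2$ and $\dot b=b_1\diamondsuit_{\zeta}b_2\in\dot{\tilde{\mathbf{B}}}$. Since $b_1\in\mathbf{B}_J$, the element $\dot b$ lies in the parabolic subalgebra $\dot{\tilde{\mathbf{U}}}_J$. I will use its expansion
\[
\dot b=\sum c_{b',b''}\,b'^+1_{\zeta'}b''^-\qquad(b'\in\mathbf{B}_J,\ b''\in\tilde{\mathbf{B}}),
\]
which comes from the proof of Theorem \ref{25.2.1}. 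Here the leading term is $b_1^+b_2^-$, and the other terms have $|b'|<|b_1|$ and $|b''|<|b_2|$ with the same difference.

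First I check that $\dot b\,v_{\zeta}$ lies in $(\mathbf{f}\theta_{\lambda_2}\mathbf{f})_{\zeta}$. Every $b''$ occurring has $I'$-degree equal to that of $b_2$, and the positive parts only involve $E_j$ with $j\in J\subset I$. Moreover $(\mathbf{f}\theta_{\lambda_2}\mathbf{f})_{\zeta}$ is a $\mathbf{U}$-submodule, and the relevant $\tilde{\mathbf{f}}$-components are forced into $\mathbf{f}\theta_{\lambda_2}\mathbf{f}$ by degree. So $\underline{\varphi_{-w\lambda_1,\lambda_2}}$ can be applied.

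Second, I show that $\underline{\varphi_{-w\lambda_1,\lambda_2}}$ intertwines the action of $\mathbf{U}_J^+$. The map $\varphi_{-w\lambda_1,\lambda_2}$ is a $\mathbf{U}$-isomorphism by Lemma \ref{isomorphism varphi}. The map $a_{-w\lambda_1}\otimes\mathrm{Id}$ commutes with $\Delta(E_j)=E_j\otimes1+K_{j_Y}\otimes E_j$ for $j\in J$, by Lemma \ref{lem:Demazure-UJ}. Hence
\[
\underline{\varphi}(\dot b\,v_\zeta)=\sum c_{b',b''}\,b'^+\,\underline{\varphi}(b''^-v_\zeta).
\]
Each $\underline{\varphi}(b''^-v_\zeta)$ is an explicit vector in ${^{\omega}V_w(\lambda_1)}\otimes\Lambda_{\lambda_2}$ given by Theorem \ref{thm:structure-trans}(2).

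Third, and this is the main obstacle, I must show that this sum vanishes. I would not attack the sum term by term. Instead I would compare with a genuine tensor product of thickened integrable modules, ${^{\omega}\tilde\Lambda_{\mu_1}}\otimes\tilde\Lambda_{\mu_2}$ with $\mu_2-\mu_1=\zeta$, and choose $\mu_1$ so that $\langle j_{\tilde Y},\mu_1\rangle=0$ for all $j\in J$. Then $b_1^+\xi_{-\mu_1}=0$ because $b_1\neq1$, and Theorem \ref{25.2.1} (for the thickened datum) gives $\dot b(\xi_{-\mu_1}\otimes\eta_{\mu_2})=0$.

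The remaining task is to build a map from the $\dot{\tilde{\mathbf{U}}}_J$-submodule generated by $\xi_{-\mu_1}\otimes\eta_{\mu_2}$ to ${^{\omega}V_w(\lambda_1)}\otimes\Lambda_{\lambda_2}$ that
\begin{itemize}
\item is compatible with the action of $\mathbf{U}(\mathfrak{b}^-)$ and of $E_j$ for $j\in J$, and
\item factors the composite $\underline{\varphi}\circ(\cdot\,v_\zeta)$.
\end{itemize}
Two facts justify this choice of comparison. The vector $\xi_{-w\lambda_1}$ is killed by every $E_j$, $j\in J$, by Lemma \ref{lem:Demazure-UJ}. And $\langle j_Y,-w\lambda_1\rangle\ge0$ for $j\in J\subset J(w)$, so the $J$-part of the Verma source behaves like a lowest weight vector of $J$-weight $0$.

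I expect the weight bookkeeping to be the delicate point: one must pass from the Verma module $M_\zeta$ to the integrable tensor product by a stable-limit argument as in \cite[\S 25.1]{Lusztig-1993}, enlarging $\mu_2$ while keeping the $J$-part of $\mu_1$ equal to zero.

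If the comparison map proves hard to construct, the fallback is a direct computation. Using $E_j x^- - x^-E_j$ from \cite[Proposition 3.1.6]{Lusztig-1993} together with Theorem \ref{thm:structure-trans}(2), I would show that the terms with $b'\neq1$ and those with $b'=1$ cancel, by induction on $\mathrm{tr}\,|b_1|$.
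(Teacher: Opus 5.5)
There is a genuine gap. The step you call the main obstacle is the whole content of the lemma, and the proposal does not supply it.

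You correctly isolate the vanishing input: Theorem \ref{25.2.1} in $\dot{\tilde{\mathbf{U}}}$, for a pair $\mu_1,\mu_2\in\tilde X^+$ with $\mu_2-\mu_1=\zeta:=-w\lambda_1\odot\lambda_2$ and $\langle j_{\tilde Y},\mu_1\rangle=0$ for $j\in J$. But the ``comparison map'' $\Phi$ you want would satisfy $\Phi(\dot u(\xi_{-\mu_1}\otimes\eta_{\mu_2}))=\underline{\varphi}(\dot u\,v_\zeta)$, where $\underline{\varphi}=\underline{\varphi_{-w\lambda_1,\lambda_2}}$. It is well defined at the vector $\dot b(\xi_{-\mu_1}\otimes\eta_{\mu_2})=0$ only if $\underline{\varphi}(\dot b\,v_\zeta)=0$, so postulating it assumes the conclusion. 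A stable-limit argument does not help, since the vanishing already holds for any fixed such pair. The fallback ``cancellation by induction on $\mathrm{tr}\,|b_1|$'' has no mechanism: the coefficients $c_{b',b''}$ of $\dot b$ are not explicit, and $\dot b$ is not built from canonical basis elements with smaller $b_1$. Your Step 2 ($E_j$-equivariance of $\underline{\varphi}$) is true but plays no role.

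The missing idea is to keep the second factor a \emph{Verma} module and push the discrepancy between $M_{\mu_2}$ and $\Lambda_{\mu_2}$ into $\ker\underline{\varphi}$. This takes three steps.

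First, by \cite[\S 23.3.1 \& \S 23.3.5]{Lusztig-1993}, the surjection $\dot{\tilde{\mathbf{U}}}1_\zeta\to{^{\omega}\Lambda_{\mu_1}}\otimes M_{\mu_2}$, $\dot u\mapsto\dot u(\xi_{-\mu_1}\otimes v_{\mu_2})$, has kernel generated by the $E_{\tilde i}^{(n)}1_\zeta$ with $n>\langle\tilde i_{\tilde Y},\mu_1\rangle$. These kill $v_\zeta$, so $\dot u\mapsto\dot u\,v_\zeta$ factors through a $\tilde{\mathbf{U}}$-map $f:{^{\omega}\Lambda_{\mu_1}}\otimes M_{\mu_2}\to M_\zeta$.

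Second, since ${b'}^+\xi_{-\mu_1}=0$ for $b'\in\mathbf{B}_J\setminus\{1\}$, one gets $\dot b(\xi_{-\mu_1}\otimes v_{\mu_2})=\sum_b c_b\,\xi_{-\mu_1}\otimes b^-v_{\mu_2}$ over $b\in\mathbf{B}(\mathbf{f}\theta_{\lambda_2}\mathbf{f})$. Applying $\mathrm{Id}\otimes\pi_{\mu_2}$ and the vanishing clause gives $c_b=0$ unless $b^-\eta_{\mu_2}=0$. Hence $\dot b\,v_\zeta=\sum_{b^-\eta_{\mu_2}=0}c_b\,b^-v_\zeta$.

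Third, each such $b$ lies in $\tilde{\mathbf{f}}\theta_i^{\langle i_{\tilde Y},\mu_2\rangle+1}$ for some $i\in I$; its $I'$-degree rules out $i'$. Then $\underline{\varphi}(b^-v_\zeta)=0$ because $F_i\eta_{0\odot\lambda_2}=0$ and $F_i^{\langle i_{\tilde Y},\mu_2\rangle+1}\xi_{-w\lambda_1}=0$. This last fact is the Demazure input actually needed. It uses $\langle i_{\tilde Y},\mu_2\rangle\geqslant\max(0,\langle i_Y,-w\lambda_1\rangle)$ for \emph{every} $i\in I$, which is automatic once $\mu_1,\mu_2$ are dominant, together with the $F_i$-extremality of $\xi_{-w\lambda_1}$. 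It is not the $E_j$-annihilation for $j\in J$ that you cite, so no delicate weight bookkeeping is needed.

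The paper takes the minimal pair. The $I$-components of $\tilde\lambda_1,\tilde\lambda_2$ are $0$ and $\langle i_Y,-w\lambda_1\rangle$ if $s_iw<w$, and $\langle i_Y,w\lambda_1\rangle$ and $0$ if $s_iw>w$. The $I'$-components are $0$ and $\langle i_Y,\lambda_2\rangle$.

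A minor point: $\mathbf{f}\theta_{\lambda_2}\mathbf{f}$ is in general a proper subspace of its $I'$-degree component of $\tilde{\mathbf{f}}$. So membership of the $b''$ in $\mathbf{f}\theta_{\lambda_2}\mathbf{f}$ is not ``forced by degree''. It comes from $(\mathbf{f}\theta_{\lambda_2}\mathbf{f})_{\tilde\lambda}$ being a $\mathbf{U}$-submodule of $M_{\tilde\lambda}$ for all $\tilde\lambda$, fed into the construction of $b_1\diamondsuit_\zeta b_2$.
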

\begin{proof}
We define two auxiliary dominant weights $\tilde{\lambda}_1,\tilde{\lambda}_2\in \tilde{X}^+$ by
\begin{align*}
\langle i_{\tilde{Y}},\tilde{\lambda}_1\rangle=\begin{cases}
0 &\textrm{if}\ s_iw<w;\\
\langle i_Y,w\lambda_1\rangle &\textrm{if}\ s_iw>w,
\end{cases}\ \ &\langle i_{\tilde{Y}},\tilde{\lambda}_2\rangle=\begin{cases}
\langle i_Y,-w\lambda_1\rangle &\textrm{if}\ s_iw<w;\\
0 &\textrm{if}\ s_iw>w,\end{cases}\\
\langle i'_{\tilde{Y}},\tilde{\lambda}_1\rangle=0,\ &\langle i'_{\tilde{Y}},\tilde{\lambda}_2\rangle=\langle i_Y,\lambda_2\rangle.
\end{align*}
for any $i\in I$.
A direct verification using \eqref{definition of odot} shows that $-w\lambda_1\odot\lambda_2=\tilde{\lambda}_2-\tilde{\lambda}_1$. 

Consider the following $\tilde{\mathbf{U}}$-module homomorphisms
$$\xymatrix@C=2.5cm{\dot{\tilde{\mathbf{U}}}1_{-w\lambda_1\odot\lambda_2} \ar[d]_-{\dot{u}\mapsto \dot{u}v_{-w\lambda_1\odot\lambda_2}} \ar@{->>}[r]^-{\dot{u}\mapsto \dot{u}(\xi_{-\tilde{\lambda}_1}\otimes v_{\tilde{\lambda}_2})} &{^{\omega}\Lambda_{\tilde{\lambda}_1}}\otimes M_{\tilde{\lambda}_2} \ar@{->>}[r]^-{\mathrm{Id}\otimes \pi_{\tilde{\lambda}_2}} \ar@{-->}[ld]^f &{^{\omega}\Lambda_{\tilde{\lambda}_1}}\otimes \Lambda_{\tilde{\lambda}_2}\\
M_{-w\lambda_1\odot\lambda_2}.}$$
By \cite[\S 23.3.1 \& \S 23.3.5]{Lusztig-1993}, $\dot{\tilde{\mathbf{U}}}1_{-w\lambda_1\odot\lambda_2}\rightarrow {^{\omega}\Lambda_{\tilde{\lambda}_1}}\otimes M_{\tilde{\lambda}_2}, \dot{u}\mapsto \dot{u}(\xi_{-\tilde{\lambda}_1}\otimes v_{\tilde \l_2})$ is surjective with kernel $\sum_{\tilde{i}\in \tilde{I},n>\langle \tilde{i}_{\tilde{Y}}, \tilde{\lambda}_1\rangle}\dot{\tilde{\mathbf{U}}}E_{\tilde{i}}^{(n)}1_{-w\lambda_1\odot\lambda_2}$. It is clear that this kernel acts on $v_{-w\lambda_1\odot\lambda_2}$ by $0$, and so $\dot{\tilde{\mathbf{U}}}1_{-w\lambda_1\odot\lambda_2}\rightarrow M_{-w\lambda_1\odot\lambda_2},\dot{u}\mapsto \dot{u} v_{-w\lambda_1\odot\lambda_2}$ factor though a $\tilde{\mathbf{U}}$-module homomorphism $f:{^{\omega}\Lambda_{\tilde{\lambda}_1}}\otimes M_{\tilde{\lambda}_2}\rightarrow M_{-w\lambda_1\odot\lambda_2}$ such that $f(\xi_{-\tilde{\lambda}_1}\otimes v_{\tilde{\lambda}_2})=v_{-w\lambda_1\odot\lambda_2}$. In particular, we have 
\begin{equation}\label{factor through f}
(b_1\diamondsuit_{-w\lambda_1\odot\lambda_2}b_2)v_{-w\lambda_1\odot\lambda_2}=f((b_1\diamondsuit_{-w\lambda_1\odot\lambda_2}b_2)(\xi_{-\tilde{\lambda}_1}\otimes v_{\tilde{\lambda}_2})).    
\end{equation}

Since $b_1\in \mathbf{B}_J$ and $b_2\in \mathbf{B}(\mathbf{f}\theta_{\lambda_2}\mathbf{f})$, by the construction of $b_1\diamondsuit_{-w\lambda_1\odot\lambda_2}b_2\in \dot{\tilde{\mathbf{B}}}$ in Theorem \ref{25.2.1} and the fact that $(\mathbf{f}\theta_{\lambda_2}\mathbf{f})_{\tilde{\lambda}}$ is a $\mathbf{U}$-submodule of $M_{\tilde{\lambda}}$ for any $\tilde{\lambda}\in \tilde{X}^+$, $b_1\diamondsuit_{-w\lambda_1\odot\lambda_2}b_2$ is contained in the $\mathcal{A}$-submodule of $\dot{\tilde{\mathbf{U}}}$ generated by ${b'_1}^+{b'_2}^-1_{-w\lambda_1\odot\lambda_2}$ for any $b'_1\in \mathbf{B}_J$ and $b'_2\in \mathbf{B}(\mathbf{f}\theta_{\lambda_2}\mathbf{f})$. Consequently, we can write
$$(b_1\diamondsuit_{-w\lambda_1\odot\lambda_2}b_2)(\xi_{-\tilde{\lambda}_1}\otimes v_{\tilde \l_2})=\sum_{b'_1 \in \mathbf{B}_J, b'_2 \in \mathbf{B}(\mathbf{f}\theta_{\lambda_2}\mathbf{f})} c_{b'_1,b'_2} {b'_1}^+\xi_{-\tilde{\lambda}_1}\otimes {b'_2}^- v_{\tilde \l_2}.$$ 
Since $\langle j_{\tilde{Y}},\tilde{\lambda}_1\rangle=0$ for any $j\in J$, we have ${b'_1}^+\xi_{-\tilde{\lambda}_1}=0$ for any $b'_1 \in \mathbf{B}_J$ with $b'_1 \neq 1$. Hence the linear combination can be rewritten as 
\begin{equation}\label{linear combination}
(b_1\diamondsuit_{-w\lambda_1\odot\lambda_2}b_2)(\xi_{-\tilde{\lambda}_1}\otimes v_{\tilde \l_2})=\sum_{b \in \mathbf{B}(\mathbf{f}\theta_{\lambda_2}\mathbf{f})}c_{b}\,\xi_{-\tilde{\lambda}_1}\otimes b^- v_{\tilde \l_2}.
\end{equation}
Similarly, since $b_1\in \mathbf{B}_J$ and $b_1\neq 1$, we have $b_1^+\xi_{-\tilde{\lambda}_1}=0$. By Theorem \ref{25.2.1}, applying $\mathrm{Id}\otimes \pi_{\tilde{\lambda}_2}:{^{\omega}\Lambda_{\tilde{\lambda}_1}}\otimes M_{\tilde{\lambda}_2}\rightarrow {^{\omega}\Lambda_{\tilde{\lambda}_1}}\otimes \Lambda_{\tilde{\lambda}_2}$ to \eqref{linear combination}, we obtain
$$0=(b_1\diamondsuit_{-w\lambda_1\odot\lambda_2}b_2)(\xi_{-\tilde{\lambda}_1}\otimes \eta_{\tilde{\lambda}_2})=\sum_{b\in \mathbf{B}(\mathbf{f}\theta_{\lambda_2}\mathbf{f})}c_b\,\xi_{-\tilde{\lambda}_1}\otimes b^-\eta_{\tilde{\lambda}_2}.$$
By \cite[Theorem 14.4.11]{Lusztig-1993}, \{$b^-\eta_{\tilde{\lambda}_2};b\in \mathbf{B}(\mathbf{f}\theta_{\lambda_2}\mathbf{f}),b^-\eta_{\tilde{\lambda}_2}\neq 0\}\subset \mathbf{B}(\Lambda_{\tilde{\lambda}_2})$ is linearly independent, and so $c_b=0$ for any $b\in \mathbf{B}(\mathbf{f}\theta_{\lambda_2}\mathbf{f})$ with $b^- \eta_{\tilde \l_2} \neq 0$. Then by \eqref{factor through f}, 
\begin{align*}
&(b_1\diamondsuit_{-w\lambda_1\odot\lambda_2}b_2) v_{-w\lambda_1\odot\lambda_2} 
=\sum_{b\in \mathbf{B}(\mathbf{f}\theta_{\lambda_2}\mathbf{f});b^-\eta_{\tilde{\lambda}_2}=0} f(c_b\,\xi_{-\tilde{\lambda}_1}\otimes b^- v_{\tilde \l_2})\\
=&\sum_{b\in \mathbf{B}(\mathbf{f}\theta_{\lambda_2}\mathbf{f});b^-\eta_{\tilde{\lambda}_2}=0} c_b\,b^- f(\xi_{-\tilde{\lambda}_1}\otimes v_{\tilde \l_2})
=\sum_{b\in \mathbf{B}(\mathbf{f}\theta_{\lambda_2}\mathbf{f});b^-\eta_{\tilde{\lambda}_2}=0} c_b\,b^- v_{-w\lambda_1\odot\lambda_2}
\end{align*}
It remains to prove that it belongs to $\mathrm{ker}\,\underline{\varphi_{-w\lambda_1,\lambda_2}}$.

Let $b \in \mathbf{B}(\mathbf{f}\theta_{\lambda_2}\mathbf{f})$ with $b^-\eta_{\tilde{\lambda}_2}=0$. By \eqref{14.4.11}, $b\in \bigcup_{\tilde{i}\in \tilde{I}}(\tilde{\mathbf{B}}\cap \tilde{\mathbf{f}}\theta_{\tilde{i}}^{\langle \tilde{i}_{\tilde{Y}},\tilde{\lambda}_2\rangle+1})$. Note that $|b|\in \sum_{i\in I}\langle i_Y,\lambda_2\rangle i'+\mathbb{N}[I]$. Hence $b\not\in \bigcup_{i'\in I'}\tilde{\mathbf{f}}\theta_{i'}^{\langle i'_{\tilde{Y}},\tilde{\lambda}_2\rangle+1}=\bigcup_{i'\in I'}\tilde{\mathbf{f}}\theta_{i'}^{\langle i_Y,\lambda_2\rangle+1}$ and $b\in \tilde{\mathbf{f}}\theta_i^{\langle i_{\tilde{Y}},\tilde{\lambda}_2\rangle+1}$ for some $i\in I$. We write $b=x\theta_i^{\langle i_{\tilde{Y}},\tilde{\lambda}_2\rangle+1}$ with $x\in \tilde{\mathbf{f}}$, and write $r(x)=\sum x_1\otimes x_2,r(\theta_i^{\langle i_{\tilde{Y}},\tilde{\lambda}_2\rangle+1})=y_1\otimes y_2$ with homogeneous $x_1,x_2\in \tilde{\mathbf{f}},y_1,y_2\in \mathbf{f}$. By the definition of $\varphi_{-w\lambda_1,\lambda_2}$ and \cite[\S 3.1.5 \& Lemma 1.2.11]{Lusztig-1993}, we have 
$$\underline{\varphi_{-w\lambda_1,\lambda_2}}(b^-v_{-w\lambda_1\odot\lambda_2})
=\sum v^{|x_2|\cdot |y_1|}x_2^-y_2^-\xi_{-w\lambda_1}\otimes  \phi_{\lambda_2}^{-1}(x_1^-y_1^-\eta_{0\odot\lambda_2}).$$
Since $\langle k_{\tilde{Y}},0\odot\lambda_2\rangle=0$ for any $k\in I$, we have $y_1^-\eta_{0\odot\lambda_2}=0$ whenever $|y_1|\neq 0$. Notice that $x_2^-F_i^{\langle i_{\tilde{Y}},\tilde{\lambda}_2\rangle+1}\xi_{-w\lambda_1}=0$, and so $\underline{\varphi_{-w\lambda_1,\lambda_2}}(b^-v_{-w\lambda_1\odot\lambda_2})=0$. 
\end{proof}

\subsection{Structure constant}

Let $\zeta\in X$ and $b_1,b_2\in \mathbf{B}$. Recall that $b_1\diamondsuit_{\zeta}b_2\in 1_{\zeta+|b_1|-|b_2|}\dot{\mathbf{U}}1_{\zeta}$ 
(see \cite[Remark 25.2.4]{Lusztig-1993}). We denote $|b_1\diamondsuit_{\zeta} b_2|=|b_1|-|b_2|\in \mathbb{Z}[I]$.

Let $\sigma:\mathbf{f}\rightarrow \mathbf{f}^{\mathrm{opp}}$ be the algebra isomorphism defined by $\sigma(\theta_i)=\theta_i$ for any $i\in I$. By \cite[Theorem 14.4.3]{Lusztig-1993}, $\sigma(\mathbf{B})=\mathbf{B}$. Similarly, we have $\sigma:\tilde{\mathbf{f}}\rightarrow \tilde{\mathbf{f}}^{\mathrm{opp}}$ such that $\sigma(\tilde{\mathbf{B}})=\tilde{\mathbf{B}}$. Let $\sigma:\mathbf{U}\rightarrow \mathbf{U}^{\mathrm{opp}}$ be the algebra isomorphism defined by $\sigma(E_i)=E_i,\sigma(F_i)=F_i,\sigma(K_{\mu})=K_{-\mu}$ for any $i\in I$ and $\mu\in Y$. Then it induces a linear isomorphism ${_{\gamma}\mathbf{U}_{\zeta}}\rightarrow {_{-\zeta}\mathbf{U}_{-\gamma}}$
for any $\gamma,\zeta\in X$. Taking direct sums, we obtain an algebra automorphism $\sigma:\dot{\mathbf{U}}\rightarrow \dot{\mathbf{U}}^{\mathrm{opp}}$ (see \cite[\S 23.1.6]{Lusztig-1993}). By \cite[Theorem 4.3.2]{Kashiwara-1994}, we have $\dot{\sigma}(\dot{\mathbf{B}})=\dot{\mathbf{B}}$.

Let $(\tilde{\tilde{I}},\cdot)$ be the thickening Cartan datum of $(\tilde{I},\cdot)$, and let $\tilde{\tilde{\mathbf{f}}}$ be the associated algebra defined in \S \ref{The algebra f} with the canonical basis $\tilde{\tilde{\mathbf{B}}}$. Then there are natural imbeddings $\tilde{\mathbf{f}}\hookrightarrow \tilde{\tilde{\mathbf{f}}}$ and $\tilde{\mathbf{B}}\subset \tilde{\tilde{\mathbf{B}}}$. For any $b_1,b_2\in \tilde{\tilde{\mathbf{B}}}$, we write $b_1b_2=\sum_{b\in \tilde{\tilde{\mathbf{B}}}}m_{b_1,b_2}^b b$.

Let $\tilde{W}$ be the Weyl group associated with $(\tilde{I},\cdot)$. Then for any $\tilde{\lambda}_1,\tilde{\lambda}_2\in (\tilde{X})^+$ and $\tilde{w}\in \tilde{W}$, let $\tilde{\mathrm{th}}_{\tilde{\lambda}_1,\tilde{\lambda}_2,\tilde{w}}:\dot{\tilde{\mathbf{B}}}\rightarrow \tilde{\tilde{\mathbf{B}}}\sqcup\{0\}$ be the thickening map associated with the thickening data defined in \S \ref{Thickening map}.

\begin{theorem}\label{thm:structure-multiplication}
Let $J \subset I$ be a spherical subset. Then for any $\dot{b}\in \dot{\mathbf{B}}\cap \dot{\mathbf{U}}_J$ and $\dot{b}'\in \dot{\mathbf{B}}\cap \dot{\mathbf{U}}1_{\zeta}$, we have
$$\dot{b}\dot{b}'=\sum_{\dot{b}''\in \dot{\mathbf{B}}\cap \dot{\mathbf{U}}1_{\zeta}}m_{\sigma(\mathrm{th}_{\lambda_1,\lambda_2,w}(\dot{b}')),\widetilde{\mathrm{th}}_{\tilde{\lambda}_1,\tilde{\lambda}_2,\tilde{w}}(\sigma(\dot{\tilde{b}}))}^{\widetilde{\mathrm{th}}_{\tilde{\lambda}_1,\tilde{\lambda}_2,\tilde{w}}(\sigma(1\diamondsuit_{-w\lambda_1\odot\lambda_2}\,\mathrm{th}_{\lambda_1,\lambda_2,w}(\dot{b}'')))}\dot{b}'',$$
for sufficiently regular $\l_1, \l_2 \in X^+$, $\tilde{\lambda}_1,\tilde{\lambda}_2\in \tilde{X}^+$ and sufficiently large $w\in W,\tilde{w}\in \tilde{W}$ such that $\zeta=\lambda_2-\lambda_1,J\subset J(w)$ and $$\mathrm{th}_{\lambda_1,\lambda_2,w}(\dot{b}'),\widetilde{\mathrm{th}}_{\tilde{\lambda}_1,\tilde{\lambda}_2,\tilde{w}}(\sigma(\dot{\tilde{b}})),\widetilde{\mathrm{th}}_{\tilde{\lambda}_1,\tilde{\lambda}_2,\tilde{w}}(\sigma(1\diamondsuit_{-w\lambda_1\odot\lambda_2}\,\mathrm{th}_{\lambda_1,\lambda_2,w}(\dot{b}'')))\neq 0,$$
where $\dot{\tilde{b}}=b_1\diamondsuit_{-w\lambda_1\odot\lambda_2-|\mathrm{th}_{\lambda_1,\lambda_2,w}(\dot{b}')|_{\tilde{X}}}b_2\in \dot{\tilde{\mathbf{B}}}$ with $b_1\in \mathbf{B}_J$ and $b_2\in \mathbf{B}$ are determined by $\dot{b}=b_1\diamondsuit_{\zeta+|\dot{b}'|_X}b_2$.
\end{theorem}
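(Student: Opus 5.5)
The plan is to compute $\dot b\dot b'$ through its action on $\xi_{-\lambda_1}\otimes\eta_{\lambda_2}$ and transport everything to $M_{-w\lambda_1\odot\lambda_2}$. By Theorem \ref{25.2.1}, for sufficiently regular $\lambda_1,\lambda_2$ with $\zeta=\lambda_2-\lambda_1$, the coefficients of $\dot b\dot b'$ in $\dot{\mathbf B}\cap\dot{\mathbf U}1_\zeta$ are determined by the expansion of $\dot b\dot b'(\xi_{-\lambda_1}\otimes\eta_{\lambda_2})$ in $\mathbf B({^\omega\Lambda_{\lambda_1}}\otimes\Lambda_{\lambda_2})$. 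The element $\dot b'(\xi_{-\lambda_1}\otimes\eta_{\lambda_2})$ is $\underline{\varphi_{-w\lambda_1,\lambda_2}}(\tilde b'^-v_{-w\lambda_1\odot\lambda_2})$ with $\tilde b'=\mathrm{th}_{\lambda_1,\lambda_2,w}(\dot b')$. Here $w$ is chosen large enough and $J\subset J(w)$; this is possible by taking $w=w_J\ast w'$. We then apply $\dot b\in\dot{\mathbf U}_J$. Lemma \ref{lem:Demazure-UJ} shows that $a_{-w\lambda_1}\otimes\mathrm{Id}$ commutes with $\dot{\mathbf U}_J$, since $\Delta$ of $E_j$ and $F_i$ only involves $\mathbf U_J^+$, $\mathbf U^-$ and Cartan parts. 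Lemma \ref{isomorphism varphi} shows that $\varphi$ is $\mathbf U$-linear. Together these give $\dot b\dot b'(\xi_{-\lambda_1}\otimes\eta_{\lambda_2})=\underline{\varphi}(\dot b\,\tilde b'^-v_{-w\lambda_1\odot\lambda_2})$, where $\dot b$ acts on the $\tilde{\mathbf U}$-Verma module via $\dot{\mathbf U}\subset\dot{\tilde{\mathbf U}}$.

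Next I would write $\dot b=b_1\diamondsuit b_2$ and compute $\dot b\,\tilde b'^-v$ inside $\dot{\tilde{\mathbf U}}$ as $\dot b\cdot\tilde b'^-1_{-w\lambda_1\odot\lambda_2}$ acting on $v_{-w\lambda_1\odot\lambda_2}$. Since $\tilde b'^-1=1\diamondsuit\tilde b'\in\dot{\tilde{\mathbf B}}$, this becomes a product in $\dot{\tilde{\mathbf U}}$. I would apply $\sigma$ to turn it into $\sigma(1\diamondsuit\tilde b')\sigma(\dot{\tilde b})$, where $\sigma(1\diamondsuit \tilde b')=1\diamondsuit\sigma(\tilde b')$ is a pure negative element. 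The product of a pure negative canonical basis element with an arbitrary $\dot{\tilde{\mathbf B}}$-element, acting on a suitable highest weight vector, is handled by a second thickening. Through $\widetilde{\mathrm{th}}$, left multiplication by $\sigma(\tilde b')^-$ on $(\tilde{\mathbf f}\theta\tilde{\mathbf f})$ becomes multiplication in $\tilde{\tilde{\mathbf f}}$. Theorem \ref{thm:structure-trans} then lets me read off the expansion coefficients as $m^{\,\cdot}_{\sigma(\tilde b'),\widetilde{\mathrm{th}}(\sigma(\dot{\tilde b}))}$. Concretely, $\widetilde{\mathrm{th}}(\sigma(\dot{\tilde b}))$ is a canonical basis element $\tilde{\tilde b}$, and $\sigma(\tilde b')^-\tilde{\tilde b}^-v=\sum m\,\tilde{\tilde b}''^-v$ holds in $\tilde{\tilde{\mathbf f}}$. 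The terms $\tilde{\tilde b}''$ outside the image correspond to zero under the quotient, by the kernel statement of Theorem \ref{thm:structure-trans}(1). The surviving ones are $\widetilde{\mathrm{th}}(\sigma(\dot{\tilde b}''))$ with $\dot{\tilde b}''\in\dot{\tilde{\mathbf B}}$ via Theorem \ref{25.2.1} for $\tilde{\mathbf U}$. This yields $\dot b(1\diamondsuit\tilde b')=\sum m\,\dot{\tilde b}''$ in $\dot{\tilde{\mathbf U}}$ after undoing $\sigma$, using $\sigma(\dot{\tilde{\mathbf B}})=\dot{\tilde{\mathbf B}}$.

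Finally I would push back down to $\dot{\mathbf U}$. Applying the result to $v_{-w\lambda_1\odot\lambda_2}$ and then $\underline\varphi$, I need to match each $\dot{\tilde b}''v$ with $\mathbf B((\mathbf f\theta\mathbf f))$. The elements of the form $1\diamondsuit\tilde b''$ give $\tilde b''^-v$, which map under $\underline\varphi$ to $\dot b''(\xi\otimes\eta)$ with $\mathrm{th}(\dot b'')=\tilde b''$. All other $\dot{\tilde b}''$, those with a nontrivial positive part lying in $\mathbf B_J$, are killed by $\underline\varphi$ by the vanishing Lemma \ref{canonical basis of U dot acts on Verma}. I must check that only $\mathbf B_J$ positive parts occur; this follows from the weight and $\mathcal A$-span argument used in that lemma's proof. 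Comparing coefficients in the basis $\mathbf B({^\omega V_w(\lambda_1)}\otimes\Lambda_{\lambda_2})$ then gives the stated formula.

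I expect the main obstacle to be the middle step. There I must justify that the second thickening converts left multiplication by $1\diamondsuit\sigma(\tilde b')$ on $\dot{\tilde{\mathbf B}}$-elements exactly into $\tilde{\tilde{\mathbf f}}$-multiplication. I would first try to use that $\widetilde{\underline\varphi}$ is $\tilde{\mathbf U}^-$-linear and that $\widetilde{\mathrm{th}}$ is defined by \eqref{definition of thickening map}. This reduces the claim to the statement that $\tilde{\tilde{\mathbf B}}$-elements outside $\mathbf B(\tilde{\mathbf f}\theta\tilde{\mathbf f})$, or in the kernel, contribute nothing. The final bookkeeping of the weight indices in $\dot{\tilde b}$ is routine by comparison.
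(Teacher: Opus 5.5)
Your proposal is correct and follows essentially the same route as the paper: transport $\dot b\dot b'(\xi_{-\lambda_1}\otimes\eta_{\lambda_2})$ to $M_{-w\lambda_1\odot\lambda_2}$ using Lemma \ref{lem:Demazure-UJ} and $\underline{\varphi}$, apply $\sigma$ so that $\dot{\tilde b}(1\diamondsuit\mathrm{th}(\dot b'))$ becomes left multiplication by a pure negative element, handle that product through the second thickening $\widetilde{\mathrm{th}}$, and then kill the terms with nontrivial $\mathbf{B}_J$ positive part via Lemma \ref{canonical basis of U dot acts on Verma}. The only organizational difference is that the paper first proves the pure-negative case, statement (b), for an arbitrary root datum and then specializes it to $\dot{\tilde{\mathbf{U}}}$, whereas you run that same computation directly in $\dot{\tilde{\mathbf{U}}}$.
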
 
\begin{proof}
We simply denote $\mathrm{th}_{\lambda_1,\lambda_2,w}=\mathrm{th},\underline{\varphi_{-w\lambda_1,\lambda_2}}=\underline{\varphi}$ and $\widetilde{\mathrm{th}}_{\tilde{\lambda}_1,\tilde{\lambda}_2,\tilde{w}}=\widetilde{\mathrm{th}}$.

By Theorem \ref{25.2.1}, we can write $\dot{b}\dot{b}'=\sum \dot{m}_{\dot{b},\dot{b}'}^{\dot{b}''}\dot{b}''$, where the summation is taken over $\dot{b}''\in \dot{\mathbf{B}}\cap \dot{\mathbf{U}}1_{\zeta}$ and the structure constants $\dot{m}_{\dot{b},\dot{b}'}^{\dot{b}''}$ are zero for all but finitely many $\dot{b}''$. Choosing sufficiently regular $\lambda_1, \lambda_2 \in X^+$ such that $\zeta=\lambda_2-\lambda_1$ and $\dot{b}''(\xi_{-\lambda_1}\otimes \eta_{\lambda_2})\neq 0$ for any $\dot{b}''$ in the summation. Then 
\begin{equation}\label{structure constant m dot}
\dot{b}\dot{b}'(\xi_{-\lambda_1}\otimes \eta_{\lambda_2})=\sum\dot{m}_{\dot{b},\dot{b}'}^{\dot{b}''}\dot{b}''(\xi_{-\lambda_1}\otimes \eta_{\lambda_2}).    
\end{equation} 
By Theorem \ref{25.2.1}, $\{\dot{b}''(\xi_{-\lambda_1}\otimes \eta_{\lambda_2});\dot{b}'' \in \dot{\mathbf{B}},\dot{m}_{\dot{b},\dot{b}'}^{\dot{b}''}\neq 0\}$ is a linearly independent subset of $\mathbf{B}({^{\omega}\Lambda_{\lambda_1}}\otimes \Lambda_{\lambda_2})$. This uniquely isolates the structure constants $\dot{m}_{\dot{b},\dot{b}'}^{\dot{b}''}$ as the exact basis coefficients.

Since the right hand side of $\eqref{structure constant m dot}$ is non-zero, we have $\dot{b}'(\xi_{-\lambda_1}\otimes \eta_{\lambda_2})\not=0$. By Theorem \ref{25.2.1}, $\dot{b}'(\xi_{-\lambda_1}\otimes \eta_{\lambda_2})\in \mathbf{B}({^{\omega}\Lambda_{\lambda}}\otimes \Lambda_{\lambda_2})$. We fix a sufficiently large $w'\in W$ such that $\dot{b}'(\xi_{-\lambda_1}\otimes \eta_{\lambda_2})\in \mathbf{B}({^{\omega}V_{w'}(\lambda_1)}\otimes \Lambda_{\lambda_2})$, and let $w=w_J*w'\in W$ be the Demazure product of the longest element of $W_J$ and $w'$. Then we have $J\subset J(w)$ and $\dot{b}'(\xi_{-\lambda_1}\otimes \eta_{\lambda_2})\in \mathbf{B}({^{\omega}V_w(\lambda_1)}\otimes \Lambda_{\lambda_2})$. Since $\dot{b}\in \dot{\mathbf{U}}_J$, its action on ${^{\omega}\Lambda_{\lambda_1}}\otimes \Lambda_{\lambda_2}$ preserves the subspace ${^{\omega}V_w(\lambda_1)}\otimes \Lambda_{\lambda_2}$. By Proposition \ref{canonical basis of demazure otimes highest weight}, we have $\dot{b}''(\xi_{-\lambda_1}\otimes \eta_{\lambda_2})\in \mathbf{B}({^{\omega}V_w(\lambda_1)}\otimes \Lambda_{\lambda_2})$ for any $\dot{b}''$ in the summation. 

Consider the action of $\dot{b}\in \dot{\mathbf{U}}_J$ on $(\mathrm{th}(\dot{b}'))^-v_{-w\lambda_1\odot\lambda_2}\in (\mathbf{f}\theta_{\lambda_2}\mathbf{f})_{-w\lambda_1\odot\lambda_2}$. The vector space $(\mathbf{f}\theta_{\lambda_2}\mathbf{f})_{-w\lambda_1\odot\lambda_2}$ has a basis $\mathbf{B}((\mathbf{f}\theta_{\lambda_2}\mathbf{f})_{-w\lambda_1\odot\lambda_2})$, so we can write
\begin{equation}\label{structure constant a}
\dot{b}(\mathrm{th}(\dot{b}'))^-v_{-w\lambda_1\odot\lambda_2}=\sum a_{\dot{b},\mathrm{th}(\dot{b}')}^{\tilde{b}''}\tilde{b}^{''-}v_{-w\lambda_1\odot\lambda_2},  
\end{equation}
where the summation is taken over $\tilde{b}^{''-}v_{-w\lambda_1\odot\lambda_2}\in \mathbf{B}((\mathbf{f}\theta_{\lambda_2}\mathbf{f})_{-w\lambda_1\odot\lambda_2})$. By \eqref{definition of thickening map}, Lemma \ref{lem:Demazure-UJ} and Lemma \ref{isomorphism varphi}, we have 
\begin{align*}
\dot{b}\dot{b}'(\xi_{-\lambda_1}\otimes \eta_{\lambda_2})=&\dot{b}\underline{\varphi}((\mathrm{th}(\dot{b}'))^-v_{-w\lambda_1\odot\lambda_2})=\underline{\varphi}(\dot{b}(\mathrm{th}(\dot{b}'))^-v_{-w\lambda_1\odot\lambda_2})\\
=&\sum a_{\dot{b},\mathrm{th}(\dot{b}')}^{\tilde{b}''}\underline{\varphi}(\tilde{b}^{''-}v_{-w\lambda_1\odot\lambda_2}),   
\end{align*}
where the summation is taken over $\tilde{b}^{''-}v_{-w\lambda_1\odot\lambda_2}\in \mathbf{B}((\mathbf{f}\theta_{\lambda_2}\mathbf{f})_{-w\lambda_1\odot\lambda_2}) \setminus \mathrm{ker}\,\underline{\varphi}$. Comparing it with \eqref{structure constant m dot}, by Theorem \ref{thm:structure-trans} and \eqref{definition of thickening map}, we obtain the relation between the structure constants of the multiplication in $\dot{\mathbf{U}}$ and the action of $\dot{\mathbf{U}}$ on $(\mathbf{f}\theta_{\lambda_2}\mathbf{f})_{-w\lambda_1\odot\lambda_2}$. More precisely,

(a) $\dot{m}_{\dot{b},\dot{b}'}^{\dot{b}''}=a_{\dot{b},\mathrm{th}(\dot{b}')}^{\mathrm{th}(\dot{b}'')}$ for any $\dot{b}''$ in the summation $\dot{b}\dot{b}'=\sum \dot{m}_{\dot{b},\dot{b}'}^{\dot{b}''}\dot{b}''$.

In order to determine the structure constant $a_{\dot{b},\mathrm{th}(\dot{b}')}^{\mathrm{th}(\dot{b}'')}$, we first consider the special case that $\dot{b}=1\diamondsuit_{\zeta+|\dot{b}'|_X}b$ with $b\in \mathbf{B}$. The vector space $\mathbf{f}\theta_{\lambda_2}\mathbf{f}$ has a basis $\mathbf{B}(\mathbf{f}\theta_{\lambda_2}\mathbf{f})$. For the product $b\,\mathrm{th}(\dot{b}')\in \mathbf{f}\theta_{\lambda_2}\mathbf{f}$, we can write $b\,\mathrm{th}(\dot{b}')=\sum_{\tilde{b}''\in \mathbf{B}(\mathbf{f}\theta_{\lambda_2}\mathbf{f})}m_{b,\mathrm{th}(\dot{b}')}^{\tilde{b}''}\tilde{b}''$. Since
\begin{equation}\label{weight relation}
\langle i_Y,\zeta+|\dot{b}'|_X\rangle=\langle i_{\tilde{Y}},-w\lambda_1\odot\lambda_2-|\mathrm{th}(\dot{b}')|_{\tilde{X}}\rangle\ \textrm{for any}\ i\in I,   
\end{equation}
we have 
\begin{align*}
(1\diamondsuit_{\zeta+|\dot{b}'|_X}b)(\mathrm{th}(\dot{b}'))^-v_{-w\lambda_1\odot\lambda_2}=&b^-(\mathrm{th}(\dot{b}'))^-v_{-w\lambda_1\odot\lambda_2}
=(b\,\mathrm{th}(\dot{b}'))^-v_{-w\lambda_1\odot\lambda_2}\\
=&\sum_{\tilde{b}''\in \mathbf{B}(\mathbf{f}\theta_{\lambda_2}\mathbf{f})}m_{b,\mathrm{th}(\dot{b}')}^{\tilde{b''}}\tilde{b}^{''-}v_{-w\lambda_1\odot\lambda_2}.
\end{align*}
Comparing it with \eqref{structure constant a}, we obtain the relation between the structure constants of the action on $(\mathbf{f}\theta_{\lambda_2}\mathbf{f})_{-w\lambda_1\odot\lambda_2}$ by $1\diamondsuit_{\zeta+|\dot{b}'|_X}b$ and the multiplication in $\tilde{\mathbf{f}}$, that is, 
$$a_{1\diamondsuit_{\zeta+|\dot{b}'|_X}b,\mathrm{th}(\dot{b}')}^{\tilde{b}''}=m_{b,\mathrm{th}(\dot{b}')}^{\tilde{b}''}\ \textrm{for any}\ \tilde{b}''\in \mathbf{B}(\mathbf{f}\theta_{\lambda_2}\mathbf{f}).$$ 
Together with (a), we have $(1\diamondsuit_{\zeta+|\dot{b}'|_X}b)\dot{b}'=\sum m_{b,\mathrm{th}(\dot{b}')}^{\mathrm{th}(\dot{b}'')}\dot{b}''$. Applying $\sigma:\dot{\mathbf{U}}\rightarrow \dot{\mathbf{U}}^{\mathrm{opp}}$, we obtain 

(b) for any $\dot{b}\in \dot{\mathbf{B}}\cap 1_{-\zeta}\dot{\mathbf{U}},b\in \mathbf{B}$, sufficiently regular $\lambda_1,\lambda_2\in X^+$ and sufficiently large $w\in W$ such that $\zeta=\lambda_2-\lambda_1$ and $\sigma(\dot{b})(\xi_{-\lambda_1}\otimes \eta_{\lambda_2})\in \mathbf{B}({^{\omega}V_w(\lambda_1)\otimes \Lambda_{\lambda_2}})$, we have
$$\dot{b}(1\diamondsuit_{-\zeta-|\dot{b}|_X+|b|_X}b)=\sum m_{\sigma(b),\mathrm{th}(\sigma(\dot{b}))}^{\mathrm{th}(\sigma(\dot{b}''))}\dot{b}'',$$
where the summation is taken over $\dot{b}''\in \dot{\mathbf{B}}\cap 1_{-\zeta}\dot{\mathbf{U}}$.

Now we consider the general case. Applying (b) in the larger modified quantum group $\dot{\tilde{\mathbf{U}}}$ for $\dot{\tilde{b}}\in \dot{\tilde{\mathbf{B}}}\cap 1_{-w\lambda_1\odot\lambda_2-|\mathrm{th}(\dot{b}')|_{\tilde{X}}+|\dot{b}|_{\tilde{X}}}\dot{\tilde{\mathbf{U}}},\mathrm{th}(\dot{b}')\in \tilde{\mathbf{B}}$, sufficiently regular $\tilde{\lambda}_1,\tilde{\lambda}_2\in \tilde{X}^+$ and sufficiently large $\tilde{w}\in \tilde{W}$ such that $-w\lambda_1\odot\lambda_2-|\mathrm{th}(\dot{b}')|_{\tilde{X}}+|\dot{b}|_{\tilde{X}}=\tilde{\lambda}_1-\tilde{\lambda}_2$ and $\sigma(\dot{\tilde{b}})(\xi_{-\tilde{\lambda}_1}\otimes \eta_{\tilde{\lambda}_2})\in \mathbf{B}({^{\omega}V_{\tilde{w}}(\tilde{\lambda}_1)\otimes \Lambda_{\tilde{\lambda}_2}})$, we have  
$$\dot{\tilde{b}}(1\diamondsuit_{-w\lambda_1\odot\lambda_2}\,\mathrm{th}(\dot{b}'))=\sum m_{\sigma(\mathrm{th}(\dot{b}')),\widetilde{\mathrm{th}}(\sigma(\dot{\tilde{b}}))}^{\widetilde{\mathrm{th}}(\sigma(\dot{\tilde{b}}''))}\dot{\tilde{b}}'',$$
where the summation is taken over $\dot{\tilde{b}}''\in \dot{\tilde{\mathbf{B}}}\cap 1_{\tilde{\lambda}_1-\tilde{\lambda}_2}\dot{\tilde{\mathbf{U}}}$.
By the constructions of $\dot{b}=b_1\diamondsuit_{\zeta+|\dot{b}'|_X}b_2\in \dot{\mathbf{B}},\dot{\tilde{b}}=b_1\diamondsuit_{-w\lambda_1\odot\lambda_2-|\mathrm{th}(\dot{b}')|_{\tilde{X}}}b_2\in \dot{\tilde{\mathbf{B}}}$ in Theorem \ref{25.2.1} and \eqref{weight relation},   
\begin{equation}\label{the product acts on Verma}
\begin{aligned}
\dot{b}(\mathrm{th}(\dot{b}'))^-v_{-w\lambda_1\odot\lambda_2}=&\dot{\tilde{b}}(\mathrm{th}(\dot{b}'))^-v_{-w\lambda_1\odot\lambda_2}=\dot{\tilde{b}}(1\diamondsuit_{-w\lambda_1\odot\lambda_2}\,\mathrm{th}(\dot{b}'))v_{-w\lambda_1\odot\lambda_2}\\
=&\sum m_{\sigma(\mathrm{th}(\dot{b}')),\widetilde{\mathrm{th}}(\sigma(\dot{\tilde{b}}))}^{\widetilde{\mathrm{th}}(\sigma(\dot{\tilde{b}}''))}\dot{\tilde{b}}''v_{-w\lambda_1\odot\lambda_2},
\end{aligned}
\end{equation}
where the summation is taken over $\dot{\tilde{b}}''\in \dot{\tilde{\mathbf{B}}}\cap 1_{\tilde{\lambda}_1-\tilde{\lambda}_2}\dot{\tilde{\mathbf{U}}}$. Since $\dot{b}\in \dot{\mathbf{B}}\cap\dot{\mathbf{U}}_J$ and $\mathrm{th}(\dot{b}')\in \mathbf{B}(\mathbf{f}\theta_{\lambda_2}\mathbf{f})$, we have $b_1\in \mathbf{B}_J$, and the product
$$\dot{\tilde{b}}(1\diamondsuit_{-w\lambda_1\odot\lambda_2}\,\mathrm{th}(\dot{b}'))=\dot{\tilde{b}}(\mathrm{th}(\dot{b}'))^-1_{-w\lambda_1\odot\lambda_2}$$
belongs to the $\mathcal{A}$-submodule of $\dot{\tilde{\mathbf{U}}}$ generated by $\tilde{b}_1^+\tilde{b}_2^-1_{-w\lambda_1\odot\lambda_2}$ for any $\tilde{b}_1\in \mathbf{B}_J$ and $\tilde{b}_2\in \mathbf{B}(\mathbf{f}\theta_{\lambda_2}\mathbf{f})$. By the proof of Theorem \ref{25.2.1}, for any $\dot{\tilde{b}}''\in \dot{\tilde{\mathbf{B}}}$ is the summation \eqref{the product acts on Verma}, we have $\dot{\tilde{b}}''=\tilde{b}''_1\diamondsuit_{-w\lambda_1\odot\lambda_2}\tilde{b}''_2$ for some $\tilde{b}''_1\in \mathbf{B}_J,\tilde{b}''_2\in \mathbf{B}(\mathbf{f}\theta_{\lambda_2}\mathbf{f})$. The vector space $(\mathbf{f}\theta_{\lambda_2}\mathbf{f})_{-w\lambda_1\odot\lambda_2}$ has a basis $\mathbf{B}((\mathbf{f}\theta_{\lambda_2}\mathbf{f})_{-w\lambda_1\odot\lambda_2})$, so we can write \eqref{the product acts on Verma} into the linear combination of $\mathbf{B}((\mathbf{f}\theta_{\lambda_2}\mathbf{f})_{-w\lambda_1\odot\lambda_2})$. Then we compare the coefficients of $(\mathrm{th}(\dot{b}''))^-v_{-w\lambda_1\odot\lambda_2}$ in \eqref{structure constant a} and \eqref{the product acts on Verma}. Note that $(\mathrm{th}(\dot{b}''))^-v_{-w\lambda_1\odot\lambda_2}\notin \mathrm{ker}\,\underline{\varphi}$. 
By Lemma \ref{a otimes Id is a based map} and Lemma \ref{canonical basis of U dot acts on Verma}, the coefficient of $(\mathrm{th}(\dot{b}''))^-v_{-w\lambda_1\odot\lambda_2}$ in \eqref{the product acts on Verma} is $m_{\sigma(\mathrm{th}(\dot{b}')),\widetilde{\mathrm{th}}(\sigma(\dot{\tilde{b}}))}^{\widetilde{\mathrm{th}}(\sigma(1\diamondsuit_{-w\lambda_1\odot\lambda_2}\,\mathrm{th}(\dot{b}'')))}$, and so $\dot{m}_{\dot{b},\dot{b}'}^{\dot{b}''}=a_{\dot{b},\mathrm{th}(\dot{b}')}^{\mathrm{th}(\dot{b}'')}=m_{\sigma(\mathrm{th}(\dot{b}')),\widetilde{\mathrm{th}}(\sigma(\dot{\tilde{b}}))}^{\widetilde{\mathrm{th}}(\sigma(1\diamondsuit_{-w\lambda_1\odot\lambda_2}\,\mathrm{th}(\dot{b}'')))}$. 
\end{proof}

\section{Structure constant of comultiplication}\label{sec:comult}

By using the thickening realizations, we compare the structure constants of the comultiplication in the modified quantum group and the comultiplication in the negative part of a larger quantum group with respect to their canonical bases.

\subsection{Universal $\mathcal{R}$-matrix}

The universal $\mathcal{R}$-matrix is an intertwiner between the comultiplication of the quantum group and its transpose. Its construction depends on a function $f:X\times X\rightarrow \mathbb{Q}$ such that 
$$f(\zeta+\nu,\zeta'+\nu')=f(\zeta,\zeta')-\langle \nu_Y,\zeta'\rangle-\langle \nu'_Y,\zeta\rangle-\nu\cdot \nu'$$
for any $\zeta,\zeta'\in X$ and $\nu,\nu'\in \mathbb{Z}[I]$. The existence of such a function relies on the $X$-regularity of the root datum (see \cite[\S 32.1.3]{Lusztig-1993}). In this subsection, we first enlarge the fixed $Y$-regular (not necessarily $X$-regular) root datum to be a both $Y$-regular and $X$-regular root datum, and then construct the universal $\mathcal{R}$-matrix.

Let $(\mathrm{Hom}_{\mathbb{Z}}(\mathbb{Z}[I],\mathbb{Z}),\mathbb{Z}[I],\langle\,,\rangle_{\mathrm{ad}},\ldots)$ be the adjoint root datum of type $(I,\cdot)$, where $\langle\,,\rangle_{\mathrm{ad}}$ is the natural evaluation, the imbeddings $I\hookrightarrow \mathrm{Hom}_{\mathbb{Z}}(\mathbb{Z}[I],\mathbb{Z})$ and $I\hookrightarrow \mathbb{Z}[I]$ are defined by $i\mapsto (j\mapsto i\cdot j)$ and $i\mapsto i$ respectively (see \cite[\S 2.2.2]{Lusztig-1993}). It is $X$-regular. We simply denote $\mathbb{Z}[I]^{\vee}:=\mathrm{Hom}_{\mathbb{Z}}(\mathbb{Z}[I],\mathbb{Z})$. 

For the $Y$-regular root datum $(Y,X,\langle\,,\,\rangle,\ldots)$, we define its {\it $X$-‌regularization} to be the root datum $(Y^{\sharp},X^{\sharp},\langle\,,\,\rangle^{\sharp},\ldots)$, where 
\begin{align*}
&Y^{\sharp}=\mathbb{Z}[I]^{\vee}\oplus Y,\,X^{\sharp}=\mathbb{Z}[I]\oplus X,\\
&\langle (\rho,\mu),(\nu,\zeta)\rangle^{\sharp}=\langle \rho,\nu\rangle_{\mathrm{ad}}+\langle \mu,\zeta\rangle,\\
&I\hookrightarrow Y^{\sharp},\,i\mapsto (i_{\mathbb{Z}[I]^{\vee}},i_Y);\ I\hookrightarrow X^{\sharp},\,i\mapsto (i,0)
\end{align*}
for any $\rho\in \mathbb{Z}[I]^{\vee},\mu\in Y$ and $\nu\in \mathbb{Z}[I],\zeta\in X$. The root datum $(Y^{\sharp},X^{\sharp},\langle\,,\,\rangle^{\sharp},\ldots)$ is both $Y$-regular and $X$-regular (see \cite[\S 2.2.2]{Lusztig-1993}). 

\begin{lemma}\label{32.1.3}
There exists a function $f^{\sharp}:X^{\sharp}\times X^{\sharp}\rightarrow \mathbb{Z}$ such that 
\begin{equation}\label{function f}
\begin{aligned}
f^{\sharp}(\zeta_1^{\sharp}+\nu_{1X^{\sharp}},\zeta_2^{\sharp}+\nu_{2X^{\sharp}})=f^{\sharp}(\zeta_1^{\sharp},\zeta_2^{\sharp})-\langle \nu_{1Y^{\sharp}},\zeta_2^{\sharp}\rangle^{\sharp}-\langle \nu_{2Y^{\sharp}},\zeta_1^{\sharp}\rangle^{\sharp}-\nu_1\cdot \nu_2
\end{aligned}
\end{equation}
for any $\zeta_1^{\sharp},\zeta_2^{\sharp}\in X^{\sharp}$ and $\nu_1,\nu_2\in \mathbb{Z}[I]$. 
\end{lemma}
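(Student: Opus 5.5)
The plan is to write down an explicit formula for $f^{\sharp}$ and check \eqref{function f} directly. The point is that $X^{\sharp}=\mathbb{Z}[I]\oplus X$ contains the root lattice as a direct summand, because $I\hookrightarrow X^{\sharp}$ is $i\mapsto(i,0)$. So there is no need to choose coset representatives for $X^{\sharp}/\mathbb{Z}[I]$ as in \cite[\S 32.1.3]{Lusztig-1993}: the $\mathbb{Z}[I]$-coordinate of an element of $X^{\sharp}$ is canonical.

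First we record the two embeddings on a general $\nu\in\mathbb{Z}[I]$. We have $\nu_{X^{\sharp}}=(\nu,0)$ and $\nu_{Y^{\sharp}}=(\nu_{\mathbb{Z}[I]^{\vee}},\nu_Y)$, where $\nu_{\mathbb{Z}[I]^{\vee}}$ is the functional $\nu'\mapsto\nu\cdot\nu'$. Hence, for $(\nu',\zeta)\in X^{\sharp}$,
$$\langle\nu_{Y^{\sharp}},(\nu',\zeta)\rangle^{\sharp}=\nu\cdot\nu'+\langle\nu_Y,\zeta\rangle.$$
Next we define, for $\nu_1,\nu_2\in\mathbb{Z}[I]$ and $\zeta_1,\zeta_2\in X$,
$$f^{\sharp}((\nu_1,\zeta_1),(\nu_2,\zeta_2))=-\nu_1\cdot\nu_2-\langle\nu_{1Y},\zeta_2\rangle-\langle\nu_{2Y},\zeta_1\rangle.$$
This takes values in $\mathbb{Z}$ because $\cdot$ and $\langle\,,\,\rangle$ are $\mathbb{Z}$-valued.

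For the verification, put $\zeta_k^{\sharp}=(\nu_k,\zeta_k)$ and shift by $\mu_{kX^{\sharp}}=(\mu_k,0)$ with $\mu_1,\mu_2\in\mathbb{Z}[I]$. Expanding bilinearly, the $-\nu_1\cdot\nu_2$ term changes by $-\mu_1\cdot\nu_2-\nu_1\cdot\mu_2-\mu_1\cdot\mu_2$. The two pairing terms change by $-\langle\mu_{1Y},\zeta_2\rangle-\langle\mu_{2Y},\zeta_1\rangle$. By the pairing formula above, the total change is
$$-\langle\mu_{1Y^{\sharp}},\zeta_2^{\sharp}\rangle^{\sharp}-\langle\mu_{2Y^{\sharp}},\zeta_1^{\sharp}\rangle^{\sharp}-\mu_1\cdot\mu_2,$$
which is exactly \eqref{function f}. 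Symmetry of $\cdot$ is used in the step $\nu_1\cdot\mu_2=\mu_2\cdot\nu_1$.

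There is no real obstacle here. The only care needed is in computing $\nu_{Y^{\sharp}}$ correctly, that is, in remembering that the adjoint component contributes $\nu\cdot\nu'$. This extra term is what absorbs the cross terms $\mu_1\cdot\nu_2$ and $\nu_1\cdot\mu_2$, which would otherwise obstruct the identity.
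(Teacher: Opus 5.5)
Your proposal is correct and is essentially the paper's proof. The paper defines the same function $f^{\sharp}((\nu_1,\zeta_1),(\nu_2,\zeta_2))=-\langle\nu_{1Y},\zeta_2\rangle-\langle\nu_{2Y},\zeta_1\rangle-\nu_1\cdot\nu_2$. It verifies the identity by the same bilinear expansion, with the $\mathbb{Z}[I]^{\vee}$-component of $\nu_{Y^{\sharp}}$ absorbing the cross terms $\mu_1\cdot\nu_2$ and $\nu_1\cdot\mu_2$.
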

\begin{proof}
We define the function
$$f^{\sharp}:X^{\sharp}\times X^{\sharp}\rightarrow \mathbb{Z},
((\nu_1,\zeta_1),(\nu_2,\zeta_2))\mapsto -\langle \nu_{1Y},\zeta_2\rangle-\langle \nu_{2Y},\zeta_1\rangle-\nu_1\cdot \nu_2,$$
for any $\zeta_1,\zeta_2\in X$ and $\nu_1,\nu_2\in \mathbb{Z}[I]$. Then it is routine to verify \eqref{function f}. Indeed, for any $\nu'_1,\nu'_2\in \mathbb{Z}[I]$, we have $\nu'_{1X^{\sharp}}=(\nu'_1,0),\nu'_{2X^{\sharp}}=(\nu'_2,0)$, and 
\begin{align*}
&f^{\sharp}((\nu_1+\nu'_1,\zeta_1),(\nu_2+\nu'_2,\zeta_2))-f^{\sharp}((\nu_1,\zeta_1),(\nu_2,\zeta_2))\\
=&-\langle \nu'_{1Y},\zeta_2\rangle-\langle \nu'_{2Y},\zeta_1\rangle-\nu_1\cdot \nu'_2-\nu'_1\cdot \nu_2-\nu'_1\cdot \nu'_2\\
=&-\langle \nu'_{1Y^{\sharp}},(\nu_2,\zeta_2)\rangle^{\sharp}-\langle \nu'_{2Y^{\sharp}},(\nu_1,\zeta_1)\rangle^{\sharp}-\nu'_1\cdot \nu'_2,
\end{align*}
as desired.
\end{proof}

Let $\mathbf{U}^{\sharp}$ be the quantum group associated with the root datum $(Y^{\sharp},X^{\sharp},\langle\,,\,\rangle^{\sharp},\ldots)$. For any $\lambda\in X^+$, we have $\langle (i_{\mathrm{ad}},i_Y),(0,\lambda)\rangle^{\sharp}=\langle i_Y,\lambda\rangle\in \mathbb{N}$, and so $(0,\lambda)\in (X^{\sharp})^+$ is dominant. Let $\Lambda_{(0,\lambda)}$ and $^{\omega}\Lambda_{(0,\lambda)}$ be the simple highest weight module and the simple lowest weight module of $\mathbf{U}^{\sharp}$ respectively.

Let $\lambda_1,\lambda_2\in X^{+}$. We define two linear isomorphisms  
\begin{align*}
\Pi_{f^{\sharp}}:&\ {^{\omega}\Lambda_{(0,\lambda_1)}}\otimes\Lambda_{(0,\lambda_2)}\rightarrow{^{\omega}\Lambda_{(0,\lambda_1)}}\otimes\Lambda_{(0,\lambda_2)},\,m_1\otimes m_2\mapsto v^{f^{\sharp}(\zeta_1^{\sharp},\zeta_2^{\sharp})}m_1\otimes m_2,\\
\tau:&\ \Lambda_{(0,\lambda_2)}\otimes {^{\omega}\Lambda_{(0,\lambda_1)}}\rightarrow {^{\omega}\Lambda_{(0,\lambda_1)}}\otimes \Lambda_{(0,\lambda_2)},\,m_2\otimes m_1\mapsto m_1\otimes m_2
\end{align*}
for any $m_1\in {}^{\omega}\Lambda_{(0,\lambda_1)},m_2\in\Lambda_{(0,\lambda_2)}$ of weights $\zeta_1^{\sharp}, \zeta_2^{\sharp}\in X$ respectively. Let $\Theta^{\sharp}$ be the quasi-$\mathcal{R}$-matrix associated with $\mathbf{U}^{\sharp}$ defined in \cite[\S 4.1]{Lusztig-1993}. It induces a linear transformation on the tensor product ${^{\omega}\Lambda_{(0,\lambda_1)}}\otimes\Lambda_{(0,\lambda_2)}$ (see \cite[\S 24.1.1]{Lusztig-1993}). The following lemma is essentially the same as \cite[Theorem 32.1.5]{Lusztig-1993}. The statement of \cite[Theorem 32.1.5]{Lusztig-1993} has the assumption that the Cartan datum is of finite type. Nevertheless, the proof is still valid for our case because the quasi-$\mathcal{R}$-matrix $\Theta^{\sharp}$ still induces a well-defined linear transformation on the tensor product ${^{\omega}\Lambda_{(0,\lambda_1)}}\otimes\Lambda_{(0,\lambda_2)}$ and the function $f^{\sharp}$ satisfies the property \cite[\S 32.1.3(a)]{Lusztig-1993} associated with the root datum $(Y^{\sharp},X^{\sharp},\langle\,,\,\rangle^{\sharp},\ldots)$ by Lemma \ref{32.1.3}.

\begin{lemma}\label{32.1.5}
Let $\lambda_1,\lambda_2\in X^+$. Then the universal $\mathcal{R}$-matrix
\begin{align*}
{_{f^{\sharp}}\mathcal{R}_{\lambda_1,\lambda_2}^{\sharp}}:\Lambda_{(0,\lambda_2)}\otimes {^{\omega}\Lambda_{(0,\lambda_1)}}&\xrightarrow{\tau} {^{\omega}\Lambda_{(0,\lambda_1)}}\otimes \Lambda_{(0,\lambda_2)} \xrightarrow{\Pi_{f^{\sharp}}} {^{\omega}\Lambda_{(0,\lambda_1)}}\otimes \Lambda_{(0,\lambda_2)}\\
&\xrightarrow{\Theta^{\sharp}}{^{\omega}\Lambda_{(0,\lambda_1)}}\otimes \Lambda_{(0,\lambda_2)}
\end{align*}
is a $\mathbf{U}^{\sharp}$-module isomorphism with the inverse $({_{f^{\sharp}}\mathcal{R}_{\lambda_1,\lambda_2}^{\sharp}})^{-1}=\tau^{-1}\Pi_{f^{\sharp}}^{-1}\overline{\Theta^{\sharp}}$.   
\end{lemma}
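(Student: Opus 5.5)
We follow the proof of \cite[Theorem 32.1.5]{Lusztig-1993}. The aim is to show that $\Theta^{\sharp}\Pi_{f^{\sharp}}\tau$ intertwines the $\mathbf{U}^{\sharp}$-action on $\Lambda_{(0,\lambda_2)}\otimes{^{\omega}\Lambda_{(0,\lambda_1)}}$ with the action on ${^{\omega}\Lambda_{(0,\lambda_1)}}\otimes\Lambda_{(0,\lambda_2)}$. Since $\mathbf{U}^{\sharp}$ is generated by $E_i,F_i,K_\mu$, it is enough to check these generators.

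\textbf{Generators $K_\mu$.} The map $\tau$ is a $\Delta(K_\mu)$-equivariant swap. The map $\Pi_{f^{\sharp}}$ is diagonal on weight vectors. The map $\Theta^{\sharp}$ is a sum of terms $b^-\otimes b^{*+}$ of total weight zero. So all three maps commute with $K_\mu\otimes K_\mu$.

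\textbf{Generators $E_i,F_i$.} Write $\Delta^t$ for the transposed coproduct, so that $\tau\circ\Delta(u)=\Delta^t(u)\circ\tau$ as operators. The first identity to establish is
\[
\Pi_{f^{\sharp}}\Delta^t(u)=\bar\Delta(u)\Pi_{f^{\sharp}}
\qquad\text{for } u=E_i,F_i,
\]
where $\bar\Delta(u)=\overline{\Delta(\bar u)}$. This is checked on $m_1\otimes m_2$ of weights $\zeta_1^{\sharp},\zeta_2^{\sharp}$ using \eqref{function f} from Lemma \ref{32.1.3}. For example,
\[
\Delta^t(E_i)=1\otimes E_i+E_i\otimes K_{i}\quad\text{and}\quad \bar\Delta(E_i)=E_i\otimes 1+K_{-i}\otimes E_i .
\]
Applying $E_i$ in either factor shifts that weight by $i_{X^{\sharp}}$. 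By \eqref{function f}, $f^{\sharp}$ then changes by $-\langle i_{Y^{\sharp}},\text{other weight}\rangle^{\sharp}$. This exponent is exactly what converts the $K_{i}$ factor in $\Delta^t$ into the $K_{-i}$ factor in $\bar\Delta$, as in \cite[\S 32.1.4]{Lusztig-1993}. The case of $F_i$ is symmetric.

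The second identity is Lusztig's intertwining property
\[
\Delta(u)\Theta^{\sharp}=\Theta^{\sharp}\bar\Delta(u),
\]
which is \cite[Theorem 4.1.2]{Lusztig-1993} and holds as an identity in the completion. Chaining the three relations gives
\[
\Delta(u)\,\Theta^{\sharp}\Pi_{f^{\sharp}}\tau=\Theta^{\sharp}\Pi_{f^{\sharp}}\tau\,\Delta(u),
\]
so ${_{f^{\sharp}}\mathcal{R}_{\lambda_1,\lambda_2}^{\sharp}}$ is $\mathbf{U}^{\sharp}$-linear.

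\textbf{The main obstacle: outside finite type.} The identity $\Delta(u)\Theta^{\sharp}=\Theta^{\sharp}\bar\Delta(u)$ holds in a completion of $\mathbf{U}^{\sharp}\otimes\mathbf{U}^{\sharp}$, and it must be evaluated on the module. On ${^{\omega}\Lambda_{(0,\lambda_1)}}\otimes\Lambda_{(0,\lambda_2)}$, the operator $\Theta^{\sharp}$ has only finitely many nonzero terms on each vector, by \cite[\S 24.1.1]{Lusztig-1993}. The reason is that $b^{*+}m_2=0$ once $|b|$ is large, since $\Lambda_{(0,\lambda_2)}$ has weights bounded above. Hence the completed identity specializes to an identity of operators on the module, exactly as in \cite[Lemma 24.1.2]{Lusztig-1993}. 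This finiteness is the only place where Lusztig's proof uses finite type, which is why the argument goes through here.

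\textbf{Inverse.} The maps $\tau$ and $\Pi_{f^{\sharp}}$ are invertible. By \cite[Corollary 4.1.3]{Lusztig-1993}, $\Theta^{\sharp}\overline{\Theta^{\sharp}}=1$, and this again holds termwise on the module by the same finiteness. So $\Theta^{\sharp}$ is invertible with inverse $\overline{\Theta^{\sharp}}$. Therefore
\[
({_{f^{\sharp}}\mathcal{R}_{\lambda_1,\lambda_2}^{\sharp}})^{-1}=\tau^{-1}\Pi_{f^{\sharp}}^{-1}\overline{\Theta^{\sharp}},
\]
and ${_{f^{\sharp}}\mathcal{R}_{\lambda_1,\lambda_2}^{\sharp}}$ is a $\mathbf{U}^{\sharp}$-module isomorphism.
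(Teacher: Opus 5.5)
Your proposal is correct and takes the same route as the paper. The paper simply defers to Lusztig's proof of \cite[Theorem 32.1.5]{Lusztig-1993} and names the two adaptations you use: $\Theta^{\sharp}$ acts with finitely many nonzero terms on ${^{\omega}\Lambda_{(0,\lambda_1)}}\otimes\Lambda_{(0,\lambda_2)}$, and $f^{\sharp}$ satisfies \cite[\S 32.1.3(a)]{Lusztig-1993} by Lemma \ref{32.1.3}. Your generator-by-generator check that $\Pi_{f^{\sharp}}$ intertwines the transposed coproduct with $\bar\Delta$ makes explicit what the paper leaves to that citation.
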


For any $\mu\in Y$, we define $\rho_{\mu}\in \mathbb{Z}[I]^{\vee}$ by $\rho_{\mu}(\nu)=\langle \mu,\nu_X\rangle$ for any $\nu\in \mathbb{Z}[I]$. Consider the group homomorphisms $\alpha:Y\rightarrow Y^{\sharp},\mu\mapsto (\rho_{\mu},\mu) $ and $\beta:X^{\sharp}\rightarrow X,(\nu,\zeta)\mapsto \nu_X+\zeta$. By definition, we have $\langle \alpha(\mu),(\nu,\zeta)\rangle^{\sharp}=\langle \mu,\beta(\nu,\zeta)\rangle$ for any $\mu\in Y,\nu\in \mathbb{Z}[I],\zeta\in X$ and $\alpha(i_Y)=(i_{\mathbb{Z}[I]^{\vee}},i_Y),\beta(i,0)=i_X$ for any $i\in I$. Hence $\alpha$ and $\beta$ define a morphism of root data from $(Y,X,\langle\,,\,\rangle,\ldots)$ to $(Y^{\sharp},X^{\sharp},\langle\,,\,\rangle^{\sharp},\ldots)$. By \cite[\S 3.1.2]{Lusztig-1993}, there is a natural algebra homomorphism $\mathbf{U}\rightarrow \mathbf{U}^{\sharp}$ defined by $$E_i\mapsto E_i,F_i\mapsto F_i,K_{\mu}\mapsto K_{(\rho_{\mu},\mu)}\ \textrm{for any}\ i\in I,\mu\in Y.$$ 
By definition, this algebra homomorphism is also a bialgebra homomorphism.

Let $\lambda\in X^+$. We regard the simple highest module $\Lambda_{(0,\lambda)}$ of $\mathbf{U}^{\sharp}$ as a $\mathbf{U}$-module via the algebra homomorphism $\mathbf{U}\rightarrow \mathbf{U}^{\sharp}$. By definition, $\langle (\rho_{\mu},\mu),(0,\lambda)\rangle^{\sharp}=\langle \mu,\lambda\rangle$ for any $\mu\in Y$, the weight of $\eta_{(0,\lambda)}\in \Lambda_{(0,\lambda)}$ is $\lambda$. By \cite[Proposition 3.5.8]{Lusztig-1993}, there exists a unique $\mathbf{U}$-module homomorphism $p_{\lambda}:\Lambda_{\lambda}\rightarrow \Lambda_{(0,\lambda)}$ such that $p_{\lambda}(\eta_{\lambda})=\eta_{(0,\lambda)}$. It is an isomorphism since $\Lambda_{\lambda}$ is simple and $\mathbf{U}^-=(\mathbf{U}^{\sharp})^-$. Similarly, there is a unique $\mathbf{U}$-module isomorphism $^{\omega}p_{\lambda}:{^{\omega}\Lambda_{\lambda}}\rightarrow {^{\omega}\Lambda_{(0,\lambda)}}$ such that $({^{\omega}p_{\lambda}})(\xi_{-\lambda})=\xi_{-(0,\lambda)}$. Combining with Lemma \ref{32.1.5}, we obtain

\begin{proposition}\label{universal R matrix without X regular}
Let $\lambda_1,\lambda_2\in X^+$. Then there exits a $\mathbf{U}$-module isomorphism
\begin{align*}
{_{f^{\sharp}}\mathcal{R}_{\lambda_1,\lambda_2}}:\Lambda_{\lambda_2}\otimes {^{\omega}\Lambda_{\lambda_1}}&\xrightarrow{p_{\lambda_2}\otimes\, {^{\omega}p_{\lambda_1}}} \Lambda_{(0,\lambda_2)}\otimes {^{\omega}\Lambda_{(0,\lambda_1)}}\xrightarrow{{_{f^{\sharp}}\mathcal{R}_{\lambda_1,\lambda_2}^{\sharp}}} {^{\omega}\Lambda_{(0,\lambda_1)}}\otimes \Lambda_{(0,\lambda_2)}\\
&\xrightarrow{({^{\omega}p_{\lambda_1}})^{-1}\otimes p_{\lambda_2}^{-1}} {^{\omega}\Lambda_{\lambda_1}}\otimes \Lambda_{\lambda_2}
\end{align*}
with the inverse $({_{f^{\sharp}}\mathcal{R}_{\lambda_1,\lambda_2}})^{-1}=(p_{\lambda_2}^{-1}\otimes ({^{\omega}p_{\lambda_1}})^{-1})({_{f^{\sharp}}\mathcal{R}_{\lambda_1,\lambda_2}^{\sharp}})^{-1}({^{\omega}p_{\lambda_1}}\otimes p_{\lambda_2})$.  
\end{proposition}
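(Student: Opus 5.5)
The statement is essentially formal once Lemma \ref{32.1.5} and the isomorphisms $p_{\lambda_2}$, ${^{\omega}p_{\lambda_1}}$ are in hand. The plan is to check that each of the three arrows in the composite is a $\mathbf{U}$-module isomorphism, where every $\mathbf{U}^{\sharp}$-module is regarded as a $\mathbf{U}$-module through the algebra homomorphism $\mathbf{U}\to\mathbf{U}^{\sharp}$ given by $E_i\mapsto E_i$, $F_i\mapsto F_i$, $K_\mu\mapsto K_{(\rho_\mu,\mu)}$.

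\emph{Outer arrows.} The map $p_{\lambda_2}\otimes{^{\omega}p_{\lambda_1}}$ is a tensor product of $\mathbf{U}$-module isomorphisms. Since the homomorphism $\mathbf{U}\to\mathbf{U}^{\sharp}$ is a bialgebra homomorphism, the $\mathbf{U}$-action on $\Lambda_{(0,\lambda_2)}\otimes{^{\omega}\Lambda_{(0,\lambda_1)}}$ obtained by restricting the $\mathbf{U}^{\sharp}$-action coincides with the tensor product of the restricted actions, computed with $\Delta$ of $\mathbf{U}$. Hence this arrow is a $\mathbf{U}$-isomorphism, and the same argument applies to $({^{\omega}p_{\lambda_1}})^{-1}\otimes p_{\lambda_2}^{-1}$.

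Two points need verification here. First, that $p_\lambda$ is an isomorphism; the paper already records this, since $\Lambda_\lambda$ is simple and $\mathbf{U}^-=(\mathbf{U}^{\sharp})^-$, so $p_\lambda$ is injective and surjective. Second, that ${^{\omega}p_\lambda}$ exists. For this I will argue by the $\omega$-twisted version of \cite[Proposition 3.5.8]{Lusztig-1993}. The involution $\omega$ on $\mathbf{U}^{\sharp}$ restricts to $\omega$ on $\mathbf{U}$, because $\omega(K_{(\rho_\mu,\mu)})=K_{(-\rho_\mu,-\mu)}$ is the image of $K_{-\mu}$. Consequently ${^{\omega}p_\lambda}$ is simply $p_\lambda$ regarded as a map ${^{\omega}\Lambda_\lambda}\to{^{\omega}\Lambda_{(0,\lambda)}}$, and it is $\mathbf{U}$-linear.

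\emph{Middle arrow.} By Lemma \ref{32.1.5}, ${_{f^{\sharp}}\mathcal{R}^{\sharp}_{\lambda_1,\lambda_2}}$ is a $\mathbf{U}^{\sharp}$-module isomorphism, hence in particular a $\mathbf{U}$-module isomorphism by restriction. The composite ${_{f^{\sharp}}\mathcal{R}_{\lambda_1,\lambda_2}}$ is therefore a $\mathbf{U}$-module isomorphism. Inverting the three factors in reverse order gives the stated inverse formula directly.

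\emph{Main obstacle.} The real work lies in justifying Lemma \ref{32.1.5} outside finite type. If that has to be checked rather than assumed, I would follow the proof of \cite[Theorem 32.1.5]{Lusztig-1993} step by step. The key ingredients are the following.
\begin{itemize}
\item $\Theta^{\sharp}$ acts on ${^{\omega}\Lambda_{(0,\lambda_1)}}\otimes\Lambda_{(0,\lambda_2)}$ by a finite sum on each vector, because $b^{*+}$ kills $\Lambda_{(0,\lambda_2)}$ in high degree.
\item The identity $\Delta(u)\Theta^{\sharp}=\Theta^{\sharp}\overline{\Delta}(u)$ holds, by \cite[Theorem 4.1.2]{Lusztig-1993}.
\item The intertwining of $\Pi_{f^{\sharp}}\circ\tau$ between $\Delta$ and $\overline{\Delta}^{\mathrm{op}}$ on generators. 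This follows from the functional equation of Lemma \ref{32.1.3} via the computation in \cite[\S 32.1.4]{Lusztig-1993}, which is purely local and type-independent.
\end{itemize}

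The only delicate check is that $X^{\sharp}$-weights of vectors in these modules differ by elements of $\mathbb{Z}[I]$. This holds, so \eqref{function f} applies to every pair of weights that occurs.
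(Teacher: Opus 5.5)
Your proposal is correct and follows the paper's argument. The paper likewise regards $\mathbf{U}^{\sharp}$-modules as $\mathbf{U}$-modules via the bialgebra homomorphism $\mathbf{U}\to\mathbf{U}^{\sharp}$, uses that $p_{\lambda}$ and ${^{\omega}p_{\lambda}}$ are $\mathbf{U}$-module isomorphisms, and composes with Lemma~\ref{32.1.5}, justifying that lemma outside finite type by the same two points you list: $\Theta^{\sharp}$ is well defined on ${^{\omega}\Lambda_{(0,\lambda_1)}}\otimes\Lambda_{(0,\lambda_2)}$, and $f^{\sharp}$ satisfies the functional equation of Lemma~\ref{32.1.3}. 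Your check that $\omega$ commutes with $\mathbf{U}\to\mathbf{U}^{\sharp}$, so that ${^{\omega}p_{\lambda}}$ is just $p_{\lambda}$ twisted by $\omega$, usefully spells out what the paper covers with ``similarly''.
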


\subsection{Module-theoretic incarnation of the comultiplication}\label{Tensor product approach to comultiplication}
In this subsection, we realize the comultiplication of $\dot{\mathbf{U}}$ on the level of the tensor products of simple lowest weight modules and simple highest weight modules. 

Let $\lambda',\lambda''\in X^+$ and $\lambda=\lambda'+\lambda''$. By \cite[Proposition 3.5.8]{Lusztig-1993}, there exists a $\mathbf{U}$-module homomorphism 
$\chi_{\lambda',\lambda''}:\Lambda_{\lambda}\rightarrow \Lambda_{\lambda'}\otimes \Lambda_{\lambda''}$ such that $\chi_{\lambda',\lambda''}(\eta_{\lambda})=\eta_{\lambda'}\otimes \eta_{\lambda''}$. Similarly, there exists a $\mathbf{U}$-module homomorphism $\chi_{-\lambda',-\lambda''}:{^{\omega}\Lambda_{\lambda}}\rightarrow {^{\omega}\Lambda_{\lambda'}}\otimes {^{\omega}\Lambda_{\lambda''}}$ such that 
$\chi_{-\lambda',-\lambda''}(\xi_{-\lambda})=\xi_{-\lambda'}\otimes \xi_{-\lambda''}$.

Let $\lambda'_1, \lambda''_1, \lambda'_2, \lambda''_2\in X^{+}$ and $\lambda_1=\lambda'_1+\lambda''_1$, $\lambda_2=\lambda'_2+\lambda''_2$. Inspired by \cite[\S 1.13]{Lusztig-2009}, we consider the $\mathbf{U}$-module homomorphism 
\begin{equation}\label{homomorphism g}
\begin{aligned}
g:{}^{\omega}\Lambda_{\lambda_1}\otimes\Lambda_{\lambda_2} &\xrightarrow{\chi_{-\lambda'_1,-\lambda''_1}\otimes\chi_{\lambda'_2,\lambda''_2}}{}^{\omega}\Lambda_{\lambda'_1}\otimes{}^{\omega}\Lambda_{\lambda''_1}\otimes\Lambda_{\lambda'_2}\otimes\Lambda_{\lambda''_2} \\
&\xrightarrow{\mathrm{Id}\otimes v^{f^{\sharp}(-(0,\lambda''_1),(0,\lambda'_2))}({}_{f^{\sharp}}\mathcal{R}_{\lambda''_1,\lambda'_2})^{-1}\otimes \mathrm{Id}}{}^{\omega}\Lambda_{\lambda'_1}\otimes\Lambda_{\lambda'_2}\otimes{}^{\omega}\Lambda_{\lambda''_1}\otimes\Lambda_{\lambda''_2}.
\end{aligned}
\end{equation}
By definition, we have 
\begin{align*}
({_{f^{\sharp}}\mathcal{R}_{\lambda''_1,\lambda'_2}})^{-1}(\xi_{-\lambda''_1}\otimes \eta_{\lambda'_2})=&(p_{\lambda'_2}^{-1}\otimes ({^{\omega}p_{\lambda''_1}})^{-1})\tau^{-1}\Pi_{f^{\sharp}}^{-1}\overline{\Theta^{\sharp}}({^{\omega}p_{\lambda''_1}}\otimes p_{\lambda'_2})(\xi_{-\lambda''_1}\otimes \eta_{\lambda'_2})\\
=&v^{-f^{\sharp}(-(0,\lambda''_1),(0,\lambda'_2))}(\eta_{\lambda'_2}\otimes \xi_{-\lambda''_1})
\end{align*}
and so $g(\xi_{-\lambda_1}\otimes \eta_{\lambda_2})=\xi_{-\lambda'_1}\otimes \eta_{\lambda'_2}\otimes \xi_{-\lambda''_1}\otimes \eta_{\lambda''_2}$.
This $\mathbf{U}$-module homomorphism enables us to formulate the relationship between the comultiplication of $\dot{\mathbf{U}}$ and its action on tensor products. More precisely, for any $\zeta', \zeta''\in X$, we denote 
$$\dot{\Delta}_{\zeta',\zeta''}=\bigoplus_{\gamma',\gamma''\in X}{}_{\gamma', \gamma''}\dot{\Delta}_{\zeta',\zeta''}:\bigoplus_{\gamma',\gamma''\in X}1_{\gamma'+\gamma''}\dot{\mathbf{U}}1_{\zeta'+\zeta''}\rightarrow \bigoplus_{\gamma',\gamma''\in X}(1_{\gamma'}\dot{\mathbf{U}}1_{\zeta'}\otimes 1_{\gamma''}\dot{\mathbf{U}}1_{\zeta''}).$$ 
Then for any $\lambda'_1,\lambda''_1,\lambda'_2,\lambda''_2\in X^+$ with $\zeta'=\lambda'_2-\lambda'_1$, $\zeta''=\lambda''_2-\lambda''_1$ and $\lambda_1=\lambda'_1+\lambda''_1,\lambda_2=\lambda'_2+\lambda''_2$, there exists a commutative diagram
$$\xymatrix@C=3cm{\bigoplus_{\gamma',\gamma''\in X}1_{\gamma'+\gamma''}\dot{\mathbf{U}}1_{\zeta'+\zeta''}\ar[r]^-{\dot{\Delta}_{\zeta',\zeta''}} \ar[d]_-{\pi_{\lambda_1,\lambda_2}} &\bigoplus_{\gamma',\gamma''\in X}(1_{\gamma'}\dot{\mathbf{U}}1_{\zeta'}\otimes 1_{\gamma''}\dot{\mathbf{U}}1_{\zeta''}) \ar[d]^-{\pi_{\lambda'_1,\lambda'_2}\otimes \pi_{\lambda''_1,\lambda''_2}}\\
{^{\omega}\Lambda_{\lambda_1}}\otimes \Lambda_{\lambda_2} \ar[r]^-g &{^{\omega}\Lambda_{\lambda'_1}}\otimes \Lambda_{\lambda'_2}\otimes {^{\omega}\Lambda_{\lambda''_1}}\otimes \Lambda_{\lambda''_2},}$$
where $\pi_{\lambda_1,\lambda_2}(\dot{u})=\dot{u}(\xi_{-\lambda_1}\otimes \eta_{\lambda_2})$ for any $\dot{u}\in \dot{\mathbf{U}}1_{\zeta'+\zeta''}$, and $\pi_{\lambda'_1,\lambda'_2},\pi_{\lambda''_1,\lambda''_2}$ are similar (see \cite[\S 1.13]{Lusztig-2009}). 

For any $\dot{b}\in\dot{\mathbf{B}}\cap \dot{\mathbf{U}}1_{\zeta'+\zeta''}$, we write
\begin{equation}\label{structure constant dotc}
\dot{\Delta}_{\zeta',\zeta''}(\dot{b})=\sum \dot{c}_{\dot{b}',\dot{b}''}^{\dot{b}} \dot{b}'\otimes\dot{b}'',    
\end{equation}
where the summation is taken over $\dot{b}'\in\dot{\mathbf{B}}\cap \dot{\mathbf{U}}1_{\zeta'},\dot{b}''\in\dot{\mathbf{B}}\cap \dot{\mathbf{U}}1_{\zeta''}$ and the structure constants $\dot{c}_{\dot{b}',\dot{b}''}^{\dot{b}}$ are zero for all but finitely many $\dot{b}',\dot{b}''$. By above commutative diagram, we have

\begin{lemma} \label{lem:g-comult-relation}
Let $\zeta', \zeta''\in X$ and $\dot{b}\in\dot{\mathbf{B}}\cap \dot{\mathbf{U}}1_{\zeta'+\zeta''}$. Then for any $\lambda'_1, \lambda''_1, \lambda'_2, \lambda''_2\in X^{+}$ with $\zeta'=\lambda'_2-\lambda'_1$, $\zeta''=\lambda''_2-\lambda''_1$ and  $\lambda_1=\lambda'_1+\lambda''_1,\lambda_2=\lambda'_2+\lambda''_2$, we have  
\begin{align*}
g(\dot{b}(\xi_{-\lambda_1}\otimes\eta_{\lambda_2})) = \sum \dot{c}_{\dot{b}',\dot{b}''}^{\dot{b}} (\dot{b}'(\xi_{-\lambda'_1}\otimes\eta_{\lambda'_2}))\otimes(\dot{b}''(\xi_{-\lambda''_1}\otimes\eta_{\lambda''_2})),
\end{align*}
where the summation is taken over $\dot{b}'\in\dot{\mathbf{B}}\cap \dot{\mathbf{U}}1_{\zeta'}$ and $\dot{b}''\in\dot{\mathbf{B}}\cap \dot{\mathbf{U}}1_{\zeta''}$.
\end{lemma}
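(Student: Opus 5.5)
The plan is to read the identity off the commutative square displayed just before the statement, applied to the single element $\dot{b}$. Since $\dot{b}\in\dot{\mathbf{U}}1_{\zeta'+\zeta''}$, the map $\pi_{\lambda_1,\lambda_2}$ sends it to $\dot{b}(\xi_{-\lambda_1}\otimes\eta_{\lambda_2})$, and the lower path of the square gives $g(\dot{b}(\xi_{-\lambda_1}\otimes\eta_{\lambda_2}))$. The upper path first applies $\dot{\Delta}_{\zeta',\zeta''}$, producing by \eqref{structure constant dotc} the finite sum $\sum\dot{c}^{\dot{b}}_{\dot{b}',\dot{b}''}\dot{b}'\otimes\dot{b}''$. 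It then applies $\pi_{\lambda'_1,\lambda'_2}\otimes\pi_{\lambda''_1,\lambda''_2}$, which by definition sends $\dot{b}'\otimes\dot{b}''$ to $\dot{b}'(\xi_{-\lambda'_1}\otimes\eta_{\lambda'_2})\otimes\dot{b}''(\xi_{-\lambda''_1}\otimes\eta_{\lambda''_2})$. Equating the two paths gives the claimed formula, with no restriction on regularity of the weights.

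The real content is therefore the commutativity of the square, which the paper attributes to \cite[\S 1.13]{Lusztig-2009}. I would verify it directly as follows. Every element of $\dot{\mathbf{U}}1_{\zeta'+\zeta''}$ has the form ${_{\gamma}\pi_{\zeta'+\zeta''}}(x)$ with $x\in\mathbf{U}$ homogeneous. Acting on a vector of weight $\zeta'+\zeta''$, such an element acts as $x$ itself, because $1_{\zeta'+\zeta''}$ is the projector onto that weight space.

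\begin{enumerate}
\item The vector $\xi_{-\lambda_1}\otimes\eta_{\lambda_2}$ has weight $\lambda_2-\lambda_1=\zeta'+\zeta''$. Since $g$ is a $\mathbf{U}$-homomorphism, $g(\dot{u}(\xi_{-\lambda_1}\otimes\eta_{\lambda_2}))=x\,g(\xi_{-\lambda_1}\otimes\eta_{\lambda_2})$.
\item By the computation preceding the statement, $g(\xi_{-\lambda_1}\otimes\eta_{\lambda_2})=\xi_{-\lambda'_1}\otimes\eta_{\lambda'_2}\otimes\xi_{-\lambda''_1}\otimes\eta_{\lambda''_2}$. The scalar $v^{f^{\sharp}}$ in the definition of $g$ was chosen precisely to cancel the factor produced by $({_{f^{\sharp}}\mathcal{R}})^{-1}$.
\item In the four-fold tensor product, grouped as $({^{\omega}\Lambda_{\lambda'_1}}\otimes\Lambda_{\lambda'_2})\otimes({^{\omega}\Lambda_{\lambda''_1}}\otimes\Lambda_{\lambda''_2})$, the element $x$ acts through $\Delta(x)=\sum x_{(1)}\otimes x_{(2)}$, by coassociativity of $\Delta$. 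The two factors $\xi_{-\lambda'_1}\otimes\eta_{\lambda'_2}$ and $\xi_{-\lambda''_1}\otimes\eta_{\lambda''_2}$ have weights $\zeta'$ and $\zeta''$ respectively.
\item The image of $x$ on the right therefore equals $({_{\gamma'}\pi_{\zeta'}}\otimes{_{\gamma''}\pi_{\zeta''}})(\Delta(x))$ applied componentwise. By the definition of ${_{\gamma',\gamma''}\dot{\Delta}_{\zeta',\zeta''}}$, this is $\dot{\Delta}_{\zeta',\zeta''}(\dot{u})$ acting through $\pi_{\lambda'_1,\lambda'_2}\otimes\pi_{\lambda''_1,\lambda''_2}$.
\end{enumerate}

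The step needing most care is the bookkeeping of weights and idempotents. For each summand of $\Delta(x)$, one must check that the projection ${_{\gamma'}\pi_{\zeta'}}\otimes{_{\gamma''}\pi_{\zeta''}}$ loses nothing when evaluated on these vectors. This holds because the right idempotents $1_{\zeta'},1_{\zeta''}$ act as the identity on the chosen weight vectors. The left indices $\gamma',\gamma''$ are determined by the degrees of the summands, and each summand lands in exactly one component of the direct sum. Once the square commutes, the lemma follows by substituting \eqref{structure constant dotc}, where the sum is finite.
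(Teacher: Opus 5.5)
Your proposal is correct and follows the paper's route. The paper obtains the lemma by evaluating both paths of the commutative square on $\dot{b}$ and substituting \eqref{structure constant dotc}, and it simply cites \cite[\S 1.13]{Lusztig-2009} for the commutativity of that square. Your direct verification of the commutativity is a sound filling-in of that citation: it uses that $g$ is a $\mathbf{U}$-homomorphism sending $\xi_{-\lambda_1}\otimes\eta_{\lambda_2}$ to $\xi_{-\lambda'_1}\otimes\eta_{\lambda'_2}\otimes\xi_{-\lambda''_1}\otimes\eta_{\lambda''_2}$, coassociativity of $\Delta$, and the degree bookkeeping for ${_{\gamma'}\pi_{\zeta'}}\otimes{_{\gamma''}\pi_{\zeta''}}$.
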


\subsection{Compatibility with the thickening realization}
In this subsection, we study the compatibility of the module-theoretic incarnation of the comultiplication of $\dot{\mathbf{U}}$ established in \S \ref{Tensor product approach to comultiplication} with the thickening realization of the tensor product established in \S \ref{ftheta_lambdaf}.

\begin{lemma}\label{chi extreme weight vector}
Let $\lambda\in X^+$ and $w\in W$. Then we have 
\begin{enumerate}
\item $\langle (i_{\mathbb{Z}[I]^{\vee}},i_Y),w(0,\lambda)\rangle^{\sharp}=\langle i_Y,w\lambda\rangle$ for any $i\in I$; 
\item $p_{\lambda}(\eta_{w\lambda})=\eta_{w(0,\lambda)}$ and $({^{\omega}p_{\lambda}})(\xi_{-w\lambda})=\xi_{-w(0,\lambda)}$;
\item $\chi_{\lambda',\lambda''}(\eta_{w\lambda})=\eta_{w\lambda'}\otimes \eta_{w\lambda''}$ and $\chi_{-\lambda',-\lambda''}(\xi_{-w\lambda})=\xi_{-w\lambda'}\otimes \xi_{-w\lambda''}$.
\end{enumerate}
\end{lemma}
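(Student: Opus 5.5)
Part (1) is a direct computation with the root datum morphism. The plan is to first show that the Weyl group actions on $X^{\sharp}$ and on $X$ are compatible with $\beta$. Since $\beta(i,0)=i_X$ and $\langle\alpha(\mu),\cdot\rangle^{\sharp}=\langle\mu,\beta(\cdot)\rangle$, the reflection $s_i(\nu,\zeta)=(\nu,\zeta)-\langle (i_{\mathbb{Z}[I]^\vee},i_Y),(\nu,\zeta)\rangle^{\sharp}(i,0)$ is sent by $\beta$ to $\beta(\nu,\zeta)-\langle i_Y,\beta(\nu,\zeta)\rangle i_X=s_i\beta(\nu,\zeta)$, using $\alpha(i_Y)=(i_{\mathbb{Z}[I]^\vee},i_Y)$. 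Hence $\beta\circ w=w\circ\beta$ for all $w\in W$, and in particular $\beta(w(0,\lambda))=w\lambda$. Then
\[
\langle (i_{\mathbb{Z}[I]^{\vee}},i_Y),w(0,\lambda)\rangle^{\sharp}=\langle\alpha(i_Y),w(0,\lambda)\rangle^{\sharp}=\langle i_Y,\beta(w(0,\lambda))\rangle=\langle i_Y,w\lambda\rangle,
\]
which is (1).

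For (2) and (3), I will characterize the extremal vectors by Lusztig's recursive construction: for $\eta_\lambda$ and a reduced expression $w=s_{i_1}\cdots s_{i_k}$, one has $\eta_{w\lambda}=F_{i_1}^{(a_1)}\cdots F_{i_k}^{(a_k)}\eta_\lambda$ with $a_t=\langle i_{tY}, s_{i_{t+1}}\cdots s_{i_k}\lambda\rangle\ge 0$ (cf.\ \cite[\S 28.1]{Lusztig-1993}, or Kashiwara's extremal vectors). Because $p_\lambda$ is a $\mathbf{U}$-module map sending $\eta_\lambda$ to $\eta_{(0,\lambda)}$, and $\mathbf{U}^-=(\mathbf{U}^\sharp)^-$, it carries this product to the same divided-power monomial applied to $\eta_{(0,\lambda)}$. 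By (1) the exponents $a_t$ agree with the ones computed in $X^\sharp$, so the image is $\eta_{w(0,\lambda)}$. The $\omega$-twisted statement is identical, with $E$'s in place of $F$'s.

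For (3) the same recursion is used. The step I expect to be the real content is showing that if $v'\otimes v''$ is a tensor of extremal vectors of weights $\mu',\mu''$ with $\langle i_Y,\mu'\rangle,\langle i_Y,\mu''\rangle\ge0$ and $E_i$ killing both, then $F_i^{(a'+a'')}(v'\otimes v'')=F_i^{(a')}v'\otimes F_i^{(a'')}v''$, where $a'=\langle i_Y,\mu'\rangle$ and $a''=\langle i_Y,\mu''\rangle$. I will prove this with the divided-power coproduct formula $\Delta(F_i^{(n)})=\sum_{s+t=n}v^{-st}F_i^{(s)}\otimes K_{-s i_Y}F_i^{(t)}$ (up to the paper's convention), together with the vanishing $F_i^{(s)}v'=0$ for $s>a'$ and $F_i^{(t)}v''=0$ for $t>a''$ ($\mathfrak{sl}_2$-theory on integrable modules). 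Only the term $s=a'$, $t=a''$ survives, and its scalar is $v^{-a'a''}v^{a'a''}=1$ after evaluating $K_{-a'i_Y}$ on $F_i^{(a'')}v''$ of weight $\mu''-a''i_X$; the sign and normalization must be checked carefully against the paper's $\Delta$. Applying this along the reduced word, noting that $a_t=a'_t+a''_t$ because $\lambda=\lambda'+\lambda''$ and the weights are linear, and that $E_{i_t}$ kills each intermediate extremal vector (they are highest for the corresponding $\mathfrak{sl}_2$), gives $\chi_{\lambda',\lambda''}(\eta_{w\lambda})=\eta_{w\lambda'}\otimes\eta_{w\lambda''}$. The lowest weight version follows by applying $\omega$, with $\Delta(E_i^{(n)})$ used in the same way.
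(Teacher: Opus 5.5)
Your proposal is correct and, for (2) and (3), follows the paper's argument: you use the recursion $\eta_{w\lambda}=F_i^{(\langle i_Y,s_iw\lambda\rangle)}\eta_{s_iw\lambda}$ (your reduced-word monomial is the same recursion unrolled), you match exponents via (1), and you expand the divided-power coproduct so that only the term $F_i^{(a')}\otimes F_i^{(a'')}$ survives, with scalar $v^{-a'a''}v^{a'a''}=1$. For (1), the paper inducts on $l(w)$ with an explicit reflection computation in $Y^{\sharp}$, whereas you combine the $W$-equivariance of $\beta$ with $\langle\alpha(\mu),x\rangle^{\sharp}=\langle\mu,\beta(x)\rangle$ and $\alpha(i_Y)=(i_{\mathbb{Z}[I]^{\vee}},i_Y)$; both arguments are valid, and yours is a slightly cleaner, non-inductive one.
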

\begin{proof}
(1) We prove by induction on the length $l(w)$ of $w$. If $l(w)=0$, it is trivial. If $l(w)>0$, we take $j\in I$ such that $s_jw<w$. By the inductive hypothesis, $\langle (i_{\mathbb{Z}[I]^{\vee}},i_Y),s_jw(0,\lambda)\rangle^{\sharp}=\langle i_Y,s_jw\lambda\rangle$ for any $i\in I$. By definition, $
s_j(i_{\mathbb{Z}[I]^{\vee}},i_Y)=(i_{\mathbb{Z}[I]^{\vee}},i_Y)-\langle (i_{\mathbb{Z}[I]^{\vee}},i_Y),(j,0)\rangle^{\sharp}(j_{\mathbb{Z}[I]^{\vee}},j_Y\rangle
=(i_{\mathbb{Z}[I]^{\vee}},i_Y)-(i\cdot j)(j_{\mathbb{Z}[I]^{\vee}},j_Y\rangle$,
and so
\begin{align*}
&\langle (i_{\mathbb{Z}[I]^{\vee}},i_Y),w(0,\lambda)\rangle^{\sharp}=\langle s_j(i_{\mathbb{Z}[I]^{\vee}},i_Y),s_jw(0,\lambda)\rangle^{\sharp}\\
=&\langle (i_{\mathbb{Z}[I]^{\vee}},i_Y)-(i\cdot j)(j_{\mathbb{Z}[I]^{\vee}},j_Y\rangle,s_jw(0,\lambda)\rangle^{\sharp}\\
=&\langle i_Y-(i\cdot j)j_Y,s_jw\lambda\rangle=\langle s_ji_Y,s_jw\lambda\rangle=\langle i_Y,w\lambda\rangle,
\end{align*}
as desired.

(2) \& (3) We prove $p_{\lambda}(\eta_{w\lambda})=\eta_{w(0,\lambda)},\chi_{\lambda',\lambda''}(\eta_{w\lambda})=\eta_{w\lambda'}\otimes \eta_{w\lambda''}$ by induction on the length $l(w)$ of $w$, and the others can be proved similarly. If $l(w)=0$, it is trivial. If $l(w)>0$, we take $i\in I$ such that $s_iw<w$. By the inductive hypothesis, $p_{\lambda}(\eta_{s_iw\lambda})=\eta_{s_iw(0,\lambda)},\chi_{\lambda',\lambda''}(\eta_{s_iw\lambda})=\eta_{s_iw\lambda'}\otimes \eta_{s_iw\lambda''}$. By \cite[Proposition 28.1.4]{Lusztig-1993}, $\eta_{w\lambda}=F_i^{(\langle i_Y,s_iw\lambda\rangle)}\eta_{s_iw\lambda}$. Similarly, we have $\eta_{w(0,\lambda)}=F_i^{(\langle (i_{\mathbb{Z}[I]^{\vee}},i_Y),s_iw(0,\lambda)\rangle^{\sharp})}\eta_{s_iw(0,\lambda)}$ and $\eta_{w\lambda'}=F_i^{(\langle i_Y,s_iw\lambda'\rangle)}\eta_{s_iw\lambda'},\eta_{w\lambda''}=F_i^{(\langle i_Y,s_iw\lambda''\rangle)}\eta_{s_iw\lambda''}$. 

By (1), we have $\langle i_Y,s_iw\lambda\rangle=\langle (i_{\mathbb{Z}[I]^{\vee}},i_Y),s_iw(0,\lambda)\rangle^{\sharp}$ and so 
$$p_{\lambda}(\eta_{w\lambda})=F_i^{(\langle i_Y,s_iw\lambda\rangle)}p_{\lambda}(\eta_{s_iw\lambda})=F_i^{(\langle (i_{\mathbb{Z}[I]^{\vee}},i_Y),s_iw(0,\lambda)\rangle^{\sharp})}\eta_{s_iw(0,\lambda)}=\eta_{w(0,\lambda)},$$
as desired. By \cite[\S 3.1.5]{Lusztig-1993}, we have 
\begin{align*}
&\chi_{\lambda',\lambda''}(\eta_{w\lambda})=F_i^{(\langle i_Y,s_iw\lambda\rangle)}\chi_{\lambda',\lambda''}(\eta_{s_iw\lambda})=F_i^{(\langle i_Y,s_iw\lambda\rangle)}(\eta_{s_iw\lambda'}\otimes \eta_{s_iw\lambda''})\\
=&\sum_{p+q=\langle i_Y,s_iw\lambda\rangle}v^{-pq}F_i^{(p)}\eta_{s_iw\lambda'}\otimes K_{-i_Y}^pF_i^{(q)}\eta_{s_iw\lambda''}.
\end{align*}
By the proof of \cite[Lemma 39.1.2]{Lusztig-1993}, $E_i\eta_{s_iw\lambda'}=E_i\eta_{s_iw\lambda''}=0$. By \cite[Lemma 5.1.6]{Lusztig-1993}, we have $F_i^{(p)}\eta_{s_iw\lambda'}=F_{i}^{(q)}\eta_{s_iw\lambda''}=0$ for any $p>\langle i_Y,s_iw\lambda'\rangle,q>\langle i_Y,s_iw\lambda''\rangle$. Thus, the only non-zero term in the summation is given by $p=\langle i_Y,s_iw\lambda'\rangle,q=\langle i_Y,s_iw\lambda''\rangle$, and so
$$\chi_{\lambda',\lambda''}(\eta_{w\lambda})=v^{-\langle i_Y,s_iw\lambda'\rangle\langle i_Y,s_iw\lambda''\rangle}\eta_{w\lambda'}\otimes K_{-i_Y}^{\langle i_Y,s_iw\lambda'\rangle}\eta_{w\lambda''}=\eta_{w\lambda'}\otimes\eta_{w\lambda''},$$
as desired.
\end{proof}

Let $\lambda\in X^+$ and $w\in W$. By definition, we can uniquely write $$-w(0,\lambda)=-(0,\lambda)+(\nu_{w,\lambda},0)=-(0,\lambda)+(\nu_{w,\lambda})_{X^{\sharp}},\ \textrm{where}\ \nu_{w,\lambda}\in \mathbb{Z}[I].$$ 

\begin{lemma}\label{image of gvarphi}
Let $w\in W,\lambda'_1,\lambda''_1,\lambda'_2,\lambda''_2\in X^+$ and $\lambda_1=\lambda'_1+\lambda''_1,\lambda_2=\lambda'_2+\lambda''_2$. For any $y\in \mathbf{f}$, write $r(y)=\sum y_1\otimes y_2$ with homogeneous $y_1,y_2\in \mathbf{f}$, then we have 
\begin{align*}
g\underline{\varphi_{-w\lambda_1,\lambda_2}}(\theta_{\lambda_2}^-y^-v_{-w\lambda_1\odot\lambda_2})=v^{\langle (\nu_{w,\lambda''_1})_Y,\lambda'_2\rangle}
\sum &v^{\langle |y_2|_Y,w\lambda''_1\rangle-\langle |y_1|_Y,\lambda'_2\rangle}\\
&\times y_2^-\xi_{-w\lambda'_1}\otimes \eta_{\lambda'_2}\otimes y_1^-\xi_{-w\lambda''_1}\otimes \eta_{\lambda''_2}.
\end{align*}
\end{lemma}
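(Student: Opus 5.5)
Since $\theta_{\lambda_2}^-y^-v_{-w\lambda_1\odot\lambda_2}$ is the image of $y^-v_{-w\lambda_1}\otimes\eta_{\lambda_2}$ under $\psi$, formula \eqref{im varphi} with $x=1$ gives
$$\underline{\varphi_{-w\lambda_1,\lambda_2}}(\theta_{\lambda_2}^-y^-v_{-w\lambda_1\odot\lambda_2})=y^-\xi_{-w\lambda_1}\otimes\eta_{\lambda_2}=y^-(\xi_{-w\lambda_1}\otimes\eta_{\lambda_2}),$$
where the last equality holds because $\eta_{\lambda_2}$ is a highest weight vector. Indeed, $\Delta(y^-)$ equals $y^-\otimes 1$ plus terms with a nontrivial $F$-part in the second factor, and those terms kill $\eta_{\lambda_2}$ only after the $E$'s; more cleanly, $\Delta(F_i)=1\otimes F_i+F_i\otimes K_{-i}$ does not act trivially. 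So I will \emph{not} rewrite this as a $\mathbf{U}^-$-action. Instead I will use that $g$ is only a $\mathbf{U}$-module map and write
$$y^-\xi_{-w\lambda_1}\otimes\eta_{\lambda_2}=\bigl((y^-\otimes 1)\bigr)(\xi_{-w\lambda_1}\otimes\eta_{\lambda_2}).$$
The map $g$ is $\chi_{-\lambda'_1,-\lambda''_1}\otimes\chi_{\lambda'_2,\lambda''_2}$ followed by the middle $\mathcal{R}$-matrix. This lets me treat the two tensor factors separately at the first stage.

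First I compute $\chi_{-\lambda'_1,-\lambda''_1}(y^-\xi_{-w\lambda_1})$. By Lemma \ref{chi extreme weight vector}(3), $\chi_{-\lambda'_1,-\lambda''_1}(\xi_{-w\lambda_1})=\xi_{-w\lambda'_1}\otimes\xi_{-w\lambda''_1}$. Since $\chi$ is a $\mathbf{U}$-map, this gives $\Delta(y^-)(\xi_{-w\lambda'_1}\otimes\xi_{-w\lambda''_1})$. By \cite[\S 3.1.5 \& Lemma 1.2.11]{Lusztig-1993}, $\Delta(y^-)=\sum v^{-|y_1|\cdot|y_2|}y_2^-\otimes K_{-|y_2|}y_1^-$ (up to the ordering conventions I will fix). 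Acting on the second factor produces the scalar $v^{\langle |y_2|_Y,w\lambda''_1\rangle}$, after combining with the $v^{-|y_1|\cdot|y_2|}$ and weight shift $|y_1|$. Meanwhile $\chi_{\lambda'_2,\lambda''_2}(\eta_{\lambda_2})=\eta_{\lambda'_2}\otimes\eta_{\lambda''_2}$. So after the first stage I get
$$\sum v^{\langle|y_2|_Y,w\lambda''_1\rangle}\, y_2^-\xi_{-w\lambda'_1}\otimes y_1^-\xi_{-w\lambda''_1}\otimes\eta_{\lambda'_2}\otimes\eta_{\lambda''_2}.$$

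Second, I apply $v^{f^\sharp(-(0,\lambda''_1),(0,\lambda'_2))}({}_{f^\sharp}\mathcal{R}_{\lambda''_1,\lambda'_2})^{-1}$ to $y_1^-\xi_{-w\lambda''_1}\otimes\eta_{\lambda'_2}$. Using Proposition \ref{universal R matrix without X regular} and Lemma \ref{32.1.5}, the inverse is $\tau^{-1}\Pi_{f^\sharp}^{-1}\overline{\Theta^\sharp}$ transported by the $p$'s. The key observation is that $\overline{\Theta^\sharp}$ acts as the identity here. Its nonconstant terms involve $b^+$ on the highest weight vector $\eta_{(0,\lambda'_2)}$ (or $b^-$ on the first factor paired with $b^{*+}$ on the second), and $b^{*+}\eta=0$ for $|b|\neq0$. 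So only $\Pi_{f^\sharp}^{-1}$ contributes, with factor $v^{-f^\sharp(\zeta_1^\sharp,\zeta_2^\sharp)}$. Here $\zeta_2^\sharp=(0,\lambda'_2)$, and $\zeta_1^\sharp$ is the weight of $y_1^-\xi_{-w(0,\lambda''_1)}$, namely $-(0,\lambda''_1)+(\nu_{w,\lambda''_1})_{X^\sharp}-|y_1|_{X^\sharp}$, using Lemma \ref{chi extreme weight vector}(2) and the definition of $\nu_{w,\lambda}$. Lemma \ref{32.1.3} then gives
$$f^\sharp(\zeta_1^\sharp,(0,\lambda'_2))=f^\sharp(-(0,\lambda''_1),(0,\lambda'_2))-\langle(\nu_{w,\lambda''_1}-|y_1|)_{Y^\sharp},(0,\lambda'_2)\rangle^\sharp.$$
The last pairing equals $\langle(\nu_{w,\lambda''_1})_Y,\lambda'_2\rangle-\langle|y_1|_Y,\lambda'_2\rangle$. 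Multiplying by the prefactor $v^{f^\sharp(-(0,\lambda''_1),(0,\lambda'_2))}$ cancels the constant term and leaves exactly $v^{\langle(\nu_{w,\lambda''_1})_Y,\lambda'_2\rangle-\langle|y_1|_Y,\lambda'_2\rangle}$. The swap $\tau^{-1}$ reorders the factors to the stated form.

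The main obstacle is the bookkeeping of $v$-powers in the first stage. I must get the precise form of $\Delta(y^-)$ and the $K$-eigenvalue on $\xi_{-w\lambda''_1}$ exactly right. The claim is that the $v^{-|y_1|\cdot|y_2|}$ from Lusztig's formula cancels against the weight shift by $y_1^-$, leaving only $\langle|y_2|_Y,w\lambda''_1\rangle$; I will check this carefully. I also need the sign convention $\langle|y_2|_Y,w\lambda''_1\rangle$ with $K_{-|y_2|}$ acting on a vector of weight $-w\lambda''_1-|y_1|$, and I will verify it against the computation in the proof of Lemma \ref{linear map varphi}.
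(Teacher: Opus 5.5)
Your proposal is correct and follows the paper's proof essentially step for step. You reduce via \eqref{im varphi} to $y^-\xi_{-w\lambda_1}\otimes\eta_{\lambda_2}$, then apply $\chi_{-\lambda'_1,-\lambda''_1}\otimes\chi_{\lambda'_2,\lambda''_2}$ using Lemma \ref{chi extreme weight vector} and Lusztig's formula for $\Delta(y^-)$; the cancellation of $v^{-|y_1|\cdot|y_2|}$ against the $K_{-|y_2|_Y}$-eigenvalue that you anticipate does occur. You then note that $\overline{\Theta^\sharp}$ acts trivially on $y_1^-\xi_{-w(0,\lambda''_1)}\otimes\eta_{(0,\lambda'_2)}$, so only $\Pi_{f^\sharp}^{-1}$ contributes, and you evaluate it via Lemma \ref{32.1.3}. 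The paper leaves the triviality of $\overline{\Theta^\sharp}$ implicit, so making it explicit is a good addition. Delete the retracted aside in your first paragraph claiming $y^-\xi_{-w\lambda_1}\otimes\eta_{\lambda_2}=y^-(\xi_{-w\lambda_1}\otimes\eta_{\lambda_2})$: it is false and not needed.
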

\begin{proof}
By \eqref{im varphi}, $\underline{\varphi_{-w\lambda_1,\lambda_2}}(\theta_{\lambda_2}^-y^-v_{-w\lambda_1\odot\lambda_2})=y^-\xi_{-w\lambda_1}\otimes \eta_{\lambda_2}$. Recall that $g$ is defined by $g=(\mathrm{Id}\otimes v^{f^{\sharp}(-(0,\lambda''_1),(0,\lambda'_2))}({}_{f^{\sharp}}\mathcal{R}_{\lambda''_1,\lambda'_2})^{-1}\otimes \mathrm{Id})(\chi_{-\lambda'_1,-\lambda''_1}\otimes\chi_{\lambda'_2,\lambda''_2})$. By \cite[\S 3.1.5 \& Lemma 1.2.11]{Lusztig-1993} and Lemma \ref{chi extreme weight vector}, we have 
\begin{align*}
&(\chi_{-\lambda'_1,-\lambda''_1}\otimes\chi_{\lambda'_2,\lambda''_2})(y^-\xi_{-w\lambda_1}\otimes \eta_{\lambda_2})\\
=&\sum v^{-|y_2|\cdot |y_1|}y_2^-\xi_{-w\lambda'_1}\otimes K_{-|y_2|_Y}y_1^-\xi_{-w\lambda''_1}\otimes \eta_{\lambda'_2}\otimes \eta_{\lambda''_2}\\
=&\sum v^{\langle -|y_2|_Y,-w\lambda''_1\rangle}y_2^-\xi_{-w\lambda'_1}\otimes y_1^-\xi_{-w\lambda''_1}\otimes \eta_{\lambda'_2}\otimes \eta_{\lambda''_2}.
\end{align*}
Recall that $({}_{f^{\sharp}}\mathcal{R}_{\lambda''_1,\lambda'_2})^{-1}=(p_{\lambda'_2}^{-1}\otimes ({^{\omega}p_{\lambda''_1}})^{-1})\tau^{-1}\Pi_{f^{\sharp}}^{-1}\overline{\Theta^{\sharp}}({^{\omega}p_{\lambda''_1}}\otimes p_{\lambda'_2})$ (see Proposition \ref{universal R matrix without X regular} and Lemma \ref{32.1.5}). For any $y_1^-\xi_{-w\lambda''_1}\otimes \eta_{\lambda'_2}$ in the summation, by Lemma \ref{chi extreme weight vector},
\begin{align*}
&({^{\omega}p_{\lambda''_1}}\otimes p_{\lambda'_2})(y_1^-\xi_{-w\lambda''_1}\otimes \eta_{\lambda'_2})=y_1^-\xi_{-w(0,\lambda''_1)}\otimes \eta_{(0,\lambda'_2)},\\
&\tau^{-1}\Pi_{f^{\sharp}}^{-1}\overline{\Theta^{\sharp}}(y_1^-\xi_{-w(0,\lambda''_1)}\otimes \eta_{(0,\lambda'_2)})=v^{-f^{\sharp}(-w(0,\lambda''_1)-|y_1|_{X^\sharp},(0,\lambda'_2))}\eta_{(0,\lambda'_2)}\otimes y_1^-\xi_{-w(0,\lambda''_1)},\\
&(p_{\lambda'_2}^{-1}\otimes ({^{\omega}p_{\lambda''_1}})^{-1})(\eta_{(0,\lambda'_2)}\otimes y_1^-\xi_{-w(0,\lambda''_1)})=\eta_{\lambda'_2}\otimes y_1^-\xi_{-w\lambda''_1}.
\end{align*}
By Lemma \ref{32.1.3}, we have 
\begin{align*}
&f^{\sharp}(-w(0,\lambda''_1)-|y_1|_{X^\sharp},(0,\lambda'_2))=f^{\sharp}(-(0,\lambda''_1)+(\nu_{w,\lambda''_1})_{X^{\sharp}}-|y_1|_{X^\sharp},(0,\lambda'_2))\\
=&f^{\sharp}(-(0,\lambda''_1),(0,\lambda'_2))-\langle (\nu_{w,\lambda''_1})_{Y^{\sharp}}-|y_1|_{Y^{\sharp}},(0,\lambda'_2)\rangle^{\sharp}\\
=&f^{\sharp}(-(0,\lambda''_1),(0,\lambda'_2))-\langle (\nu_{w,\lambda''_1})_Y-|y_1|_Y,\lambda'_2\rangle.
\end{align*}
Summarizing above results together, we obtain the desired expression.
\end{proof}

\begin{lemma}\label{g and tilde g}
Let $w\in W,\lambda'_1,\lambda''_1,\lambda'_2,\lambda''_2\in X^+$ and $\lambda_1=\lambda'_1+\lambda''_1,\lambda_2=\lambda'_2+\lambda''_2$. Then there exists a $\mathbf{U}^-$-module homomorphism 
$$\tilde{g}:(\mathbf{f}\theta_{\lambda_2}\mathbf{f})_{-w\lambda_1\odot\lambda_2}\rightarrow (\mathbf{f}\theta_{\lambda'_2}\mathbf{f})_{-w\lambda'_1\odot\lambda'_2}\otimes (\mathbf{f}\theta_{\lambda''_2}\mathbf{f})_{-w\lambda''_1\odot\lambda''_2}$$ 
such that the following diagram commutes $$\xymatrix@C=2cm{(\mathbf{f}\theta_{\lambda_2}\mathbf{f})_{-w\lambda_1\odot\lambda_2} \ar@{->>}[d]_{\underline{\varphi_{-w\lambda_1,\lambda_2}}} \ar@{-->}[r]^-{\tilde{g}} &(\mathbf{f}\theta_{\lambda'_2}\mathbf{f})_{-w\lambda'_1\odot\lambda'_2}\otimes (\mathbf{f}\theta_{\lambda''_2}\mathbf{f})_{-w\lambda''_1\odot\lambda''_2} \ar@{->>}[d]^{\underline{\varphi_{-w\lambda'_1,\lambda'_2}} \otimes \underline{\varphi_{-w\lambda''_1,\lambda''_2}}}\\
{^{\omega}V_w(\lambda_1)}\otimes \Lambda_{\lambda_2} \ar@{^{(}->}[d] &{^{\omega}V_w(\lambda'_1)}\otimes \Lambda_{\lambda'_2}\otimes {^{\omega}V_w(\lambda''_1)}\otimes \Lambda_{\lambda''_2} \ar@{^{(}->}[d] \\{^{\omega}\Lambda_{\lambda_1}}\otimes \Lambda_{\lambda_2} \ar[r]^-{g} &{^{\omega}\Lambda_{\lambda'_1}}\otimes  \Lambda_{\lambda'_2}\otimes {^{\omega}\Lambda_{\lambda''_1}}\otimes \Lambda_{\lambda''_2}.}$$
\end{lemma}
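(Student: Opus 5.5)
The plan is to define $\tilde{g}$ on generators and then propagate it to all of $(\mathbf{f}\theta_{\lambda_2}\mathbf{f})_{-w\lambda_1\odot\lambda_2}$ using the $\mathbf{U}^-$-action and comultiplication. Since $\underline{\varphi}$ is not injective, we cannot define $\tilde{g}$ by lifting along $\underline{\varphi}$. Instead we work through the isomorphism $\varphi_{-w\lambda_1,\lambda_2}$ of Lemma \ref{isomorphism varphi}. Put
$$\tilde{g}=(\psi_{-w\lambda'_1,\lambda'_2}\otimes\psi_{-w\lambda''_1,\lambda''_2})\circ G\circ\varphi_{-w\lambda_1,\lambda_2},$$
where $G:M_{-w\lambda_1}\otimes\Lambda_{\lambda_2}\to M_{-w\lambda'_1}\otimes\Lambda_{\lambda'_2}\otimes M_{-w\lambda''_1}\otimes\Lambda_{\lambda''_2}$ is a Verma-level lift of $g$. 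So the real task is to construct a $\mathbf{U}^-$-linear $G$ with
$$(a_{-w\lambda'_1}\otimes\mathrm{Id}\otimes a_{-w\lambda''_1}\otimes\mathrm{Id})\circ G=g\circ(a_{-w\lambda_1}\otimes\mathrm{Id}).$$
Because $\varphi$ and $\psi$ are $\mathbf{U}$-isomorphisms, the diagram then commutes and $\tilde{g}$ is $\mathbf{U}^-$-linear. Here $\mathbf{U}^-$ acts on the target via the iterated coproduct.

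To build $G$, note that $M_{-w\lambda_1}\otimes\Lambda_{\lambda_2}$ is generated as a $\mathbf{U}^-$-module by the vectors $y^-v_{-w\lambda_1}\otimes\eta_{\lambda_2}$; this is the triangularity used in the proof of Lemma \ref{isomorphism varphi}. In fact it is free over $\mathbf{U}^-$ on $v_{-w\lambda_1}\otimes\Lambda_{\lambda_2}$, since $M_{-w\lambda_1}\otimes N\cong\mathbf{U}^-\otimes N$ for any module $N$. So we define $G$ on free generators $v_{-w\lambda_1}\otimes m$ and extend $\mathbf{U}^-$-linearly. For $m=\eta_{\lambda_2}$ we mimic the formula of Lemma \ref{image of gvarphi}. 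On the simple highest weight factors, the map $g$ on $\Lambda$'s is a composite of $\chi_{\lambda'_2,\lambda''_2}$ and the $\mathcal{R}$-matrix piece. That composite can be lifted: replace $\xi_{-w\lambda_1}$ by $v_{-w\lambda_1}$ and use the map $M_{-w\lambda_1}\to M_{-w\lambda'_1}\otimes M_{-w\lambda''_1}$, $v\mapsto v\otimes v$ ($\mathbf{U}^-$-linear via $\Delta$, by freeness). This lift covers $\chi_{-\lambda'_1,-\lambda''_1}$ on Demazure modules by Lemma \ref{chi extreme weight vector}(3).

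The remaining piece is the $\mathcal{R}$-matrix swap of the middle factors $M_{-w\lambda''_1}\otimes\Lambda_{\lambda'_2}$. Here I would use the formula $\tau^{-1}\Pi_{f^\sharp}^{-1}\overline{\Theta^\sharp}$. The quasi-$\mathcal{R}$-matrix acts on $M_{-w\lambda''_1}\otimes\Lambda_{\lambda'_2}$ by a finite sum, as in \eqref{Theta and Psi}, because $\Lambda_{\lambda'_2}$ is integrable and locally finite for $\mathbf{U}^+$. The weight factor $\Pi_{f^\sharp}$ only needs weights, and $M_{-w\lambda''_1}$ carries weights $-w(0,\lambda''_1)-\nu$. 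Hence the same formula defines a $\mathbf{U}$-homomorphism $M_{-w\lambda''_1}\otimes\Lambda_{\lambda'_2}\to\Lambda_{\lambda'_2}\otimes M_{-w\lambda''_1}$, by the proof of Lemma \ref{32.1.5}, which uses only the intertwining property of $\Theta$ and weight considerations. It is compatible with $a_{-w\lambda''_1}\otimes\mathrm{Id}$, since $a$ is $\mathbf{U}(\mathfrak{b}^-)$-linear and $\Theta$ involves $b^-$ on the first factor and $b^{*+}$ on the second.

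Compatibility with the projections holds termwise for $\mathbf{U}^-$ and weight operators, because all the $a$'s are $\mathbf{U}(\mathfrak{b}^-)$-linear. The main obstacle is that $g$ itself is $\mathbf{U}$-linear, while the Verma lift is only verified on pieces. I would therefore define $G$ directly as the composite of the three lifted maps above, each $\mathbf{U}(\mathfrak{b}^-)$-linear and compatible with the corresponding $a$'s. Then the square commutes automatically, and one checks it against Lemma \ref{image of gvarphi} on generators as a sanity check. The delicate point is confirming that the $E$-parts of $\overline{\Theta^\sharp}$ act only on the $\Lambda_{\lambda'_2}$ factor, so no $E$ ever hits the Verma or Demazure factor, where $a$ fails to be $E$-equivariant.
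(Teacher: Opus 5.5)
Your proof is correct, but it takes a different route from the paper.

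\textbf{The paper's route.} The paper never lifts $g$. It defines $\tilde g$ outright as $\kappa\circ(p_1\otimes p_2)\circ\Delta$. Concretely, it applies the coproduct of the enlarged algebra $\tilde{\mathbf{U}}$ to $\mathbf{f}\theta_{\lambda_2}\mathbf{f}\subset\tilde{\mathbf{f}}\cong\tilde{\mathbf{U}}^-$. It then projects to the components of $I'$-degree $|\theta_{\lambda'_2}|$ and $|\theta_{\lambda''_2}|$, and evaluates on $v_{-w\lambda'_1\odot\lambda'_2}\otimes v_{-w\lambda''_1\odot\lambda''_2}$ with the scalar $v^{\langle(\nu_{w,\lambda''_1})_Y,\lambda'_2\rangle}$. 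Commutativity is checked on the $\mathbf{U}^-$-generators $\theta_{\lambda_2}^-y^-v_{-w\lambda_1\odot\lambda_2}$, by comparing \eqref{im tilde g} with Lemma~\ref{image of gvarphi}.

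\textbf{Your route.} You transport a Verma-level lift of $g$ through the isomorphisms of Lemma~\ref{isomorphism varphi}. The lift is $G=(\mathrm{Id}\otimes R''\otimes\mathrm{Id})\circ(\tilde\chi\otimes\chi_{\lambda'_2,\lambda''_2})$. Here $\tilde\chi$ is the Verma map $M_{-w\lambda_1}\to M_{-w\lambda'_1}\otimes M_{-w\lambda''_1}$, $v\mapsto v\otimes v$. The map $R''$ is $v^{f^\sharp(-(0,\lambda''_1),(0,\lambda'_2))}\tau^{-1}\Pi_{f^\sharp}^{-1}\overline{\Theta^\sharp}$ on $M_{-w\lambda''_1}\otimes\Lambda_{\lambda'_2}$. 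Commutativity of the diagram is then formal. Your $\tilde g$ is even $\mathbf{U}$-linear, since $\tilde\chi$ is a $\mathbf{U}$-map by the Verma universal property and $R''$ is a $\mathbf{U}$-map by the argument of Lemma~\ref{32.1.5}.

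\textbf{Points to state explicitly.}
\begin{enumerate}
\item $M_{-w\lambda''_1}$ should be regarded as the $\mathbf{U}^\sharp$-Verma module of highest weight $-w(0,\lambda''_1)$.
\item Compatibility of $\Pi_{f^\sharp}$ with $a_{-w\lambda''_1}$ relies on Lemma~\ref{chi extreme weight vector}(2), so that the $X^\sharp$-weights on both sides match.
\item The detour through defining $G$ on the free generators $v\otimes m$ is unnecessary; you only specified it for $m=\eta_{\lambda_2}$. The composite definition is what carries the proof.
\end{enumerate}

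\textbf{What each approach buys.} The paper's route produces the explicit coproduct formula for $\tilde g$. That formula is exactly what the proof of Theorem~\ref{thm:str-const-comult} uses to write $\tilde g(\tilde b^-v_{-w\lambda_1\odot\lambda_2})$ in terms of the coefficients $c^{\tilde b}_{\tilde b'',\tilde b'}$ of $r(\tilde b)$. Your map does coincide with the paper's. Both are $\mathbf{U}^-$-linear, and since $\overline{\Theta^\sharp}$ acts trivially on $y_1^-v_{-w\lambda''_1}\otimes\eta_{\lambda'_2}$, both send $\theta_{\lambda_2}^-y^-v_{-w\lambda_1\odot\lambda_2}$ to the right-hand side of \eqref{im tilde g}. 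But seeing this requires the same generator computation your construction avoids, so nothing is saved downstream. What you gain is a conceptual explanation of why $\tilde g$ exists, together with its $\mathbf{U}$-equivariance.
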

\begin{proof}
We identify $(\mathbf{f}\theta_{\lambda_2}\mathbf{f})_{-w\lambda_1\odot\lambda_2}\cong \mathbf{f}\theta_{\lambda_2}\mathbf{f}$ via $z^-v_{-w\lambda_1\odot\lambda_2}\mapsto z$ for any $z\in \mathbf{f}\theta_{\lambda_2}\mathbf{f}$, and define 
\begin{align*}
\tilde{g}:\ &(\mathbf{f}\theta_{\lambda_2}\mathbf{f})_{-w\lambda_1\odot\lambda_2}\cong \mathbf{f}\theta_{\lambda_2}\mathbf{f}\hookrightarrow \tilde{\mathbf{f}}\cong \tilde{\mathbf{U}}^-\xrightarrow{\Delta} \tilde{\mathbf{U}}\otimes \tilde{\mathbf{U}}\xrightarrow{p_1\otimes p_2}\\
&\bigoplus_{\nu'\in \mathbb{N}[I]}\tilde{\mathbf{U}}(-\nu'-|\theta_{\lambda'_2}|) \otimes \bigoplus_{\nu''\in \mathbb{N}[I]}\tilde{\mathbf{U}}(-\nu''-|\theta_{\lambda''_2}|)\xrightarrow{\kappa} M_{-w\lambda'_1\odot \lambda'_2}\otimes M_{-w\lambda''_1\odot \lambda''_2},
\end{align*}
where $\Delta$ is the comultiplication of $\tilde{\mathbf{U}}$, $p_1,p_2$ are natural projection from $\tilde{\mathbf{U}}$ to the direct sums of its certain homogeneous components, and $\kappa$ is defined by
$$\kappa(u'\otimes u'')=v^{\langle (\nu_{w,\lambda''_1})_Y,\lambda'_2\rangle}u'v_{-w\lambda'_1\odot \lambda'_2}\otimes u''v_{-w\lambda''_1\odot \lambda''_2}\ \textrm{for any}\ u',u''\in \tilde{\mathbf{U}}.$$
By definition, $\tilde{g}$ is a $\mathbf{U}^-$-module homomorphism.

Note that $(\mathbf{f}\theta_{\lambda_2}\mathbf{f})_{-w\lambda_1\odot\lambda_2}$ is generated by $\theta_{\lambda_2}^-y^-v_{-w\lambda_1\odot\lambda_2}$ for any $y\in \mathbf{f}$ as a $\mathbf{U}^-$-module. We write $r(y)=\sum y_1\otimes y_2$ with homogeneous $y_1,y_2\in \mathbf{f}$. By \cite[Lemma 1.4.2 \& \S 3.1.5 \& Lemma 1.2.11]{Lusztig-1993}, we have
\begin{align*}
\tilde{g}(\theta_{\lambda_2}^-y^-v_{-w\lambda_1\odot\lambda_2})=&v^{\langle (\nu_{w,\lambda''_1})_Y,\lambda'_2\rangle}\sum v^{\sum_{i\in I}\langle i_Y,\lambda'_2\rangle\langle i_Y,\lambda''_2\rangle-|\theta_{\lambda'_2}|\cdot |\theta_{\lambda''_2}|-|y_2|\cdot |y_1|}\\
& \times \theta_{\lambda'_2}^-y_2^-v_{-w\lambda'_1\odot\lambda'_2}\otimes K_{-|\theta_{\lambda'_2}|_{\tilde{Y}}}\theta_{\lambda''_2}^-K_{-|y_2|_{\tilde{Y}}}y_1^-v_{-w\lambda''_1\odot\lambda''_2}.
\end{align*}
We calculate $K_{-|\theta_{\lambda'_2}|_{\tilde{Y}}}\theta_{\lambda''_2}^-K_{-|y_2|_{\tilde{Y}}}y_1^-v_{-w\lambda''_1\odot\lambda''_2}$ as follows. Moving $K_{-|\theta_{\lambda'_2}|_{\tilde{Y}}},K_{-|y_2|_{\tilde{Y}}}$ past $\theta_{\lambda''_2}^-,y_1^-$ by the commutative relations
\begin{align*}
&K_{-|\theta_{\lambda'_2}|_{\tilde{Y}}}\theta_{\lambda''_2}^-=v^{|\theta_{\lambda'_2}|\cdot |\theta_{\lambda''_2}|}\theta_{\lambda''_2}^-K_{-|\theta_{\lambda'_2}|_{\tilde{Y}}},
K_{-|\theta_{\lambda'_2}|_{\tilde{Y}}}y_1^-=v^{|\theta_{\lambda'_2}|\cdot |y_1|}y_1^-K_{-|\theta_{\lambda'_2}|_{\tilde{Y}}},\\
&K_{-|y_2|_{\tilde{Y}}}y_1^-=v^{|y_2|\cdot|y_1|}y_1^-K_{-|y_2|_{\tilde{Y}}}.
\end{align*}
Since $i'\cdot j=-\delta_{i,j}$ for any $i,j\in I$ in the thickening Cartan datum, we have $|\theta_{\lambda'_2}|\cdot |y_1|=-\langle|y_1|_Y,\lambda'_2\rangle$, and so $K_{-|\theta_{\lambda'_2}|_{\tilde{Y}}}y_1^-=v^{-\langle|y_1|_Y,\lambda'_2\rangle}y_1^-K_{-|\theta_{\lambda'_2}|_{\tilde{Y}}}$. The action of $K_{-|\theta_{\lambda'_2}|_{\tilde{Y}}}K_{-|y_2|_{\tilde{Y}}}$ on $v_{-w\lambda''_1\odot\lambda''_2}$ is
$$K_{-|\theta_{\lambda'_2}|_{\tilde{Y}}}K_{-|y_2|_{\tilde{Y}}}v_{-w\lambda''_1\odot\lambda''_2}=v^{\langle -|y_2|_{\tilde{Y}}-|\theta_{\lambda'_2}|_{\tilde{Y}},-w\lambda''_1\odot\lambda''_2\rangle}v_{-w\lambda''_1\odot\lambda''_2}.$$
By \eqref{definition of odot}, we have $\langle-|y_2|_{\tilde{Y}}-|\theta_{\lambda'_2}|_{\tilde{Y}},-w\lambda''_1\odot\lambda''_2\rangle=\langle|y_2|_Y,w\lambda''_1\rangle-\sum_{i\in I}\langle i_Y,\lambda'_2\rangle\langle i_Y,\lambda''_2\rangle$, and so $K_{-|\theta_{\lambda'_2}|_{\tilde{Y}}}K_{-|y_2|_{\tilde{Y}}}v_{-w\lambda''_1\odot\lambda''_2}
=v^{\langle |y_2|_Y,w\lambda''_1\rangle-\sum_{i\in I}\langle i_Y,\lambda'_2\rangle\langle i_Y,\lambda''_2\rangle}v_{-w\lambda''_1\odot\lambda''_2}$. Hence
\begin{equation}\label{im tilde g}
\begin{aligned}
\tilde{g}(\theta_{\lambda_2}^-y^-v_{-w\lambda_1\odot\lambda_2})
=&v^{\langle (\nu_{w,\lambda''_1})_Y,\lambda'_2\rangle}\sum v^{\langle |y_2|_Y,w\lambda''_1\rangle-\langle |y_1|_Y,\lambda'_2\rangle}\\
& \times \theta_{\lambda'_2}^-y_2^-v_{-w\lambda'_1\odot\lambda'_2}\otimes \theta_{\lambda''_2}^-y_1^-v_{-w\lambda''_1\odot\lambda''_2}\\
\in &(\mathbf{f}\theta_{\lambda'_2}\mathbf{f})_{-w\lambda'_1\odot\lambda'_2}\otimes (\mathbf{f}\theta_{\lambda''_2}\mathbf{f})_{-w\lambda''_1\odot\lambda''_2},   
\end{aligned}
\end{equation}
and so $\mathrm{im}\,\tilde{g}\subset(\mathbf{f}\theta_{\lambda'_2}\mathbf{f})_{-w\lambda'_1\odot\lambda'_2}\otimes (\mathbf{f}\theta_{\lambda''_2}\mathbf{f})_{-w\lambda''_1\odot\lambda''_2}$. 

Since all maps $\underline{\varphi_{-w\lambda_1,\lambda_2}},\underline{\varphi_{-w\lambda'_1,\lambda'_2}}\otimes\underline{\varphi_{-w\lambda''_1,\lambda''_2}},g,\tilde{g}$ in the diagram are $\mathbf{U}^-$-module homomorphisms, to prove $g\underline{\varphi_{-w\lambda_1,\lambda_2}}=(\underline{\varphi_{-w\lambda'_1,\lambda'_2}}\otimes\underline{\varphi_{-w\lambda''_1,\lambda''_2}})\tilde{g}$, it suffices to verify it on the $\mathbf{U}^-$-module generators of $(\mathbf{f}\theta_{\lambda_2}\mathbf{f})_{-w\lambda_1\odot\lambda_2}$, that is, $\theta_{\lambda_2}^-y^-v_{-w\lambda_1\odot\lambda_2}$ for any $y\in \mathbf{f}$. By \eqref{im tilde g} and \eqref{im varphi}, we have 
\begin{align*}
&(\underline{\varphi_{-w\lambda'_1,\lambda'_2}}\otimes\underline{\varphi_{-w\lambda''_1,\lambda''_2}})\tilde{g}(\theta_{\lambda_2}^-y^-v_{-w\lambda_1\odot\lambda_2})\\
=&v^{\langle (\nu_{w,\lambda''_1})_Y,\lambda'_2\rangle}\sum v^{\langle |y_2|_Y,w\lambda''_1\rangle-\langle |y_1|_Y,\lambda'_2\rangle} y_2^-\xi_{-w\lambda'_1}\otimes \eta_{\lambda'_2}\otimes y_1^-\xi_{-w\lambda''_1}\otimes \eta_{\lambda''_2}.
\end{align*}
It coincides with $g\underline{\varphi_{-w\lambda_1,\lambda_2}}(\theta_{\lambda_2}^-y^-v_{-w\lambda_1\odot\lambda_2})$ by Lemma \ref{image of gvarphi}, as desired.
\end{proof}

For any $\tilde{b}\in \tilde{\mathbf{B}}$, we write $r(\tilde{b})=\sum_{\tilde{b}',\tilde{b}''\in \tilde{\mathbf{B}}}c^{\tilde{b}}_{\tilde{b}',\tilde{b}''}\tilde{b}'\otimes \tilde{b}''$.

\begin{theorem}\label{thm:str-const-comult}
Let $\zeta',\zeta''\in X$ and $\dot{b}\in \dot{\mathbf{B}}\cap \dot{\mathbf{U}}1_{\zeta'+\zeta''}$. Then we have 
\begin{align*}
\dot{\Delta}_{\zeta',\zeta''}(\dot{b})=v^{\langle (\nu_{w,\lambda''_1})_Y,\lambda'_2\rangle}\sum v^{\langle -|\mathrm{th}_{\lambda'_1,\lambda'_2,w}(\dot{b}')|_{\tilde{Y}},-w\lambda''_1\odot\lambda''_2\rangle} c^{\mathrm{th}_{\lambda_1,\lambda_2,w}(\dot{b})}_{\mathrm{th}_{\lambda''_1,\lambda''_2,w}(\dot{b}''),\mathrm{th}_{\lambda'_1,\lambda'_2,w}(\dot{b}')}\dot{b}'\otimes \dot{b}'',
\end{align*}
where the summation is taken over $\dot{b}'\in\dot{\mathbf{B}}\cap \dot{\mathbf{U}}1_{\zeta'}$ and $\dot{b}''\in\dot{\mathbf{B}}\cap \dot{\mathbf{U}}1_{\zeta''}$, for sufficiently regular $\lambda'_1,\lambda''_1,\lambda'_2,\lambda''_2\in X^+$ and sufficiently large $w\in W$ such that 
$\mathrm{th}_{\lambda_1,\lambda_2,w}(\dot{b}),\mathrm{th}_{\lambda'_1,\lambda'_2,w}(\dot{b}'),\mathrm{th}_{\lambda''_1,\lambda''_2,w}(\dot{b}'')\neq 0$.
\end{theorem}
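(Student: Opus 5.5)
Fix $\dot b$ and the finitely many $\dot b',\dot b''$ with $\dot c^{\dot b}_{\dot b',\dot b''}\neq 0$ in \eqref{structure constant dotc}. First choose $\lambda'_1,\lambda''_1,\lambda'_2,\lambda''_2$ regular enough, and then $w$ large enough, so that all of $\dot b(\xi_{-\lambda_1}\otimes\eta_{\lambda_2})$, $\dot b'(\xi_{-\lambda'_1}\otimes\eta_{\lambda'_2})$ and $\dot b''(\xi_{-\lambda''_1}\otimes\eta_{\lambda''_2})$ lie in the canonical bases of the corresponding Demazure tensor products. This is possible by Theorem \ref{25.2.1}, the directedness of $W$ under the Demazure product, and Proposition \ref{canonical basis of demazure otimes highest weight}. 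With these choices the three thickening maps are nonzero on the relevant elements, and \eqref{definition of thickening map} gives $\dot b(\xi_{-\lambda_1}\otimes\eta_{\lambda_2})=\underline{\varphi_{-w\lambda_1,\lambda_2}}(\tilde b^-v)$ with $\tilde b=\mathrm{th}_{\lambda_1,\lambda_2,w}(\dot b)$.

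Next apply $g$. By Lemma \ref{lem:g-comult-relation}, $g(\dot b(\xi_{-\lambda_1}\otimes\eta_{\lambda_2}))$ equals $\sum \dot c^{\dot b}_{\dot b',\dot b''}\,(\dot b'(\cdots))\otimes(\dot b''(\cdots))$. Because the vectors $\dot b'(\cdots)\otimes\dot b''(\cdots)$ form part of a basis (a tensor product of canonical bases), this expansion determines the $\dot c$ uniquely. On the other side, Lemma \ref{g and tilde g} gives $g\underline{\varphi}(\tilde b^-v)=(\underline{\varphi}\otimes\underline{\varphi})\tilde g(\tilde b^-v)$. It therefore suffices to compute $\tilde g(\tilde b^-v_{-w\lambda_1\odot\lambda_2})$ in the basis $\mathbf{B}((\mathbf{f}\theta_{\lambda'_2}\mathbf{f})_{\cdots})\otimes\mathbf{B}((\mathbf{f}\theta_{\lambda''_2}\mathbf{f})_{\cdots})$. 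After that we apply Theorem \ref{thm:structure-trans}(1): basis elements either go bijectively to canonical basis elements or to zero.

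The core step is the computation of $\tilde g(\tilde b^-v)$. Its definition is $\Delta$ followed by projections onto the components of $I'$-degrees $|\theta_{\lambda'_2}|$ and $|\theta_{\lambda''_2}|$, and then the map $\kappa$. By \cite[\S 3.1.5 \& Lemma 1.2.11]{Lusztig-1993}, $\Delta(\tilde b^-)=\sum c^{\tilde b}_{\tilde b_1,\tilde b_2} v^{-|\tilde b_1|\cdot|\tilde b_2|}\tilde b_2^-\otimes K_{-|\tilde b_2|}\tilde b_1^-$, where $r(\tilde b)=\sum c\,\tilde b_1\otimes\tilde b_2$. Hence
\[
\tilde g(\tilde b^-v)=v^{\langle(\nu_{w,\lambda''_1})_Y,\lambda'_2\rangle}\sum c^{\tilde b}_{\tilde b_1,\tilde b_2}\,v^{e}\,\tilde b_2^-v_{-w\lambda'_1\odot\lambda'_2}\otimes\tilde b_1^-v_{-w\lambda''_1\odot\lambda''_2}.
\]
Here $\tilde b_2$ has $I'$-degree $|\theta_{\lambda'_2}|$ and $\tilde b_1$ has $I'$-degree $|\theta_{\lambda''_2}|$. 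Commuting $K_{-|\tilde b_2|}$ past $\tilde b_1^-$ cancels $v^{-|\tilde b_1|\cdot|\tilde b_2|}$, and its action on the highest weight vector gives $e=\langle-|\tilde b_2|_{\tilde Y},-w\lambda''_1\odot\lambda''_2\rangle$. Since $\tilde b_1\in\mathbf{B}(\mathbf{f}\theta_{\lambda''_2}\mathbf{f})$ and $\tilde b_2\in\mathbf{B}(\mathbf{f}\theta_{\lambda'_2}\mathbf{f})$, these are genuine basis elements of the two pieces.

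The main obstacle is justifying this last membership claim. A term $\tilde b_1\otimes\tilde b_2$ with the right $I'$-degrees must have $\tilde b_1,\tilde b_2$ inside $\tilde{\mathbf{B}}\cap\mathbf{f}\theta\mathbf{f}$. I would argue this via \eqref{irxthetalambday}-style compatibility: $r(\mathbf{f}\theta_{\lambda_2}\mathbf{f})$, projected to those $I'$-degrees, lies in $\mathbf{f}\theta_{\lambda''_2}\mathbf{f}\otimes\mathbf{f}\theta_{\lambda'_2}\mathbf{f}$, because $r(\theta_{\lambda_2})$ projects onto a multiple of $\theta_{\lambda''_2}\otimes\theta_{\lambda'_2}$. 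Combined with the fact that $\mathbf{B}(\mathbf{f}\theta\mathbf{f})=\tilde{\mathbf{B}}\cap\mathbf{f}\theta\mathbf{f}$ is a basis \cite[Proposition 4.3]{Fang-Lan-2025} and the positivity of the $c$, this shows that no cancellation with non-$\mathbf{f}\theta\mathbf{f}$ basis elements can occur.

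Applying $\underline{\varphi}\otimes\underline{\varphi}$ and Theorem \ref{thm:structure-trans}, the nonzero images are exactly $\dot b'(\cdots)\otimes\dot b''(\cdots)$ with $\tilde b_2=\mathrm{th}_{\lambda'_1,\lambda'_2,w}(\dot b')$ and $\tilde b_1=\mathrm{th}_{\lambda''_1,\lambda''_2,w}(\dot b'')$. Comparing coefficients yields the stated formula.
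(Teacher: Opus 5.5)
Your proposal is correct and follows essentially the same route as the paper. You compare Lemma \ref{lem:g-comult-relation} with $(\underline{\varphi}\otimes\underline{\varphi})\tilde g(\tilde b^-v_{-w\lambda_1\odot\lambda_2})$ via Lemma \ref{g and tilde g}, compute $\tilde g$ from Lusztig's formula for $\Delta(x^-)$, and match coefficients using Theorem \ref{thm:structure-trans}. The two small deviations are harmless: you take $w$ large for the finitely many $\dot b',\dot b''$ directly rather than showing $g$ preserves the Demazure tensor products, and you get the support in $\mathbf{B}(\mathbf{f}\theta_{\lambda''_2}\mathbf{f})\otimes\mathbf{B}(\mathbf{f}\theta_{\lambda'_2}\mathbf{f})$ from the projection of $r(\theta_{\lambda_2})$ rather than from $\mathrm{im}\,\tilde g$. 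Positivity of the $c$'s plays no role in that support step, since uniqueness of coordinates in $\tilde{\mathbf{B}}\otimes\tilde{\mathbf{B}}$ already suffices.
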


\begin{proof}
We retain the notation from \eqref{structure constant dotc}, that is, $\dot{\Delta}_{\zeta',\zeta''}(\dot{b})=\sum\dot{c}_{\dot{b}',\dot{b}''}^{\dot{b}}\dot{b}'\otimes\dot{b}''$ and $\dot{c}_{\dot{b}',\dot{b}''}^{\dot{b}}\neq 0$ for any $\dot{b}',\dot{b}''$ in the summation. Choosing sufficiently regular $\lambda'_1,\lambda''_1,\lambda'_2,\lambda''_2\in X^+$ such that $\zeta'=\lambda'_2-\lambda'_1,\zeta''=\lambda''_2-\lambda''_1$ and $\dot{b}'(\xi_{-\lambda'_1}\otimes\eta_{\lambda'_2})\neq 0,\dot{b}''(\xi_{-\lambda''_1}\otimes\eta_{\lambda''_2})\neq 0$ for any $\dot{b}',\dot{b}''$ in the summation. By Lemma \ref{lem:g-comult-relation}, we have 
\begin{equation} \label{eq:b-action-match}
g(\dot{b}(\xi_{-\lambda_1}\otimes\eta_{\lambda_2})) = \sum \dot{c}_{\dot{b}',\dot{b}''}^{\dot{b}} (\dot{b}'(\xi_{-\lambda'_1}\otimes\eta_{\lambda'_2}))\otimes(\dot{b}''(\xi_{-\lambda''_1}\otimes\eta_{\lambda''_2})).
\end{equation}
By Theorem \ref{25.2.1}, $\{(\dot{b}'(\xi_{-\lambda'_1}\otimes \eta_{\lambda'_2}))\otimes (\dot{b}''(\xi_{-\lambda''_1}\otimes \eta_{\lambda''_2}));\dot{b}',\dot{b}''\in \dot{\mathbf{B}},\dot{c}^{\dot{b}}_{\dot{b}',\dot{b}''}\neq 0\}$ is a linearly independent subset of the basis $\mathbf{B}({{^\omega}\Lambda_{\lambda'_1}}\otimes \Lambda_{\lambda'_2})\otimes  \mathbf{B}({{^\omega}\Lambda_{\lambda''_1}}\otimes \Lambda_{\lambda''_2})$. This uniquely isolates the structure constants $\dot{c}_{\dot{b}',\dot{b}''}^{\dot{b}}$ as the exact basis coefficients.

Since the right hand side of \eqref{eq:b-action-match} is non-zero, we have $\dot{b}(\xi_{-\lambda_1}\otimes\eta_{\lambda_2})\ne0$. By Theorem \ref{25.2.1}, $\dot{b}(\xi_{-\lambda_1}\otimes\eta_{\lambda_2})\in \mathbf{B}({}^{\omega}\Lambda_{\lambda_1}\otimes\Lambda_{\lambda_2})$. We fix a sufficiently large $w\in W$ such that $\dot{b}(\xi_{-\lambda_1}\otimes\eta_{\lambda_2})\in \mathbf{B}({}^{\omega}V_{w}(\lambda_1)\otimes\Lambda_{\lambda_2})$. By Lemma \ref{chi extreme weight vector}, $\chi_{-\lambda'_1,-\lambda''_1}({}^{\omega}V_{w}(\lambda_1))\subset {}^{\omega}V_{w}(\lambda'_1)\otimes {}^{\omega}V_{w}(\lambda''_1)$. Together with the fact that the quasi-$\mathcal{R}$-matrix $\Theta^{\sharp}$ preserves the subspace $({^{\omega}p_{\lambda''_1}}\otimes p_{\lambda'_2})({}^{\omega}V_w(\lambda''_1)\otimes\Lambda_{\lambda'_2})\subset {^{\omega}\Lambda_{(0,\lambda''_1)}\otimes \Lambda_{(0,\lambda'_2)}}$ by similar argument as the proof of Proposition \ref{canonical basis of demazure otimes highest weight}, we have $$g(\dot{b}(\xi_{-\lambda_1}\otimes\eta_{\lambda_2}))\in{}^{\omega}V_{w}(\lambda'_1)\otimes\Lambda_{\lambda'_2}\otimes{}^{\omega}V_{w}(\lambda''_1)\otimes\Lambda_{\lambda''_2}.$$ 
Consequently, by Proposition \ref{canonical basis of demazure otimes highest weight}, we have $\dot{b}'(\xi_{-\lambda'_1}\otimes\eta_{\lambda'_2})\in \mathbf{B}({}^{\omega}V_{w}(\lambda'_1)\otimes\Lambda_{\lambda'_2})$ and $\dot{b}''(\xi_{-\lambda''_1}\otimes\eta_{\lambda''_2})\in \mathbf{B}({}^{\omega}V_{w}(\lambda''_1)\otimes\Lambda_{\lambda''_2})$ for all terms in the summation of \eqref{eq:b-action-match}.

Let $\tilde{b}=\mathrm{th}_{\lambda_1,\lambda_2,w}(\dot{b})\in \mathbf{B}(\mathbf{f}\theta_{\lambda_2}\mathbf{f})$. By \eqref{definition of thickening map} and Lemma \ref{g and tilde g}, we have 
\begin{align*}
g(\dot{b}(\xi_{-\lambda_1}\otimes \eta_{\lambda_2}))=g\underline{\varphi_{-w\lambda_1,\lambda_2}}(\tilde{b}^-v_{-w\lambda_1\odot\lambda_2})
=(\underline{\varphi_{-w\lambda'_1,\lambda'_2}}\otimes \underline{\varphi_{-w\lambda''_1,\lambda''_2}})\tilde{g}(\tilde{b}^-v_{-w\lambda_1\odot\lambda_2}).
\end{align*}
Then by the definition of $\tilde{g}$ and \cite[\S 3.1.5 \& Lemma 1.2.11]{Lusztig-1993}, we have 
\begin{align*}
\tilde{g}(\tilde{b}^-v_{-w\lambda_1\odot\lambda_2})=&v^{\langle (\nu_{w,\lambda''_1})_Y,\lambda'_2\rangle}\sum v^{-|\tilde{b}'|\cdot |\tilde{b}''|}c^{\tilde{b}}_{\tilde{b}'',\tilde{b}'}\tilde{b}'^-v_{-w\lambda'_1\odot\lambda'_2}\otimes K_{-|\tilde{b}'|_{\tilde{Y}}}\tilde{b}''^-v_{-w\lambda''_1\odot\lambda''_2}\\
=&v^{\langle (\nu_{w,\lambda''_1})_Y,\lambda'_2\rangle}\sum v^{\langle -|\tilde{b}'|_{\tilde{Y}},-w\lambda''_1\odot\lambda''_2\rangle}c^{\tilde{b}}_{\tilde{b}'',\tilde{b}'}\tilde{b}'^-v_{-w\lambda'_1\odot\lambda'_2}\otimes \tilde{b}''^-v_{-w\lambda''_1\odot\lambda''_2},
\end{align*}
where the summation is taken over $\tilde{b}',\tilde{b}''\in \tilde{\mathbf{B}}$ with $|\tilde{b}'|-|\theta_{\lambda'_2}|,|\tilde{b}''|-|\theta_{\lambda''_2}|\in \mathbb{N}[I]$. Indeed, the summation is taken over $\tilde{b}'\in \mathbf{B}(\mathbf{f}\theta_{\lambda'_2}\mathbf{f}),\tilde{b}''\in \mathbf{B}(\mathbf{f}\theta_{\lambda''_2}\mathbf{f})$, since $\mathrm{im}\,\tilde{g}\subset (\mathbf{f}\theta_{\lambda'_2}\mathbf{f})_{-w\lambda'_1\odot\lambda'_2}\otimes (\mathbf{f}\theta_{\lambda''_2}\mathbf{f})_{-w\lambda''_1\odot\lambda''_2}$ and $\mathbf{B}(\mathbf{f}\theta_{\lambda'}\mathbf{f})=\tilde{\mathbf{B}}\cap (\mathbf{f}\theta_{\lambda'}\mathbf{f}),\mathbf{B}(\mathbf{f}\theta_{\lambda''}\mathbf{f})=\tilde{\mathbf{B}}\cap (\mathbf{f}\theta_{\lambda''}\mathbf{f})$.
Hence we have
\begin{align*}
g(\dot{b}(\xi_{-\lambda_1}\otimes \eta_{\lambda_2}))=&v^{\langle (\nu_{w,\lambda''_1})_Y,\lambda'_2\rangle}\sum v^{\langle -|\tilde{b}'|_{\tilde{Y}},-w\lambda''_1\odot\lambda''_2\rangle}c^{\tilde{b}}_{\tilde{b}'',\tilde{b}'}\\
&\times (\underline{\varphi_{-w\lambda'_1,\lambda'_2}}(\tilde{b}'^-v_{-w\lambda'_1\odot\lambda'_2})\otimes \underline{\varphi_{-w\lambda''_1,\lambda''_2}}(\tilde{b}''^-v_{-w\lambda'_1\odot\lambda''_2})),
\end{align*}
where the summation is taken over $\tilde{b}'^-v_{-w\lambda'_1\odot\lambda'_2}\in \mathbf{B}((\mathbf{f}\theta_{\lambda'_2}\mathbf{f})_{-w\lambda'_1\odot\lambda'_2})\setminus\mathrm{ker}\,\underline{\varphi_{-w\lambda'_1,\lambda'_2}}$ and $\tilde{b}''^-v_{-w\lambda''_1\odot\lambda''_2}\in \mathbf{B}((\mathbf{f}\theta_{\lambda''_2}\mathbf{f})_{-w\lambda''_1\odot\lambda''_2})\setminus\mathrm{ker}\,\underline{\varphi_{-w\lambda''_1,\lambda''_2}}$. Comparing it with \eqref{eq:b-action-match}, by Theorem \ref{thm:structure-trans} and \eqref{definition of thickening map}, we obtain 
\begin{equation}\label{two structure constants}
\dot{c}^{\dot{b}}_{\dot{b}',\dot{b}''}=v^{\langle (\nu_{w,\lambda''_1})_Y,\lambda'_2\rangle} v^{\langle -|\mathrm{th}_{\lambda'_1,\lambda'_2,w}(\dot{b}')|_{\tilde{Y}},-w\lambda''_1\odot\lambda''_2\rangle}c^{\mathrm{th}_{\lambda_1,\lambda_2,w}(\dot{b})}_{\mathrm{th}_{\lambda''_1,\lambda''_2,w}(\dot{b}''),\mathrm{th}_{\lambda'_1,\lambda'_2,w}(\dot{b}')},
\end{equation}
as desired.
\end{proof}

Let $\dot{b}\in \dot{\mathbf{B}}$ and $\lambda_1,\lambda_2\in X^+$ with $\dot{b}(\xi_{-\lambda_1}\otimes \eta_{\lambda_2})\in \mathbf{B}({^{\omega}\Lambda_{\lambda_1}}\otimes \Lambda_{\lambda_2})$. Consider the special case $\zeta'=-\lambda_1$ and $\zeta''=\lambda_2$ in Theorem \ref{thm:str-const-comult}. For any sufficiently large $w\in W$, we have $\dot{\Delta}_{-\lambda_1,\lambda_2}(\dot{b})=\sum c^{\mathrm{th}_{\lambda_1,\lambda_2,w}(\dot{b})}_{\mathrm{th}_{0,\lambda_2,w}(\dot{b}''),\mathrm{th}_{\lambda_1,0,w}(\dot{b}')}\dot{b}'\otimes \dot{b}''$.
Then 
$$\dot{b}(\xi_{-\lambda_1}\otimes \eta_{\lambda_2})=\sum c^{\mathrm{th}_{\lambda_1,\lambda_2,w}(\dot{b})}_{\mathrm{th}_{0,\lambda_2,w}(\dot{b}''),\mathrm{th}_{\lambda_1,0,w}(\dot{b}')}\dot{b}'\xi_{-\lambda_1}\otimes \dot{b}''\eta_{\lambda_2}.$$
For any non-zero terms in the summation, by Theorem \ref{25.2.1} and definition, if $\dot{b}'\xi_{-\lambda_1}={b'_1}^-\xi_{-w\lambda_1}\in \mathbf{B}({^{\omega}V_w(\lambda_1)}),\dot{b}''\eta_{\lambda_2}=b_2^-\eta_{\lambda_2}\in \mathbf{B}(\Lambda_{\lambda_2})$, then $\mathrm{th}_{\lambda_1,0,w}(\dot{b}')=b'_1, \mathrm{th}_{0,\lambda_2,w}(\dot{b}'')=b_2\theta_{\lambda_2}$. Hence
$$\dot{b}(\xi_{-\lambda_1}\otimes \eta_{\lambda_2})=\sum c^{\mathrm{th}_{\lambda_1,\lambda_2,w}(\dot{b})}_{b_2\theta_{\lambda_2},b'_1}{b'_1}^-\xi_{-w\lambda_1}\otimes b_2^-\eta_{\lambda_2}.$$
This is the relation between the transition matrix from the pure tensor basis $\mathbf{B}({^{\omega}\Lambda_{\lambda_1}})\otimes \mathbf{B}(\Lambda_{\lambda_2})$ to the canonical basis $\mathbf{B}({^{\omega}\Lambda_{\lambda_1}}\otimes \Lambda_{\lambda_2})$ and the structure constant of the comultiplication in $\tilde{\mathbf{U}}^-$ which has been established in Theorem \ref{thm:structure-trans} (2).

\subsection{Example}
We present the example for $\mathfrak{sl}_2$ and retain the notations in Example \ref{Example of thickening map}. Recall that $\dot{\mathbf{B}}=\{\theta_i^{(k)}\diamondsuit_t\,\theta_i^{(l)};t\in \mathbb{Z},k,l\in \mathbb{N}\}$. We only consider the case $t\leqslant l-k$, and the case $t>l-k$ is similar. In this case, $\theta_i^{(k)}\diamondsuit_t\,\theta_i^{(l)}=E_i^{(k)}F_i^{(l)}1_t\in 1_{t+2k-2l}\dot{\mathbf{U}}1_t$. Let $s',s'',t',t''\in \mathbb{Z}$ with $t+2k-2l=s'+s'',t=t'+t''$. We express ${_{s',s''}\dot{\Delta}_{t',t''}}(\theta_i^{(k)}\diamondsuit_t\,\theta_i^{(l)})$ as the linear combination of $\dot{\mathbf{B}}\otimes \dot{\mathbf{B}}$.

Let $\mathcal{S}=\{(k',k'',l',l'')\in \mathbb{N}^4;k'+k''=k,l'+l''=l,t'+2k'-2l'=s',t''+2k''-2l''=s''\}$. For any $(k',k'',l',l'')\in \mathcal{S}$, we set
$$\alpha_{k',k'',l',l''}=k'k''+l'l''-2k''l'+k''t'-l't''.$$
Then we divide $\mathcal{S}=\mathcal{S}_1\sqcup \mathcal{S}_2\sqcup \mathcal{S}_3$ into the union of subsets, where elements in $\mathcal{S}_1$ satisfy $t'\leqslant l'-k',t''\leqslant l''-k''$, elements in $\mathcal{S}_2$ satisfy $t'>l'-k',t''\leqslant l''-k''$, and elements in $\mathcal{S}_3$ satisfy $t'\leqslant l'-k',t''> l''-k''$.
By definition and \cite[\S 23.1.3]{Lusztig-1993}, 
\begin{align*}
&{_{s',s''}\dot{\Delta}_{t',t''}}(\theta_i^{(k)}\diamondsuit_t\,\theta_i^{(l)})=\sum_{\mathcal{S}} v^{\alpha_{k',k'',l',l''}} (E_i^{(k')}F_i^{(l')}1_{t'}) \otimes  (E_i^{(k'')}F_i^{(l'')}1_{t''})\\
=&\sum_{\mathcal{S}_1} v^{\alpha_{k',k'',l',l''}}(\theta_i^{(k')}\diamondsuit_{t'}\theta_i^{(l')})\otimes (\theta_i^{(k'')}\diamondsuit_{t''}\theta_i^{(l'')})\\
+&\sum_{\mathcal{S}_2} v^{\alpha_{k',k'',l',l''}}\sum_{a'\geqslant 0}\begin{bmatrix}
t'-l'+k'\\ a'   
\end{bmatrix}(\theta_i^{(k'-a')}\diamondsuit_{t'}\theta_i^{(l'-a')})\otimes (\theta_i^{(k'')}\diamondsuit_{t''}\theta_i^{(l'')})\\
+&\sum_{\mathcal{S}_3} v^{\alpha_{k',k'',l',l''}}\sum_{a''\geqslant 0}\begin{bmatrix}
t''-l''+k''\\ a''   
\end{bmatrix}(\theta_i^{(k')}\diamondsuit_{t'}\theta_i^{(l')})\otimes (\theta_i^{(k''-a'')}\diamondsuit_{t''}\theta_i^{(l''-a'')}),
\end{align*}
where we denote $\begin{bmatrix}
q\\ p    
\end{bmatrix}=0$ for $q<p$ in $\mathbb{N}$ and $\theta_i^{(n)}=0$ for $n<0$.

Recall that  
$$\mathrm{th}_{m,n,-1}(\theta_i^{(k)}\diamondsuit_{n-m}\,\theta_i^{(l)})=\begin{cases}
\theta_{i'}^{(n-l)}\theta_i^{(m-k+l)}\theta_{i'}^{(l)}\ &\textrm{if}\ n-m\leqslant l-k;\\
\theta_i^{(l)}\theta_{i'}^{(n)}\theta_i^{(m-k)}\ &\textrm{if}\ n-m\geqslant l-k.
\end{cases}$$
We take large enough $m',m'',n',n''\in \mathbb{N}$ with $t'=n'-m',t''=n''-m''$, and let $n=n'+n'',m=m'+m''$. Then $t=n-m$. We express $r(\mathrm{th}_{m,n,-1}(\theta_i^{(k)}\diamondsuit_{n-m}\,\theta_i^{(l)}))$ as the linear combination of $\tilde{\mathbf{B}}\otimes \tilde{\mathbf{B}}$.

Let $\tilde{\mathcal{S}}=\{(p',p'',q',q'',r',r'')\in \mathbb{N}^6;p'+p''=n-l,q'+q''=m-k+l,r'+r''=l\}$. For any $(p',p'',q',q'',r',r'')\in \tilde{\mathcal{S}}$, we set 
$$\beta_{p',p'',q',q'',r',r''}=p'p''+q'q''+r'r''-p''q'+2p''r'-q''r'.$$
Then we divide $\tilde{\mathcal{S}}=\tilde{\mathcal{S}}_1\sqcup \tilde{\mathcal{S}}_2\sqcup \tilde{\mathcal{S}}_3$ into the union of subsets, where elements in $\tilde{\mathcal{S}}_1$ satisfy $q'\geqslant p'+r',q''\geqslant p''+r''$, elements in $\tilde{\mathcal{S}}_2$ satisfy $q'< p'+r',q''\geqslant p''+r''$, and elements in $\tilde{\mathcal{S}}_3$ satisfy $q'\geqslant p'+r',q''< p''+r''$. By definition and \cite[Lemma 1.4.2\& Lemma 42.1.2(d)]{Lusztig-1993},
\begin{align*}
&r(\mathrm{th}_{m,n,-1}(\theta_i^{(k)}\diamondsuit_{n-m}\,\theta_i^{(l)}))=\sum_{\tilde{\mathcal{S}}_1} v^{\beta_{p',p'',q',q'',r',r''}} \theta_{i'}^{(p')}\theta_i^{(q')}\theta_{i'}^{(r')}\otimes \theta_{i'}^{(p'')}\theta_i^{(q'')}\theta_{i'}^{(r'')}\\
&+\sum_{\tilde{\mathcal{S}}_2} v^{\beta_{p',p'',q',q'',r',r''}}\sum_{a'\geqslant 0} \begin{bmatrix}
p'+r'-q'\\a'
\end{bmatrix} \theta_i^{(r'-a')}\theta_{i'}^{(p'+r')}\theta_i^{(q'-r'+a')}\otimes \theta_{i'}^{(p'')}\theta_i^{(q'')}\theta_{i'}^{(r'')}\\
&+\sum_{\tilde{\mathcal{S}}_3} v^{\beta_{p',p'',q',q'',r',r''}}\sum_{a''\geqslant 0} \begin{bmatrix}
p''+r''-q''\\a''
\end{bmatrix} \theta_{i'}^{(p')}\theta_i^{(q')}\theta_{i'}^{(r')}\otimes \theta_i^{(r''-a'')}\theta_{i'}^{(p''+r'')}\theta_i^{(q''-r''+a'')}.
\end{align*}

We compare the coefficients to verify \eqref{two structure constants}, that is,
$$\dot{c}^{\dot{b}}_{\dot{b}',\dot{b}''}=v^{\langle (\nu_{w,\lambda''_1})_Y,\lambda'_2\rangle} v^{\langle -|\mathrm{th}_{\lambda'_1,\lambda'_2,w}(\dot{b}')|_{\tilde{Y}},-w\lambda''_1\odot\lambda''_2\rangle}c^{\mathrm{th}_{\lambda_1,\lambda_2,w}(\dot{b})}_{\mathrm{th}_{\lambda''_1,\lambda''_2,w}(\dot{b}''),\mathrm{th}_{\lambda'_1,\lambda'_2,w}(\dot{b}')}.$$
For finite type, the fixed root datum is already $X$-regular. We have  $(\nu_{-1,m''})_X=2m'',(\nu_{-1,m''})_Y=m''$, and so $\langle (\nu_{w,\lambda''_1})_Y,\lambda'_2\rangle=m''n'$. We have $|\mathrm{th}_{m',n',-1}(\dot{b}')|_{\tilde{Y}}=(m'-k'+l')i_{\tilde{Y}}+n'i'_{\tilde{X}}$, and so $\langle -|\mathrm{th}_{m',n',-1}(\dot{b}')|,m''\odot n''\rangle=-(m'+l'-k')m''-n'n''$. It is routine to check that 
$$\alpha_{k',k'',l',l''}=m''n'-(m'+l'-k')m''-n'n''+\beta_{n''-l'',n'-l',m''+l''-k'',m'+l'-k',l'',l'}.$$
So the exponents of $v$ match perfectly. As for the quantum binomial coefficients, for any $a'$ in the linear combination of $r(\mathrm{th}_{m,n,-1}(\theta_i^{(k)}\diamondsuit_{n-m}\,\theta_i^{(l)}))$, notice that the scalar $p'+r'-q'$ is equal to difference between the exponents of $\theta_{i'}$ and $\theta_i$ in $\theta_i^{(r'-a')}\theta_{i'}^{(p'+r')}\theta_i^{(q'-r'+a')}$. This difference in $\theta_i^{(l')}\theta_{i'}^{(n')}\theta_i^{(m'-k')}$ is $n'-(l'+(m'-k'))=t'-l'+k'$ matching the scalar in the quantum binomial coefficients in the linear combination of ${_{s',s''}\dot{\Delta}_{t',t''}}(\theta_i^{(k)}\diamondsuit_t\,\theta_i^{(l)})$. Similarly, the quantum binomial coefficients also match for $t'',k'',l'',n'',m'',p'',q'',r'',a''$. This provides a direct verification of \eqref{two structure constants} and Theorem \ref{thm:str-const-comult} for $\mathfrak{sl}_2$.

\section{Positivity of canonical basis of modified quantum group}\label{Positivity of canonical basis of modified quantum group}

\subsection{Positivity of transition matrix} 
Let $\lambda_1,\lambda_2\in X^+$. In this subsection, we establish the positivity property of the transition matrix from the pure tensor basis $\mathbf{B}({^{\omega}\Lambda_{\lambda_1}})\otimes \mathbf{B}(\Lambda_{\lambda_2})$ to the canonical basis $\mathbf{B}({^{\omega}\Lambda_{\lambda_1}}\otimes \Lambda_{\lambda_2})$ of ${^{\omega}\Lambda_{\lambda_1}}\otimes \Lambda_{\lambda_2}$.

\begin{theorem}\label{thm:trans-pos}
Let $\lambda_1,\lambda_2\in X^+$. Then we have 
$$\mathbf{B}({^{\omega}\Lambda_{\lambda_1}}\otimes \Lambda_{\lambda_2})\subset \mathbb{N}[v^{-1}][\mathbf{B}({^{\omega}\Lambda_{\lambda_1}})\otimes \mathbf{B}(\Lambda_{\lambda_2})].$$
\end{theorem}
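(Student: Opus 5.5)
Fix $b_1\diamondsuit b_2\in\mathbf{B}({^{\omega}\Lambda_{\lambda_1}}\otimes\Lambda_{\lambda_2})$ with $b_1\in\mathbf{B}({^{\omega}\Lambda_{\lambda_1}})$ and $b_2\in\mathbf{B}(\Lambda_{\lambda_2})$. Since $\mathbf{B}({^{\omega}\Lambda_{\lambda_1}})=\bigcup_{w\in W}\mathbf{B}({^{\omega}V_w(\lambda_1)})$, we may choose $w\in W$ with $b_1\in\mathbf{B}({^{\omega}V_w(\lambda_1)})$. By Proposition \ref{canonical basis of demazure otimes highest weight}, $b_1\diamondsuit b_2\in\mathbf{B}({^{\omega}V_w(\lambda_1)}\otimes\Lambda_{\lambda_2})$. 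Theorem \ref{thm:structure-trans}(1) then provides a (unique) $\tilde b\in\mathbf{B}(\mathbf{f}\theta_{\lambda_2}\mathbf{f})\subset\tilde{\mathbf{B}}$ with $b_1\diamondsuit b_2=\underline{\varphi_{-w\lambda_1,\lambda_2}}(\tilde b^-v_{-w\lambda_1\odot\lambda_2})$.

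Next we apply Theorem \ref{thm:structure-trans}(2), which gives
$$b_1\diamondsuit b_2=\sum c^{\tilde b}_{b_2'\theta_{\lambda_2},b_1'}\,{b_1'}^-\xi_{-w\lambda_1}\otimes {b_2'}^-\eta_{\lambda_2},$$
where the sum runs over pure tensors in $\mathbf{B}({^{\omega}V_w(\lambda_1)})\otimes\mathbf{B}(\Lambda_{\lambda_2})\subset\mathbf{B}({^{\omega}\Lambda_{\lambda_1}})\otimes\mathbf{B}(\Lambda_{\lambda_2})$. Two points need care here. The first is that the elements ${b_1'}^-\xi_{-w\lambda_1}$ which appear are exactly the nonzero images $a_{-w\lambda_1}({b_1'}^-v_{-w\lambda_1})$. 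By Proposition \ref{action map is a based map} these are canonical basis elements, and distinct $b_1'$ with nonzero image give distinct elements, because $\mathbf{B}(M_{-w\lambda_1})\cap\ker a_{-w\lambda_1}$ spans the kernel. Hence no two terms collapse, and no signs are introduced when we regroup coefficients. The second is that the coefficients $c^{\tilde b}_{\tilde b_1,\tilde b_2}$ are structure constants of $r$ on $\tilde{\mathbf{B}}$, so Theorem \ref{14.4.13} applied to the thickened datum $(\tilde I,\cdot)$ gives $c^{\tilde b}_{\tilde b_1,\tilde b_2}\in\mathbb{N}[v,v^{-1}]$.

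Finally, Theorem \ref{24.3.3} shows that the coefficients of $b_1\diamondsuit b_2$ in the pure tensor basis lie in $\mathbb{Z}[v^{-1}]$. Because the pure tensor basis is linearly independent, each such coefficient is simultaneously a member of $\mathbb{N}[v,v^{-1}]$ and of $\mathbb{Z}[v^{-1}]$, and therefore lies in $\mathbb{N}[v^{-1}]$. Together with the previous paragraph, this gives $\mathbf{B}({^{\omega}\Lambda_{\lambda_1}}\otimes\Lambda_{\lambda_2})\subset\mathbb{N}[v^{-1}][\mathbf{B}({^{\omega}\Lambda_{\lambda_1}})\otimes\mathbf{B}(\Lambda_{\lambda_2})]$.

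The only delicate step is the bookkeeping in passing from $M_{-w\lambda_1}\otimes\Lambda_{\lambda_2}$ to the Demazure quotient, namely checking that $a_{-w\lambda_1}\otimes\mathrm{Id}$ maps pure tensors of canonical basis elements injectively onto pure tensors or to zero. This injectivity is exactly what Proposition \ref{action map is a based map} provides. All the real positivity input comes from Lusztig's comultiplication positivity for $\tilde{\mathbf{B}}$ in Theorem \ref{14.4.13}.
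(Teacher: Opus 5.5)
Your proof is correct and is essentially the paper's argument. Each canonical basis element is realized at a Demazure stage via Theorem \ref{thm:structure-trans}, its coefficients land in $\mathbb{N}[v,v^{-1}]$ by Lusztig's comultiplication positivity for $\tilde{\mathbf{B}}$, and intersecting with $\mathbb{Z}[v^{-1}]$ via Theorem \ref{24.3.3} finishes. The differences are cosmetic. You fix the basis element and then choose $w$, whereas the paper fixes $w$, proves positivity on all of $\mathbf{B}({^{\omega}V_w(\lambda_1)}\otimes\Lambda_{\lambda_2})$, and takes the union over $W$. Your injectivity check for $a_{-w\lambda_1}$ is valid, but it is already built into how the sum in Theorem \ref{thm:structure-trans}(2) is indexed.
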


\begin{proof}
Let $w\in W$. We show that $$\mathbf{B}({^{\omega}V_w(\lambda_1)}\otimes \Lambda_{\lambda_2})\subset \mathbb{N}[v, v^{-1}][\mathbf{B}({^{\omega}V_w(\lambda_1)})\otimes \mathbf{B}(\Lambda_{\lambda_2})].$$ 
For any $\tilde{b}\in \mathbf{B}(\mathbf{f}\theta_{\lambda_2}\mathbf{f})$, we write $r(\tilde{b})=\sum_{\tilde{b}_1,\tilde{b}_2\in \tilde{\mathbf{B}}}c_{\tilde{b}_1,\tilde{b}_2}^{\tilde{b}}\tilde{b}_1\otimes\tilde{b}_2$. Then $c_{\tilde{b}_1,\tilde{b}_2}^{\tilde{b}}\in \mathbb{N}[v,v^{-1}]$ by the positivity property of $\tilde{\mathbf{B}}$ with respect to the comultiplication in Theorem \ref{14.4.13}. By Theorem \ref{thm:structure-trans}, there is a bijection
$\underline{\varphi_{-w\lambda_1,\lambda_2}}:\mathbf{B}((\mathbf{f}\theta_{\lambda_2}\mathbf{f})_{-w\lambda_1\odot\lambda_2})\setminus\mathrm{ker}\,\underline{\varphi_{-w\lambda_1,\lambda_2}}\rightarrow \mathbf{B}({^{\omega}V_w(\lambda_1)}\otimes \Lambda_{\lambda_2})$ such that
$$\underline{\varphi_{-w\lambda_1,\lambda_2}}(\tilde{b}^-v_{-w\lambda_1\odot\lambda_2})=\sum c_{b_2\theta_{\lambda_2},b_1}^{\tilde{b}}b_1^-\xi_{-w\lambda_1}\otimes b_2^-\eta_{\lambda_2}$$ 
for any $\tilde{b}^-v_{-w\lambda_1\odot\lambda_2}\in \mathbf{B}((\mathbf{f}\theta_{\lambda_2}\mathbf{f})_{-w\lambda_1\odot\lambda_2})\setminus\mathrm{ker}\,\underline{\varphi_{-w\lambda_1,\lambda_2}}$, where the summation is taken over $b_1^-\xi_{-w\lambda_1}\in \mathbf{B}({^{\omega}V_w(\lambda_1)})$ and $b_2^-\eta_{\lambda_2}\in \mathbf{B}(\Lambda_{\lambda_2})$. 
Consequently, every canonical basis element in $\mathbf{B}(^{\omega}V_{w}(\lambda_1)\otimes\Lambda_{\lambda_2})$ can be written as a $(\mathbb{N}[v,v^{-1}])$-linear combination of pure tensor basis elements in $\mathbf{B}({^{\omega}V_w(\lambda_1)})\otimes \mathbf{B}(\Lambda_{\lambda_2})$.

We have $\mathbf{B}({^{\omega}\Lambda_{\lambda_1}})\otimes \mathbf{B}(\Lambda_{\lambda_2})=\bigcup_{w\in W}\mathbf{B}({^{\omega}V_w(\lambda_1)})\otimes \mathbf{B}(\Lambda_{\lambda_2})$. By Proposition \ref{canonical basis of demazure otimes highest weight}, we have $\mathbf{B}(^{\omega}\Lambda_{\lambda_1}\otimes\Lambda_{\lambda_2})=\bigcup_{w\in W}\mathbf{B}(^{\omega}V_{w}(\lambda_1)\otimes\Lambda_{\lambda_2})$, and so $\mathbf{B}({^{\omega}\Lambda_{\lambda_1}}\otimes \Lambda_{\lambda_2})\subset \mathbb{N}[v,v^{-1}][\mathbf{B}({^{\omega}\Lambda_{\lambda_1}})\otimes \mathbf{B}(\Lambda_{\lambda_2})]$.

By Theorem \ref{24.3.3}, the transition matrix from $\mathbf{B}(^{\omega}\Lambda_{\lambda_1})\otimes \mathbf{B}(\Lambda_{\lambda_2})$ to $\mathbf{B}(^{\omega}\Lambda_{\lambda_1}\otimes\Lambda_{\lambda_2})$ is unitriangular with off-diagonal entries strictly in $v^{-1}\mathbb{Z}[v^{-1}]$. Thus the entries of the transition matrix are contained in $\mathbb{N}[v,v^{-1}]\cap\mathbb{Z}[v^{-1}]=\mathbb{N}[v^{-1}]$. 
\end{proof}

\subsection{Positivity of bilinear form}
Lusztig introduced in \cite[Theorem 26.1.2]{Lusztig-1993} the symmetric bilinear form $$(\,,\,)_{\dot{\mathbf{U}}}:\dot{\mathbf{U}}\times \dot{\mathbf{U}}\rightarrow \mathbb{Q}(v).$$
He conjectured that the canonical basis $\dot{\mathbf{B}}$ admits the positivity property with respect to this bilinear form. It was proved by McGerty \cite{McGerty-2012} for finite type A and affine type A using the geometric realization. In this subsection, we establish this positivity for any symmetric Cartan datum. 

\begin{theorem}\label{positivity of bilinear form}
Let $\dot{b},\dot{b}'\in \dot{\mathbf{B}}$. Then we have
$$(\dot{b},\dot{b}')_{\dot{\mathbf{U}}}\in \delta_{\dot{b},\dot{b}'}+v^{-1}\mathbb{N}[[v^{-1}]].$$
\end{theorem}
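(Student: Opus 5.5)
Recall Lusztig's characterization of $(\,,\,)_{\dot{\mathbf{U}}}$ in \cite[Chapter 26]{Lusztig-1993} as a limit of forms on tensor products. For $\dot{b},\dot{b}'\in\dot{\mathbf{U}}1_{\zeta}$ (the form vanishes between different $1_\zeta$-components) and $\lambda_1,\lambda_2\in X^+$ with $\zeta=\lambda_2-\lambda_1$, one has
\[
(\dot{b},\dot{b}')_{\dot{\mathbf{U}}}=\lim_{\lambda}\bigl(\dot{b}(\xi_{-\lambda_1-\lambda}\otimes\eta_{\lambda_2+\lambda}),\dot{b}'(\xi_{-\lambda_1-\lambda}\otimes\eta_{\lambda_2+\lambda})\bigr)_{\otimes},
\]
where the limit is taken $v^{-1}$-adically as $\langle i_Y,\lambda\rangle\to\infty$. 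Here $(\,,\,)_\otimes$ is the product of the forms $(\,,\,)_{{^\omega\Lambda}}$ and $(\,,\,)_{\Lambda}$. The plan is to compute each term of this sequence via thickening and show it lies in $\delta_{\dot b,\dot b'}+v^{-1}\mathbb{N}[[v^{-1}]]$. Since that set is closed under $v^{-1}$-adic limits, the limit then lies there as well.

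Fix sufficiently regular $\lambda_1,\lambda_2$ and a sufficiently large $w$ with both $\dot b(\xi_{-\lambda_1}\otimes\eta_{\lambda_2})$ and $\dot b'(\xi_{-\lambda_1}\otimes\eta_{\lambda_2})$ in $\mathbf{B}({^\omega V_w(\lambda_1)}\otimes\Lambda_{\lambda_2})$. Let $\tilde b=\mathrm{th}_{\lambda_1,\lambda_2,w}(\dot b)$ and $\tilde b'=\mathrm{th}_{\lambda_1,\lambda_2,w}(\dot b')$. By Theorem \ref{thm:structure-trans}(2),
\[
\dot b(\xi_{-\lambda_1}\otimes\eta_{\lambda_2})=\sum c^{\tilde b}_{b_2\theta_{\lambda_2},b_1}\,b_1^-\xi_{-w\lambda_1}\otimes b_2^-\eta_{\lambda_2},
\]
and similarly for $\dot b'$. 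Expanding the form gives
\[
\sum c^{\tilde b}_{b_2\theta_{\lambda_2},b_1}\,c^{\tilde b'}_{b_2'\theta_{\lambda_2},b_1'}\,(b_1^-\xi_{-w\lambda_1},b_1'^-\xi_{-w\lambda_1})\,(b_2^-\eta_{\lambda_2},b_2'^-\eta_{\lambda_2}).
\]
The coefficients $c$ lie in $\mathbb{N}[v,v^{-1}]$ by Theorem \ref{14.4.13}. The key inputs are the positivity of the forms on $\Lambda_{\lambda_2}$ and on ${^\omega\Lambda_{\lambda_1}}$ restricted to the canonical basis, which is needed because the individual coefficients are not yet known to lie in $\mathbb{N}[v^{-1}]$ before the sum is taken.

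A cleaner route avoids this issue by mimicking Lemma \ref{varphi and inner product}(3). First, compare the form on $M_{-w\lambda_1}\otimes\Lambda_{\lambda_2}$ with $(\tilde b,\tilde b')_{\tilde{\mathbf f}}\prod_{i}\prod_{k=1}^{\langle i_Y,\lambda_2\rangle}(1-v^{-2k})$. Second, compare it with the form on ${^\omega V_w(\lambda_1)}\otimes\Lambda_{\lambda_2}$; the projection $a_{-w\lambda_1}$ does not preserve forms, but in the stable range the difference tends to $0$ $v^{-1}$-adically. Combining these, the $\lambda$-limit of the tensor form becomes the limit of $(\tilde b,\tilde b')_{\tilde{\mathbf f}}\prod(1-v^{-2k})$ along the thickened sequence.

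The main obstacle is that the factor $\prod(1-v^{-2k})$ has negative coefficients, so positivity of $(\tilde b,\tilde b')_{\tilde{\mathbf f}}$ alone is not enough. I would instead use that $\dot b(\xi\otimes\eta)$ is expressed with $\mathbb{N}[v^{-1}]$-coefficients (Theorem \ref{thm:trans-pos}) in the pure tensor basis. Then $(\dot b\cdot,\dot b'\cdot)_\otimes$ is an $\mathbb{N}[v^{-1}]$-combination of products of forms of canonical basis elements of $\Lambda_{\lambda_2}$ and ${^\omega\Lambda_{\lambda_1}}$. So everything reduces to showing $(b^-\eta_\lambda,b'^-\eta_\lambda)_{\Lambda_\lambda}\in\delta+v^{-1}\mathbb{N}[[v^{-1}]]$. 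I would prove this reduction by realizing $\Lambda_\lambda\cong\pi_{0\odot\lambda}((\mathbf f\theta_\lambda\mathbf f)_{0\odot\lambda})$ and writing the form as a $(\tilde{\mathbf f})$-form of thickened canonical basis elements, whose positivity is given by Theorem \ref{14.4.13}. The unitriangularity of the transition matrix handles the leading term $\delta_{\dot b,\dot b'}$, and the $v^{-1}$-adic limit finishes the argument.
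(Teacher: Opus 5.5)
Your skeleton matches the paper's proof. It uses Lusztig's limit formula \cite[Proposition 26.2.3]{Lusztig-1993}, the $\mathbb{N}[v^{-1}]$-expansion of $\dot b(\xi_{-\lambda_1}\otimes\eta_{\lambda_2})$ in the pure tensor basis (Theorem \ref{thm:trans-pos}), positivity of the contravariant forms on $\mathbf{B}({^{\omega}\Lambda_{\lambda_1}})$ and $\mathbf{B}(\Lambda_{\lambda_2})$, and finally the leading term (the paper uses \cite[Theorem 26.3.1]{Lusztig-1993}). The gap is in the third ingredient. The paper does not derive it from Theorem \ref{14.4.13}; it imports it from \cite[Theorem 4.16]{Zheng-2014}, a categorification result. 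Your proposed derivation via $\phi_\lambda$ never reaches $(\,,\,)_{\tilde{\mathbf{f}}}$. The map $\phi_\lambda$ embeds $\Lambda_\lambda$ into the simple $\tilde{\mathbf{U}}$-module $\Lambda_{0\odot\lambda}$. Contravariant forms on a simple module are unique up to scalar, so transporting $(\,,\,)_{\Lambda_\lambda}$ only gives the contravariant form of $\Lambda_{0\odot\lambda}$ on $(b\theta_\lambda)^-\eta_{0\odot\lambda}$ and $(b'\theta_\lambda)^-\eta_{0\odot\lambda}$ (the scalar is in fact $1$). That is a form of exactly the same type, so nothing has been reduced.

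The form $(\,,\,)_{\tilde{\mathbf{f}}}$ enters only on the Verma side, in Lemma \ref{varphi and inner product}(3), and there it measures something else. For $x,y\in\mathbf{f}$ one computes $(x\theta_\lambda,y\theta_\lambda)_{\tilde{\mathbf{f}}}=(x,y)_{\mathbf{f}}(\theta_\lambda,\theta_\lambda)_{\tilde{\mathbf{f}}}$, which sees $(x,y)_{\mathbf{f}}$, not $(x^-\eta_\lambda,y^-\eta_\lambda)_{\Lambda_\lambda}$. The two differ by factors with negative coefficients. Already for $\mathfrak{sl}_2$ one has $(F_i\eta_n,F_i\eta_n)_{\Lambda_n}=(1-v^{-2n})(\theta_i,\theta_i)_{\mathbf{f}}$. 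Its positivity, $1+v^{-2}+\cdots+v^{-2n+2}$, comes from cancellation, which is exactly the phenomenon you flagged for $\prod_k(1-v^{-2k})$. Theorem \ref{14.4.13} gives no control over such cancellations, so your last reduction does not close. You need to cite \cite[Theorem 4.16]{Zheng-2014}, for both $\Lambda_{\lambda_2}$ and ${^{\omega}\Lambda_{\lambda_1}}$, or supply a genuinely new argument. With that input the rest of your argument goes through, including closure of $\delta_{\dot b,\dot b'}+v^{-1}\mathbb{N}[[v^{-1}]]$ under $v^{-1}$-adic limits. Your abandoned \emph{cleaner route} also contains an unproved claim, that the failure of $a_{-w\lambda_1}$ to preserve forms disappears in the stable limit, but it plays no role in the final argument.
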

\begin{proof}
Let $\dot{b}\in \dot{\mathbf{B}}\cap \dot{\mathbf{U}}1_{\zeta},\dot{b}'\in \dot{\mathbf{B}}\cap \dot{\mathbf{U}}1_{\zeta'}$ with $\zeta,\zeta'\in X$. We may suppose that $\zeta=\zeta'$, otherwise,  $(\dot{b},\dot{b}')_{\dot{\mathbf{U}}}=0$ by \cite[Theorem 26.1.2(a)]{Lusztig-1993}. By \cite[Proposition 26.2.3]{Lusztig-1993}, $(\dot{b},\dot{b}')_{\dot{\mathbf{U}}}$ is the limit of $(\dot{b}(\xi_{-\lambda_1}\otimes \eta_{\lambda_2}),\dot{b}'(\xi_{-\lambda_1}\otimes \eta_{\lambda_2}))_{{{^{\omega}\Lambda_{\lambda_1}}\otimes \Lambda_{\lambda_2}}}$, when $\langle i,\lambda_1\rangle,\langle i,\lambda_2\rangle\rightarrow \infty$ for any $i\in I$ and $\zeta=\lambda_2-\lambda_1$, where $(\,,\,)_{{^{\omega}\Lambda_{\lambda_1}}\otimes \Lambda_{\lambda_2}}$ is defined by 
$$(m_1\otimes m_2,m'_1\otimes m'_2)_{{^{\omega}\Lambda_{\lambda_1}}\otimes \Lambda_{\lambda_2}}=(m_1,m'_1)_{{^{\omega}\Lambda_{\lambda_1}}}(m_2,m'_2)_{\Lambda_{\lambda_2}}$$
for any $m_1,m'_1\in {^{\omega}\Lambda_{\lambda_1}},m_2,m'_2\in \Lambda_{\lambda_2}$, and $(\,,\,)_{{^{\omega}\Lambda_{\lambda_1}}},(\,,\,)_{\Lambda_{\lambda_2}}$ are given by \cite[Proposition 19.1.2]{Lusztig-1993}. By \cite[Theorem 4.16]{Zheng-2014}, the canonical bases of ${^{\omega}\Lambda_{\lambda_1}}$ and $\Lambda_{\lambda_2}$ have the positivities with respect to $(\,,\,)_{{^{\omega}\Lambda_{\lambda_1}}}$ and $(\,,\,)_{\Lambda_{\lambda_2}}$, that is, 
$$(\mathbf{B}({^{\omega}\Lambda_{\lambda_1}}),\mathbf{B}({^{\omega}\Lambda_{\lambda_1}}))_{{^{\omega}\Lambda_{\lambda_1}}}, (\mathbf{B}(\Lambda_{\lambda_2}),\mathbf{B}(\Lambda_{\lambda_2}))_{\Lambda_{\lambda_2}}\subset \mathbb{N}[v^{-1}].$$ 
By Theorem \ref{25.2.1} and Theorem \ref{thm:structure-trans}, we have
\begin{align*}
\dot{b}(\xi_{-\lambda_1}\otimes \eta_{\lambda_2}),\dot{b}'(\xi_{-\lambda_1}\otimes \eta_{\lambda_2})\in \mathbf{B}({^{\omega}\Lambda_{\lambda_1}}\otimes \Lambda_{\lambda_2})\subset \mathbb{N}[v^{-1}][\mathbf{B}({^{\omega}\Lambda_{\lambda_1}})\otimes \mathbf{B}(\Lambda_{\lambda_2})].
\end{align*}
Hence $(\dot{b}(\xi_{-\lambda_1}\otimes \eta_{\lambda_2}),\dot{b}'(\xi_{-\lambda_1}\otimes \eta_{\lambda_2}))_{{{^{\omega}\Lambda_{\lambda_1}}\otimes \Lambda_{\lambda_2}}}\in \mathbb{N}[v^{-1}]$, and so $(\dot{b},\dot{b}')_{\dot{\mathbf{U}}}\in \mathbb{N}[[v^{-1}]]$. By \cite[Theorem 26.3.1]{Lusztig-1993}, we have $(\dot{b},\dot{b}')_{\dot{\mathbf{U}}}\in \delta_{\dot{b},\dot{b}'}+v^{-1}\mathbb{Z}[[v^{-1}]]$. Hence 
$(\dot{b},\dot{b}')_{\dot{\mathbf{U}}}\in \delta_{\dot{b},\dot{b}'}+v^{-1}\mathbb{N}[[v^{-1}]]$.
\end{proof}

\subsection{Positivity of comultiplication}
By leveraging the relation between the structure constants of the comultiplication of $\dot{\mathbf{U}}$ and the negative part of the larger quantum group established in Theorem \ref{thm:str-const-comult}, we deduce the positivity of the comultiplication.

The comultiplication of $\dot{\mathbf{U}}$ can be regarded as a single map
$$\dot{\Delta}:\dot{\mathbf{U}}\rightarrow \prod_{\gamma',\gamma'',\zeta',\zeta''\in X}{_{\gamma'}\mathbf{U}_{\zeta'}}\otimes {_{\gamma''}\mathbf{U}_{\zeta''}}\hookrightarrow \dot{\mathbf{U}}\hat{\otimes}\dot{\mathbf{U}},$$
where $\dot{\mathbf{U}}\hat{\otimes}\dot{\mathbf{U}}$ is the completion of $\dot{\mathbf{U}}\otimes\dot{\mathbf{U}}$ consisting of formal linear combinations $\sum_{\dot{b},\dot{b}'\in \dot{\mathbf{B}}}k_{\dot{b},\dot{b}'}\dot{b}\otimes \dot{b}'$ with $k_{\dot{b},\dot{b}'}\in \mathbb{Q}(v)$ (see \cite[\S 23.1.5]{Lusztig-1993} and \cite[\S 1.11]{Lusztig-2009}). We denote
$$\mathbb{N}[v, v^{-1}][\dot{\mathbf{B}}\hat{\otimes} \dot{\mathbf{B}}]=\{\sum_{\dot{b},\dot{b}'\in \dot{\mathbf{B}}}k_{\dot{b},\dot{b}'}\dot{b}\otimes \dot{b}'\in \dot{\mathbf{U}}\hat{\otimes}\dot{\mathbf{U}};k_{\dot{b},\dot{b}'}\in \mathbb{N}[v,v^{-1}],\dot{b},\dot{b}'\in \dot{\mathbf{B}}\}.$$

\begin{theorem} \label{thm:comult-positivity}
Let $\dot{b} \in \dot{\mathbf{B}}$. Then we have
$$\Delta(\dot{b}) \in \mathbb{N}[v, v^{-1}][\dot{\mathbf{B}}\hat{\otimes} \dot{\mathbf{B}}].$$
\end{theorem}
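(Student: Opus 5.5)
The plan is to reduce the statement to a single component of $\dot{\Delta}$ and read off its coefficients from Theorem \ref{thm:str-const-comult}. Since $\dot{\Delta}(\dot{b})$ is the formal sum over all $\gamma',\gamma'',\zeta',\zeta''\in X$ of the components ${_{\gamma',\gamma''}\dot{\Delta}_{\zeta',\zeta''}}(\dot{b})$, and the blocks ${_{\gamma'}\mathbf{U}_{\zeta'}}\otimes{_{\gamma''}\mathbf{U}_{\zeta''}}$ are spanned by disjoint subsets of $\dot{\mathbf{B}}\otimes\dot{\mathbf{B}}$, it suffices to prove the following. For every $\zeta',\zeta''\in X$ with $\dot{b}\in\dot{\mathbf{U}}1_{\zeta'+\zeta''}$, every structure constant $\dot{c}^{\dot{b}}_{\dot{b}',\dot{b}''}$ in \eqref{structure constant dotc} lies in $\mathbb{N}[v,v^{-1}]$. (If $\dot{b}\notin\dot{\mathbf{U}}1_{\zeta'+\zeta''}$, the component is zero.)

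Fix such $\zeta',\zeta''$. Because the sum \eqref{structure constant dotc} is finite, we may choose $\lambda'_1,\lambda''_1,\lambda'_2,\lambda''_2\in X^+$ sufficiently regular with $\zeta'=\lambda'_2-\lambda'_1$ and $\zeta''=\lambda''_2-\lambda''_1$. We then choose $w\in W$ sufficiently large (using the directedness of $(W,\le)$ under the Demazure product) so that the thickening maps of $\dot{b}$ and of all the finitely many $\dot{b}',\dot{b}''$ with nonzero coefficient are nonzero. This is exactly the setup of the proof of Theorem \ref{thm:str-const-comult}, which gives identity \eqref{two structure constants}:
\[
\dot{c}^{\dot{b}}_{\dot{b}',\dot{b}''}=v^{N}\,c^{\mathrm{th}_{\lambda_1,\lambda_2,w}(\dot{b})}_{\mathrm{th}_{\lambda''_1,\lambda''_2,w}(\dot{b}''),\mathrm{th}_{\lambda'_1,\lambda'_2,w}(\dot{b}')}
\]
with an explicit integer exponent $N$. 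Any pair $(\dot{b}',\dot{b}'')$ outside the support has $\dot{c}=0$, which is trivially in $\mathbb{N}[v,v^{-1}]$.

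The $c$ on the right is a structure constant of $r$ on $\tilde{\mathbf{B}}$ for the thickened datum $(\tilde{I},\cdot)$, which is again symmetric. By Theorem \ref{14.4.13} it therefore lies in $\mathbb{N}[v,v^{-1}]$. Since $v^{N}$ is a monomial with coefficient $1$, the product also lies in $\mathbb{N}[v,v^{-1}]$, and the theorem follows.

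The main point requiring care is that the exponent $N$ is an integer and not merely rational. Both $\langle(\nu_{w,\lambda''_1})_Y,\lambda'_2\rangle$ and $\langle -|\tilde{b}'|_{\tilde{Y}},-w\lambda''_1\odot\lambda''_2\rangle$ are pairings of lattice elements, so $N$ is an integer. The $X$-regularization step was designed precisely so that $f^{\sharp}$ is $\mathbb{Z}$-valued (Lemma \ref{32.1.3}).

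A second point is that the choice of parameters must be made uniformly for all terms of a fixed component, so that identity \eqref{two structure constants} holds for every relevant pair simultaneously. Finiteness of each component makes this possible.
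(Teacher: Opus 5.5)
Your proposal is correct and follows essentially the same route as the paper. Both arguments read off each block $\dot{\Delta}_{\zeta',\zeta''}(\dot{b})$ from Theorem \ref{thm:str-const-comult} as a monomial $v^{N}$ times a comultiplication structure constant of $\tilde{\mathbf{B}}$, invoke Theorem \ref{14.4.13} for the symmetric thickened datum to get that constant in $\mathbb{N}[v,v^{-1}]$, and then sum over blocks.
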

\begin{proof}
For any $\tilde{b}\in \mathbf{B}(\mathbf{f}\theta_{\lambda_2}\mathbf{f})$, we write $r(\tilde{b})=\sum_{\tilde{b}_1,\tilde{b}_2\in \tilde{\mathbf{B}}}c_{\tilde{b}_1,\tilde{b}_2}^{\tilde{b}}\tilde{b}_1\otimes\tilde{b}_2$. Then $c_{\tilde{b}_1,\tilde{b}_2}^{\tilde{b}}\in \mathbb{N}[v,v^{-1}]$ by the positivity property of $\tilde{\mathbf{B}}$ with respect to the comultiplication in Theorem \ref{14.4.13}. Let $\zeta',\zeta''\in X$ and $\dot{b}\in \dot{\mathbf{B}}\cap \dot{\mathbf{U}}1_{\zeta'+\zeta''}$. By Theorem \ref{thm:str-const-comult}, we have 
\begin{align*}
\dot{\Delta}_{\zeta',\zeta''}(\dot{b})=v^{\langle (\nu_{w,\lambda''_1})_Y,\lambda'_2\rangle}\sum v^{\langle -|\mathrm{th}_{\lambda'_1,\lambda'_2,w}(\dot{b}')|_{\tilde{Y}},-w\lambda''_1\odot\lambda''_2\rangle} c^{\mathrm{th}_{\lambda_1,\lambda_2,w}(\dot{b})}_{\mathrm{th}_{\lambda''_1,\lambda''_2,w}(\dot{b}''),\mathrm{th}_{\lambda'_1,\lambda'_2,w}(\dot{b}')}\dot{b}'\otimes \dot{b}'',
\end{align*}
where the summation is taken over $\dot{b}'\in\dot{\mathbf{B}}\cap \dot{\mathbf{U}}1_{\zeta'}$ and $\dot{b}''\in\dot{\mathbf{B}}\cap \dot{\mathbf{U}}1_{\zeta''}$, for sufficiently regular $\lambda'_1,\lambda''_1,\lambda'_2,\lambda''_2\in X^+$ and sufficiently large $w\in W$ satisfy certain conditions. Hence we have $\dot{\Delta}_{\zeta',\zeta''}(\dot{b})\in \mathbb{N}[v,v^{-1}[\dot{\mathbf{B}}\otimes \dot{\mathbf{B}}]$. Summing over all blocks, we conclude that $\dot{\Delta}(\dot{b})\in\mathbb{N}[v,v^{-1}][\dot{\mathbf{B}}\hat{\otimes}\dot{\mathbf{B}}]$.
\end{proof}

\subsection{The involution $\omega$ and canonical basis}
The algebra automorphism $\omega:\mathbf{U}\rightarrow \mathbf{U}$ induces a linear isomorphism ${_{\gamma}\mathbf{U}_{\zeta}}\rightarrow {_{-\gamma}\mathbf{U}_{-\zeta}}$ for any $\gamma,\zeta\in X$. Taking direct sums, we obtain an algebra automorphism $\omega:\dot{\mathbf{U}}\rightarrow \dot{\mathbf{U}}$ (see \cite[\S 23.1.6]{Lusztig-1993}). By \cite[Corollary 26.3.2]{Lusztig-1993}, $\omega(\dot{\mathbf{B}})\subset \pm\dot{\mathbf{B}}$. Lusztig conjectured that $\omega(\dot{\mathbf{B}})=\dot{\mathbf{B}}$, and he proved in \cite[Proposition 3.16]{Lusztig-2023} for finite type. In this subsection, we verify this conjecture in general. 

Let $\lambda_1,\lambda_2\in X^+$. We define the linear map
$$\Omega_{-\lambda_1,\lambda_2}:{^{\omega}\Lambda_{\lambda_1}}\otimes \Lambda_{\lambda_2}\rightarrow {^{\omega}({^{\omega}\Lambda_{\lambda_2}}\otimes \Lambda_{\lambda_1})}$$
by $u_1\xi_{-\lambda_1}\otimes u_2\eta_{\lambda_2} \mapsto {}^{\omega}(\omega(u_2)\xi_{-\lambda_2}\otimes \omega(u_1)\eta_{\lambda_1})$ for any $u_1,u_2\in \mathbf{U}$. Similarly, we have the linear map $\Omega_{-\lambda_2, \lambda_1}:{^{\omega}\Lambda_{\lambda_2}}\otimes \Lambda_{\lambda_1}\rightarrow {^{\omega}({^{\omega}\Lambda_{\lambda_1}}\otimes \Lambda_{\lambda_2})}$, and it induces the linear map ${^{\omega}\Omega_{-\lambda_2,\lambda_1}}:{^{\omega}({^{\omega}\Lambda_{\lambda_2}}\otimes \Lambda_{\lambda_1})}\rightarrow {^{\omega}\Lambda_{\lambda_1}}\otimes \Lambda_{\lambda_2}$ by ${^{\omega}\Omega_{-\lambda_2,\lambda_1}}({^{\omega}m})={^{\omega}(\Omega_{-\lambda_2,\lambda_1}(m))}$ for any $m\in {^{\omega}\Lambda_{\lambda_2}}\otimes \Lambda_{\lambda_1}$. Then it is easy to check that ${^{\omega}\Omega_{-\lambda_2,\lambda_1}}$ is inverse to $\Omega_{-\lambda_1,\lambda_2}$. Hence $\Omega_{-\lambda_1,\lambda_2}$ is a linear isomorphism.

\begin{lemma}\label{isomorphism Omega}
Let $\lambda_1,\lambda_2\in X^+$. Then $\Omega_{-\lambda_1,\lambda_2}:{^{\omega}\Lambda_{\lambda_1}}\otimes \Lambda_{\lambda_2}\rightarrow {^{\omega}({^{\omega}\Lambda_{\lambda_2}}\otimes \Lambda_{\lambda_1})}$ is a $\mathbf{U}$-module isomorphism such that
$$\Omega_{-\lambda_1,\lambda_2}(b_1^+\xi_{-\lambda_1}\diamondsuit b_2^-\eta_{\lambda_2})={}^{\omega}(b_2^+\xi_{-\lambda_2}\diamondsuit b_1^-\eta_{\lambda_1})$$
for any $b_1\in \mathbf{B}(\lambda_1)$ and $b_2\in \mathbf{B}(\lambda_2)$.
\end{lemma}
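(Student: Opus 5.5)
The plan is to check three things in turn: that $\Omega_{-\lambda_1,\lambda_2}$ is $\mathbf{U}$-linear, that it intertwines the $\Psi$-involutions, and that it matches the lattices spanned by pure tensors of canonical basis elements. The basis statement then follows from the uniqueness part of Theorem \ref{24.3.3}. Bijectivity is already known, since ${^{\omega}\Omega_{-\lambda_2,\lambda_1}}$ is a two-sided inverse. Note that $\Omega_{-\lambda_1,\lambda_2}$ is simply $m_1\otimes m_2\mapsto {}^{\omega}(\iota(m_2)\otimes\iota(m_1))$. Here $\iota$ denotes the identification of underlying spaces $\Lambda_{\lambda}\cong{^{\omega}\Lambda_{\lambda}}$, $u\eta_\lambda\mapsto\omega(u)\xi_{-\lambda}$, and its inverse. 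This formula makes well-definedness clear, because the annihilator of $\xi_{-\lambda}$ is $\omega$ of the annihilator of $\eta_\lambda$.

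\emph{$\mathbf{U}$-linearity.} By definition, $u\in\mathbf{U}$ acts on ${^{\omega}(\,{^{\omega}\Lambda_{\lambda_2}}\otimes\Lambda_{\lambda_1})}$ through $\Delta(\omega(u))$. I would first verify the identity $\Delta\circ\omega=\mathrm{flip}\circ(\omega\otimes\omega)\circ\Delta$ on generators. For instance, $(\omega\otimes\omega)\Delta(E_i)=F_i\otimes1+K_{-i_Y}\otimes F_i$, which is the flip of $\Delta(F_i)=1\otimes F_i+F_i\otimes K_{-i_Y}$. The cases of $F_i$ and $K_\mu$ are similar. Since both sides are algebra homomorphisms, the identity holds on all of $\mathbf{U}$. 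Writing $\Delta(u)=\sum u'\otimes u''$, we then get
\[
\Omega_{-\lambda_1,\lambda_2}(u(m_1\otimes m_2))=\sum{}^{\omega}(\omega(u'')\iota(m_2)\otimes\omega(u')\iota(m_1))=u\,\Omega_{-\lambda_1,\lambda_2}(m_1\otimes m_2).
\]

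\emph{Compatibility with $\Psi$.} The bar involutions on ${^{\omega}\Lambda}$ and $\Lambda$ correspond under $\iota$, because $\omega$ commutes with the bar involution on $\mathbf{U}$. So it suffices to compare $\Theta$ on the two sides. Applying $\Omega_{-\lambda_1,\lambda_2}$ to $\Theta(m_1\otimes m_2)=\sum_\nu(-v)^{\mathrm{tr}\,\nu}\sum_{b\in B_\nu}b^-m_1\otimes b^{*+}m_2$ gives
\[
{}^{\omega}\Bigl(\sum_\nu(-v)^{\mathrm{tr}\,\nu}\sum_{b\in B_\nu}b^{*-}\iota(m_2)\otimes b^{+}\iota(m_1)\Bigr).
\]
Because $(\,,\,)_{\mathbf{f}}$ is symmetric, the dual basis of $\{b^*\}$ is $\{b\}$. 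Hence this is ${}^{\omega}$ of $\Theta$ applied to $\iota(m_2)\otimes\iota(m_1)$, computed with the basis $\{b^*\}$ in place of $B_\nu$, and $\Theta$ does not depend on the choice of basis. Therefore $\Omega_{-\lambda_1,\lambda_2}\Psi=\Psi\,\Omega_{-\lambda_1,\lambda_2}$, where the $\Psi$ on the target is the $\Psi$ of ${^{\omega}\Lambda_{\lambda_2}}\otimes\Lambda_{\lambda_1}$ transported through ${}^{\omega}$. I expect this step to be the main bookkeeping obstacle: the $(-v)$ powers, the placement of $*$, and the order of tensor factors must all be tracked carefully.

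\emph{Conclusion.} Since $\omega(b^\pm)=b^\mp$, the map sends the pure tensor $b_1^+\xi_{-\lambda_1}\otimes b_2^-\eta_{\lambda_2}$ to ${}^{\omega}(b_2^+\xi_{-\lambda_2}\otimes b_1^-\eta_{\lambda_1})$. It therefore maps the basis $\mathbf{B}({^{\omega}\Lambda_{\lambda_1}})\otimes\mathbf{B}(\Lambda_{\lambda_2})$ bijectively onto the corresponding pure-tensor basis of the target, and it preserves the $\mathbb{Z}[v^{-1}]$- and $v^{-1}\mathbb{Z}[v^{-1}]$-spans. Consequently $\Omega_{-\lambda_1,\lambda_2}(b_1^+\xi_{-\lambda_1}\diamondsuit b_2^-\eta_{\lambda_2})$ is $\Psi$-invariant and congruent to ${}^{\omega}(b_2^+\xi_{-\lambda_2}\otimes b_1^-\eta_{\lambda_1})$ modulo $v^{-1}\mathbb{Z}[v^{-1}]$-combinations. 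By the uniqueness in Theorem \ref{24.3.3}, it equals ${}^{\omega}(b_2^+\xi_{-\lambda_2}\diamondsuit b_1^-\eta_{\lambda_1})$.
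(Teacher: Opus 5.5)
Your proposal is correct and follows the paper's proof essentially step for step. You get $\mathbf{U}$-linearity from $\Delta\circ\omega=\mathrm{flip}\circ(\omega\otimes\omega)\circ\Delta$, compatibility with $\Psi$ by exchanging the roles of the dual bases $\{b\}$ and $\{b^*\}$ in $\Theta$, and the basis identity from the uniqueness in Theorem \ref{24.3.3}, exactly as the paper does; writing $\Omega_{-\lambda_1,\lambda_2}$ through the identification $\iota$ only organizes the same bookkeeping more cleanly and makes well-definedness explicit.
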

\begin{proof}
We simply denote $\Omega_{-\lambda_1,\lambda_2}=\Omega$, 

Let $\tau:\mathbf{U}\otimes \mathbf{U}\rightarrow \mathbf{U}\otimes \mathbf{U},u_1\otimes u_2\mapsto u_2\otimes u_1$ be the natural linear isomorphism. Then we have $(\omega\otimes \omega)\Delta=\tau\Delta\omega$ (see the commutative diagram in the proof of \cite[Proposition 25.1.3]{Lusztig-1993}). For any $u\in \mathbf{U}$, we write $\Delta(u)=\sum u_1\otimes u_2\in \mathbf{U}\otimes \mathbf{U}$, and then $\Delta\omega(u)=\sum \omega(u_2)\otimes \omega(u_1)$. For any $u'_1,u'_2\in \mathbf{U}$, we have 
\begin{align*}
\Omega(u(u'_1\xi_{-\lambda_1}\otimes u'_2\eta_{\lambda_2}))=&\Omega(\sum u_1u'_1\xi_{-\lambda_1}\otimes u_2u'_2\eta_{\lambda_2})\\
=&\sum {}^{\omega}(\omega(u_2)\omega(u'_2)\xi_{-\lambda_2}\otimes \omega(u_1)\omega(u'_1)\eta_{\lambda_1}),\\
u\Omega(u'_1\xi_{-\lambda_1}\otimes u'_2\eta_{\lambda_2})=&u({^{\omega}(\omega(u'_2)\xi_{-\lambda_2}\otimes \omega(u'_1)\eta_{\lambda_1})})\\
=&{^{\omega}((\omega(u))(\omega(u'_2)\xi_{-\lambda_2}\otimes \omega(u'_1)\eta_{\lambda_1}))}\\=
&\sum {}^{\omega}(\omega(u_2)\omega(u'_2)\xi_{-\lambda_2}\otimes \omega(u_1)\omega(u'_1)\eta_{\lambda_1}).
\end{align*}
Hence the linear isomorphism $\Omega$ is a $\mathbf{U}$-module isomorphism.

Let $\Psi$ and $\Psi'$ be the involutions on ${^{\omega}\Lambda_{\lambda_1}}\otimes \Lambda_{\lambda_2}$ and ${^{\omega}\Lambda_{\lambda_2}}\otimes \Lambda_{\lambda_1}$ defined by \eqref{Phi-involution} respectively. For any $u_1,u_2\in \mathbf{U}$, on the one hand, we have
\begin{align*}
\Psi'({^{\omega}(\Omega(u_1\xi_{-\lambda_1}\otimes u_2\eta_{\lambda_2})}))=&\Psi'(\omega(u_2)\xi_{-\lambda_2}\otimes \omega(u_1)\eta_{\lambda_1})\\
=&\sum_{\nu\in \mathbb{N}[I]}(-v)^{\mathrm{tr}\,\nu}\sum_{b\in B_{\nu}}b^-\overline{\omega(u_2)\xi_{-\lambda_2}}\otimes b^{*+}\overline{\omega(u_1)\eta_{\lambda_1}}\\
=&\sum_{\nu\in \mathbb{N}[I]}(-v)^{\mathrm{tr}\,\nu}\sum_{b\in B_{\nu}}b^-\omega(\overline{u_2})\xi_{-\lambda_2}\otimes b^{*+}\omega(\overline{u_1})\eta_{\lambda_1};
\end{align*}
on the other hand, we have 
\begin{align*}
{^{\omega}(\Omega\Psi(u_1\xi_{-\lambda_1}\otimes u_2\eta_{\lambda_2}))}=&{^{\omega}(\Omega(\sum_{\nu\in \mathbb{N}[I]}(-v)^{\mathrm{tr}\,\nu}\sum_{b\in B_{\nu}}b^-\overline{u_1\xi_{-\lambda_1}}\otimes b^{*+}\overline{u_2\eta_{\lambda_2}}))}\\
=&\sum_{\nu\in \mathbb{N}[I]}(-v)^{\mathrm{tr}\,\nu}\sum_{b\in B_{\nu}}b^{*-}\omega(\overline{u_2})\xi_{-\lambda_2}\otimes b^+\omega(\overline{u_1})\eta_{\lambda_1}.
\end{align*}

Notice that both $\{b; b \in B_{\nu}\}$ and $\{b^*; b \in B_{\nu}\}$ are bases of $\mathbf{f}_{\nu}$ which are dual to each other. So $\sum_{b\in B_{\nu}}b^-\omega(\overline{u_2})\xi_{-\lambda_2}\otimes b^{*+}\omega(\overline{u_1})\eta_{\lambda_1}=\sum_{b\in B_{\nu}}b^{*-}\omega(\overline{u_2})\xi_{-\lambda_2}\otimes b^+\omega(\overline{u_1})\eta_{\lambda_1}$, and $\Psi'({^{\omega}(\Omega(u_1\xi_{-\lambda_1}\otimes u_2\eta_{\lambda_2})}))={^{\omega}(\Omega\Psi(u_1\xi_{-\lambda_1}\otimes u_2\eta_{\lambda_2}))}$. 

Let $b_1\in \mathbf{B}(\lambda_1)$ and $b_2\in \mathbf{B}(\lambda_2)$. By Theorem \ref{24.3.3}, we have $\Psi(b_1^+\xi_{-\lambda_1}\diamondsuit b_2^-\eta_{\lambda_2})=b_1^+\xi_{-\lambda_1}\diamondsuit b_2^-\eta_{\lambda_2}\in \mathbb{Z}[v^{-1}][\mathbf{B}({^{\omega}\Lambda_{\lambda_1}})\otimes \mathbf{B}(\Lambda_{\lambda_2})]$ and $b_1^+\xi_{-\lambda_1}\diamondsuit b_2^-\eta_{\lambda_2}-b_1^+\xi_{-\lambda_1}\otimes b_2^-\eta_{\lambda_2}\in v^{-1}\mathbb{Z}[v^{-1}][\mathbf{B}({^{\omega}\Lambda_{\lambda_1}})\otimes \mathbf{B}(\Lambda_{\lambda_2})]$. Then 
\begin{align*}
&\Psi'({^{\omega}(\Omega(b_1^+\xi_{-\lambda_1}\diamondsuit b_2^-\eta_{\lambda_2})}))={^{\omega}(\Omega(b_1^+\xi_{-\lambda_1}\diamondsuit b_2^-\eta_{\lambda_2})})\in \mathbb{Z}[v^{-1}][\mathbf{B}({^{\omega}\Lambda_{\lambda_2}})\otimes \mathbf{B}(\Lambda_{\lambda_1})];\\
&{^{\omega}(\Omega(b_1^+\xi_{-\lambda_1}\diamondsuit b_2^-\eta_{\lambda_2}))}-b_2^+\xi_{-\lambda_2}\otimes b_1^-\eta_{\lambda_1}\in v^{-1}\mathbb{Z}[v^{-1}][\mathbf{B}({^{\omega}\Lambda_{\lambda_2}})\otimes \mathbf{B}(\Lambda_{\lambda_1})],
\end{align*}
Hence ${^{\omega}(\Omega(b_1^+\xi_{-\lambda_1}\diamondsuit b_2^-\eta_{\lambda_2}))}$ satisfies the characterizing conditions in Theorem \ref{24.3.3}, and so ${^{\omega}(\Omega(b_1^+\xi_{-\lambda_1}\diamondsuit b_2^-\eta_{\lambda_2}))}=b_2^+\xi_{-\lambda_2}\diamondsuit b_1^-\eta_{\lambda_1}$, that is, we have $\Omega(b_1^+\xi_{-\lambda_1}\diamondsuit b_2^-\eta_{\lambda_2})={}^{\omega}(b_2^+\xi_{-\lambda_2}\diamondsuit b_1^-\eta_{\lambda_1})$.
\end{proof}

Now we show that the involution $\omega$ preserves the canonical basis. 

\begin{proposition}\label{prop:omega}
Let $\zeta\in X$ and $b_1,b_2\in \mathbf{B}$. Then $\omega(b_1\diamondsuit_{\zeta} b_2)=b_2\diamondsuit_{-\zeta}\, b_1$.
\end{proposition}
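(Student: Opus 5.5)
We reduce to the stable characterization of $\dot{\mathbf{B}}$ in Theorem \ref{25.2.1} and transport it through the isomorphism $\Omega$ of Lemma \ref{isomorphism Omega}.

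Fix $\zeta\in X$ and $b_1,b_2\in\mathbf{B}$, and choose $\lambda_1,\lambda_2\in X^+$ with $\zeta=\lambda_2-\lambda_1$, sufficiently regular so that $b_1\in\mathbf{B}(\lambda_1)$ and $b_2\in\mathbf{B}(\lambda_2)$. Then $-\zeta=\lambda_1-\lambda_2$, so the same pair $(\lambda_2,\lambda_1)$ is admissible for the element $b_2\diamondsuit_{-\zeta}b_1$.

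We first compute the action of $\omega(b_1\diamondsuit_\zeta b_2)\in\dot{\mathbf{U}}1_{-\zeta}$ on $\xi_{-\lambda_2}\otimes\eta_{\lambda_1}\in{^{\omega}\Lambda_{\lambda_2}}\otimes\Lambda_{\lambda_1}$. The defining property $u({^\omega m})={^\omega(\omega(u)m)}$ of the twisted module, together with the $\mathbf{U}$-linearity of $\Omega=\Omega_{-\lambda_1,\lambda_2}$ from Lemma \ref{isomorphism Omega}, gives
\begin{align*}
{^\omega}\bigl(\omega(\dot{u})(\xi_{-\lambda_2}\otimes\eta_{\lambda_1})\bigr)=\dot{u}\,{^\omega}(\xi_{-\lambda_2}\otimes\eta_{\lambda_1})=\dot{u}\,\Omega(\xi_{-\lambda_1}\otimes\eta_{\lambda_2})=\Omega\bigl(\dot{u}(\xi_{-\lambda_1}\otimes\eta_{\lambda_2})\bigr)
\end{align*}
for any $\dot{u}\in\dot{\mathbf{U}}$. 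For this we extend the $\mathbf{U}$-module identities to unital $\dot{\mathbf{U}}$-modules and check that $\omega$ on $\dot{\mathbf{U}}$ corresponds to $\omega$ on $\mathbf{U}$ with $1_\gamma\mapsto 1_{-\gamma}$. We also use $\Omega(\xi_{-\lambda_1}\otimes\eta_{\lambda_2})={^\omega}(\xi_{-\lambda_2}\otimes\eta_{\lambda_1})$, which holds by definition with $u_1=u_2=1$.

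Take $\dot{u}=b_1\diamondsuit_\zeta b_2$. By Theorem \ref{25.2.1} and Lemma \ref{isomorphism Omega}, the right-hand side equals
\begin{align*}
\Omega\bigl(b_1^+\xi_{-\lambda_1}\diamondsuit b_2^-\eta_{\lambda_2}\bigr)={^\omega}\bigl(b_2^+\xi_{-\lambda_2}\diamondsuit b_1^-\eta_{\lambda_1}\bigr).
\end{align*}
The argument of ${^\omega}$ here is $(b_2\diamondsuit_{-\zeta}b_1)(\xi_{-\lambda_2}\otimes\eta_{\lambda_1})$, again by Theorem \ref{25.2.1}. Removing ${^\omega}$, we obtain
\begin{align*}
\omega(b_1\diamondsuit_\zeta b_2)(\xi_{-\lambda_2}\otimes\eta_{\lambda_1})=(b_2\diamondsuit_{-\zeta}b_1)(\xi_{-\lambda_2}\otimes\eta_{\lambda_1})
\end{align*}
for all sufficiently regular $\lambda_1,\lambda_2$ with $\lambda_1-\lambda_2=-\zeta$.

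Both sides lie in $\dot{\mathbf{U}}1_{-\zeta}$, so it remains to see that an element $\dot{u}\in\dot{\mathbf{U}}1_{-\zeta}$ killing $\xi_{-\lambda_2}\otimes\eta_{\lambda_1}$ for all sufficiently regular such pairs must vanish. Expand $\dot{u}$ in the basis $\dot{\mathbf{B}}\cap\dot{\mathbf{U}}1_{-\zeta}$; only finitely many terms occur. For $\lambda$'s regular enough, each of these finitely many basis elements acts nontrivially, and Theorem \ref{25.2.1} sends them to distinct canonical basis elements of ${^{\omega}\Lambda_{\lambda_2}}\otimes\Lambda_{\lambda_1}$, which are linearly independent. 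Hence $\dot{u}=0$, and $\omega(b_1\diamondsuit_\zeta b_2)=b_2\diamondsuit_{-\zeta}b_1$.

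The step needing the most care is the compatibility check in the displayed computation, namely that the $\omega$-twist of unital $\dot{\mathbf{U}}$-modules and the idempotent bookkeeping $1_\gamma\mapsto1_{-\gamma}$ agree with the $\mathbf{U}$-linearity of $\Omega$. The canonical-basis content itself is already supplied by Lemma \ref{isomorphism Omega}.
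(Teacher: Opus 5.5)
Your proof is correct and follows the paper's argument: it is the same chain of equalities, transporting the action of $b_1\diamondsuit_\zeta b_2$ through $\Omega_{-\lambda_1,\lambda_2}$ and then applying Lemma~\ref{isomorphism Omega}. The only difference is the last step: the paper notes $\omega({_{\mathcal{A}}\dot{\mathbf{U}}1_{\zeta}})={_{\mathcal{A}}\dot{\mathbf{U}}1_{-\zeta}}$ and invokes the uniqueness clause of Theorem~\ref{25.2.1}, whereas you use an equivalent direct faithfulness argument on sufficiently regular tensor products, which also spares you the integrality check.
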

\begin{proof}
By Theorem Theorem \ref{25.2.1}, we have $\omega(b_1\diamondsuit_{\zeta} b_2)\in \omega({_{\mathcal{A}}\dot{\mathbf{U}}1_{\zeta}})={_{\mathcal{A}}\dot{\mathbf{U}}1_{-\zeta}}$. For any $\lambda_1,\lambda_2\in X^+$ with $\zeta=\lambda_2-\lambda_1$ and $b_1\in \mathbf{B}(\lambda_1),b_2\in \mathbf{B}(\lambda_2)$, we have 
\begin{align*}
&{}^{\omega}(\omega(b_1 \diamondsuit_{\zeta} b_2)(\xi_{-\lambda_2}\otimes \eta_{\lambda_1}))=(b_1\diamondsuit_{\zeta} b_2)({^{\omega}(\xi_{-\lambda_2}\otimes \eta_{\lambda_1}}))=(b_1\diamondsuit_{\zeta} b_2)\Omega_{-\lambda_1,\lambda_2}(\xi_{-\lambda_1}\otimes \eta_{\lambda_2})\\
&=\Omega_{-\lambda_1,\lambda_2}((b_1\diamondsuit_{\zeta} b_2)(\xi_{-\lambda_1}\otimes \eta_{\lambda_2}))=\Omega_{-\lambda_1,\lambda_2}(b_1^+\xi_{-\lambda_1}\diamondsuit b_2^-\eta_{\lambda_2})={}^{\omega}(b_2^+\xi_{-\lambda_2}\diamondsuit b_1^-\eta_{\lambda_1}).
\end{align*}
So $\omega(b_1 \diamondsuit_{\zeta} b_2)(\xi_{-\lambda_2}\otimes \eta_{\lambda_1})=b_2^+\xi_{-\lambda_2}\diamondsuit b_1^-\eta_{\lambda_1}$. Hence $\omega(b_1\diamondsuit_{\zeta} b_2)$ satisfies the characterizing conditions in Theorem \ref{25.2.1} (1), and so $\omega(b_1\diamondsuit_{\zeta} b_2)=b_2\diamondsuit_{-\zeta}\, b_1$.
\end{proof}

\subsection{Positivity of multiplication}

Let $J\subset I$ be a spherical subset. Then the spherical parabolic subalgebras $\dot{\mathbf{U}}_J$ and ${^{\omega}\dot{\mathbf{U}}_J}$ have bases $\dot{\mathbf{B}}\cap \dot{\mathbf{U}}_J$ and $\dot{\mathbf{B}}\cap {^{\omega}\dot{\mathbf{U}}_J}$ respectively. A canonical basis element of $\dot{\mathbf{B}}$ is called {\it spherical parabolic}, if it is contained in $\dot{\mathbf{B}}\cap\dot{\mathbf{U}}_J$ or $\dot{\mathbf{B}}\cap{^{\omega}\dot{\mathbf{U}}_J}$ for some spherical subset $J\subset I$.
\begin{theorem}\label{thm:mult}
For any $\dot{b},\dot{b}'\in \dot{\mathbf{B}}$, if one of them is spherical parabolic, then 
$$\dot{b}\dot{b}'\in \mathbb{N}[v,v^{-1}][\dot{\mathbf{B}}].$$
\end{theorem}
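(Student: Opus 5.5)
The plan is to reduce to the case $\dot{b}\in\dot{\mathbf{B}}\cap\dot{\mathbf{U}}_J$ with $\dot{b}$ on the left, and then read off positivity from Theorem \ref{thm:structure-multiplication}. There are four configurations: $\dot b$ or $\dot b'$ spherical parabolic, and of type $\dot{\mathbf{U}}_J$ or ${^{\omega}\dot{\mathbf{U}}_J}$. All of them will be moved to the basic one using the two symmetries $\sigma$ and $\omega$.

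For the basic case, let $\dot{b}\in\dot{\mathbf{B}}\cap\dot{\mathbf{U}}_J$ with $J$ spherical, and $\dot{b}'\in\dot{\mathbf{B}}\cap\dot{\mathbf{U}}1_\zeta$ (if $\dot b'$ lies in no $\dot{\mathbf{U}}1_\zeta$ compatible with $\dot b$, the product is $0$). Theorem \ref{thm:structure-multiplication} expresses each coefficient of $\dot{b}''$ in $\dot{b}\dot{b}'$ as a structure constant $m^{b}_{b_1,b_2}$ of multiplication in $\tilde{\tilde{\mathbf{f}}}$ with respect to $\tilde{\tilde{\mathbf{B}}}$. This requires choosing $\lambda_1,\lambda_2,\tilde\lambda_1,\tilde\lambda_2$ sufficiently regular and $w,\tilde w$ sufficiently large. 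Since only finitely many $\dot b''$ occur, such choices can be made simultaneously.

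The point to be careful about is that the entries $\sigma(\mathrm{th}(\dot b'))$, $\widetilde{\mathrm{th}}(\sigma(\dot{\tilde b}))$ and $\widetilde{\mathrm{th}}(\sigma(1\diamondsuit\,\mathrm{th}(\dot b'')))$ all lie in $\tilde{\tilde{\mathbf{B}}}$. This holds because $\sigma(\tilde{\mathbf{B}})=\tilde{\mathbf{B}}\subset\tilde{\tilde{\mathbf{B}}}$ and $\widetilde{\mathrm{th}}$ takes values in $\tilde{\tilde{\mathbf{B}}}\sqcup\{0\}$, which is nonzero by the choice of parameters. Theorem \ref{14.4.13}, applied to the Cartan datum $(\tilde{\tilde I},\cdot)$, then gives $m\in\mathbb{N}[v,v^{-1}]$, so $\dot b\dot b'\in\mathbb{N}[v,v^{-1}][\dot{\mathbf{B}}]$.

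To reduce the other configurations:
\begin{enumerate}
\item If $\dot{b}'\in\dot{\mathbf{U}}_J$ is the spherical factor on the right, apply the anti-automorphism $\sigma$. Since $\sigma(\dot{\mathbf{B}})=\dot{\mathbf{B}}$, we have $\sigma(\dot b\dot b')=\sigma(\dot b')\sigma(\dot b)$. I need to check that $\sigma$ maps $\dot{\mathbf{U}}_J$ to itself. This should follow because $\sigma$ fixes $E_j,F_i$ and reverses order, so $\mathbf{U}_J^+1_\zeta\mathbf{U}^-$ goes to $\mathbf{U}^-1_{-\zeta}\mathbf{U}_J^+$, which lies in $\dot{\mathbf{U}}_J$ after commutation relations, because $\dot{\mathbf{U}}_J$ is a subalgebra containing $F_i1_\zeta$ and $E_j1_\zeta$. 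The basic case then applies, and $\sigma$ carries positive expansions to positive expansions.
\item If the spherical factor lies in ${^{\omega}\dot{\mathbf{U}}_J}$, apply the algebra automorphism $\omega$. It sends ${^{\omega}\dot{\mathbf{U}}_J}$ onto $\dot{\mathbf{U}}_J$, and by Proposition \ref{prop:omega} it preserves $\dot{\mathbf{B}}$ exactly, with no signs. So $\omega(\dot b\dot b')=\omega(\dot b)\omega(\dot b')$ reduces to the previous cases, and $\omega^{-1}=\omega$ transports positivity back.
\end{enumerate}

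The main obstacle is step (1): verifying that $\sigma$ preserves $\dot{\mathbf{U}}_J$, equivalently that $\dot{\mathbf{B}}\cap\dot{\mathbf{U}}_J$ is $\sigma$-stable. If this turns out delicate, a fallback is to observe that $\omega\sigma$ (or $\sigma\omega$) interchanges $\dot{\mathbf{U}}_J$ and ${^{\omega}\dot{\mathbf{U}}_J}$ up to orientation, and to combine it with $\omega$ to cover all four cases. A second fallback is to rerun the argument of Theorem \ref{thm:structure-multiplication} with right multiplication and the $\sigma$-twisted Demazure modules.
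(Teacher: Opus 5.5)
Your proposal matches the paper's proof. The paper also reads off the case $\dot b\in\dot{\mathbf{B}}\cap\dot{\mathbf{U}}_J$ (spherical factor on the left) from Theorem \ref{thm:structure-multiplication} together with Lusztig's multiplication positivity for $\tilde{\tilde{\mathbf{B}}}$, and reduces the other configurations to it using $\omega$ (Proposition \ref{prop:omega}) and $\sigma$ (Kashiwara); which of $\omega$ or $\sigma$ you apply first in the fourth configuration makes no difference.

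Your concern in step (1) is already resolved by the argument you gave: $\sigma$ is an involutive anti-automorphism, and $\dot{\mathbf{U}}_J$ is a subalgebra containing all $F_i1_\zeta$ and $E_j1_\zeta$ with $j\in J$. Hence $\sigma(\dot{\mathbf{U}}_J)=\dot{\mathbf{U}}_J$, a fact the paper uses without comment.
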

\begin{proof}
For any $b_1,b_2\in \tilde{\tilde{\mathbf{B}}}$, we write $b_1b_2=\sum_{b\in \tilde{\tilde{\mathbf{B}}}}m_{b_1,b_2}^b b$. Then $m_{b_1,b_2}^b\in \mathbb{N}[v,v^{-1}]$ be the positivity property of $\tilde{\tilde{\mathbf{B}}}$ with respect to the multiplication in Theorem \ref{14.4.13}. 

(a) If $\dot{b}\in \dot{\mathbf{B}}\cap \dot{\mathbf{U}}_J$ is spherical parabolic, then for any $\dot{b}'\in \dot{\mathbf{B}}$, by Theorem \ref{thm:structure-multiplication}, 
$$\dot{b}\dot{b}'=\sum_{\dot{b}''\in \dot{\mathbf{B}}}m_{\sigma(\mathrm{th}_{\lambda_1,\lambda_2,w}(\dot{b}')),\widetilde{\mathrm{th}}_{\tilde{\lambda}_1,\tilde{\lambda}_2,\tilde{w}}(\sigma(\dot{\tilde{b}}))}^{\widetilde{\mathrm{th}}_{\tilde{\lambda}_1,\tilde{\lambda}_2,\tilde{w}}(\sigma(1\diamondsuit_{-w\lambda_1\odot\lambda_2}\,\mathrm{th}_{\lambda_1,\lambda_2,w}(\dot{b}'')))}\dot{b}''\in \mathbb{N}[v,v^{-1}][\dot{\mathbf{B}}]$$
for sufficiently regular $\l_1, \l_2 \in X^+$, $\tilde{\lambda}_1,\tilde{\lambda}_2\in \tilde{X}^+$ and sufficiently large $w\in W,\tilde{w}\in \tilde{W}$ satisfy certain conditions.

(b) By Proposition \ref{prop:omega}, $\omega(\dot{\mathbf{B}})=\dot{\mathbf{B}}$. If $\dot{b}\in \mathbf{B}\cap {^{\omega}\dot{\mathbf{U}}_J}$ is spherical parabolic, then $\omega(\dot{b})\in \mathbf{B}\cap \dot{\mathbf{U}}_J$ is spherical parabolic by definition. For any $\dot{b}'\in \dot{\mathbf{B}}$, by (a), we have $\omega(\dot{b})\omega(\dot{b}')\in \mathbb{N}[v,v^{-1}][\dot{\mathbf{B}}]$. Applying $\omega$, we obtain $\dot{b}\dot{b}'\in \mathbb{N}[v,v^{-1}][\dot{\mathbf{B}}]$.

(c) By \cite[Theorem 4.3.2]{Kashiwara-1994}, $\sigma(\dot{\mathbf{B}})=\dot{\mathbf{B}}$. If $\dot{b}'\in \dot{\mathbf{B}}\cap \dot{\mathbf{U}}_J$ or $\dot{\mathbf{B}}\cap{^{\omega}\dot{\mathbf{U}}_J}$ is spherical parabolic, then $\sigma(\dot{b}')\in \dot{\mathbf{B}}\cap \dot{\mathbf{U}}_J$ or $\dot{\mathbf{B}}\cap{^{\omega}\dot{\mathbf{U}}_J}$ is spherical parabolic. For any $\dot{b}\in \dot{\mathbf{B}}$, by (a) or (b), we have $\sigma(\dot{b}')\sigma(\dot{b})\in \mathbb{N}[v,v^{-1}][\dot{\mathbf{B}}]$. Applying $\sigma$, we obtain $\dot{b}\dot{b}'\in \mathbb{N}[v,v^{-1}][\dot{\mathbf{B}}]$.
\end{proof}

\subsection{Generalization to arbitrary root datum}

For the $Y$-regular root datum $(Y,X,\langle\,,\,\rangle,\ldots)$ of type $(I,\cdot)$, the canonical basis $\dot{\mathbf{B}}$ of the modified quantum group $\dot{\mathbf{U}}$ is constructed by gluing the canonical bases of tensor products (see Theorem \ref{25.2.1}). In general, for arbitrary root datum $(Y',X',\langle\,,\,\rangle',\ldots)$ of type $(I,\cdot)$, the canonical basis $\dot{\mathbf{B}}'$ of the associated modified quantum group $\dot{\mathbf{U}}'$ is constructed from $\dot{\mathbf{B}}$. In this subsection, we generalize the positivity properties of $\dot{\mathbf{B}}$ to $\dot{\mathbf{B}}'$.

There is a unique morphism $\alpha:Y\rightarrow Y',\beta:X'\rightarrow X$ of root data from the simply connected root datum $(Y,X,\langle\,,\,\rangle,\ldots)$ to $(Y',X',\langle\,,\,\rangle',\ldots)$ (see \cite[\S 2.2.2]{Lusztig-1993}). By \cite[\S 3.1.2 \& \S 23.2.5 \& \S 25.2.2]{Lusztig-1993}, the natural algebra homomorphism $\phi:\mathbf{U}\rightarrow \mathbf{U}'$ defined by $\phi(E_i)=E_i,\phi(F_i),\phi(K_{\mu})=K_{\alpha(\mu)}$ for any $i\in I$ and $\mu\in Y$ induces a linear isomorphism $\dot{\phi}_{\zeta}:\dot{\mathbf{U}}1_{\beta(\zeta')}\rightarrow \dot{\mathbf{U}}'1_{\zeta'}$ for any $\zeta'\in X'$ such that $\dot{\phi}_{\zeta'}(x^+y^-1_{\beta(\zeta')})=x^+y^-1_{\zeta'}$ for any $x,y\in \mathbf{f}$. For any $\zeta'\in X'$ and $b_1,b_2\in \mathbf{B}$, we denote $b_1\diamondsuit_{\zeta'}b_2=\dot{\phi}_{\zeta'}(b_1\diamondsuit_{\beta(\zeta')}b_2)$, where $b_1\diamondsuit_{\beta(\zeta')}b_2\in \dot{\mathbf{B}}\cap \dot{\mathbf{U}}1_{\beta(\zeta')}$ is the canonical basis element of $\dot{\mathbf{U}}$ given by Theorem \ref{25.2.1}. The canonical basis of $\dot{\mathbf{U}}'$ is 
$$\dot{\mathbf{B}}':=\{b_1\diamondsuit_{\zeta'}b_2;\zeta'\in X',b_1,b_2\in \mathbf{B}\}.$$

Taking the direct sum of $\dot{\phi}_{\zeta}$ over $\zeta'\in X'$, we obtain a linear isomorphism $$\dot{\phi}=\bigoplus_{\zeta'\in X'}\dot{\phi}_{\zeta}:\bigoplus_{\zeta'\in X'}\dot{\mathbf{U}}1_{\beta(\zeta')}\rightarrow \bigoplus_{\zeta'\in X'}\dot{\mathbf{U}}'1_{\zeta'}=\dot{\mathbf{U}}'.$$
By definition, we have $\dot{\phi}(\bigcup_{\zeta'\in X'}(\dot{\mathbf{B}}\cap \dot{\mathbf{U}}1_{\beta(\zeta')}))=\dot{\mathbf{B}}'$. Moreover, if $\dot{b}\in \dot{\mathbf{B}}\cap \dot{\mathbf{U}}1_{\beta(\zeta')}$ is spherical parabolic, then $\dot{\phi}(\dot{b})$ is also spherical parabolic. 

By definition, $\phi:\mathbf{U}\rightarrow \mathbf{U}'$ is a bialgebra homomorphism preserving the multiplication and the comultiplication. Hence $\dot{\phi}$ also preserve the multiplication and the comultiplication. Let $(\,,\,)_{\dot{\mathbf{U}}}$ and $(\,,\,)_{\dot{\mathbf{U}}'}$ be the unique bilinear forms on $\dot{\mathbf{U}}$ and $\dot{\mathbf{U}}'$ respectively (see \cite[Theorem 26.1.2]{Lusztig-1993}). Then the bilinear form $(\,,\,)'_{\dot{\mathbf{U}}'}:\dot{\mathbf{U}}'\otimes \dot{\mathbf{U}}'\rightarrow \mathbb{Q}(v)$ defined by 
$$(u_1,u_2)'_{\dot{\mathbf{U}}'}=(\dot{\phi}^{-1}(u_1),\dot{\phi}^{-1}(u_2))_{\dot{\mathbf{U}}}\ \textrm{for any}\ u_1,u_2\in \dot{\mathbf{U}}'$$
satisfies the  characterizing conditions of $(\,,\,)_{\dot{\mathbf{U}}'}$ in \cite[Theorem 26.1.2(a)-(c)]{Lusztig-1993}. Hence we have $(\dot{\phi}(u_1),\dot{\phi}(u_2))_{\dot{\mathbf{U}}'}=(u_1,u_2)_{\dot{\mathbf{U}}}$ for any $u_1,u_2\in \bigoplus_{\zeta'\in X'}\dot{\mathbf{U}}1_{\beta(\zeta')}$.

As results, the positivity properties of $\dot{\mathbf{B}}$ with respect to the spherical multiplication, the comultiplication and the bilinear form give the same positivity properties of $\dot{\mathbf{B}}'$.

\section{Positivity of canonical basis of tensor product}\label{Positivity of canonical basis of tensor product}

By using the positivity property of the canonical basis of the modified quantum group, we establish the positivity property of the canonical basis of tensor product.

\subsection{Canonical basis of tensor product in general case}\label{Canonical basis of tensor product sequel}

Let $m,n\in \mathbb{N}$ and $\lambda_1,\ldots,\lambda_{m+n}\in X^+$. We denote the sequence and the tensor product.
\begin{align*}
\blambda&=(-\lambda_1,\ldots,-\lambda_m,\lambda_{m+1},\ldots,\lambda_{m+n}),\\ \Lambda(\blambda)&={^{\omega}\Lambda_{\lambda_1}}\otimes \cdots \otimes {^{\omega}\Lambda_{\lambda_m}}\otimes \Lambda_{\lambda_{m+1}}\otimes\cdots\otimes \Lambda_{\lambda_{m+n}}.
\end{align*}
We define the $\Psi$-involution on $\Lambda(\blambda)$ by induction on $m,n\in \mathbb{N}$. 

For any $\mathbf{U}$-module $M$ and $\lambda\in X^+$, by \cite[\S 24.1.1]{Lusztig-1993}, the quasi-$\mathcal{R}$-matrix $\Theta$ induces well-defined linear transformations on the tensor products $^{\omega}\Lambda_{\lambda}\otimes M$ and $M\otimes \Lambda_{\lambda}$. The $\Psi$-involution on ${^{\omega}}\Lambda_{\lambda_1}\otimes \Lambda_{\lambda_2}$ is defined by \eqref{Phi-involution}. The $\Psi$-involutions on ${^{\omega}}\Lambda_{\lambda_1}\otimes {^{\omega}\Lambda_{\lambda_2}}$ and $\Lambda_{\lambda_1}\otimes \Lambda_{\lambda_2}$ are defined by the same formula, that is $\Psi(m_1\otimes m_2)=\Theta(\overline{m_1}\otimes \overline{m_2})$ for any $m_1\in {^{\omega}\Lambda_{\lambda_1}},m_2\in {^{\omega}\Lambda_{\lambda_2}}$ or $m_1\in \Lambda_{\lambda_1},m_2\in \Lambda_{\lambda_2}$. Suppose that the $\Psi$-involution on $\Lambda(\blambda)$ has been defined, then the $\Psi$-involutions on ${^{\omega}\Lambda_{\lambda}}\otimes \Lambda(\blambda)$ and $\Lambda(\blambda)\otimes \Lambda_{\lambda}$ for any $\lambda$ are defined by 
\begin{align*}
&\Psi(m\otimes m')=\Theta(\overline{m}\otimes \Psi(m'))\ \textrm{for any}\ m\in {^{\omega}\Lambda_{\lambda}},m'\in \Lambda(\lambda);\\
&\Psi(m\otimes m')=\Theta(\Psi(m)\otimes \overline{m'})\ \textrm{for any}\ m\in \Lambda(\blambda),m'\in \Lambda_{\lambda}.
\end{align*}

We denote 
$\mathcal{L}(\blambda)=\mathbb{Z}[v^{-1}][\mathbf{B}(^{\omega}\Lambda_{\lambda_1})\otimes\cdots\otimes \mathbf{B}(^{\omega}\Lambda_{\lambda_m})\otimes \mathbf{B}(\Lambda_{\lambda_{m+1}})\otimes \cdots\otimes \mathbf{B}(\Lambda_{\lambda_{m+n}})]$. The canonical basis of the tensor product $\Lambda(\blambda)$ was established by Bao-Wang in \cite[Theorem 2.9]{Bao-Wang-2016}.

\begin{theorem}
Let $m,n\in \mathbb{N}$ and $\lambda_1,\ldots,\lambda_{m+n}\in X^+$. Then for any $b_s\in \mathbf{B}(^{\omega}\Lambda_s),1\leqslant s\leqslant m$ and $b_{m+t}\in \mathbf{B}(\Lambda_{m+t}),1\leqslant t\leqslant n$, there exists a unique 
$b_1\diamondsuit\cdots\diamondsuit b_{m+n}\in \mathcal{L}(\blambda)$
such that 
$$\Psi(b_1\diamondsuit\cdots\diamondsuit b_{m+n})=b_1\diamondsuit\cdots\diamondsuit b_{m+n}\ \textrm{and}\ b_1\diamondsuit\cdots\diamondsuit b_{m+n}-b_1\otimes\cdots\otimes b_{m+n}\in v^{-1}\mathcal{L}(\blambda).$$  
Moreover, $\mathbf{B}(\Lambda(\blambda)):=\mathbf{B}(^{\omega}\Lambda_{\lambda_1})\diamondsuit\cdots\diamondsuit \mathbf{B}(^{\omega}\Lambda_{\lambda_m})\diamondsuit \mathbf{B}(\Lambda_{\lambda_{m+1}})\diamondsuit \cdots\diamondsuit \mathbf{B}(\Lambda_{\lambda_{m+n}})$ is a basis of $\Lambda(\blambda)$. The transition matrix from $\mathbf{B}(^{\omega}\Lambda_{\lambda_1})\otimes\cdots\otimes \mathbf{B}(^{\omega}\Lambda_{\lambda_m})\otimes \mathbf{B}(\Lambda_{\lambda_{m+1}})\otimes \cdots\otimes \mathbf{B}(\Lambda_{\lambda_{m+n}})$ to $\mathbf{B}(\Lambda(\blambda))$ is unitriangular with off-diagonal entries in $v^{-1}\mathbb{Z}[v^{-1}]$.
\end{theorem}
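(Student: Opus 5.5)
We would prove the theorem by induction on $m+n$, following the pattern of Lusztig's proof of Theorem \ref{24.3.3} and of Lemma \ref{canonical basis of Verma otime highest weight}. The base cases are $m+n\le 1$, where $\Psi$ is the bar-involution and the pure tensor basis is already canonical, and $m+n=2$, which is Theorem \ref{24.3.3} when $m=n=1$ and its analogue for ${^{\omega}\Lambda}\otimes{^{\omega}\Lambda}$ and $\Lambda\otimes\Lambda$. For the inductive step we may write $\Lambda(\blambda)$ as ${^{\omega}\Lambda_{\lambda}}\otimes\Lambda(\blambda')$ when $m\ge1$, or as $\Lambda(\blambda')\otimes\Lambda_\lambda$ when $m=0$. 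The induction hypothesis then gives a $\Psi$-invariant basis $\mathbf{B}(\Lambda(\blambda'))$ with unitriangular transition matrix to the pure tensors.

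Next we show three properties of $\Psi$ on $\Lambda(\blambda)$.
\begin{enumerate}
\item $\Psi$ is compatible with the $\mathbf{U}$-action, i.e.\ $\Psi(um)=\overline{u}\Psi(m)$. This follows from \cite[Lemma 24.1.2]{Lusztig-1993} applied to the pair $({^{\omega}\Lambda_\lambda},\Lambda(\blambda'))$, together with the corresponding property of $\Psi$ on $\Lambda(\blambda')$ from the induction hypothesis.
\item $\Psi$ is an involution. We check $\Theta\overline{\Theta}=1$ as in \cite[Corollary 4.1.3]{Lusztig-1993}.
\item $\Psi$ preserves the $\mathcal{A}$-lattice spanned by the pure tensor basis. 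This follows as in \cite[Proposition 2.4]{Bao-Wang-2016}: the quasi-$\mathcal{R}$-matrix has integral components $b^-\otimes b^{*+}$ after rescaling, and the lattices of $\mathbf{B}({^{\omega}\Lambda_\lambda})$ and of $\mathcal{L}(\blambda')$ are stable under $\mathbf{U}^{\pm}$ divided powers.
\end{enumerate}

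Then we establish triangularity. Order the pure tensors by the partial order given by weights in the second (resp.\ first) factor. The terms of $\Theta$ with $\nu\neq0$ strictly lower the weight of the first factor and raise the second. Hence $\Psi(b\otimes b')=b\otimes b'+\sum(\text{strictly lower terms})$ with coefficients in $\mathcal{A}$. Since each weight space is finite-dimensional and only finitely many terms occur, the standard lemma \cite[Lemma 24.2.1]{Lusztig-1993} applies: it takes an involution that is unitriangular on a lattice with respect to a partial order with finite lower intervals, and produces unique elements $b_1\diamondsuit\cdots\diamondsuit b_{m+n}$ that are $\Psi$-fixed and congruent to the pure tensor modulo $v^{-1}\mathcal{L}(\blambda)$. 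Uniqueness follows because $0$ is the only $\Psi$-invariant element of $v^{-1}\mathcal{L}(\blambda)$. The basis property and unitriangularity with off-diagonal entries in $v^{-1}\mathbb{Z}[v^{-1}]$ come out of the same construction.

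The main obstacle is the finiteness and triangularity in the mixed lowest/highest setting. Unlike the pure highest-weight case, weights in $\Lambda(\blambda)$ are neither bounded above nor below, so the ordering must be chosen carefully. We would first try the order comparing the weight of the ${^{\omega}\Lambda}$-part: it is bounded below by $-\sum_{s\le m}\lambda_s$ and bounded above within a fixed total weight space because the $\Lambda$-part is bounded above. This bounds each interval, making the intervals finite.
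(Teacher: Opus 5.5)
The paper does not prove this statement; it quotes it from \cite[Theorem 2.9]{Bao-Wang-2016}. Your plan is the standard Lusztig-type argument behind that reference: induction on the number of factors, the three properties of $\Psi$, triangularity, then \cite[Lemma 24.2.1]{Lusztig-1993}. The triangularity step, which you yourself flag, fails as written.

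\textbf{The finiteness claim is false.} Outside finite type, once lowest and highest weight factors are mixed, weight spaces of $\Lambda(\blambda)$ are not finite-dimensional, even if weights are refined to $\mathbb{N}[I]$-degrees. Take affine type with imaginary root $\delta$ and $\lambda_2,\lambda_3$ of positive level. Choose $b_k,b'_k\in\mathbf{B}$ with $|b_k|=|b'_k|=k\delta$ and $b_k^+\xi_{-\lambda_2}\neq0\neq b_k'^-\eta_{\lambda_3}$. Then the pure tensors $b_k^+\xi_{-\lambda_2}\otimes b_k'^-\eta_{\lambda_3}$ all have the same weight for every $k\geq0$. For $m=1$ your first-factor order is fine; that is Lusztig's proof of Theorem \ref{24.3.3}. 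For $m\geq2$, $n\geq1$, however, the second factor of ${^{\omega}\Lambda_{\lambda}}\otimes\Lambda(\blambda')$ contains such a product. So the sets $\{h'\leq h\}$, which Lemma 24.2.1 requires to be finite, are infinite.

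\textbf{Your proposed repair does not close this.} Write the ${^{\omega}\Lambda}$-part weight as $-\sum_s\lambda_s+\alpha$ and the $\Lambda$-part weight as $\sum_t\lambda_{m+t}-\beta$, with $\alpha,\beta\in\mathbb{N}[I]$.
\begin{itemize}
\item Fixing the total weight fixes $\alpha-\beta$. Boundedness of the $\Lambda$-part therefore only gives $\alpha\geq\alpha-\beta$, a second \emph{lower} bound, and $\alpha$ is unbounded above (same example).
\item What really makes lower sets finite is that $\alpha$ has finitely many predecessors in $\mathbb{N}[I]$, and each piece with fixed $(\alpha,\beta)$ is finite-dimensional.
\item More seriously, with your decomposition $\Psi$ is not strictly triangular for the $\alpha$-order. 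In $\Theta_\nu$ with $\nu\neq0$, the $\mathbf{U}^+$-component can act entirely on lowest weight factors inside $\Lambda(\blambda')$, through the summand $E_i\otimes1$ of $\Delta(E_i)$, leaving $\alpha$ unchanged.
\item Likewise, $\Psi$ on $\Lambda(\blambda')$ mixes pure tensors with the same $\alpha$.
\item If you order pure tensors rather than $\mathbf{B}({^{\omega}\Lambda_\lambda})\otimes\mathbf{B}(\Lambda(\blambda'))$, the term $b\otimes(\Psi(b')-b')$ is not lower even in the first-factor order.
\end{itemize}

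\textbf{A clean fix.} Regroup $\Lambda(\blambda)=M\otimes M'$ with $M={^{\omega}\Lambda_{\lambda_1}}\otimes\cdots\otimes{^{\omega}\Lambda_{\lambda_m}}$ and $M'=\Lambda_{\lambda_{m+1}}\otimes\cdots\otimes\Lambda_{\lambda_{m+n}}$.
\begin{enumerate}
\item The one-sided products have finite-dimensional graded pieces, so your induction produces $\mathbf{B}(M)$ and $\mathbf{B}(M')$.
\item Check that $\Psi=\Theta\circ(\Psi_M\otimes\Psi_{M'})$. This is an associativity property of $\Theta$, needed anyway for the paper's recursive definition of $\Psi$ to be independent of the order in which factors are attached.
\item On $\mathbf{B}(M)\otimes\mathbf{B}(M')$ one then has $\Psi(b\otimes b')=\Theta(b\otimes b')$, which equals $b\otimes b'$ plus terms of strictly smaller $\alpha$.
\item Lower sets are finite, and Lemma 24.2.1 applies exactly as in Theorem \ref{24.3.3}.
\end{enumerate}
Alternatively, keep your peeling with the basis $\mathbf{B}({^{\omega}\Lambda_{\lambda}})\otimes\mathbf{B}(\Lambda(\blambda'))$ and the first-factor order. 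Then replace Lemma 24.2.1 by its variant for column-finite matrices in which only the set of levels below a given one is finite; the recursion still terminates with finite support.
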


The canonical basis of $\Lambda(\blambda)$ is $\mathbf{B}(\Lambda(\blambda))$. Moreover, $(\Lambda(\blambda),\mathbf{B}(\Lambda(\blambda)))$ is a based $\mathbf{U}$-module.


\subsection{Realization of tensor product of simple highest weight modules}

In this subsection, we establish the thickening realization of the tensor product of simple highest weight modules.

Let $\lambda_1,\lambda_2\in X^+$. By \eqref{definition of odot}, the thickening product $\lambda_1\odot\lambda_2\in \tilde{X}^+$ is dominant, then we have the Verma module $M_{\lambda_1\odot\lambda_2}$, the simple highest weight module $\Lambda_{\lambda_1\odot\lambda_2}$ of $\tilde{\mathbf{U}}$ with the highest weight $\lambda_1\odot \lambda_2$. We regard them as $\mathbf{U}$-modules via the imbedding $\mathbf{U}\hookrightarrow \tilde{\mathbf{U}}$, and define the $\mathbf{U}$-module $\Lambda_{\lambda_1,\lambda_2}$ to be the image of $(\mathbf{f}\theta_{\lambda_2}\mathbf{f})_{\lambda_1\odot\lambda_2}\subset M_{\lambda_1\odot\lambda_2}$ under the natural projection $\pi_{\lambda_1\odot\lambda_2}:M_{\lambda_1\odot\lambda_2}\rightarrow \Lambda_{\lambda_1\odot\lambda_2}$.

By \cite[Corollary 4.5]{Fang-Lan-2025}, $\mathbf{B}(\Lambda_{\lambda_1,\lambda_2})=:\{b^-\eta_{\lambda_1\odot\lambda_2};b\in \mathbf{B}(\mathbf{f}\theta_{\lambda_2}\mathbf{f})\}\setminus \{0\}$ is a basis of $\Lambda_{\lambda_1,\lambda_2}$, and $(\Lambda_{\lambda_1,\lambda_2},\mathbf{B}(\Lambda_{\lambda_1,\lambda_2}))$ is a based $\mathbf{U}$-submodule of $(\Lambda_{\lambda_1\odot\lambda_2},\mathbf{B}(\Lambda_{\lambda_1\odot\lambda_2}))$.

The following result, first established in \cite[Corollary 7.21]{Li-2014} and \cite[Theorem 3.8 \& Theorem 4.7]{Fang-Lan-2025}, realizes the tensor product of two simple highest weight modules as a based $\mathbf{U}$-submodule of a simple highest weight module of $\tilde{\mathbf{U}}$. Our proof here is different and simpler.

\begin{lemma}\label{morphism underlinepsi of case 2}
Let $\lambda_1,\lambda_2\in X^+$. Then there are based $\mathbf{U}$-module isomorphisms
$$\xymatrix@C=2cm{(\Lambda_{\lambda_1,\lambda_2}, \mathbf{B}(\Lambda_{\lambda_1,\lambda_2})) \ar@<.5ex>[r]^-{\overline{\varphi_{\lambda_1,\lambda_2}}} &(\Lambda_{\lambda_1}\otimes \Lambda_{\lambda_2}, \mathbf{B}(\Lambda_{\lambda_1}\otimes \Lambda_{\lambda_2})).\ar@<.5ex>[l]^-{\overline{\psi_{\lambda_1,\lambda_2}}}}$$
\end{lemma}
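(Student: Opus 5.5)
The strategy is to descend the Verma-level realization of Lemma \ref{isomorphism varphi} with $\zeta=\lambda_1$ to the simple quotients, and then to transport the canonical bases as in Theorem \ref{positivity of transition matrix}. Consider the surjection $\pi_{\lambda_1}\otimes\mathrm{Id}:M_{\lambda_1}\otimes\Lambda_{\lambda_2}\to\Lambda_{\lambda_1}\otimes\Lambda_{\lambda_2}$ and the restriction of $\pi_{\lambda_1\odot\lambda_2}$ to $(\mathbf{f}\theta_{\lambda_2}\mathbf{f})_{\lambda_1\odot\lambda_2}$, whose image is $\Lambda_{\lambda_1,\lambda_2}$. We aim to show that $\psi_{\lambda_1,\lambda_2}$ maps $\ker(\pi_{\lambda_1}\otimes\mathrm{Id})$ into the kernel of the second map, and that $\varphi_{\lambda_1,\lambda_2}$ maps that kernel back into $\ker(\pi_{\lambda_1}\otimes\mathrm{Id})$. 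Once both inclusions hold, the induced maps $\overline{\psi_{\lambda_1,\lambda_2}}$ and $\overline{\varphi_{\lambda_1,\lambda_2}}$ are mutually inverse, and they are $\mathbf{U}$-linear because $\psi$ is.

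\textbf{Kernel inclusions.} The kernel $\ker(\pi_{\lambda_1}\otimes\mathrm{Id})$ is spanned by elements $y^-F_i^{(\langle i_Y,\lambda_1\rangle+1)}v_{\lambda_1}\otimes m$. It is also the $\mathbf{U}$-submodule generated by $F_i^{(\langle i_Y,\lambda_1\rangle+1)}v_{\lambda_1}\otimes\eta_{\lambda_2}$: these vectors generate $\ker\pi_{\lambda_1}\otimes\Lambda_{\lambda_2}$, since the kernel is a sum of Verma modules and tensoring with an integrable module preserves the generation. Because $\psi$ is $\mathbf{U}$-linear, it suffices to compute
\[
\psi\bigl(F_i^{(n+1)}v_{\lambda_1}\otimes\eta_{\lambda_2}\bigr)=\theta_{\lambda_2}^-F_i^{(n+1)}v_{\lambda_1\odot\lambda_2},\qquad n=\langle i_Y,\lambda_1\rangle .
\]
This element already lies in $\ker\pi_{\lambda_1\odot\lambda_2}$: commute $F_i^{(n+1)}$ past $\theta_{\lambda_2}$ using the higher Serre relations, as in the construction of $\psi_0$, and use that $\langle i_{\tilde Y},\lambda_1\odot\lambda_2\rangle=n$.

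For the reverse inclusion, the kernel of $\pi_{\lambda_1\odot\lambda_2}$ restricted to $(\mathbf{f}\theta_{\lambda_2}\mathbf{f})_{\lambda_1\odot\lambda_2}$ is spanned by canonical basis elements $b^-v_{\lambda_1\odot\lambda_2}$ with $b\in\mathbf{B}(\mathbf{f}\theta_{\lambda_2}\mathbf{f})$ and $b^-\eta_{\lambda_1\odot\lambda_2}=0$; this follows from \cite[Corollary 4.5]{Fang-Lan-2025} together with \eqref{14.4.11}. Each such $b$ lies in $\tilde{\mathbf{f}}\theta_{\tilde i}^{N+1}$ with $N=\langle\tilde i_{\tilde Y},\lambda_1\odot\lambda_2\rangle$. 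The $I'$-degree forces $\tilde i=i\in I$, exactly as in the last paragraph of the proof of Lemma \ref{canonical basis of U dot acts on Verma}. That same computation, with $\xi_{-w\lambda_1}$ replaced by $v_{\lambda_1}$ and then followed by the projection $\pi_{\lambda_1}$, shows $(\pi_{\lambda_1}\otimes\mathrm{Id})\varphi(b^-v)=0$.

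\textbf{Based structure.} Theorem \ref{positivity of transition matrix} gives the bijection $\varphi:\mathbf{B}((\mathbf{f}\theta_{\lambda_2}\mathbf{f})_{\lambda_1\odot\lambda_2})\to\mathbf{B}(M_{\lambda_1}\otimes\Lambda_{\lambda_2})$. Next, prove the analogue of Lemma \ref{a otimes Id is a based map} for $\pi_{\lambda_1}\otimes\mathrm{Id}$, which maps $\mathbf{B}(M_{\lambda_1}\otimes\Lambda_{\lambda_2})$ into $\mathbf{B}(\Lambda_{\lambda_1}\otimes\Lambda_{\lambda_2})\sqcup\{0\}$. The argument is the same: $\pi_{\lambda_1}$ is based by \cite[Theorem 14.4.11]{Lusztig-1993}, it intertwines the two $\Psi$-involutions because $\Theta$ is defined by the same formula on both sides, and one then invokes the uniqueness characterization of the canonical basis of $\Lambda_{\lambda_1}\otimes\Lambda_{\lambda_2}$. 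Combining this with $\pi_{\lambda_1\odot\lambda_2}$ being based on $\mathbf{B}(\Lambda_{\lambda_1,\lambda_2})$, a canonical basis element is killed by one projection exactly when its image is killed by the other, by the kernel identification above. Hence $\overline{\varphi}$ maps $\mathbf{B}(\Lambda_{\lambda_1,\lambda_2})$ bijectively onto $\mathbf{B}(\Lambda_{\lambda_1}\otimes\Lambda_{\lambda_2})$.

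The main obstacle I expect is the kernel comparison, in particular showing that $\ker\pi_{\lambda_1\odot\lambda_2}\cap(\mathbf{f}\theta_{\lambda_2}\mathbf{f})_{\lambda_1\odot\lambda_2}$ is spanned by canonical basis elements lying in $\tilde{\mathbf{f}}\theta_i^{N+1}$ for $i\in I$, rather than in the corresponding $\tilde{\mathbf{f}}\theta_{i'}^{N+1}$. I would first try the degree argument from Lemma \ref{canonical basis of U dot acts on Verma}, and fall back on dimension counting via the bijection if it fails.
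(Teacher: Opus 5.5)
Your architecture matches the paper's. You descend $\varphi_{\lambda_1,\lambda_2}$ and $\psi_{\lambda_1,\lambda_2}$ through $\pi_{\lambda_1}\otimes\mathrm{Id}$ and $\pi_{\lambda_1\odot\lambda_2}$, then transport canonical bases using Theorem~\ref{positivity of transition matrix}, the based map $\pi_{\lambda_1}\otimes\mathrm{Id}$ and \cite[Corollary 4.5]{Fang-Lan-2025}.

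Your proof that $(\pi_{\lambda_1}\otimes\mathrm{Id})\varphi$ kills $\ker\pi_{\lambda_1\odot\lambda_2}\cap(\mathbf{f}\theta_{\lambda_2}\mathbf{f})_{\lambda_1\odot\lambda_2}$ is sound. That kernel is spanned by the $b^-v_{\lambda_1\odot\lambda_2}$ with $b\in\mathbf{B}(\mathbf{f}\theta_{\lambda_2}\mathbf{f})$ and $b^-\eta_{\lambda_1\odot\lambda_2}=0$. The $I'$-degree of $b$ rules out $\tilde{\mathbf{f}}\theta_{i'}^{\langle i_Y,\lambda_2\rangle+1}$. The computation at the end of the proof of Lemma~\ref{canonical basis of U dot acts on Verma} then applies verbatim. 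This replaces the paper's appeal to the proof of \cite[Theorem 3.8]{Fang-Lan-2025}, so the step you flagged as the main obstacle is not one.

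The gap is in the opposite inclusion. In general, $\ker(\pi_{\lambda_1}\otimes\mathrm{Id})=\ker\pi_{\lambda_1}\otimes\Lambda_{\lambda_2}$ is \emph{not} the $\mathbf{U}$-submodule generated by the vectors $F_i^{(\langle i_Y,\lambda_1\rangle+1)}v_{\lambda_1}\otimes\eta_{\lambda_2}$. Each such vector is annihilated by every $E_j$, so it generates only a highest weight module.

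Already for $\mathfrak{sl}_2$ with $\lambda_1=0$ and $\lambda_2=1$ (notation of \S\ref{Example of thickening map}) this fails. The vector $F_iv_0\otimes\eta_1$ generates a module with one-dimensional weight spaces. But the weight $-3$ space of $\ker\pi_0\otimes\Lambda_1$ contains the independent vectors $F_i^{(2)}v_0\otimes\eta_1$ and $F_iv_0\otimes F_i\eta_1$. More generally, by Lemma~\ref{isomorphism varphi}, $v_\zeta\otimes\eta_\lambda$ corresponds to $\theta_\lambda^-v_{\zeta\odot\lambda}$. For $\lambda\neq 0$ this generates only $\{x^-\theta_\lambda^-v_{\zeta\odot\lambda};x\in\mathbf{f}\}$, not all of $(\mathbf{f}\theta_\lambda\mathbf{f})_{\zeta\odot\lambda}$. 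So evaluating $\psi$ on these vectors does not show that $\pi_{\lambda_1\odot\lambda_2}\psi$ kills the kernel.

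What you need is the paper's induction on $\mathrm{tr}\,|y|$.
\begin{itemize}
\item The kernel is spanned by the $x^-v_{\lambda_1}\otimes y^-\eta_{\lambda_2}$ with $x$ in the left ideal $\sum_{i\in I}\mathbf{f}\theta_i^{\langle i_Y,\lambda_1\rangle+1}$.
\item For $y=1$, $\psi(x^-v_{\lambda_1}\otimes\eta_{\lambda_2})=\theta_{\lambda_2}^-x^-v_{\lambda_1\odot\lambda_2}$ lies in $\sum_{i\in I}\tilde{\mathbf{U}}^-F_i^{(\langle i_Y,\lambda_1\rangle+1)}v_{\lambda_1\odot\lambda_2}\subset\ker\pi_{\lambda_1\odot\lambda_2}$ directly. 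No Serre relations are needed, since $\langle i_{\tilde{Y}},\lambda_1\odot\lambda_2\rangle=\langle i_Y,\lambda_1\rangle$.
\item For $y=\theta_jy'$, use the identity
\[
F_j(x^-v_{\lambda_1}\otimes{y'}^-\eta_{\lambda_2})=x^-v_{\lambda_1}\otimes y^-\eta_{\lambda_2}+v^{\langle -j_Y,\lambda_2-|y'|_X\rangle}(\theta_jx)^-v_{\lambda_1}\otimes{y'}^-\eta_{\lambda_2},
\]
together with the $F_j$-equivariance of $\pi_{\lambda_1\odot\lambda_2}\psi$ and the stability of the left ideal under left multiplication by $\theta_j$.
\end{itemize}
With this repair, the two kernels correspond under $\varphi$. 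Your based-structure step, which is the paper's, then goes through unchanged.
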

\begin{proof}
By Lemma \ref{isomorphism varphi}, there are $\mathbf{U}$-module isomorphisms $\varphi_{\lambda_1,\lambda_2}:(\mathbf{f}\theta_{\lambda_2}\mathbf{f})_{\lambda_1\odot\lambda_2}\rightarrow M_{\lambda_1}\otimes \Lambda_{\lambda}$ and $\psi_{\lambda_1,\lambda_2}:M_{\lambda_1}\otimes \Lambda_{\lambda}\rightarrow (\mathbf{f}\theta_{\lambda_2}\mathbf{f})_{\lambda_1\odot\lambda_2}$ which are inverse to each other.

By \cite[Lemma 3.2 \& Definition 3.3]{Fang-Lan-2025}, we can identify 
\begin{align*}
\Lambda_{\lambda_1,\lambda_2}\cong (\mathbf{f}\theta_{\lambda_2}\mathbf{f})_{\lambda_1\odot\lambda_2}/((\mathbf{f}\theta_{\lambda_2}\mathbf{f})_{\lambda_1\odot\lambda_2}\cap \sum_{i\in I}\tilde{\mathbf{f}}\theta_i^{\langle i_Y,\lambda_1\rangle+1}).
\end{align*}
Let $\pi_{\lambda_1,\lambda_2}:(\mathbf{f}\theta_{\lambda_2}\mathbf{f})_{\lambda_1\odot\lambda_2}\rightarrow \Lambda_{\lambda_1,\lambda_2}$ and $\pi_{\lambda_1}:M_{\lambda_1}\rightarrow \Lambda_{\lambda_1}$ be the natural projections. We simply denote $\pi_{\lambda_1,\lambda_2}=\pi,\varphi_{\lambda_1,\lambda_2}=\varphi,\psi_{\lambda_1,\lambda_2}=\psi$, and consider the following diagram 
$$\xymatrix{\mathrm{ker}\,\pi \ar@{^{(}->}[r] \ar@<.5ex>[d]^{\varphi} &(\mathbf{f}\theta_{\lambda_2}\mathbf{f})_{\lambda_1\odot\lambda_2} \ar@<.5ex>[d]^{\varphi} \ar@{->>}[r]^-{\pi} &\Lambda_{\lambda_1,\lambda_2} \ar@{-->}[d]<.5ex>^{\overline{\varphi}}\\
\mathrm{ker}\,(\pi_{\lambda_1}\otimes \mathrm{Id}) \ar@{^{(}->}[r] \ar@<.5ex>[u]^{\psi} &M_{\lambda_1}\otimes \Lambda_{\lambda_2} \ar@{->>}[r]^-{\pi_{\lambda_1}\otimes \mathrm{Id}}  \ar@<.5ex>[u]^{\psi} &\Lambda_{\lambda_1}\otimes \Lambda_{\lambda_2}. \ar@{-->}[u]<.5ex>^{\overline{\psi}}}$$

We prove $\pi\psi(x^-v_{\lambda_1}\otimes y^-\eta_{\lambda_2})=0$ for any $x\in \sum_{i\in I}\mathbf{f}\theta_i^{\langle i_Y,\lambda_1\rangle+1}$ and homogeneous $y\in \mathbf{f}$ by induction on $\mathrm{tr}\,|y|\in \mathbb{N}$. If $\mathrm{tr}\,|y|=0$, then $y\in \mathbb{Q}(v)$. By \eqref{im psi}, we have $\pi\psi(x^-v_{\lambda_1}\otimes\eta_{\lambda_2})=\pi(\theta_{\lambda_2}^-x^-v_{\lambda_1\odot\lambda_2})=0$. If $\mathrm{tr}\,|y|>0$, then it is enough to prove the statement for any $y=\theta_iy'$, with $i\in I$ and homogeneous $y'\in \mathbf{f}$. Notice that \begin{align*}
F_i(x^-v_{\lambda_1}\otimes y'^-\eta_{\lambda_2})=&x^-v_{\lambda_1}\otimes F_iy'^-\eta_{\lambda_2}+F_ix^-v_{\lambda_1}\otimes K_{-i_Y}y'^-\eta_{\lambda_2}\\
=&x^-v_{\lambda_1}\otimes y^-\eta_{\lambda_2}+(\theta_ix)^-v_{\lambda_1}\otimes v^{\langle -i_Y,\lambda_2-|y'|_X\rangle}y'^-\eta_{\lambda_2}.
\end{align*}
Since $\pi\psi$ commutes with the actions of $F_i$, and $\mathrm{tr}\,|y'|<\mathrm{tr}\,|y|$, by the inductive hypothesis, we have $\pi\psi(x^-v_{\lambda_1}\otimes y^-\eta_{\lambda_2})=0$, as desired. Hence $\pi\psi(\mathrm{ker}\,(\pi_{\lambda_1}\otimes \mathrm{Id}))=0$,
and then $\pi\psi$ induces the $\mathbf{U}$-module homomorphism $\overline{\psi}$. By the proof of \cite[Theorem 3.8]{Fang-Lan-2025}, $(\pi_{\lambda_1}\otimes \mathrm{Id})\varphi(\mathrm{ker}\,\pi)=0$, and $(\pi_{\lambda_1}\otimes \mathrm{Id})\varphi$ induces the $\mathbf{U}$-module homomorphism $\underline{\varphi}$. From the commutative diagram, $\underline{\varphi}$ and $\underline{\psi}$ are the inverses of each other, and both of them are $\mathbf{U}$-module isomorphisms.

By \cite[Theorem 14.4.11]{Lusztig-1993}, $\pi_{\lambda_1}:M_{\lambda_1}\rightarrow \Lambda_{\lambda_1}$ is a based $\mathbf{U}$-module homomorphism, and then so is $\pi_{\lambda_1}\otimes \mathrm{Id}:M_{\lambda_1}\otimes \Lambda_{\lambda_2}\rightarrow \Lambda_{\lambda_1}\otimes \Lambda_{\lambda_2}$. By \cite[Corollary 4.5]{Fang-Lan-2025}, we have $\pi_{\lambda_1,\lambda_2}(\mathbf{B}(\mathbf{f}\theta_{\lambda_2}\mathbf{f})_{\lambda_1\odot\lambda_2})\subset \mathbf{B}(\Lambda_{\lambda_1,\lambda_2})\sqcup \{0\}$, and $\mathbf{B}(\mathbf{f}\theta_{\lambda_2}\mathbf{f})_{\lambda_1\odot\lambda_2}\cap \mathrm{ker}\,\pi_{\lambda_1,\lambda_2}$ is a basis of $\mathrm{ker}\,\pi_{\lambda_1,\lambda_2}$. Combining with Theorem \ref{positivity of transition matrix}, both $\overline{\varphi}$ and $\overline{\psi}$ are based $\mathbf{U}$-module isomorphisms.
\end{proof}

Let $n\geqslant 1$. We define the Cartan datum $(\tilde{I}^n,\cdot)$ and the $\tilde{Y}^n$-regular root datum $(\tilde{Y}^n,\tilde{X}^n,\langle\,,\,\rangle,\ldots)$ of type $(\tilde{I}^n,\cdot)$ by induction. For $n=1$, $(\tilde{I}^1,\cdot)=(\tilde{I},\cdot)$ and  $(\tilde{Y}^1,\tilde{X}^1,\langle\,,\,\rangle,\ldots)=(\tilde{Y},\tilde{X},\langle\,,\,\rangle,\ldots)$ are the thickening data defined in \S \ref{The thickening product}. For $n>1$, $(\tilde{I}^n,\cdot)$ and $(\tilde{Y}^n,\tilde{X}^n,\langle\,,\,\rangle,\ldots)$ are defined to be the thickening data of $(\tilde{I}^{n-1},\cdot)$ and $(\tilde{Y}^{n-1},\tilde{X}^{n-1},\langle\,,\,\rangle,\ldots)$ respectively. For convenience, we denote $(\tilde{I}^0,\cdot)=(I,\cdot)$ and $(\tilde{Y}^0,\tilde{X}^0,\langle\,,\,\rangle,\ldots)=(Y,X,\langle\,,\,\rangle,\ldots)$.

Let $\lambda_1,\ldots,\lambda_n\in X^+$. We inductively define $\lambda_1\odot\cdots\odot\lambda_n\in (\tilde{X}^{n-1})^+$ to be the thickening product of $\lambda_1\odot\cdots\odot\lambda_{n-1}\in (\tilde{X}^{n-2})^+$ and $0\odot\cdots\odot 0\odot\lambda_{n}\in (\tilde{X}^{n-2})^+$.

Let $\tilde{\mathbf{f}}^n$ and $\tilde{\mathbf{U}}^n$ be the algebra and the quantum group associated with $(\tilde{I}^n,\cdot)$ and $(\tilde{Y}^n,\tilde{X}^n,\langle\,,\,\rangle,\ldots)$ defined in \S \ref{The algebra f} and \S \ref{Quantized enveloping algebra} respectively. Then there are natural subalgebra imbeddings $\mathbf{f}\hookrightarrow\tilde{\mathbf{f}}\hookrightarrow\cdots\hookrightarrow \tilde{\mathbf{f}}^n$ and $\mathbf{U}\hookrightarrow\tilde{\mathbf{U}}\hookrightarrow\cdots\hookrightarrow \tilde{\mathbf{U}}^n$.

\begin{proposition}\label{thickening realization of tensor product of simple based modules}
Let $n\geqslant 2,\lambda_1,\ldots,\lambda_n\in X^+$ and $\mathbf{B}(\Lambda(\blambda))$ be the canonical basis of the tensor product $\Lambda(\blambda)=\Lambda_{\lambda_1}\otimes\cdots\otimes \Lambda_{\lambda_n}$. Then there exist a based $\mathbf{U}$-submodule $(\Lambda_{\blambda},\mathbf{B}(\Lambda_{\blambda}))$ of the based $\tilde{\mathbf{U}}^{n-1}$-module $(\Lambda_{\lambda_1\odot\cdots\odot\lambda_n},\mathbf{B}(\Lambda_{\lambda_1\odot\cdots\odot\lambda_n}))$, and based $\mathbf{U}$-module isomorphisms 
$$\xymatrix@C=1.5cm{(\Lambda_{\blambda},\mathbf{B}(\Lambda_{\blambda})) \ar@<.5ex>[r]^-{\overline{\varphi_{\blambda}}} &(\Lambda(\blambda),\mathbf{B}(\Lambda(\blambda))). \ar@<.5ex>[l]^-{\overline{\psi_{\blambda}}}}$$
\end{proposition}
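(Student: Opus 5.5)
We argue by induction on $n$; the case $n=2$ is Lemma \ref{morphism underlinepsi of case 2}. For the step, let $n\geqslant 3$ and write $\blambda'=(\lambda_1,\ldots,\lambda_{n-1})$. By induction we have a based $\mathbf{U}$-submodule $(\Lambda_{\blambda'},\mathbf{B}(\Lambda_{\blambda'}))$ of $(\Lambda_{\mu},\mathbf{B}(\Lambda_\mu))$, where $\mu=\lambda_1\odot\cdots\odot\lambda_{n-1}\in(\tilde{X}^{n-2})^+$, together with based isomorphisms $\overline{\varphi_{\blambda'}},\overline{\psi_{\blambda'}}$ onto $\Lambda(\blambda')$.

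We then apply Lemma \ref{morphism underlinepsi of case 2} to the quantum group $\tilde{\mathbf{U}}^{n-2}$ in place of $\mathbf{U}$, with the dominant weights $\mu$ and $\mu'=0\odot\cdots\odot0\odot\lambda_n\in(\tilde{X}^{n-2})^+$. This gives a based $\tilde{\mathbf{U}}^{n-2}$-isomorphism
\[
\Lambda_{\mu,\mu'}\cong\Lambda_\mu\otimes\Lambda_{\mu'},
\]
where $\Lambda_{\mu,\mu'}\subset\Lambda_{\mu\odot\mu'}=\Lambda_{\lambda_1\odot\cdots\odot\lambda_n}$ is a based submodule. The lemma is stated for the base datum, but its proof uses only symmetry of the Cartan datum and the results of \cite{Fang-Lan-2025}, so it transfers verbatim. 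As a $\mathbf{U}$-module, $\Lambda_{\mu'}$ contains the $\mathbf{U}$-submodule generated by $\eta_{\mu'}$. Since $\langle i,\mu'\rangle=\langle i_Y,\lambda_n\rangle$ for $i\in I$, this submodule is $\mathbf{U}$-isomorphic to $\Lambda_{\lambda_n}$. It is a based submodule, because $\mathbf{B}\subset\tilde{\mathbf{B}}^{n-2}$ and $b^-\eta_{\mu'}\neq0$ exactly when $b^-\eta_{\lambda_n}\neq0$ by \eqref{14.4.11}.

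Next we set
\[
\Lambda_{\blambda}:=\overline{\psi_{\mu,\mu'}}\bigl(\Lambda_{\blambda'}\otimes\Lambda_{\lambda_n}\bigr)\subset\Lambda_{\lambda_1\odot\cdots\odot\lambda_n},
\]
and define $\overline{\varphi_{\blambda}}=(\overline{\varphi_{\blambda'}}\otimes\mathrm{Id})\circ\overline{\varphi_{\mu,\mu'}}$ restricted to $\Lambda_{\blambda}$, with inverse $\overline{\psi_{\blambda}}$ defined correspondingly. Restricted from $\tilde{\mathbf{U}}^{n-2}$ to $\mathbf{U}$, these are $\mathbf{U}$-module isomorphisms.

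The remaining and main step is to compare canonical bases. We must show three things:
\begin{enumerate}
\item the $\Psi$-involution on $\Lambda_\mu\otimes\Lambda_{\mu'}$ built from the quasi-$\mathcal{R}$-matrix $\Theta^{(n-2)}$ of $\tilde{\mathbf{U}}^{n-2}$ restricts on $\Lambda_{\blambda'}\otimes\Lambda_{\lambda_n}$ to the $\Psi$-involution built from $\Theta$ of $\mathbf{U}$;
\item the based submodule structure transfers, so that $\mathbf{B}(\Lambda_\mu\otimes\Lambda_{\mu'})\cap(\Lambda_{\blambda'}\otimes\Lambda_{\lambda_n})$ is a basis of that subspace;
\item the induced isomorphism $\Lambda_{\blambda'}\otimes\Lambda_{\lambda_n}\cong\Lambda(\blambda')\otimes\Lambda_{\lambda_n}=\Lambda(\blambda)$ carries this basis to $\mathbf{B}(\Lambda(\blambda))$.
\end{enumerate}

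For (1), $\Theta^{(n-2)}$ on this subspace involves only $b^{*+}$ acting on $\Lambda_{\lambda_n}\subset\Lambda_{\mu'}$. Here $\Lambda_{\lambda_n}$ is annihilated by $E_{\tilde{i}}$ for every $\tilde{i}\notin I$: the vector $\eta_{\mu'}$ is killed by all $E_{\tilde{i}}$, and $E_{\tilde{i}}$ commutes with $F_i$ for $i\in I$. So only the components $b\in\mathbf{f}$ survive, and $\Theta^{(n-2)}$ reduces to $\Theta$.

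For (2), based submodules are preserved under tensor products, by the argument of Proposition \ref{canonical basis of demazure otimes highest weight}: $\Psi$ preserves the subspace, and the unitriangularity argument applies. For (3), the based isomorphism $\overline{\varphi_{\blambda'}}\otimes\mathrm{Id}$ intertwines the $\Psi$-involutions, since $\Theta$ is natural with respect to $\mathbf{U}$-module maps commuting with bar, and it preserves the lattices $\mathcal{L}$. The characterization in the theorem of Bao--Wang then identifies the canonical bases.

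The obstacle I expect is (1), in particular checking that the inductively defined $\Psi$ on $\Lambda(\blambda)=\Lambda(\blambda')\otimes\Lambda_{\lambda_n}$ matches the restriction. I would handle it exactly as sketched: verify the vanishing of the $I'$-type components of $\Theta^{(n-2)}$ on $\Lambda_{\lambda_n}$ via the weight argument above.
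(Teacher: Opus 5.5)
Your overall architecture is the paper's: induction on $n$, the $n=2$ lemma applied to $\tilde{\mathbf U}^{n-2}$ with $\mu=\lambda_1\odot\cdots\odot\lambda_{n-1}$ and $\mu'=0\odot\cdots\odot0\odot\lambda_n$, and $\Lambda_{\blambda}$ defined as an image under $\overline{\psi_{\mu,\mu'}}$. However, your identification of the last factor is wrong.

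By \eqref{definition of odot}, $\zeta\odot\lambda$ carries $\zeta$ on the old indices and $\lambda$ only on the new copy. So for $n\geqslant 3$ the weight $\mu'$ pairs to $0$ with every $i\in I$; for instance $\mu'=0\odot\lambda_3$ when $n=3$. Hence $\mathbf{U}\eta_{\mu'}$ is the trivial module, and your $\Lambda_{\blambda}$ is isomorphic to $\Lambda(\blambda')$, not to $\Lambda(\blambda)$. The paper instead obtains the copy of $\Lambda_{\lambda_n}$ from the induction hypothesis for $\blambda''=(0,\ldots,0,\lambda_n)$, using $\Lambda_0\otimes\cdots\otimes\Lambda_0\otimes\Lambda_{\lambda_n}\cong\Lambda_{\lambda_n}$. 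For $n=3$ this copy is $\mathbf{f}^-\theta_{\lambda_3}^-\eta_{0\odot\lambda_3}$, which is not generated by $\eta_{\mu'}$.

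With that copy, your argument for (1) collapses. $E_{i'}$ does not kill $\theta_{\lambda_3}^-\eta_{0\odot\lambda_3}$, and $F_{i'}$ does not preserve $\Lambda_{\blambda'}$. So the non-$I$ components of $\Theta^{(n-2)}$ survive, and the Demazure-type argument for (2) has no invariant subspace to work with.

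Worse, the compatibility you need is false. Take $I=\{i\}$ and $\blambda=(\lambda_1,0,\lambda_3)$ with $\langle i_Y,\lambda_1\rangle=\langle i_Y,\lambda_3\rangle=1$. Then $\Lambda_\mu\otimes\Lambda_{\mu'}$ is $V\otimes V^*$ for $\tilde{\mathbf U}$ of type $A_2$. Set $e_1=\eta_\mu$, $e_2=F_ie_1$, $e_3=F_{i'}e_2$ and $f_3=\eta_{\mu'}$, $f_2=F_{i'}f_3$, $f_1=F_if_2$. The relevant subspace is $\Lambda_{\blambda'}\otimes\Lambda_{\blambda''}=\mathrm{span}(e_1,e_2)\otimes\mathrm{span}(f_2,f_1)$.
\begin{itemize}
\item Since $E_if_2=0$, the vector $e_2\otimes f_2$ is fixed by the $\Psi$ of $\mathbf U$. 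Hence it lies in the $\mathbf U$-canonical basis of this tensor product.
\item The $\Psi$ of $\tilde{\mathbf U}$ sends it to $e_2\otimes f_2-(v-v^{-1})e_3\otimes f_3$.
\item The corresponding element of $\mathbf{B}(\Lambda_\mu\otimes\Lambda_{\mu'})$ is $e_2\otimes f_2+v^{-1}e_3\otimes f_3$, which lies outside the subspace.
\end{itemize}
So the subspace is not spanned by $\tilde{\mathbf U}$-canonical basis elements. Its image under $\overline{\psi_{\mu,\mu'}}$ is therefore not a based submodule of $\Lambda_{\lambda_1\odot\lambda_2\odot\lambda_3}$.

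The paper's proof disposes of this step in one sentence (``Taking the tensor product of based modules, we obtain a based $\mathbf U$-submodule \ldots''). The example shows that sentence fails as stated too, so following the paper will not close the gap. What is needed is a different subspace, one that is not a tensor product of subspaces. In the example, $\mathrm{span}\{e_1\otimes f_2,\ e_1\otimes f_1+v^{-1}e_2\otimes f_2,\ e_2\otimes f_2+v^{-1}e_3\otimes f_3,\ e_2\otimes f_1\}$ works: it is $\mathbf U$-stable, spanned by $\tilde{\mathbf U}$-canonical basis elements, and based-isomorphic to $\Lambda_{\lambda_1}\otimes\Lambda_{\lambda_3}$. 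Your item (3), associativity via the Bao--Wang characterization, is fine but moot until this is repaired.
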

\begin{proof}
We prove the statement by induction on $n$. If $n=2$, it is Lemma \ref{morphism underlinepsi of case 2}. If $n>2$, we denote $\blambda'=(\lambda_1,\ldots,\lambda_{n-1})$, $\blambda''=(0,\ldots,0, \lambda_{n})$ and  
$\tilde{\lambda}'=\lambda_1\odot\cdots\odot\lambda_{n-1},\tilde{\lambda}''=0 \odot \cdots \odot 0 \odot \l_n\in (\tilde{X}^{n-2})^+$. By the inductive hypothesis, there exist based $\mathbf{U}$-submodules $(\Lambda_{\blambda'},\mathbf{B}(\Lambda_{\blambda'})),(\Lambda_{\blambda''},\mathbf{B}(\Lambda_{\blambda''}))$ of the based $\tilde{\mathbf{U}}^{n-2}$-module $(\Lambda_{\tilde{\lambda}'},\mathbf{B}(\Lambda_{\tilde{\lambda}'})),(\Lambda_{\tilde{\lambda}''},\mathbf{B}(\Lambda_{\tilde{\lambda}''}))$ respectively, and based $\mathbf{U}$-module isomorphisms
$$\xymatrix@C=1cm{(\Lambda_{\blambda'},\mathbf{B}(\Lambda_{\blambda'})) \ar@<.5ex>[r]^-{\overline{\varphi_{\blambda'}}} &{(\Lambda(\blambda'),\mathbf{B}(\Lambda(\blambda'))) \ar@<.5ex>[l]^-{\overline{\psi_{\blambda'}}},\ (\Lambda_{\blambda''},\mathbf{B}(\Lambda_{\blambda''})) \ar@<.5ex>[r]^-{\overline{\varphi_{\blambda''}}}} &(\Lambda(\blambda''),\mathbf{B}(\Lambda(\blambda''))). \ar@<.5ex>[l]^-{\overline{\psi_{\blambda''}}}}$$
Taking the tensor product of based modules, we obtain a based $\mathbf{U}$-submodule $(\Lambda_{\blambda'}\otimes \Lambda_{\blambda''},\mathbf{B}(\Lambda_{\blambda'}\otimes \Lambda_{\blambda''}))$ of the based $\tilde{\mathbf{U}}^{n-2}$-module $(\Lambda_{\tilde{\lambda}'}\otimes \Lambda_{\tilde{\lambda}''},\mathbf{B}(\Lambda_{\tilde{\lambda}'}\otimes \Lambda_{\tilde{\lambda}''}))$, and based $\mathbf{U}$-module isomorphisms
$$\xymatrix@C=2cm{(\Lambda_{\blambda'}\otimes \Lambda_{\blambda''},\mathbf{B}(\Lambda_{\blambda'}\otimes \Lambda_{\blambda''})) \ar@<.5ex>[r]^-{\overline{\varphi_{\blambda'}}\otimes \overline{\varphi_{\blambda''}}} &(\Lambda(\blambda')\otimes \Lambda(\blambda''),\mathbf{B}(\Lambda(\blambda')\otimes \Lambda(\blambda''))). \ar@<.5ex>[l]^-{\overline{\psi_{\blambda'}}\otimes \overline{\psi_{\blambda''}}}}$$
Since the simple highest weight module $\Lambda_0$ of $\mathbf{U}$ is $\mathbb{Q}(v)$ such that $F_i,E_i$ acts by $0$ and $K_{\mu}$ acts by $1$ for any $i\in I$ and $\mu\in Y$, we can naturally identify $(\Lambda(\blambda''),\mathbf{B}(\Lambda(\blambda'')))=(\Lambda_{\lambda_n},\mathbf{B}(\Lambda_{\lambda_n}))$, and then $(\Lambda(\blambda')\otimes \Lambda(\blambda''),\mathbf{B}(\Lambda(\blambda')\otimes \Lambda(\blambda'')))=(\Lambda(\blambda),\mathbf{B}(\Lambda(\blambda)))$. 

Applying Lemma \ref{morphism underlinepsi of case 2} for the tensor product $(\Lambda_{\tilde{\lambda}'}\otimes \Lambda_{\tilde{\lambda}''},\mathbf{B}(\Lambda_{\tilde{\lambda}'}\otimes \Lambda_{\tilde{\lambda}''}))$ of based $\tilde{\mathbf{U}}^{n-2}$-modules, there exist a based $\tilde{\mathbf{U}}^{n-2}$-submodule $(\Lambda_{\tilde{\lambda}', \tilde{\lambda}''},\mathbf{B}(\Lambda_{\tilde \lambda', \tilde \lambda''})$ of the based $\tilde{\mathbf{U}}^{n-1}$-module $(\Lambda_{\lambda_1\odot\cdots\odot\lambda_n},\mathbf{B}(\Lambda_{\lambda_1\odot\cdots\odot\lambda_n}))$ and based $\tilde{\mathbf{U}}^{n-2}$-modules isomorphisms
\begin{equation*}
\xymatrix@C=2cm{(\Lambda_{\tilde \lambda', \tilde \lambda''},\mathbf{B}(\Lambda_{\tilde \lambda', \tilde \lambda''})) \ar@<.5ex>[r]^-{\overline{\varphi_{\tilde \lambda', \tilde \lambda''}}} &(\Lambda_{\tilde \l'}\otimes \Lambda_{\tilde \l''},\mathbf{B}(\Lambda_{\tilde \l'}\otimes \Lambda_{\tilde \l''})). \ar@<.5ex>[l]^-{\overline{\psi_{\tilde \lambda', \tilde \lambda''}}}}
\end{equation*}
Let $(\Lambda_{\blambda},\mathbf{B}(\Lambda_{\blambda}))=\overline{\psi_{\tilde \l', \tilde \l''}}((\Lambda_{\blambda'}\otimes \Lambda_{\blambda''},\mathbf{B}(\Lambda_{\blambda'}\otimes \Lambda_{\blambda''})))$. Then it is a based $\mathbf{U}$-submodule of $(\Lambda_{\lambda_1\odot\cdots\odot\lambda_n},\mathbf{B}(\Lambda_{\lambda_1\odot\cdots\odot\lambda_n}))$, and 
\begin{align*}
\overline{\varphi_{\blambda}}=(\overline{\varphi_{\blambda'}}\otimes \overline{\varphi_{\blambda''}})(\overline{\varphi_{\tilde \l',\tilde \l''}}|_{\Lambda_{\blambda}}), \
\overline{\psi_{\blambda}}=\overline{\psi_{\tilde \l',\tilde \l''}}(\overline{\psi_{\blambda'}}\otimes \overline{\psi_{\blambda''}})
\end{align*}
are the desired based $\mathbf{U}$-module isomorphisms.
\end{proof}

\subsection{Positivity of canonical basis of tensor product}

\begin{theorem}\label{Positivity in general case}
Let $m,n\in \mathbb{N}$ and $\lambda_1,\ldots,\lambda_{m},\lambda_{m+1},\ldots,\lambda_{m+n}\in X^+$. Then we have 
\begin{enumerate}
\item (Positivity of transition matrix) 
$$\mathbf{B}(\Lambda(\blambda))\subset \mathbb{N}[v^{-1}][\mathbf{B}({^{\omega}\Lambda_{\lambda_1}})\otimes\cdots\otimes \mathbf{B}({^{\omega}\Lambda_{\lambda_m}})\otimes \mathbf{B}(\Lambda_{\lambda_{m+1}})\otimes\cdots\otimes \mathbf{B}(\Lambda_{\lambda_{m+n}})].$$
\item (Positivity of action) Let $\dot{b}\in \dot{\mathbf{B}}$. If $\dot{b}$ is spherical parabolic or $\Lambda(\blambda)$ is the tensor product of simple highest (resp. lowest) weight modules, that is, $m=0$ (resp. $n=0$), then 
\begin{align*}
\dot{b}(\mathbf{B}(\Lambda(\blambda)))\subset \mathbb{N}[v,v^{-1}][\mathbf{B}(\Lambda(\blambda))].
\end{align*}
\end{enumerate}
\end{theorem}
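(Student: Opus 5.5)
The plan is to reduce both parts to the two-factor results already established, by realizing $\Lambda(\blambda)$ through iterated thickening and Lusztig's stable-limit description of $\dot{\mathbf{B}}$.

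\textbf{Reduction for (1).} Group the factors as $\Lambda(\blambda)={^{\omega}\Lambda(\blambda_-)}\otimes\Lambda(\blambda_+)$. Here ${^{\omega}\Lambda(\blambda_-)}={^{\omega}\Lambda_{\lambda_1}}\otimes\cdots\otimes{^{\omega}\Lambda_{\lambda_m}}$ and $\Lambda(\blambda_+)=\Lambda_{\lambda_{m+1}}\otimes\cdots\otimes\Lambda_{\lambda_{m+n}}$.

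\textbf{Highest-weight block ($m=0$).} By Proposition \ref{thickening realization of tensor product of simple based modules}, $(\Lambda(\blambda_+),\mathbf{B}(\Lambda(\blambda_+)))\cong(\Lambda_{\blambda_+},\mathbf{B}(\Lambda_{\blambda_+}))$ as based modules. The target is a based submodule of a simple highest weight module of $\tilde{\mathbf{U}}^{n-1}$.

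Positivity of the transition matrix is proved by induction on $n$. The isomorphism $\overline{\varphi}$ in Lemma \ref{morphism underlinepsi of case 2} is induced from $\varphi$, and Theorem \ref{positivity of transition matrix}(2) gives its matrix coefficients as structure constants $c^{\tilde b}_{b_2\theta,b_1}\in\mathbb{N}[v,v^{-1}]$. These are then pushed through the based projection $\pi_{\lambda_1}\otimes\mathrm{Id}$. Iterating expresses $\mathbf{B}(\Lambda(\blambda_+))$ in the pure tensor basis with $\mathbb{N}[v,v^{-1}]$ coefficients.

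\textbf{Lowest-weight block and gluing.} The case $n=0$ follows by applying $\omega$, via the isomorphisms analogous to $\Omega$ of Lemma \ref{isomorphism Omega}. For general $m,n$, I would use that the $\Psi$-involution on $\Lambda(\blambda)$ is compatible with the $\Psi$-involution on ${^{\omega}M}\otimes N$, where $M,N$ are based modules isomorphic to ${^{\omega}\Lambda(\blambda_-)}$ and $\Lambda(\blambda_+)$. Each such based module embeds as a based submodule of a single ${^{\omega}\Lambda_{\mu_1}}$, resp. $\Lambda_{\mu_2}$, of an enlarged algebra $\tilde{\mathbf{U}}^{N}$. Compatibility of $\Theta$ with the embedding $\mathbf{U}\hookrightarrow\tilde{\mathbf{U}}^N$ needs checking. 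The point is that $\Theta$ restricted to the $\mathbf{U}$-part of $\tilde\Theta$ acts the same on these submodules, because both are Lusztig $\mathbf{U}$-based submodules; this is the step I expect to be most delicate.

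\textbf{Conclusion of (1).} Granting this, Theorem \ref{thm:trans-pos} for $\tilde{\mathbf{U}}^N$ gives positivity of $\mathbf{B}({^{\omega}M}\otimes N)$ in $\mathbf{B}(M)\otimes\mathbf{B}(N)$. Composing with the positive expansions of $\mathbf{B}(M)$ and $\mathbf{B}(N)$ yields coefficients in $\mathbb{N}[v,v^{-1}]$. Unitriangularity with off-diagonal entries in $v^{-1}\mathbb{Z}[v^{-1}]$ then forces them into $\mathbb{N}[v^{-1}]$.

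\textbf{Part (2) for $m=0$ or $n=0$.} For $m=0$, $\Lambda(\blambda)$ is a based submodule of a simple module $\Lambda_\mu$ of $\tilde{\mathbf{U}}^{n-1}$. Since $\dot{\mathbf{B}}\subset\dot{\tilde{\mathbf{B}}}^{n-1}$, it suffices to show that $\dot{\mathbf{B}}$ acts positively on $\mathbf{B}(\Lambda_\mu)$.

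Choose $\lambda_1$ and use the based surjection ${^{\omega}\Lambda_{\lambda_1}}\otimes\Lambda_{\mu}\to\Lambda_{\mu-\lambda_1+\ldots}$? More simply, use $\Lambda_\mu\cong{^{\omega}\Lambda_0}\otimes\Lambda_\mu$ and write $b^-\eta_\mu=(1\diamondsuit_\mu b)(\xi_0\otimes\eta_\mu)$. Then $\dot b\,b^-\eta_\mu=\dot b(1\diamondsuit_\mu b)(\xi_0\otimes\eta_\mu)$.

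Now $1\diamondsuit_\mu b=b^-1_\mu$ lies in $\dot{\mathbf{U}}_\emptyset$, which is spherical parabolic. So Theorem \ref{thm:mult}, with the factor $1\diamondsuit b$, gives $\dot b\,(1\diamondsuit b)\in\mathbb{N}[v,v^{-1}][\dot{\mathbf{B}}]$. By Theorem \ref{25.2.1}, each $\dot b''(\xi_0\otimes\eta_\mu)$ is either $0$ or a canonical basis element. This gives (2) for $m=0$; $n=0$ follows by $\omega$ and Proposition \ref{prop:omega}.

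\textbf{Part (2) for spherical parabolic $\dot b$.} Realize $\Lambda(\blambda)$ as a based submodule of ${^{\omega}\Lambda_{\mu_1}}\otimes\Lambda_{\mu_2}$ over $\tilde{\mathbf{U}}^N$, as in part (1). Each basis element is then $\dot b'(\xi_{-\mu_1}\otimes\eta_{\mu_2})$ for some $\dot b'\in\dot{\tilde{\mathbf{B}}}^N$. The element $\dot b$ remains spherical parabolic in $\dot{\tilde{\mathbf{U}}}^N$, since the same $J$ works. Hence Theorem \ref{thm:mult} gives $\dot b\dot b'\in\mathbb{N}[v,v^{-1}][\dot{\tilde{\mathbf{B}}}^N]$, and applying to $\xi_{-\mu_1}\otimes\eta_{\mu_2}$ with Theorem \ref{25.2.1} finishes the proof.

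\textbf{Main obstacle.} The hard step is the compatibility of the iterated thickening with the mixed $\Psi$-involution, i.e. identifying $\mathbf{B}(\Lambda(\blambda))$ with a subset of $\mathbf{B}({^{\omega}\Lambda_{\mu_1}}\otimes\Lambda_{\mu_2})$ for the enlarged algebra.
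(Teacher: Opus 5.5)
Your route is the paper's. You split $\Lambda(\blambda)$ into a lowest and a highest block, realize each block inside a simple module of $\tilde{\mathbf{U}}^{N}$ via Proposition~\ref{thickening realization of tensor product of simple based modules}, and apply Theorems~\ref{thm:trans-pos} and~\ref{thm:mult} there. You then use $1\diamondsuit_\mu b\in\dot{\mathbf{U}}_\emptyset$ for $m=0$ and $\omega$ for $n=0$. Your direct two-factor highest-weight argument via Theorem~\ref{positivity of transition matrix}(2) and $\pi_{\lambda_1}\otimes\mathrm{Id}$ is fine.

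\textbf{The gap.} The step you flag is not merely delicate; the claim is false. $\tilde\Theta=\sum_{\nu\in\mathbb{N}[\tilde I]}\tilde\Theta_\nu$ has components with $\nu\notin\mathbb{N}[I]$, and these need not vanish on the $\mathbf{U}$-submodules. Take $I=\{i\}$ and $\Lambda(\blambda)={^{\omega}\Lambda_1}\otimes{^{\omega}\Lambda_1}\otimes\Lambda_1\otimes\Lambda_1$, in the notation of \S\ref{Example of thickening map}. The blocks sit in ${^{\omega}\Lambda_{1\odot1}}$ and $\Lambda_{1\odot1}$, with $\tilde I$ of type $A_2$. The element $\xi_{-1}\otimes\xi_{-1}\otimes\eta_1\otimes\eta_1\in\mathbf{B}(\Lambda(\blambda))$ corresponds to $m_1\otimes m_2$ with $m_1=E_{i'}\xi_{-(1\odot1)}$ and $m_2=F_{i'}\eta_{1\odot1}$. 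Since $E_im_2=0$, $m_1\otimes m_2$ is $\Psi$-invariant for $\mathbf{U}$. But $F_{i'}m_1=\xi_{-(1\odot1)}$ and $E_{i'}m_2=\eta_{1\odot1}$, so
\[
\tilde\Psi(m_1\otimes m_2)=m_1\otimes m_2-(v-v^{-1})\,\xi_{-(1\odot1)}\otimes\eta_{1\odot1}.
\]
Hence the enlarged canonical basis element with leading term $m_1\otimes m_2$ is $m_1\otimes m_2+v^{-1}\xi_{-(1\odot1)}\otimes\eta_{1\odot1}$. This does not lie in ${^{\omega}\Lambda_{\blambda'}}\otimes\Lambda_{\blambda''}$, so that subspace is not a based submodule of the enlarged tensor product. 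Using $\Theta$ instead of $\tilde\Theta$ there would make Theorem~\ref{thm:trans-pos} inapplicable.

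The same computation breaks the tensor step in the proof of Proposition~\ref{thickening realization of tensor product of simple based modules} for $n\geq3$, on which your $m=0$ arguments rely. Take $b_1=F_iF_{i'}\eta_{1\odot1}$ and $b_2=F_{i'}\eta_{0\odot1}$; here $F_{i'}b_1=F_{i'}^{(2)}F_i\eta_{1\odot1}\neq0$ and $E_{i'}b_2\neq0$. The paper's proof makes the same identification without comment (``Taking the tensor product of based modules, we obtain a based $\mathbf{U}$-submodule \dots''). So this gap is shared with the written argument, not a departure from it.

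\textbf{A repair.} What is true is a projection statement.
\begin{enumerate}
\item Grade the enlarged tensor product by the component, along the new simple roots, of the weight of the second factor. $\mathbf{U}$ and $\dot{\mathbf{U}}$ preserve this degree.
\item The $\tilde\Theta_\nu$ with $\nu\in\mathbb{N}[I]$ are those of $\Theta$, and every other $\tilde\Theta_\nu$ strictly raises the degree. Correction terms of an enlarged canonical basis element have degree at least that of its leading term.
\item Hence the component of $m_1\tilde\diamondsuit m_2$ in the degree $D$ of $m_1\otimes m_2$ is $\Psi$-invariant and congruent to $m_1\otimes m_2$. It is therefore the $\mathbf{U}$-canonical basis element, and $\mathbb{N}[v^{-1}]$-positivity descends to it.
\item For part (2), $\dot b$ commutes with this projection. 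By triangularity, only enlarged canonical basis elements whose leading term has degree exactly $D$ contribute to the degree-$D$ part of $\dot b(m_1\tilde\diamondsuit m_2)$. Their projections are $\mathbf{U}$-canonical basis elements.
\end{enumerate}
For blocks with three or more factors, do not embed the whole block into a single simple module. Instead, iterate the projection argument over tensor products of enlarged simple modules, merging two adjacent factors at a time by Lemma~\ref{morphism underlinepsi of case 2} and inducting over all symmetric Cartan data.
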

\begin{proof}
Without loss of generality, we may assume that $m=n$ by taking the tensor products ${{^\omega}\Lambda_{0}}\otimes\ldots\otimes {{^\omega}\Lambda_{0}}\otimes \Lambda(\blambda)$ or $\Lambda(\blambda)\otimes \Lambda_0\otimes\ldots\otimes \Lambda_0$ if necessary. We denote $\blambda'=(-\lambda_1,\ldots,-\lambda_n)$ and $\blambda''=(\lambda_{n+1},\ldots,\lambda_{2n})$ so that $\blambda=\blambda'\blambda''$. If $n\geqslant 2$, by Proposition \ref{thickening realization of tensor product of simple based modules}, there exist a based $\mathbf{U}$-submodule $(\Lambda_{\blambda''},\mathbf{B}(\Lambda_{\blambda''}))$ of the based $\tilde{\mathbf{U}}^{n-1}$-module $(\Lambda_{\lambda_{n+1}\odot\cdots\odot\lambda_{2n}},\mathbf{B}(\Lambda_{\lambda_{n+1}\odot\cdots\odot\lambda_{2n}}))$, and a based $\mathbf{U}$-module isomorphism 
\begin{equation}\label{larger highest}
(\Lambda_{\blambda''},\mathbf{B}(\Lambda_{\blambda''}))\cong (\Lambda(\blambda''),\mathbf{B}(\Lambda(\blambda''))).   
\end{equation}
Similarly, there exist a based $\mathbf{U}$-submodule $({^{\omega}\Lambda_{\blambda'}},\mathbf{B}({^{\omega}\Lambda_{\blambda'}}))$ of the based $\tilde{\mathbf{U}}^{n-1}$-module $({^{\omega}\Lambda_{\lambda_1\odot\cdots\odot\lambda_{n}}},\!\mathbf{B}({^{\omega}\Lambda_{\lambda_1\odot\cdots\odot\lambda_{n}}}))$, and a based $\mathbf{U}$-module isomorphism 
\begin{equation}\label{larger lowest}
({^{\omega}\Lambda_{\blambda'}},\mathbf{B}({^{\omega}\Lambda_{\blambda'}}))\cong (\Lambda(\blambda'),\mathbf{B}(\Lambda(\blambda'))).
\end{equation}

Taking the tensor product of based modules, we obtain a based $\mathbf{U}$-submodule $({^{\omega}\Lambda_{\blambda'}}\otimes \Lambda_{\blambda''},\mathbf{B}({^{\omega}\Lambda_{\blambda'}}\otimes \Lambda_{\blambda''}))$ of the based $\tilde{\mathbf{U}}^{n-1}$-module 
\begin{equation}\label{larger lowest tensor larger highest}
({^{\omega}\Lambda_{\lambda_1\odot\ldots\odot \lambda_{n}}}\otimes \Lambda_{\lambda_{n+1}\odot\cdots\odot \lambda_{2 n}},\mathbf{B}({^{\omega}\Lambda_{\lambda_1\odot\cdots\odot \lambda_{n}}}\otimes \Lambda_{\lambda_{n+1}\odot\ldots\odot \lambda_{2 n}})),
\end{equation}
and based $\mathbf{U}$-module isomorphisms
\begin{equation}\label{thickening realization of based modules in general case}
({^{\omega}\Lambda_{\blambda'}}\otimes \Lambda_{\blambda''},\mathbf{B}({^{\omega}\Lambda_{\blambda'}}\otimes \Lambda_{\blambda''}))\cong (\Lambda(\blambda),\mathbf{B}(\Lambda(\blambda)))
\end{equation}

(1) If $n=1$, the statement is Theorem \ref{thm:trans-pos}. If $n\geqslant 2$, applying Theorem \ref{thm:trans-pos} for the tensor product of $\tilde{\mathbf{U}}^{n-1}$-modules in \eqref{larger lowest tensor larger highest}, we have 
$$\mathbf{B}({^{\omega}\Lambda_{\lambda_1\odot\ldots\odot \lambda_{n}}}\otimes \Lambda_{\lambda_{n+1}\odot\cdots\odot \lambda_{2 n}})\subset \mathbb{N}[v^{-1}][\mathbf{B}({^{\omega}\Lambda_{\lambda_1\odot\ldots\odot \lambda_{n}}})\otimes \mathbf{B}(\Lambda_{\lambda_{n+1}\odot\cdots\odot \lambda_{2 n}})],$$ 
and then $\mathbf{B}({^{\omega}\Lambda_{\blambda'}}\otimes \Lambda_{\blambda''})\subset \mathbb{N}[v^{-1}][\mathbf{B}({^{\omega}\Lambda_{\blambda'}})\otimes \mathbf{B}(\Lambda_{\blambda''})]$. By the based $\mathbf{U}$-module isomorphisms in \eqref{larger highest}, \eqref{larger lowest} and \eqref{thickening realization of based modules in general case}, $\mathbf{B}(\Lambda(\blambda))\subset \mathbb{N}[v^{-1}][\mathbf{B}(\Lambda(\blambda'))\otimes \mathbf{B}(\Lambda(\blambda''))]$. 

By Proposition \ref{thickening realization of tensor product of simple based modules}, and applying \cite[Corollary 4.8]{Fang-Lan-2025} for the tensor product of $\tilde{\mathbf{U}}^{n-1}$-modules inductively, we have $\mathbf{B}(\Lambda(\blambda''))\subset \mathbb{N}[v^{-1}][\mathbf{B}(\Lambda_{\lambda_{n+1}})\otimes \cdots \otimes\mathbf{B}(\Lambda_{\lambda_{2n}})]$. Similarly, $\mathbf{B}(\Lambda(\blambda'))\subset \mathbb{N}[v^{-1}][\mathbf{B}({^{\omega}\Lambda_{\lambda_1}})\otimes\cdots\otimes \mathbf{B}({^{\omega}\Lambda_{\lambda_n}})]$. Therefore, we have $$\mathbf{B}(\Lambda(\blambda))\subset \mathbb{N}[v^{-1}][\mathbf{B}({^{\omega}\Lambda_{\lambda_1}})\otimes\cdots\otimes \mathbf{B}({^{\omega}\Lambda_{\lambda_m}})\otimes \mathbf{B}(\Lambda_{\lambda_{m+1}})\otimes\cdots\otimes \mathbf{B}(\Lambda_{\lambda_{m+n}})].$$

(2) We first prove the case that $\dot{b}\in \dot{\mathbf{B}}$ is spherical parabolic. If $n=1$, for any $b_1^+\xi_{-\lambda_1}\diamondsuit b_2^-\eta_{\lambda_2}\in \mathbf{B}({^{\omega}\Lambda_{\lambda_1}}\otimes \Lambda_{\lambda_2})$, by Theorem \ref{thm:mult} and Theorem \ref{25.2.1}, we have $\dot{b}(b_1\diamondsuit_{\lambda_2-\lambda_1}b_2)\in \mathbb{N}[v,v^{-1}][\dot{\mathbf{B}}]$ and 
$$\dot{b}(b_1^+\xi_{-\lambda_1}\diamondsuit b_2^-\eta_{\lambda_2})=\dot{b}(b_1\diamondsuit_{\lambda_2-\lambda_1}b_2)(\xi_{-\lambda_1}\otimes \eta_{\lambda_2})\in \mathbb{N}[v,v^{-1}][\mathbf{B}({^{\omega}\Lambda_{\lambda_1}}\otimes \Lambda_{\lambda_2})].$$
Hence we have 

(a) $\dot{b}(\mathbf{B}({^{\omega}\Lambda_{\lambda_1}}\otimes \Lambda_{\lambda_2}))\subset \mathbb{N}[v,v^{-1}][\mathbf{B}({^{\omega}\Lambda_{\lambda_1}}\otimes \Lambda_{\lambda_2})]$.

If $n\geqslant 2$, let $\dot{\tilde{\mathbf{U}}}^{n-1}$ be the modified quantum group  associated with the root datum $(\tilde{Y}^{n-1},\tilde{X}^{n-1},\langle\,,\,\rangle,\ldots)$, and $\dot{\tilde{\mathbf{B}}}^{n-1}$ be its canonical basis. Then there are natural embeddings $\dot{\mathbf{U}}\subset \dot{\tilde{\mathbf{U}}}^{n-1}$ and $\dot{\mathbf{B}}\subset \dot{\tilde{\mathbf{B}}}^{n-1}$. Applying (a) for $\dot{b}\in \dot{\tilde{\mathbf{B}}}^{n-1}$ and the tensor product of $\tilde{\mathbf{U}}^{n-1}$-modules in \eqref{larger lowest tensor larger highest}, we obtain 
$$\dot{b}(\mathbf{B}({^{\omega}\Lambda_{\lambda_1\odot\cdots\odot \lambda_{n}}}\otimes \Lambda_{\lambda_{n+1}\odot\ldots\odot \lambda_{2 n}}))\subset \mathbb{N}[v,v^{-1}][\mathbf{B}({^{\omega}\Lambda_{\lambda_1\odot\cdots\odot \lambda_{n}}}\otimes \Lambda_{\lambda_{n+1}\odot\ldots\odot \lambda_{2 n}})],$$
and so $\dot{b}(\mathbf{B}({^{\omega}\Lambda_{\blambda'}}\otimes \Lambda_{\blambda''}))\subset \mathbb{N}[v,v^{-1}][\mathbf{B}({^{\omega}\Lambda_{\blambda'}}\otimes \Lambda_{\blambda''})]$. By the based $\mathbf{U}$-module isomorphism in \eqref{thickening realization of based modules in general case}, we have $\dot{b}(\mathbf{B}(\Lambda(\blambda)))\subset \mathbb{N}[v,v^{-1}][\mathbf{B}(\Lambda(\blambda))]$.

Next we prove the case that $\Lambda(\blambda)$ is the tensor product of simple highest weight modules. The case that $\Lambda(\blambda)$ is the tensor product of simple lowest weight modules can be proved in a similar way. Let $\dot{b}\in \dot{\mathbf{B}}$ and $b^-\eta_{\lambda_1}\in \mathbf{B}(\Lambda_{\lambda_1})$. Notice that $1\diamondsuit_{\lambda}b\in \dot{\mathbf{B}}$ is spherical parabolic. By Theorem \ref{thm:mult} and Theorem \ref{25.2.1}, we have $\dot{b}(1\diamondsuit_{\lambda}b)\in \mathbb{N}[v,v^{-1}][\dot{\mathbf{B}}]$ and
$$\dot{b}(b^-\eta_{\lambda_1})=\dot{b}b^-1_{\lambda_1}\eta_{\lambda_1}=\dot{b}(1\diamondsuit_{\lambda}b)\eta_{\lambda_1}\in \mathbb{N}[v,v^{-1}][\mathbf{B}(\Lambda_{\lambda_1})].$$
Hence 

(b) $\dot{b}(\mathbf{B}(\Lambda_{\lambda_1}))\subset \mathbb{N}[v,v^{-1}][\mathbf{B}(\Lambda_{\lambda_1})].$

If $n\geqslant 2$, applying (b) for $\dot{b}\in \dot{\tilde{\mathbf{B}}}^{n-1}$ and the $\tilde{\mathbf{U}}^{n-1}$-modules $\Lambda_{\lambda_1\odot\cdots\odot\lambda_n}$, we obtain 
$$\dot{b}(\mathbf{B}(\Lambda_{\lambda_1\odot\cdots\odot\lambda_n}))\subset \mathbb{N}[v,v^{-1}][\mathbf{B}(\Lambda_{\lambda_1\odot\cdots\odot\lambda_n})],$$
and so $\dot{b}(\mathbf{B}(\Lambda_{\lambda_1,\ldots,\lambda_n}))\subset \mathbb{N}[v,v^{-1}][\mathbf{B}(\Lambda_{\lambda_1,\ldots,\lambda_n})]$. By the based $\mathbf{U}$-module isomorphisms in Proposition \ref{thickening realization of tensor product of simple based modules}, we have $\dot{b}(\mathbf{B}(\Lambda(\blambda)))\subset \mathbb{N}[v,v^{-1}][\mathbf{B}(\Lambda(\blambda))]$.
\end{proof}

\printbibliography

\end{document}